\newcommand{\ubar}[1]{\underaccent{\bar}{#1}}
\newtheorem{theorem}{Theorem}[section]
\newtheorem{proposition}[theorem]
{Proposition}
\newtheorem{lemma}{Lemma}[subsection]
\newtheorem{corollary}{Corollary}[theorem]
\newtheorem{definition}{Definition}[section]
\newtheorem{remark}{Remark}[section]
\DeclareMathOperator{\tr}{tr}
\title{The Isometric Embedding Problem in a Null Cone}
\author{Henri Roesch}
\date{\today} 
\begin{document}
\maketitle

\begin{abstract}
In the first part of this paper, we extend the result of Li-Wang \cite{li2020} on the linearized embedding problem to a compact manifold of arbitrary dimension. Using this, we then show that any metric perturbation of a embedded $n$-sphere is also isometrically embedded up to a solution of the homogenous Codazzi equation, irrespective of the ambient geometry. In the second part we specialize to dimension two, and study these results within an ambient Null Cone. Specifically, given a path of metrics on the 2-sphere and an initial isometric embedding, we develop a small parameter existence and uniqueness theorem for paths of isometric embeddings. In the final part, after imposing asymptotic decay conditions on the Null Cone, we show that any metric on the 2-sphere can be isometrically embedded up to a scaling factor. We then prove the existence of a foliation in a neighborhood of infinity.
\end{abstract}

\tableofcontents
\newpage
\section{Introduction}
In 1916, Weyl proposed the following problem: Given a smooth metric on a 2-sphere exhibiting positive Gauss curvature, can an isometric embedding be given into the three dimensional Euclidean space? Weyl suggested an approach using the method of continuity in \cite{weyl1}, for the analytic case he  also produced the openness part of the argument. Weyl also provided an estimate on the mean curvature for strictly convex surfaces as a $C^2$ a priori estimate toward the continuity argument. The analytic case was subsequently solved by Lewy \cite{lewy1}. The smooth case was then settled by Nirenberg in his famous paper \cite{nirenberg1}, another solution was also obtained independently by Alexandrov-Pogorelov \cite{pogorelov1}. Pogorelov then generalized Nirenberg's approach to the hyperbolic case in \cite{pogorelov2}, and other Riemannian manifolds in \cite{pogorelov3},\cite{pogorelov4}. Work of Guan-Li \cite{guanli1}, Hong-Zuily \cite{hongzuily1}, and Iaia \cite{iaia1} then generalized Weyl's estimate to the nonnegative Gauss curvature case. The analogous degenerate case in hyperbolic space has been considered by Chang-Xiao \cite{changxiao1}, and Lin-Wang \cite{linwang1}.\\
\indent In the theory of general relativity the isometric embedding problem holds great significance in the study of quasi-local mass quantities. Brown and York in \cite{brownyork1} proposed a definition of a quasi-local mass utilizing the solution to the Weyl problem. Their quasi-local mass measures the deviation between the extrinsic mean curvature of the physical region as it sits in a three dimensional `time-symmetric slice' of spacetime compared to the isometric embedding in Euclidean space. Important work by Shi-Tam in \cite{shitam1} was able to establish non-negativity of the Brown-York mass. Liu-Yau in \cite{liuyau1}, \cite{liuyau2} introduced a natural extension to the Brown-York counterpart that incorporated general time-slices in spacetime and were also able to show non-negativity, as would be expected of a meaningful mass quantity. A further generalization using a Hamilton-Jacobi formalism led Wang-Yau to define another quasi-local mass \cite{wangyau1},\cite{wangyau2},\cite{wangyau3}. Similarly to the Euclidean space reference of Brown-York, the Wang-Yau mass considers general co-dimension two spacelike regions in spacetime and utilizes the Weyl problem to find isometric embeddings within the flat Minkowski spacetime as reference. Non-negativity for the Wang-Yau mass was also shown in \cite{wangyau1}, along with many other physically important observations due to Chen-Wang-Yau \cite{chenwangyau1},\cite{chenwangyau2},\cite{chenwangyau3},\cite{chenwangyau4},\cite{chenwangyau5},\cite{chenwangyau6}, and Chen-M.Wang-Y.Wang-Yau \cite{chenwang2yau1},\cite{chenwang2yau2},\cite{chenwang2yau3}. As highlighted in the case of the Wang-Yau mass, investigations into the Weyl problem within more exotic ambient backgrounds is of particular interest.\\
\indent The setting of this paper is to initially study the isometric embedding problem within more general ambient geometries. Specifically, we concentrate particularly upon a null three dimensional ambient geometry, called a \textit{Null Cone}. Our analysis develops from recent work of Li-Wang \cite{li2020}, where the authors observe a dual relation between the linearized isometric embedding system and the homogenous linearized Gauss-Codazzi system. Li-Wang also provides a novel use of the maximum principle to obtain uniqueness of solutions, from which Fredholm theory provides solvability of the linearized problem. Their work provides an openness argument independent of any ambient Riemannian space and also without the requirement of infinitesimal rigidity as in earlier work of Polgorelov \cite{pogorelov2},\cite{pogorelov3}. Our interest in studying the isometric embedding problem in Null Cones similarly derives from a study of quasi-local energy. One way to understand the notion of total energy is to consider a hypersurface in spacetime ruled by null geodesics, representative of a congruence of light rays in general relativity. This congruence forms the Null Cone geometry. Similarly to the Liu-Yau mass, if any spherical cross-section $\Sigma\cong \mathbb{S}^2$ of this congruence is considered, the resulting co-dimension two surface exhibits an extrinsic mean curvature vector that one can use to define the Hawking Energy, $E_H(\Sigma)$. Taking a limit of this quasi-local energy along 2-spheres foliating a Null Cone then relates to the total energy of the spacetime whenever the induced metrics asymptotically rescale to a metric of constant Gauss curvature. We refer the reader to the works of Christodoulou-Klainermann \cite{christodoulou2014global}, Bieri \cite{bieri2010}, also Klainerman-Nicol\`{o} \cite{klainerman2003evolution}, Chru\'{s}ciel-Paetz \cite{chrusciel2014mass}, and Mars-Soria \cite{MS1}. From this limit, one can define the notion of the total energy of a Null Cone $\Omega$ called the Trautman-Bondi Energy, $E_{TB}(\Omega)$. This energy can be traced back to Trautman \cite{Trautman:1958zz}, predating, a coordinate based construction by Bondi et al. \cite{bondi1962gravitational,sachs1962gravitational}.  We refer the reader to \cite{bieri2016future} for more details, including how the Trautman-Bondi mass relates to the famous one of Arnowitt-Deser-Misner \cite{arnowitt2008republication} at spacelike infinity. 
\\
\indent In the first part of this paper, we establish the openness for a strictly convex surface in arbitrary ambient geometries. We do so by observing the openness argument of Li-Wang holds independently of the signature or degeneracy of the metric orthogonal to an embedded sphere. We also show that the elliptic operator employed in the Li-Wang result in dimension two can be transformed into a self-adjoint elliptic operator acting on covariant vectors fields, closely related to the transformation of the conformal Killing operator into the conformal vector Laplace operator. We are then able to solve the linearized problem up to a solution of the homogenous Codazzi equation in any dimension, on any compact manifold. In the second part of the paper we show, given an initial isometric embedding, any given path of metrics connected to the initial one admits small parameter existence and uniqueness of a corresponding path of isometric embeddings within a convex Null Cone. In the final part, we use this to show that any metric on a 2-sphere can be isometrically embedded within a Null Cone up to a scaling. Moreover, any convex Null Cone admits a foliation in a neighborhood of infinity by a given metric up to rescaling.
\subsection{Initial Setup and Main Results}
We assume the existence of a $C^1$ mapping $F: I\times \mathbb{S}^n\to \mathbb{R}^{n+1}$, for an interval $I\subset\mathbb{R}$, $0\in I$. We also assume the existence of a smooth 2-tensor $\sigma$ as a ``metric". Denoting by $r:=F|_{\{0\}\times\mathbb{S}^n}$, we assume an embedding $r:\mathbb{S}^n\to\mathbb{R}^{n+1}$ such that $\sigma(dF(\partial_t),X)=0$ for any $X\in \Gamma(r_\star(T\mathbb{S}^n))$. It will be helpful to identify $dF(\partial_t)|_{\{0\}\times\mathbb{S}^n}$ via the canonical isomorphism $T_p\mathbb{R}^{n+1}\to \mathbb{R}^{n+1}$ at any given $p\in \mathbb{R}^{n+1}$ with a function $\vec{\upsilon}:\mathbb{S}^n\to\mathbb{R}^{n+1}$. We will make no assumption on the sign of $\sigma(\vec{\upsilon},\vec{\upsilon})$. We also obtain the induced metric $\gamma:=r^\star(\sigma)$, and induced second fundamental form $h = \pounds_{\partial_t}(F^\star(\sigma))|_{\{0\}\times\mathbb{S}^n}$.\\\\ Throughout this paper, for $0<\beta<1$, $\mathfrak{n}\in \mathbb{N}_+$, we define the following index $(\mathfrak{n},\beta)$-H\"{o}lder spaces with the usual norms induced by the standard metric on $\mathbb{S}^n$:
\begin{itemize}
\item $C^{\mathfrak{n},\beta}(\mathbb{S}^n,\mathbb{R}^{n+1})$: the H\"{o}lder space of functions $\mathbb{S}^n\to\mathbb{R}^{n+1}$,
\item $C^{\mathfrak{n},\beta}(T\mathbb{S}^n)$: the H\"{o}lder space of sections of contravariant vector fields on $\mathbb{S}^n$, 
\item $C^{\mathfrak{n},\beta}(T^\star\mathbb{S}^n)$: the H\"{o}lder space of sections  of covariant vector fields on $\mathbb{S}^n$, 
\item $C^{\mathfrak{n},\beta}(\text{Sym}(T^\star\mathbb{S}^n\otimes T^\star\mathbb{S}^n))$: the H\"{o}lder space of sections of symmetric covariant 2-tensors on $\mathbb{S}^n$.
\end{itemize}
For convenience, we will denote by $||\cdot||_{\mathfrak{n},\beta}$ the norm as applied to any element of a given H\"{o}lder space above. When used, it will be clear from context the particular space in question.
We're ready to state our first result in Section 2.3, Theorem \ref{t1}: 
\begin{theorem}\label{T1}
Consider, 
\begin{align*}
	h\in C^{2,\beta}(\text{Sym}(T^\star\mathbb{S}^n\otimes T^\star\mathbb{S}^n)),\,\,\gamma&\in C^{3,\beta}(\text{Sym}(T^\star\mathbb{S}^n\otimes T^\star\mathbb{S}^n)),\,\,\vec{\upsilon}\in C^{2,\beta}(\mathbb{S}^n,\mathbb{R}^{n+1}).
\end{align*}
Then, provided $\mathcal{P}(h,h)>0$, there exists $\epsilon>0$ such that any $\tilde\gamma\in C^{2,\beta}(\text{Sym}(T^\star\mathbb{S}^n\otimes T^\star\mathbb{S}^n))$ satisfying $||\gamma-\tilde\gamma||_{2,\beta}\leq \epsilon$
admits a unique $a\in C^{1,\beta}(\text{Sym}(T^\star\mathbb{S}^n\otimes T^\star\mathbb{S}^n))$ satisfying $\mathcal{P}(a,h) = 0$, and $\nabla\cdot a = d\tr_\gamma a$, whereby the metric $\tilde\gamma+a$ is isometrically embedded in $(\mathbb{R}^{n+1},\sigma)$.
\end{theorem}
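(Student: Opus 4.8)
\emph{Proof strategy.} The plan is to realise the isometric-embedding condition as the fixed-point equation $\Psi(X)=\tilde\gamma+a$ for the substitution operator $\Psi(X):=X^{\star}\sigma$, which is a smooth (indeed analytic) map from a neighbourhood of $r$ in $C^{2,\beta}(\mathbb{S}^{n},\mathbb{R}^{n+1})$ into the $C^{1,\beta}$ symmetric $2$-tensors on $\mathbb{S}^{n}$, with $\Psi(r)=\gamma$; the regularity of $r$, which by elliptic regularity for the embedding equation exceeds $C^{2,\beta}$ since $\gamma\in C^{3,\beta}$ and $\mathcal{P}(h,h)>0$, is ample for this. Writing $X=r+\dot X$ one expands $\Psi(r+\dot X)=\gamma+\mathcal{L}\dot X+Q(\dot X)$, where $\mathcal{L}:=D\Psi|_{r}$ is the linearised isometric-embedding operator and the remainder satisfies $\|Q(\dot X)\|_{1,\beta}\lesssim\|\dot X\|_{2,\beta}^{2}$ together with the matching Lipschitz bound. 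Decomposing a variation $\dot X$ into a part tangent to $r(\mathbb{S}^{n})$, encoded by a $\gamma$-vector field $V$, and its $\vec{\upsilon}$-component $\phi$, one finds that $\mathcal{L}\dot X$ is the sum of the Killing operator $\pounds_{V}\gamma$, a term proportional to $\phi\,h$, and lower-order terms built from the ambient curvature of $\sigma$; this is where $h$ and the Gauss--Codazzi data enter.

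The second step is to feed in the linearised result established earlier in this section (our extension of \cite{li2020}), which is exactly where the hypothesis $\mathcal{P}(h,h)>0$ is used. It supplies a topological direct-sum decomposition of the $C^{1,\beta}$ symmetric $2$-tensors as $\operatorname{Im}\mathcal{L}\oplus\mathcal{K}$, with $\mathcal{K}=\{a:\ \mathcal{P}(a,h)=0,\ \nabla\cdot a=d\tr_{\gamma}a\}$ the finite-dimensional space of homogeneous solutions, together with a bounded right inverse $R:C^{1,\beta}\to C^{2,\beta}(\mathbb{S}^{n},\mathbb{R}^{n+1})$ satisfying $\mathcal{L}R=P_{\operatorname{Im}\mathcal{L}}$ and normalised so that $\operatorname{Im}R$ is a fixed closed complement of $\ker\mathcal{L}$ (hence $R\mathcal{L}=\mathrm{id}$ on $\operatorname{Im}R$ and $R|_{\mathcal{K}}=0$). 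The crucial structural feature --- and the reason the Li--Wang reformulation into an elliptic operator is indispensable here rather than the naive linearisation --- is that $R$ recovers the derivative lost by the first-order operator $\mathcal{L}$, so that a Banach contraction suffices and no Nash--Moser scheme is needed.

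With these ingredients the conclusion is routine. Given $\tilde\gamma$ with $\|\tilde\gamma-\gamma\|_{2,\beta}\le\epsilon$, set $\dot\gamma:=\tilde\gamma-\gamma$ and seek $\dot X\in\operatorname{Im}R$ and $a\in\mathcal{K}$ with $\Psi(r+\dot X)=\tilde\gamma+a$, i.e.\ $\mathcal{L}\dot X=\dot\gamma+a-Q(\dot X)$; applying $R$ and $P_{\mathcal{K}}$ converts this into the fixed-point equation $\dot X=R\bigl(\dot\gamma-Q(\dot X)\bigr)$, after which $a=-P_{\mathcal{K}}\bigl(\dot\gamma-Q(\dot X)\bigr)$ is forced. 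The quadratic and Lipschitz bounds on $Q$ make $\dot X\mapsto R(\dot\gamma-Q(\dot X))$ a contraction of a ball $\{\|\dot X\|_{2,\beta}\le C\epsilon\}$ into itself once $\epsilon$ is small, producing a unique $\dot X^{\ast}$ with $\|\dot X^{\ast}\|_{2,\beta}\lesssim\epsilon$; then $X^{\ast}:=r+\dot X^{\ast}$ is an embedding (being $C^{1}$-close to $r$, with $\mathbb{S}^{n}$ compact), $(X^{\ast})^{\star}\sigma=\tilde\gamma+a$ is positive definite because it is close to $\gamma$, and $a\in C^{1,\beta}$ satisfies $\mathcal{P}(a,h)=0$ and $\nabla\cdot a=d\tr_{\gamma}a$ by construction. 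For uniqueness of $a$ one runs the same identity backwards: any admissible $a'$ realised by an embedding near $r$ may, after composition with an ambient isometry so as to lie in the slice $r+\operatorname{Im}R$, be recognised via $R|_{\mathcal{K}}=0$ as a fixed point of the same contraction, whence $a'=a$. I expect essentially all of the difficulty to lie upstream, in the cited linearised theorem --- producing the splitting $\operatorname{Im}\mathcal{L}\oplus\mathcal{K}$ together with the derivative-recovering right inverse, uniformly in the signature or degeneracy of $\sigma$ so that Null Cones are covered, via the maximum-principle uniqueness and Fredholm theory of \cite{li2020} suitably extended; granting that, Theorem~\ref{T1} is soft. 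The one genuinely delicate point internal to the present argument is the normalisation modulo $\ker\mathcal{L}$ in the uniqueness step, which uses that $\ker\mathcal{L}$ is spanned by restrictions of ambient Killing fields --- for flat $\sigma$ this is the classical infinitesimal rigidity of closed convex hypersurfaces.
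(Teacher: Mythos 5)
Your overall frame (fixed point for the substitution operator, splitting the target into the image of the linearization plus the homogeneous Codazzi solutions $\mathcal{K}$, contraction in $C^{2,\beta}$) matches the paper's Nirenberg-style scheme, but the proposal has a genuine gap exactly at the point you declare to be supplied by the linearized theory: the bounded right inverse $R:C^{1,\beta}\to C^{2,\beta}(\mathbb{S}^{n},\mathbb{R}^{n+1})$ with $\mathcal{L}R=P_{\operatorname{Im}\mathcal{L}}$ does not exist for the first-order linearized embedding operator, and it is not what Proposition \ref{p3} provides. Solving $2\,\text{Sym}(\nabla\tau)+\phi h=q$ for $q\in C^{1,\beta}$ via the elliptic operator $\mathcal{L}_h$ gives $\tau\in C^{2,\beta}$, but then $\phi=\frac{\tr_\gamma q}{\tr_\gamma h}-2\nabla\cdot\tau$ is only $C^{1,\beta}$, so the corresponding displacement $\vec{y}=\tau^{i}\vec{r}_i+\phi\vec{\upsilon}$ lands in $C^{1,\beta}$, one derivative short of closing the contraction; for generic $C^{1,\beta}$ data there is no gain. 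The derivative is recovered in the paper not by the linear operator but by a structural property of the specific quadratic remainder: $\phi$ satisfies the second-order elliptic equation (\ref{e12}) coming from the linearized scalar curvature, whose coefficient $(\tr_\gamma h)\gamma-h$ is elliptic precisely because $\mathcal{P}(h,h)>0$, and the source term $\nabla\cdot\nabla\cdot\big((\tr_\gamma\tilde q)\gamma-\tilde q\big)$, when evaluated on $\tilde q(\vec{y},\nabla\vec{y})$, contains no third derivatives of $\vec{y}$ because the top-order terms cancel (the computations with $S^{1},S^{2}$ in the proof of Theorem \ref{t1}). Without this Nirenberg-type cancellation your map $\dot X\mapsto R(\dot\gamma-Q(\dot X))$ is not a self-map of a $C^{2,\beta}$ ball, and no amount of elliptic theory for $\mathcal{L}_h$ alone repairs it. Relatedly, your description of $\mathcal{K}$ as finite-dimensional is unjustified for $n\geq 3$ (the system $\mathcal{P}(a,h)=0$, $\nabla\cdot a=d\tr_\gamma a$ is underdetermined); the paper only uses the unique $L^{2}$-orthogonal decomposition of Proposition \ref{p3}, not finite dimensionality.

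The uniqueness step is also off track. You propose to normalize a competing solution by ``composition with an ambient isometry so as to lie in the slice $r+\operatorname{Im}R$'' and you assert that $\ker\mathcal{L}$ consists of restrictions of ambient Killing fields; but $\sigma$ is an arbitrary (possibly degenerate) pulled-back tensor with no isometries in general, and the paper's computation (Theorem \ref{t0}, Corollary \ref{c0}) identifies $\ker(L_h)$ with the kernel of the conformal Killing operator of $(\mathbb{S}^{n},\gamma)$ — an intrinsic object, unrelated to ambient Killing fields, and nonzero even in the flat convex case. The uniqueness actually proved is local: the fixed point is unique in the ball $\|\vec{y}\|_{2,\beta}\leq\delta$ by the contraction mapping principle, and the tensor $a$ is then unique by the uniqueness half of Proposition \ref{p3}; no normalization by symmetries is needed or available. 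As written, your argument would fail at both the existence step (no derivative-recovering right inverse) and the uniqueness step (nonexistent ambient isometries), even granting the linearized results of Section 2.
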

Here, $\nabla$ refers to the Levi-Civita covariant derivative induced on $\mathbb{S}^n$ by $\gamma$. The bilinear form $\mathcal{P}$ acting on $\text{Sym}(T^\star\mathbb{S}^n\otimes T^\star\mathbb{S}^n)$ is given by:
\begin{definition}\label{d1}
Given a compact manifold $\Sigma^n$ with metric $\gamma$, $\mathcal{P}$ is the bilinear form acting on tensors $a,b\in\text{Sym}(T_p^\star\Sigma\otimes T_p^\star\Sigma)$ according to:
$$\mathcal{P}(a,b) := \tr_\gamma a\tr_\gamma b - \langle a,b\rangle$$
whereby, $\langle \cdot,\cdot\rangle$ represents the metric induced on $\text{Sym}(T^\star\Sigma\otimes T^\star\Sigma)$ by $\gamma$.	\end{definition}
Using an appropriate local orthonormal frame for the metric $\gamma$ at a point $p\in \Sigma$, we may realize some $h_p\in \text{Sym}(T_p^\star\Sigma\otimes T_p^\star\Sigma) $ as $h_p = \text{diag}(\kappa_1,\cdots,\kappa_n)$. We observe:
$$\mathcal{P}(h_p,h_p) = 2\sum_{i<j}\kappa_i\kappa_j.$$
Regarding the $\mathbb{S}^2$ case, Li-Wang proves (Theorem 11, \cite{li2020}) the following result. Given $h\in \Gamma(\text{Sym}(T^\star\mathbb{S}^2\otimes T^\star\mathbb{S}^2))$ such that $\mathcal{P}(h,h)>0$, then any $a\in \Gamma(\text{Sym}(T^\star\mathbb{S}^2\otimes T^\star\mathbb{S}^2))$ satisfying $\mathcal{P}(h,a) = 0$ and $\nabla\cdot a  = d\tr_\gamma a$ must necessarily be the trivial section. We will present the Li-Wang result within our context in the next section. We are then able prove the following openness result in Section 3.3 and 3.4 (Theorems \ref{t3}, \ref{t4}, and Corollary \ref{c5}):
\begin{theorem}\label{T2}
Suppose $(\mathcal{A},\sigma)$ is a convex Null Cone. For $\mathfrak{n}\geq 2$, consider also a continuous path of metrics $t\to\gamma_t\in C^{\mathfrak{n}+1,\beta}(\text{Sym}(T^\star\mathbb{S}^2\otimes T^\star\mathbb{S}^2))$, $t\in(-b,b)$, with the following properties:
\begin{enumerate}
\item There exists an isometric embedding $r_0:(\mathbb{S}^2,\gamma_0)\hookrightarrow(\mathcal{A},\sigma)$,
\item $t\to\gamma_t\in C^{\mathfrak{n},\beta}(\text{Sym}(T^\star\mathbb{S}^2\otimes T^\star\mathbb{S}^2))$ is $\mathfrak{m}$-times continuously Fr\'{e}chet differentiable within $C^{\mathfrak{n},\beta}(\text{Sym}(T^\star\mathbb{S}^2\otimes T^\star\mathbb{S}^2))$.
\end{enumerate}
Then, there exists $0<\epsilon<b$, and a family of isometric embeddings $r_t:(\mathbb{S}^2,\gamma_t)\hookrightarrow (\mathcal{A},\sigma)$, $t\in(-\epsilon,\epsilon)$, with an associated path $t\to\vec{r}_t\in C^{\mathfrak{n}+1,\beta}(\mathbb{S}^2,\mathbb{R}^3)$ that is $\mathfrak{m}$-times continuously Fr\'{e}chet differentiable within $C^{\mathfrak{n},\beta}(\mathbb{S}^2,\mathbb{R}^3)$. Moreover, $\epsilon$ is independent of $\mathfrak{n},\mathfrak{m}$, and the path is unique with first Fr\'{e}chet derivative: 
\begin{align*}
\dot{\vec{r}}_t &= d\vec{r}_t(\tau_t^\#)+\phi_t\frac{\vec{r}_t}{|\vec{r}_t|},\\
L_{h(\vec{r}_t)}(\tau_t) = \dot{\gamma}_t-&\frac12\tr_{h(\vec{r}_t)}(\dot{\gamma}_t)h(\vec{r}_t),\,\,\,\phi_t=\frac{\tr_{\gamma_t}(\dot{\gamma}_t)-2\nabla^{\gamma_t}\cdot\tau_t}{\tr_{\gamma_t}h(\vec{r}_t)},
\end{align*}
whereby $\tau_t\perp\text{Ker}(L_{h(\vec{r}_t)})$ with respect to the $L^2$ inner product induced by $\gamma_t$.
\end{theorem}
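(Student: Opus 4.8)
The plan is to read Theorem~\ref{T2} as a parametrised openness statement and run the implicit function theorem (equivalently, integrate the evolution equation generated by the displayed derivative formula), feeding in Theorem~\ref{T1} — together with the Li--Wang rigidity result (Theorem~11 of~\cite{li2020}), which in dimension two forces the correction tensor of Theorem~\ref{T1} to vanish and so gives genuine openness of the Weyl problem inside $\mathcal{A}$ — as the input that makes the linearised operator invertible. Concretely: fix a neighbourhood $\mathcal{U}\subset C^{\mathfrak n+1,\beta}(\mathbb{S}^2,\mathbb{R}^3)$ of $r_0$ consisting of embeddings into $\mathcal{A}$ whose cross-sectional second fundamental form $h(\vec r)$ is definite ($\mathcal{P}(h(\vec r),h(\vec r))>0$), with the tangential gauge normalised by $L^2(\gamma)$-orthogonality to $\text{Ker}(L_{h(\vec r)})$, and define $\Phi:(-b,b)\times\mathcal{U}\to C^{\mathfrak n,\beta}(\text{Sym}(T^\star\mathbb{S}^2\otimes T^\star\mathbb{S}^2))$ by $\Phi(t,\vec r)=\vec r^{\star}\sigma-\gamma_t$, augmented by the gauge equation. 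Hypothesis~(1) gives $\Phi(0,r_0)=0$, and a path of isometric embeddings is exactly a zero curve $\{\Phi(t,\vec r_t)=0\}$.

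\textbf{The linearised operator.} The heart of the matter is to show $D_{\vec r}\Phi(0,r_0)$ is an isomorphism onto its target. Writing a variation as $\delta\vec r=d\vec r(\xi^{\#})+\psi\,\vec r/|\vec r|$ — with $\vec r/|\vec r|$ the null generator direction of the cone, tangent to $\mathcal{A}$ and $\sigma$-orthogonal to every cross-section — one computes $D_{\vec r}(\vec r^{\star}\sigma)\cdot\delta\vec r=\pounds_\xi\gamma+2\psi\,h$: first order elliptic in $\xi$, zeroth order in $\psi$. Projecting onto the $h$-trace-free part removes $\psi$ and leaves $L_h(\xi)=(\cdot)-\tfrac12\tr_h(\cdot)h$; projecting onto the $\gamma$-trace recovers $\psi$ algebraically, using $\tr_\gamma h\neq0$ (a consequence of $\mathcal{P}(h,h)>0$ in dimension two). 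Since $\mathcal{P}(h,h)>0$ makes $L_h$ a determined first order elliptic operator it is Fredholm of index zero; its right-hand side is $h$-trace-free by construction, hence in the range once it is $L^2(\gamma)$-orthogonal to $\text{Ker}(L_h^\star)$, and the Li--Wang duality identifies that obstruction with the homogeneous Codazzi system, which is trivial on $\mathbb{S}^2$ by Theorem~11 of~\cite{li2020}. The gauge $\xi\perp\text{Ker}(L_h)$ then makes the solution unique, giving bounded invertibility; differentiating the elliptic problem in parameters supplies the smoothness of $\Phi$ and the uniformity of these bounds over $\mathcal{U}$.

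\textbf{The path, its regularity, and its derivative.} The implicit function theorem — with $\mathfrak m$-fold $t$-differentiability inherited from hypothesis~(2) — now produces on some $(-\epsilon,\epsilon)$ a unique $t\mapsto\vec r_t$ with $\Phi(t,\vec r_t)=0$, whose first $t$-derivative $-[D_{\vec r}\Phi(t,\vec r_t)]^{-1}\partial_t\Phi$ unwinds, through the two projections above, to exactly the displayed formulas: $L_{h(\vec r_t)}(\tau_t)=\dot\gamma_t-\tfrac12\tr_{h(\vec r_t)}(\dot\gamma_t)h(\vec r_t)$, $\phi_t=(\tr_{\gamma_t}\dot\gamma_t-2\nabla^{\gamma_t}\!\cdot\tau_t)/\tr_{\gamma_t}h(\vec r_t)$, and $\dot{\vec r}_t=d\vec r_t(\tau_t^{\#})+\phi_t\,\vec r_t/|\vec r_t|$ with $\tau_t\perp\text{Ker}(L_{h(\vec r_t)})$. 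Two points need care. First, convexity is preserved along the path because $\mathcal{P}(h(\vec r_t),h(\vec r_t))>0$ is an open condition on the compact $\mathbb{S}^2$ and $h(\vec r_t)$ varies continuously in $t$, so after shrinking $\epsilon$ the path stays in $\mathcal{U}$ and the linear theory keeps applying. Second, $\epsilon$ is made independent of $\mathfrak n,\mathfrak m$ by first solving at a fixed low regularity, where the existence time is controlled only by low-order norms of the data and the modulus of convexity of $(\mathcal{A},\sigma)$, and then bootstrapping with the elliptic estimates for $L_h$ (and differentiating the equation in $t$) to reach $\vec r_t\in C^{\mathfrak n+1,\beta}$ with $t\mapsto\vec r_t$ being $\mathfrak m$-times differentiable into $C^{\mathfrak n,\beta}$ on the same interval. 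This bookkeeping, together with the intrinsic one-derivative gap between those two statements (the normal term $\phi_t$ costs one derivative of $\tau_t$ against the coefficients $h(\vec r_t)$), is the step I expect to be the main obstacle.

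\textbf{Isometry and uniqueness.} Finally one checks the family really consists of isometric embeddings (immediate from the definition of $\Phi$ on the implicit-function route, or, if one integrates the evolution equation directly, from the fact that $E_t:=\vec r_t^{\star}\sigma-\gamma_t$ satisfies, by the very choice of the two projections, a homogeneous linear ODE in $t$ with $E_0=0$, so $E_t\equiv0$ by Gr\"onwall) and is the unique such path. For uniqueness, any competing path of the stated regularity, once its velocity is decomposed as above, must have tangential part solving $L_{h(\vec r_t)}$ against the same right-hand side; imposing $\tau_t\perp\text{Ker}(L_{h(\vec r_t)})$ pins it down and then forces $\phi_t$, so the velocities agree and the paths coincide by ODE uniqueness — without that normalisation one only has uniqueness modulo the flow of $h$-conformal Killing fields, i.e.\ modulo reparametrisation, which is why the gauge appears in the statement. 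Assembling these pieces gives Theorems~\ref{t3}, \ref{t4} and Corollary~\ref{c5}.
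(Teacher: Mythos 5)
There is a genuine gap, and it sits exactly at what you call ``the heart of the matter'': the claimed bounded invertibility of $D_{\vec r}\Phi$ between fixed H\"older spaces is false, because the inverse of the linearized embedding operator loses one derivative in the normal component. Given $q\in C^{\mathfrak n,\beta}$, solving $2\,\text{Sym}(\nabla\xi)+\psi h=q$ yields $\xi\in C^{\mathfrak n+1,\beta}$ by ellipticity of $L_h$ (or $\mathcal L_h$), but $\psi=(\tr_\gamma q-2\nabla\cdot\xi)/\tr_\gamma h$ is generically only $C^{\mathfrak n,\beta}$, so the reconstructed variation $d\vec r(\xi^\#)+\psi\,\vec r/|\vec r|$ does not land back in $C^{\mathfrak n+1,\beta}$. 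Hence $D_{\vec r}\Phi$ is not an isomorphism from (a gauge-fixed complement in) $C^{\mathfrak n+1,\beta}(\mathbb S^2,\mathbb R^3)$ onto $C^{\mathfrak n,\beta}(\text{Sym})$, and the Banach-space implicit function theorem you invoke does not apply; you flag this ``one-derivative gap'' as the expected obstacle, but resolving it is not bookkeeping --- it is the content of the openness theorem. The paper (following Nirenberg and Li--Wang) regains the missing derivative only for the \emph{specific} nonlinearity: the double-divergence identity (\ref{e12}) applied to the quadratic remainder $\tilde q(\vec y,\nabla\vec y)$ of (\ref{e10}) contains no derivatives of $\vec y$ above second order, which upgrades $\phi$ to $C^{2,\beta}$ and lets the contraction map of Theorem \ref{t1} close in a fixed space; a plain IFT formulation cannot see this structure (one would instead need a Nash--Moser scheme with tame estimates you have not supplied). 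Incidentally, $L_h$ has index $6$, not $0$, though that is immaterial to surjectivity.

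The same loss of derivatives undermines your closing step ``the paths coincide by ODE uniqueness'': the velocity field $\vec r_t\mapsto d\vec r_t(\tau_t^\#)+\phi_t\vec r_t/|\vec r_t|$, with $\tau_t$ obtained from the elliptic problem with coefficients $h(\vec r_t)$ and $\phi_t$ involving $\nabla\cdot\tau_t$, maps $C^{2,\beta}$ data to a $C^{1,\beta}$ velocity, so it is not a Lipschitz vector field on a fixed Banach space and Picard--Lindel\"of is unavailable. This is why the paper's construction is not an IFT argument at all: existence (Theorem \ref{t3}) proceeds by anchoring the Theorem \ref{t1}/\ref{t2} contraction at a partition of $[0,\epsilon]$ with constants uniform over a $C^{2,\beta}$ ball, passing to a limit by Arzel\`a--Ascoli, recovering spatial regularity from the null-cone graph structure and convexity (Proposition \ref{p6}, which also fixes $\epsilon$ at the $C^{2,\beta}$ level, hence independent of $\mathfrak n,\mathfrak m$), and only then identifying the derivative; uniqueness (Theorem \ref{t4}) is a separate quantitative argument via the method of characteristics (Lemmas \ref{l9}, \ref{l10}), H\"older estimates on the flows, and the analogue of (\ref{e12}) for the difference of the $\phi$'s. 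Your outline correctly identifies the linear algebraic decomposition, the role of the Li--Wang surjectivity, and the derivative formula, but without the Nirenberg-type regularity gain and a substitute for the missing ODE theory, the proposed proof does not go through.
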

Here, convexity of a Null Cone refers to the property that a geodesic foliation of a null-cone by convex leaves exist. We will discuss this further in the final section and show how this property translates into the fact that -any- cross-section of a Null Cone is convex. The operator $L_{h(\vec{r}_t)}:C^{m,\beta}(T^\star\mathbb{S}^2)\to C^{m-1,\beta}(\text{Sym}(T^\star\mathbb{S}^2\otimes T^\star\mathbb{S}^2)))$ is the elliptic operator considered in the work of Li-Wang. It depends on the second fundamental form, $h(\vec{r}_t)\in C^{m,\beta}(\text{Sym}(T^\star\mathbb{S}^2\otimes T^\star\mathbb{S}^2))$, associated to the isometric embedding $r_t:(\mathbb{S}^2,\gamma_t)\hookrightarrow (\mathcal{A},\sigma)$. \\
\indent Using this result we will be able to complete the continuity argument toward isometrically embedding any metric up to a scale factor within a Null Cone. In order to do so we need to assume certain decay at infinity (see \ref{a1}-\ref{a4} in Section 4.2) yielding our final result in Section 4.2 and 4.3 (Theorems \ref{t5}, \ref{t6}):
\begin{theorem}\label{T3}
	Suppose $(\mathcal{A},\sigma)$ is a smooth, convex Null Cone satisfying the decay assumptions (\ref{a1}-\ref{a4}). Then, for $s$ sufficiently large, we can isometrically embed the metric $s^2\gamma$, for any metric $\gamma\in C^{\mathfrak{n},\beta}(\text{Sym}(T^\star\mathbb{S}^2\otimes T^\star\mathbb{S}^2))$, $\mathfrak{n}\geq 3$. Moreover, we can foliate $(\mathcal{A},\sigma)$ in a neighborhood of infinity with a family of isometric embeddings associated to the path $t\to s^2e^t\gamma$, $t\in[0,\infty)$, and the foliation is an asymptotically $C^{1,\beta}$ geodesic foliation.
\end{theorem}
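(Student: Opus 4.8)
The plan is to run the method of continuity, with Theorem~\ref{T2} supplying the openness step and the decay assumptions \ref{a1}--\ref{a4} supplying the a priori estimates needed for closedness. For the \emph{starting point}: since $(\mathcal{A},\sigma)$ is convex it carries a geodesic foliation by convex leaves $\{\Sigma_\rho\}$ with induced metrics $\gamma_\rho$, and each $\Sigma_\rho$ is by construction an isometric embedding of $(\mathbb{S}^2,\gamma_\rho)$. Fixing $s$ large, I would join $\gamma_s$ to $s^2\gamma$ by a path of metrics $\tau\mapsto g_\tau$, $\tau\in[0,1]$, with $g_0=\gamma_s$ and $g_1=s^2\gamma$, taken smooth for $\tau\in(0,1)$; no Gauss-curvature sign is imposed on the intermediate $g_\tau$, and none is needed, because in a Null Cone every cross-section is convex, so the positivity $\mathcal{P}(h,h)>0$ for any embedded leaf is supplied by the \emph{ambient} geometry rather than by the induced metric. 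Set
\[
A:=\{\tau\in[0,1]\ :\ (\mathbb{S}^2,g_\tau)\ \text{admits an isometric embedding into}\ (\mathcal{A},\sigma)\},
\]
so that $0\in A$.

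Openness of $A$ is immediate from Theorem~\ref{T2}: at $\tau_\star\in A$ with embedding $r_{\tau_\star}$, applying it to the path $t\mapsto g_{\tau_\star+t}$ with initial datum $r_{\tau_\star}$ produces embeddings for all $\tau$ near $\tau_\star$. For closedness, given $\tau_j\to\tau_\infty$ in $A$ with embeddings $r_{\tau_j}$, I would use \ref{a1}--\ref{a4} and the structure of the Null-Cone isometric embedding system to derive, for $s$ large and uniformly in $\tau$: (i) a $C^0$ bound keeping $r_\tau(\mathbb{S}^2)$ within a fixed compact region of $\mathcal{A}$; (ii) a $C^1$ bound; (iii) the crucial second-fundamental-form estimate --- a uniform $C^0$ bound on $h(\vec r_\tau)$ together with a uniform lower bound $\mathcal{P}(h(\vec r_\tau),h(\vec r_\tau))\ge c>0$ --- which is the analogue of Weyl's mean-curvature estimate, here adapted to the degenerate ambient metric and not requiring positive Gauss curvature, the positivity being inherited from the ambient convexity; and (iv) higher-order Schauder bounds for the (elliptic) embedding system. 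Arzel\`a--Ascoli then produces a convergent subsequence $r_{\tau_j}\to r_{\tau_\infty}$, a uniformly convex isometric embedding of $(\mathbb{S}^2,g_{\tau_\infty})$; hence $\tau_\infty\in A$, so $A=[0,1]$ and $s^2\gamma$ is isometrically embedded. Carrying this out for every large $s$ proves the first assertion.

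For the foliation, start from the embedding $r_0$ of $(\mathbb{S}^2,s^2\gamma)$ and apply Theorem~\ref{T2} to the path $t\mapsto s^2e^t\gamma$, $t\in[0,\infty)$; since $\dot\gamma_t=\gamma_t$, the Fr\'echet-derivative formulas of Theorem~\ref{T2} simplify. Iterating, with each continuation step controlled by the same uniform a priori estimates --- now monitored under the rescaling $e^{-t}\gamma_t\equiv s^2\gamma$ so that they persist as $t\to\infty$ --- yields isometric embeddings $r_t$ for all $t\ge0$; set $\Sigma_t:=r_t(\mathbb{S}^2)$. The leaves $\Sigma_t$ are nested and mutually disjoint because the lapse in $\dot{\vec r}_t=d\vec r_t(\tau_t^\#)+\phi_t\,\vec r_t/|\vec r_t|$ satisfies $\phi_t>0$ for $s$ large: its leading part is $2/\tr_{\gamma_t}h(\vec r_t)>0$ by convexity, and the correction $-2\nabla^{\gamma_t}\cdot\tau_t/\tr_{\gamma_t}h(\vec r_t)$ is of lower order. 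They exhaust a neighborhood of infinity because $\mathrm{Area}(\Sigma_t)=e^t\,\mathrm{Area}(\mathbb{S}^2,s^2\gamma)\to\infty$ forces $\Sigma_t$ outward along the null generators, while each $\Sigma_t$ meets every generator exactly once. Finally, to see the foliation is asymptotically $C^{1,\beta}$ geodesic, I would show that $\tau_t$ and the deviation of $\phi_t$ from its mean decay as $t\to\infty$, so that the null lapse between consecutive leaves, normalized by its mean, converges in $C^{1,\beta}$ to the constant lapse of a geodesic foliation; this reduces to estimating the solution of $L_{h(\vec r_t)}(\tau_t)=\gamma_t-\tfrac12\tr_{h(\vec r_t)}(\gamma_t)\,h(\vec r_t)$ against the decay of $h(\vec r_t)$ extracted in the a priori estimates.

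The main obstacle is steps (iii)--(iv): the Weyl-type a priori estimate for $h(\vec r_t)$, together with the convexity lower bound $\mathcal{P}(h,h)\ge c>0$, held uniformly along the continuity path \emph{and} uniformly as $s,t\to\infty$, in a setting where the ambient metric is degenerate and no curvature sign is assumed on $\gamma$. Extracting from \ref{a1}--\ref{a4} precisely the quantitative control that keeps these bounds alive in the limit is where the real work lies; everything else is a standard continuity-and-bootstrap argument built on Theorem~\ref{T2}.
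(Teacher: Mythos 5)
Your skeleton (continuity method: openness from Theorem \ref{T2}, closedness from a priori estimates extracted from \ref{a1}--\ref{a4}, then continuation along $t\to s^2e^t\gamma$ with decay of $\tau_t$ and of the lapse deviation) matches the paper's architecture, and you are right that convexity/$\mathcal{P}(h,h)>0$ is supplied by the ambient Null Cone rather than by any curvature condition on $\gamma$. But the proposal has a genuine gap exactly where you flag it: the uniform estimates of your steps (i)--(iv) are the entire content of the theorem, and the mechanism the paper uses to obtain them is absent from your outline. The paper does not run the continuity argument over an arbitrary path $g_\tau$ from $\gamma_s$ to $s^2\gamma$, nor does it prove a Weyl-type estimate on $h$ followed by Arzel\`a--Ascoli. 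Instead it (1) uniformizes both the leaf metrics and the target metric (Proposition \ref{p9}, Lemma \ref{l14}), writing $\gamma_s=\Phi_s^\star(e^{2u_s}\mathring\gamma)$ and $\gamma=\Phi_\gamma^\star(e^{2u_\gamma}\mathring\gamma)$, and chooses the \emph{conformal} interpolation $\gamma(s,t)=\Phi_s^\star(e^{2(t(u_\gamma+\bar u_s-u_s)+u_s)}\mathring\gamma)$, so that the target actually embedded is $e^{2\bar u_s}\gamma$ (the scale is $e^{\bar u_s}\sim s$, not literally $s$); and (2) proves the self-improving $C^{1,\beta}$ estimate of Proposition \ref{p10}: because $\dot\gamma=2\dot u\,\gamma$, the source $\tilde q=\dot\gamma-\tfrac12(\tr_{\tilde h}\dot\gamma)\tilde h$ reduces, after the trace cancellation against $\ubar\chi$, to terms controlled by $\hat{\ubar\chi}$ and the decay \ref{a4}, the lapse is $\phi=\dot u\varphi(1+\text{small})$, and the renormalized radius $f=\varphi e^{-u(s,t)}-se^{-u_s\circ\Phi_s}$ satisfies a transport equation $\partial_t f=\tau^\#(f)+\tilde\phi$ with $\|\tilde\phi\|_{1,\beta}=O(e^{-\bar u_s})$, solved by characteristics (Lemma \ref{l9}). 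This is precisely where ``$s$ sufficiently large'' enters: the a priori deviation is $O(1/s)$, so the bootstrap closes and the extension criterion applies. Without the specific conformal structure of the path, the lapse has no reason to be nearly self-similar and your uniform $C^0$/$C^1$ control does not close; and the extension criterion itself (Proposition \ref{p8}) needs only $C^0$ and $C^1$ bounds on $\varphi=|\vec r|$, since $h(\vec r)=2\,r^\star(\ubar\chi)$ is ambient data along the graph --- so the ``Weyl-type estimate'' you single out as the crux is essentially automatic once the graph bounds hold, and the real difficulty sits in the graph bounds themselves.

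The same gap recurs in the foliation step. The paper's Proposition \ref{p11} is the analogue of Proposition \ref{p10} for the path $t\to e^{t+2\bar u_s}\gamma$, giving $\|\varphi e^{-t/2-\bar u_s}-|\vec r_s|e^{-\bar u_s}\|_{1,\beta}=O(e^{-\bar u_s})$ uniformly in $t$; nesting is obtained not from ``$\phi_t>0$ to leading order'' directly but from the differential inequality $\partial_t\omega\ge\tfrac{\omega}{2}-C_1$ for the graph function $\omega=\varphi\circ\Psi^{-1}$ (valid because $\omega(s,0)\sim s$ is large), which also gives the exponential lower bound producing exhaustion of a neighborhood of infinity; and the asymptotically geodesic $C^{1,\beta}$ statement is the quantitative convergence $\|e^{-t/2}\omega-\omega_\infty\|_{1,\beta}\le 2C_2e^{-t/2}$, so that in the parameter $\mathfrak t=e^{t/2}$ one has $\omega=\omega_\infty\mathfrak t+f$ with $f$ bounded in $C^{1,\beta}$, matching Definition \ref{d5}. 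Your proposal gestures at these conclusions but supplies neither the renormalized transport estimates nor the monotonicity argument that actually deliver them, so as written it is a plausible plan rather than a proof.
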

Any null hypersurface geometry consists of a manifold with a degenerate metric. In the case of a Null Cone this degeneracy exists in the normal flow direction for a global foliation by 2-spheres with expanding area form. \textit{Asymptotic flatness} therefore refers to the decay of the intrinsic and extrinsic geometry along this foliation towards the data observed for a geodesic foliation of the standard lightcone in Minkowski space. Up to a rescaling, these leaves converge to a Riemannian structure on the 2-sphere, the regularity here is identified as $C^{1,\beta}$, see Definition \ref{d5}.
\section{The Linearized Isometric Embedding Problem}
For convenience, we will assume all tensors in this section are smooth. When dealing with less regular spaces in later sections, we will address any discrepancies whenever necessary.\\\\
\indent Given an open interval $I\subset\mathbb{R}$ such that $0\in I$, and a compact manifold $\Sigma^n$ we assume the existence of a smooth mapping (of smooth manifolds) $F:I\times\Sigma^n\to\mathcal{M}$. Moreover, for each $s\in I$, $F_s:=F(s,\cdot)$ induces an embedding $\Sigma\hookrightarrow_{F_s}\mathcal{M}$. We place no constraints on the dimension of the manifold $\mathcal{M}$, regarding the metric $g$ we only ask that $dF(\partial_s)|_{\Sigma}\in \Gamma(T^\perp\Sigma)$, where we identify $\Sigma$ with its image $ F_0(\Sigma)$, and that the induced metric $\gamma:=g|_\Sigma$ is Riemannian. For yet another open interval $J\subset\mathbb{R}$ such that $0\in J$, we assume the existence of a smooth mapping $V:J\times \Sigma\to T\Sigma$, such that $V(t,p)\in T_p\Sigma$, for $t\in J$, $p\in\Sigma$. We also take the mapping $V$ such that $V(0,\cdot)$ is the trivial section. Now for each $t\in J$, the section $V_t:=V(t,\cdot)\in\Gamma(T\Sigma)$ generates a flow $\tilde\Psi_t:\mathbb{R}\times \Sigma\to \Sigma$ such that $\lambda\to\tilde\Psi_t(\lambda,p)$ is the maximal integral curve of $V_t$ through $p \in \Sigma$, and $\tilde\Psi_t(0,p)=p$. It is well known that for each $\lambda\in\mathbb{R}$, the map $\tilde\Psi_t(\lambda,\cdot):\Sigma\to\Sigma$ is a diffeomorphism, and we will denote $\Psi(t,p):=\tilde\Psi_t(1,p)$. From standard results for systems of ODE one concludes that the mapping $\Psi: J\times \Sigma\to\Sigma$ is also smooth (we will also denote $\Psi_t(p):=\Psi(t,p)$, whereby $\Psi_t:\Sigma\to\Sigma$ is again a diffeomorphism). In a local coordinate chart $(x^i,\mathcal{U})$ of a point $p\in\Sigma$, we may use the Taylor Approximation Theorem to conclude that $\Psi^i(t,p) = t\tau^i(p)+O(t^2)$ whereby $\tau = dV(\partial_t)$, which by a natural isomorphism, may be viewed as $\tau\in\Gamma(T\Sigma)$. Finally, we will take a smooth function $\varphi: J\times \Sigma\to I$, such that $\varphi(0,\cdot) \equiv 0$, and $\phi(p) = \frac{d}{dt}|_{t=0}\varphi(t,p)$. With slight abuse of notation, denoting temporarily $\gamma = F^\star(g)$, we have in a coordinate neighborhood $(x^i,\mathcal{U})$ of some $p\in\Sigma$:
\begin{align*}
\frac{d}{dt}|_{t=0}(\Psi^\star_t\gamma)(\varphi_t,\Psi_t(x))_{ij} &=\frac{d}{dt}|_{t=0}\Big(\gamma(\varphi_t,\Psi_t(x))_{kl}\frac{\partial\Psi^k_t}{\partial x^i}\frac{\partial\Psi^l_t}{\partial x^j}\Big)\\
&=h_{ij}\phi+\gamma_{ij,k}\tau^k+\gamma_{ik}\tau^k,_j+\gamma_{jl}\tau^l,_i\\
&=h_{ij}\phi+\nabla_i\tau_j+\nabla_j\tau_i\\
&=2\text{Sym}(\nabla\tau)_{ij}+\phi h_{ij}
\end{align*}
whereby $h:=\pounds_{\partial_s}(F^\star g)|_\Sigma$. Since $dF(\partial_s)|_\Sigma\in\Gamma(T^\perp\Sigma)$, we may recognize the symmetric 2-tensor $h$ as a certain component of the second fundamental form of $\Sigma$. Given the pairing $(\gamma,h)$ on $\Sigma$, we refer to the operator
$$(\tau,\phi)\to 2\text{Sym}(\nabla\tau)+\phi h$$
as the \textit{linearized embedding} operator. Since we're ultimately interested in studying a path of metrics on $\Sigma$ for which an isometric embedding exists into $(\mathcal{M},g)$ we will need to first understand the linearized embedding operator.
\subsection{A new elliptic operator}
It turns out an elliptic operator of 1-forms on $\Sigma$ can be constructed that we will use to characterize the image of the linearized embedding operator. In order to construct this operator we will need the symmetric bilinear form $\mathcal{P}$ of Definition \ref{d1}. Before constructing this operator it will be helpful to identify a sub-bundle of $\text{Sym}(T^\star\Sigma\otimes T^\star\Sigma)$ related to the 2-tensor $h$. We can partially motivate this sub-bundle by assuming the triple $(\Sigma,\gamma,h)$ can be isometrically embedded in $\mathbb{R}^{n+1}$, whereby $h$ corresponds with the induced second fundamental form. Then, from the Gauss equation (see, for example \cite{O}, pg. 100), one identifies the scalar curvature of $\Sigma\subset \mathbb{R}^{n+1}$:
$$R = (\tr_\gamma h)^2 - |h|_\gamma^2.$$
If we take a path of isometries $t\to F_t$, $F_t:\Sigma\hookrightarrow\mathbb{R}^{n+1}$, $F_0 = id$, then we observe a variation of the second fundamental form $a:=\pounds_{\partial_t}h$, satisfying:
$$\mathcal{P}(a,h) = (\tr_\gamma a)(\tr_\gamma h) - \langle a, h\rangle = 0$$
on $\Sigma$. So, with respect to some given $h\in\Gamma(\text{Sym}(T^\star\Sigma\otimes T^\star\Sigma))$ satisfying $\mathcal{P}(h,h)>0$, we will denote the bundle of pointwise $h$-orthogonal symmetric 2-tensors with respect to $\mathcal{P}$ by 
$$\Xi_h:=\{a\in\text{Sym}(T^\star\Sigma\otimes T^\star\Sigma)|\mathcal{P}(a,h)=0\}.$$
\begin{lemma}\label{l3}
$-\mathcal{P}$ is a Lorentzian metric for the vector bundle $\text{Sym}(T^\star\Sigma\otimes T^\star\Sigma)$. 
\end{lemma}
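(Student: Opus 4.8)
The plan is to verify the claim pointwise and fiberwise. Smoothness is immediate: both the induced bundle metric $\langle\cdot,\cdot\rangle$ and the tensor field $\gamma$ (hence the trace $\tr_\gamma$) are smooth, so $\mathcal{P}$ — being assembled algebraically from them — is automatically a smooth section of symmetric bilinear forms on the fibers of $\text{Sym}(T^\star\Sigma\otimes T^\star\Sigma)$. Only the pointwise signature has to be computed. So I would fix $p\in\Sigma$, choose a $\gamma$-orthonormal basis of $T_p\Sigma$, and identify $\text{Sym}(T_p^\star\Sigma\otimes T_p^\star\Sigma)$ with the space of symmetric $n\times n$ matrices, of dimension $N=\tfrac{n(n+1)}{2}$; in such a basis $\langle a,b\rangle=\sum_{ij}a_{ij}b_{ij}$ and $\tr_\gamma a=\sum_i a_{ii}$.

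The key step is the invariant orthogonal splitting $a=a_0+\tfrac1n(\tr_\gamma a)\,\gamma$ into the $\gamma$-trace-free part $a_0$ and the pure-trace part. Using $\langle a_0,\gamma\rangle=\tr_\gamma a_0=0$ and $\langle\gamma,\gamma\rangle=\tr_\gamma\gamma=n$, one gets
$$-\mathcal{P}(a,b)=\langle a,b\rangle-\tr_\gamma a\,\tr_\gamma b=\langle a_0,b_0\rangle-\tfrac{n-1}{n}\,\tr_\gamma a\,\tr_\gamma b,$$
so $-\mathcal{P}$ is block-diagonal for the decomposition $\text{Sym}(T_p^\star\Sigma\otimes T_p^\star\Sigma)=\mathcal{H}_p\oplus\mathbb{R}\gamma$, where $\mathcal{H}_p$ is the $(N-1)$-dimensional space of $\gamma$-trace-free tensors. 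On $\mathcal{H}_p$ it restricts to $\langle\cdot,\cdot\rangle$, which is positive definite, while on the line $\mathbb{R}\gamma$, writing $a=t\gamma$ (so $\tr_\gamma a=tn$, $a_0=0$), it equals $-\mathcal{P}(t\gamma,t\gamma)=t^2 n-t^2 n^2=-n(n-1)t^2$, negative definite for $n\geq 2$.

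Hence at every point $-\mathcal{P}$ is nondegenerate of signature $(N-1,1)$, with one timelike direction spanned by $\gamma$ and a spacelike complement of dimension $N-1$; combined with smoothness and the frame-independence of the trace decomposition, this gives a smooth Lorentzian fiber metric on $\text{Sym}(T^\star\Sigma\otimes T^\star\Sigma)$. I do not expect a serious obstacle here: the only points needing care are the $\langle\cdot,\cdot\rangle$-orthogonality of the trace / trace-free splitting (which produces the coefficient $\tfrac{n-1}{n}$) and the tacit hypothesis $n\geq 2$, which is exactly what is in force elsewhere in this section — indeed any $h$ with $\mathcal{P}(h,h)>0$ is precisely a $-\mathcal{P}$-timelike vector, so $\Xi_h$ is a spacelike hyperplane on which $-\mathcal{P}$ restricts to a Riemannian metric, the observation that motivates the constructions to follow.
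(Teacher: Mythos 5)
Your argument is correct, and it reaches the conclusion by a slightly different route than the paper. The paper fixes an arbitrary $\mathcal{P}$-timelike element $h$ (noting $\gamma$ itself works), characterizes its $\mathcal{P}$-orthocomplement $\Xi_h$ by $\tr_\gamma a=\langle a,\tfrac{h}{\tr_\gamma h}\rangle$, and uses Cauchy--Schwarz together with $|\tfrac{h}{\tr_\gamma h}|^2<1$ to show $-\mathcal{P}$ is positive definite on $\Xi_h$, which forces Lorentzian signature. You instead specialize to the distinguished timelike direction $\gamma$ and exploit the trace/trace-free splitting $a=a_0+\tfrac1n(\tr_\gamma a)\gamma$, for which $-\mathcal{P}$ becomes block-diagonal, so the signature $(N-1,1)$ can be read off directly with no inequality needed (on trace-free tensors $-\mathcal{P}=\langle\cdot,\cdot\rangle$, on $\mathbb{R}\gamma$ it is $-n(n-1)t^2$). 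What your computation buys is an explicit, inequality-free determination of the signature and of the timelike line $\mathbb{R}\gamma$; what the paper's phrasing buys is the statement, needed immediately afterwards, that for \emph{every} $h$ with $\mathcal{P}(h,h)>0$ the fiber $\Xi_h$ is spacelike, which in your language is the standard fact that the $-\mathcal{P}$-orthocomplement of any timelike vector in a Lorentzian space is spacelike -- a fact you correctly flag in your closing remark, so nothing is missing. Your observation that $n\geq 2$ is tacitly required (for $n=1$ the form $\mathcal{P}$ vanishes identically) is also accurate and applies equally to the paper's proof.
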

\begin{proof}
We start by taking any symmetric 2-tensor $h$ such that $\mathcal{P}(h,h)>0$ (for example $\gamma$ itself will do). It suffices therefore to show that $-\mathcal{P}$ is an inner product on $\Xi_h|_p$. To do so, we start by noticing that $\mathcal{P}(h,h)>0$ implies $|\frac{h}{\tr_\gamma h}|^2<1$, and $\mathcal{P}(a,h) = 0$ is equivalent to $\tr_\gamma a = \langle a,\frac{h}{\tr_\gamma h}\rangle$. Therefore,
$$\mathcal{P}(a,a) = (\tr_\gamma a)^2-|a|^2 = \langle a,\frac{h}{\tr_\gamma h}\rangle^2-|a|^2\leq|a|^2\Big(|\frac{h}{\tr_\gamma h}|^2-1\Big)\leq 0,$$
having used the Cauchy-Schwartz inequality to observe the first inequality. We also observe from the final inequality that equality is achieved if and only if $a=0$. 

\end{proof}
We will be able to construct an elliptic operator on 1-forms, $\Gamma(T^\star\Sigma)$, for any symmetric 2-tensor $h\in\Gamma(\text{Sym}(T^\star\Sigma\otimes T^\star\Sigma))$ satisfying $\mathcal{P}(h,h)>0$. We start by defining the operator $L_h:\Gamma(T^\star\Sigma)\to \Gamma(\Xi_h)$, given by
$$L_h(\tau) = 2\text{Sym}(\nabla\tau)-2\frac{\mathcal{P}(\text{Sym}(\nabla\tau),h)}{\mathcal{P}(h,h)}h,$$
noticing that this is precisely the $h$-orthogonal $\mathcal{P}$ projection
of $\text{Sym}(\nabla\tau)$. In the $n=2$ case, which we consider in the next section, $L_h$ is exactly the operator considered by Li-Wang (see (3.4), pg11 of \cite{li2020}). \\
\indent With $L_h$ in hand, we define the operator $\mathcal{L}_h:\Gamma(T^\star\Sigma)\to\Gamma(T^\star\Sigma)$:
$$\mathcal{L}_h(\tau) := \nabla\cdot \big(L_h(\tau)-\tr_\gamma (L_h(\tau))\gamma\big).$$
\begin{remark}
 We note that $\mathcal{P}(\gamma,a) = n\tr_\gamma a-\tr_\gamma a = (n-1)\tr_\gamma a$ for any symmetric 2-tensor $a\in\Gamma(\text{Sym}(T^\star\Sigma\otimes T^\star\Sigma))$. It follows that
$$L_\gamma(\tau) = 2\text{Sym}(\nabla\tau)-\frac{2}{n}(\nabla\cdot \tau) \gamma,$$
also known as the conformal Killing operator with respect to the metric $\gamma$. The 1-forms $\tau\in\text{Ker}(L_\gamma)$ are metrically equivalent to the global conformal Killing fields if $\Sigma$. Specifically, the flow diffeomorphism $\Psi_t:\Sigma\to\Sigma$ generated by $\tau\in\text{Ker}(L_\gamma)$ induce the conformal diffeomorphisms, $\Psi_t^\star(\gamma) = e^{2u_t}\gamma$, of $\Sigma$ connected to the identity. $\mathcal{L}_\gamma$ is metrically equivalent to the conformal vector Laplace operator on $\Gamma(T\Sigma)$.
\end{remark}
\begin{proposition}\label{p2}
Suppose $h\in\Gamma(\text{Sym}(T^\star\Sigma\otimes T^\star\Sigma))$ satisfies $\mathcal{P}(h,h)>0$. Then
\begin{enumerate}
\item $\mathcal{L}_h$ is an elliptic operator
\item $\mathcal{L}_h$ is a self adjoint operator with respect to the $L^2$ inner product induced by $\gamma$  
\item $\text{Ker}(\mathcal{L}_h) = \text{Ker}(L_h)$
\end{enumerate}
\end{proposition}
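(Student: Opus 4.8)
The plan is to organize all three parts around a single structural observation: writing $\Pi_h(a):=a-\tfrac{\mathcal{P}(a,h)}{\mathcal{P}(h,h)}h$ for a symmetric $2$-tensor $a$, the hypothesis $\mathcal{P}(h,h)>0$ together with Lemma~\ref{l3} (which makes $-\mathcal{P}$ a nondegenerate Lorentzian metric) shows that the line $\mathbb{R}h$ is nondegenerate, so we obtain the pointwise $\mathcal{P}$-orthogonal splitting $\text{Sym}(T^\star_p\Sigma\otimes T^\star_p\Sigma)=\mathbb{R}h\oplus\Xi_h|_p$, on which $-\mathcal{P}$ is positive definite in the $\Xi_h$ factor; then $\Pi_h$ is the projection onto $\Xi_h$ along $h$, and by construction $L_h=2\,\Pi_h\circ\text{Sym}(\nabla\,\cdot\,)$. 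Two elementary facts about $\Pi_h$ will do all the work: $\Pi_h$ is pointwise self-adjoint and idempotent for $\mathcal{P}$, i.e.\ $\mathcal{P}(\Pi_h a,b)=\mathcal{P}(a,\Pi_h b)=\mathcal{P}(\Pi_h a,\Pi_h b)$ (immediate since $\operatorname{im}\Pi_h=\Xi_h$ is $\mathcal{P}$-orthogonal to $\ker\Pi_h=\mathbb{R}h$); and, abbreviating $G(a):=a-(\tr_\gamma a)\gamma$, for symmetric $a,b$ one has $\langle G(a),b\rangle=\langle a,b\rangle-(\tr_\gamma a)(\tr_\gamma b)=-\mathcal{P}(a,b)$, so that $\mathcal{L}_h(\tau)=\nabla\cdot\!\big(G(L_h\tau)\big)$.

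For (1) I would compute the principal symbol directly. Since $G$ is algebraic, $\sigma_{\nabla\cdot}(\xi)$ is contraction against $\xi^\#$ in the first slot, and $\sigma_{L_h}(\xi)v=2\,\Pi_h(\xi\odot v)$ with $\xi\odot v:=\text{Sym}(\xi\otimes v)$, we get $\sigma_{\mathcal{L}_h}(\xi)v=\iota_{\xi^\#}\!\big(G(2\,\Pi_h(\xi\odot v))\big)$. Pairing with $v$ and using $\langle\iota_{\xi^\#}S,v\rangle=S(\xi^\#,v^\#)=\langle S,\xi\odot v\rangle$ together with the two facts above,
\[
\langle\sigma_{\mathcal{L}_h}(\xi)v,\,v\rangle\;=\;2\,\langle G(\Pi_h(\xi\odot v)),\,\xi\odot v\rangle\;=\;-2\,\mathcal{P}\big(\Pi_h(\xi\odot v),\,\Pi_h(\xi\odot v)\big).
\]
By Lemma~\ref{l3} this is $\geq 0$, and it vanishes exactly when $\Pi_h(\xi\odot v)=0$, i.e.\ when $\xi\odot v\in\mathbb{R}h$. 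For $\xi\neq 0$ and $v\neq 0$ we have $\xi\odot v\neq 0$, so this would force $h$ to be a nonzero multiple of $\xi\odot v$; but Cauchy--Schwarz gives $\mathcal{P}(\xi\odot v,\xi\odot v)=\tfrac12\big(\langle\xi,v\rangle^2-|\xi|^2|v|^2\big)\leq 0$, contradicting $\mathcal{P}(h,h)>0$. Hence $\langle\sigma_{\mathcal{L}_h}(\xi)v,v\rangle>0$ for all $\xi,v\neq 0$, so $\sigma_{\mathcal{L}_h}(\xi)$ is injective, hence an isomorphism of $T^\star_p\Sigma$; $\mathcal{L}_h$ is elliptic.

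For (2) and (3) I would integrate by parts on the compact manifold $\Sigma$: for symmetric $S$, $\int_\Sigma\langle\nabla\cdot S,\eta\rangle=-\int_\Sigma\langle S,\text{Sym}(\nabla\eta)\rangle$. Applying this to $S=G(L_h\tau)$, using $\langle G(a),b\rangle=-\mathcal{P}(a,b)$, then $L_h=2\,\Pi_h\circ\text{Sym}(\nabla\,\cdot\,)$ and the self-adjointness/idempotency of $\Pi_h$ collapses everything to
\[
\int_\Sigma\langle\mathcal{L}_h\tau,\,\eta\rangle\;=\;\int_\Sigma\mathcal{P}\big(L_h\tau,\,\text{Sym}(\nabla\eta)\big)\;=\;2\int_\Sigma\mathcal{P}\big(\Pi_h\text{Sym}(\nabla\tau),\,\Pi_h\text{Sym}(\nabla\eta)\big)\;=\;\tfrac12\int_\Sigma\mathcal{P}\big(L_h\tau,\,L_h\eta\big).
\]
The last expression is manifestly symmetric in $\tau,\eta$ since $\mathcal{P}$ is symmetric, which is (2). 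For (3), $\text{Ker}(L_h)\subseteq\text{Ker}(\mathcal{L}_h)$ is immediate from the definition of $\mathcal{L}_h$. Conversely, if $\mathcal{L}_h\tau=0$, taking $\eta=\tau$ above gives $\int_\Sigma\mathcal{P}(L_h\tau,L_h\tau)=0$; but $L_h\tau$ is a section of $\Xi_h$, on which $-\mathcal{P}$ is positive definite by Lemma~\ref{l3}, so the integrand is pointwise $\leq 0$ and must vanish identically, forcing $L_h\tau\equiv 0$, i.e.\ $\tau\in\text{Ker}(L_h)$.

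The only genuinely computational step is the symbol identity in (1); once $L_h$ is written through the projection $\Pi_h$, parts (2) and (3) are essentially bookkeeping. Accordingly I expect the main obstacle to be verifying that the symbol is definite, i.e.\ pinning down precisely how $\mathcal{P}(h,h)>0$ together with Lemma~\ref{l3} forces strictness in the Cauchy--Schwarz-type estimate above --- this is the one place where the positivity (convexity) hypothesis enters essentially rather than formally, and where one must check that $\xi\odot v$ cannot be proportional to $h$.
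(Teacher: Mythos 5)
Your proposal is correct and follows essentially the same route as the paper: your symbol pairing $-2\mathcal{P}\big(\Pi_h(\xi\odot v),\Pi_h(\xi\odot v)\big)$ expands to exactly the two-term expression in the text, and the integration-by-parts identity $\int\langle\mathcal{L}_h\tau,\eta\rangle\,dA=\tfrac12\int\mathcal{P}(L_h\tau,L_h\eta)\,dA$ combined with Lemma \ref{l3} yields (2) and (3) just as in the paper. The only differences are cosmetic: you get strictness of the symbol directly from $\mathcal{P}(\xi\odot v,\xi\odot v)\le 0$ (the paper instead argues $\tau=c\xi$ and then $c=0$ via $\tr_\gamma h>|h|$), and your overall sign in the quadratic-form identity (the paper records a minus, which appears to be a slip and is immaterial to the conclusions).
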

\begin{proof}
The principal symbol of $\mathcal{L}_h$ takes the form:
\begin{align*}
\sigma_\xi(\mathcal{L}_h)(\tau) &= \\
\Big(|\xi|^2\tau+\langle\tau,\xi\rangle\xi&-2\frac{(\tr_\gamma h)\langle \xi,\tau\rangle-h(\xi,\tau)}{\mathcal{P}(h,h)}h\Big)-\Big(2\langle\xi,\tau\rangle -2\frac{(\tr_\gamma h)\langle\xi,\tau\rangle-h(\xi,\tau)}{\mathcal{P}(h,h)}\tr_\gamma h\Big)\xi.
\end{align*}
We therefore observe, 
$$\langle\sigma_\xi(\mathcal{L}_h)(\tau), \tau\rangle=\Big(|\xi|^2|\tau|^2-\langle \tau,\xi\rangle^2\Big)+\frac{2}{\mathcal{P}(h,h)}\Big(h(\xi,\tau)-\tr_\gamma h\langle \xi,\tau\rangle\Big)^2,$$
and we notice both terms within parentheses are independently non-negative. For ellipticity, it suffices to show $\sigma(\mathcal{L}_h)(\xi)$ is an injective linear mapping whenever $\xi\neq0$, equivalently, $\sigma_\xi(\mathcal{L}_h)(\tau)=0$ implies $\tau=0$. Clearly the assumption $\sigma_\xi(\mathcal{L}_h)(\tau)=0$ implies $\tau = c\xi$ for some constant $c$ as a result of the first pair of parentheses above. We conclude from the second pair of parentheses that $c(h(\xi,\xi)-\tr_\gamma h|\xi|^2)=0$. From $\mathcal{P}(h,h)>0$ we may conclude (up to an appropriate sign change) that $\tr_\gamma h>0$. Therefore, from Cauchy-Schwartz it follows that 
$$h(\xi,\xi)-\tr_\gamma h|\xi|^2\leq (|h|-\tr_\gamma h)|\xi|^2<0$$
forcing $c=0$ as desired.\\\\
\indent For the second property we integrate over $\Sigma$ using the induced area form coming from $\gamma$, using the divergence theorem: 
\begin{align*}
\int\langle\mathcal{L}_h(\tau),\eta\rangle dA &= \int-\langle L_h(\tau),\nabla\eta\rangle+(\tr_\gamma L_h(\tau))(\nabla\cdot\eta) dA\\
&=-\int \mathcal{P}(L_h(\tau),\text{Sym}(\nabla\eta)) dA\\
&=-\frac12\int \mathcal{P}(L_h(\tau),L_h(\eta))dA.
\end{align*}
We immediately conclude from the permutation symmetry between $(\tau,\eta)$ within this last term that $\int\langle\mathcal{L}_h(\tau),\eta\rangle dA = \int\langle\tau,\mathcal{L}_h(\eta)\rangle dA$ and therefore that $\mathcal{L}_h$ is self adjoint.\\\\
\indent For the third property, $\text{Ker}(L_h)\subset \text{Ker}(\mathcal{L}_h)$ follows trivially. For the reverse inclusion we observe
$$\int \langle\mathcal{L}_h(\tau),\tau\rangle dA = -\frac12\int\mathcal{P}(L_h(\tau),L_h(\tau))dA.$$
Therefore, for any $\tau\in\text{Ker}(\mathcal{L}_h)$ we observe that $\int\mathcal{P}(L_h(\tau),L_h(\tau))dA = 0$, and we use Lemma \ref{l3} to conclude that $\mathcal{P}(L_h(\tau),L_h(\tau))\equiv 0$, thus $L_h(\tau)= 0$. 
\end{proof}
We now show that the collection of symmetric 2-tensors given by 
$$\mathcal{S}:=\{h\in\Gamma(\text{Sym}(T^\star\Sigma\otimes T^\star\Sigma))|\mathcal{P}(h,h)>0\}$$
consists of two disjoint open connected components. To show this, we start by decomposing an $h$ into its $\gamma$-linear and $\gamma$-orthogonal components with respect to $\mathcal{P}$:
$$h = \frac{\tr_\gamma h}{n}\gamma+\Big(h-\frac{\tr_\gamma h}{n}\gamma\Big)=: \frac{1}{n}(\tr_\gamma h)\gamma + \hat{h}.$$
We notice that this corresponds with the decomposition of $h$ into its trace and traceless (or tracefree) parts with respect to $\gamma$. We may now construct a path within $\mathcal{S}$, $t\to h_t\in \Gamma(\text{Sym}(T^\star\Sigma\otimes T^\star\Sigma))$, from $h=h_0$ to a multiple of $\gamma$ at $t=1$, such that $\mathcal{P}(h,h)=\mathcal{P}(h_t,h_t)$. One example is the path given by: 
 \[t\to \begin{cases} 
      (a\cosh(ct)-b\sinh(ct))e_0+(b\cosh(ct)-a\sinh(ct))e_1, & b> 0 \\
      a\gamma & b=0 
   \end{cases}
\]

whereby, $|\hat{h}|>0$ gives:
$$e_0:=\frac{\gamma}{\sqrt{n(n-1)}},\,e_1:=\frac{\hat{h}}{|\hat{h}|},\,a = \mathcal{P}(e_0,h) = \sqrt{\frac{n-1}{n}}(\tr_\gamma h),\,b=-\mathcal{P}(e_1,\hat{h}) = |\hat{h}|,\,c=\text{arc}\tanh(\frac{b}{a}).$$

The sign of $\tr_\gamma h=\frac{1}{n-1}\mathcal{P}(\gamma,h)$ then indicates whether $h$ is path connected to 
$\gamma$ or $-\gamma$, giving the disjoint union $\mathcal{S} = \mathcal{S}_+\cup\mathcal{S}_-$ respectively. We highlight that the analysis above is not specific to $\gamma$, one can similarly with the choice $e_0^h: = \frac{h}{\sqrt{\mathcal{P}(h,h)}}$ construct a path from any $\tilde h\in\mathcal{S}$ to $h\in\mathcal{S}$ fixing $\mathcal{P}({\tilde h},{\tilde h})$. Similarly, we conclude that $h,\tilde h\in\mathcal{S}$ occupy the same connected component if and only if $\mathcal{P}(h,\tilde h)>0$.
\begin{remark}
Since the symbol of $\mathcal{L}_h$ at a given $p\in\Sigma$ relates to that of $L_h$ via:
$$\sigma_\xi(\mathcal{L}_h)(\tau) = \langle \xi,\sigma_\xi(L_h)(\tau)\rangle-\tr_\gamma(\sigma_\xi(L_h)(\tau))\xi,$$
we note that the linear map $\sigma_\xi(\mathcal{L}_h)$ is a composition of $\sigma_\xi(L_h)$ by yet another linear mapping. Therefore, from the injectivity of $\sigma_\xi(\mathcal{L}_h)$ we have that $\sigma_\xi(L_h)$ is injective also.
\end{remark}
It will be helpful to denote the subset $\ubar\Xi_h\subset\Gamma(\Xi_h)$:
$$\ubar\Xi_h:=\{a\in\Gamma(\Xi_h)|\nabla\cdot a = d\tr_\gamma a\}.$$
\begin{proposition}\label{p3}
Suppose $h\in\Gamma(\text{Sym}(T^\star\Sigma\otimes T^\star\Sigma))$ satisfies $\mathcal{P}(h,h)>0$. Then, for every $q\in\Gamma(\Xi_h)$, we have a unique decomposition
$$q = L_h(\tau)+a$$
such that $a\in \ubar\Xi_h$. Moreover, $\{a,L_h(\tau)\}$ are mutually orthogonal with respect to the $L^2$ inner product induced by $\mathcal{P}$, namely $\int\mathcal{P}(L_h(t),a)dA = 0$.
\end{proposition}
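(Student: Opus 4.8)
\emph{Proof proposal.} The plan is to reduce the existence of the decomposition to the solvability of a single equation of the form $\mathcal{L}_h(\tau)=b$, and then to exploit the Fredholm theory made available by Proposition \ref{p2}. Fix $q\in\Gamma(\Xi_h)$. For any $\tau\in\Gamma(T^\star\Sigma)$ the tensor $a:=q-L_h(\tau)$ automatically lies in $\Gamma(\Xi_h)$, since $\Xi_h$ is a linear sub-bundle and $L_h$ takes values in $\Gamma(\Xi_h)$; hence the only remaining constraint in the requirement $a\in\ubar\Xi_h$ is $\nabla\cdot a=d\tr_\gamma a$, equivalently $\nabla\cdot(a-\tr_\gamma a\,\gamma)=0$. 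Using $\nabla\cdot(f\gamma)=df$ and the definition of $\mathcal{L}_h$, this unravels precisely to
$$\mathcal{L}_h(\tau)=\nabla\cdot q-d\tr_\gamma q=:b,$$
so the first claim is equivalent to solving this equation for $\tau$.

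By Proposition \ref{p2}, $\mathcal{L}_h$ is a self-adjoint elliptic operator with respect to the $L^2$ inner product induced by $\gamma$, with $\text{Ker}(\mathcal{L}_h)=\text{Ker}(L_h)$. Passing to the appropriate Hölder (or Sobolev) completions on the compact manifold $\Sigma$, standard elliptic Fredholm theory then guarantees that $\mathcal{L}_h(\tau)=b$ is solvable if and only if $b$ is $L^2$-orthogonal to $\text{Ker}(L_h)$, with elliptic regularity returning a smooth solution for smooth $b$. To verify the orthogonality, take $\tau_0\in\text{Ker}(L_h)$ and integrate by parts, using that $q$ is symmetric:
$$\int\langle b,\tau_0\rangle\,dA=\int\big(-\langle q,\nabla\tau_0\rangle+\tr_\gamma q\,(\nabla\cdot\tau_0)\big)\,dA=\int\mathcal{P}(q,\text{Sym}(\nabla\tau_0))\,dA.$$
Now $L_h(\tau_0)=0$ says exactly that $\text{Sym}(\nabla\tau_0)$ is a pointwise scalar multiple of $h$; since $q\in\Xi_h$ satisfies $\mathcal{P}(q,h)=0$ pointwise, the integrand vanishes identically, so $b\perp\text{Ker}(L_h)$ and the equation is solvable. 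This is the crux; the rest is bookkeeping.

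For uniqueness, suppose $q=L_h(\tau_1)+a_1=L_h(\tau_2)+a_2$ with $a_1,a_2\in\ubar\Xi_h$. Then $L_h(\tau_1-\tau_2)=a_2-a_1\in\ubar\Xi_h$, so by the reduction of the first paragraph $\mathcal{L}_h(\tau_1-\tau_2)=0$; hence $\tau_1-\tau_2\in\text{Ker}(\mathcal{L}_h)=\text{Ker}(L_h)$, which forces $L_h(\tau_1)=L_h(\tau_2)$ and therefore $a_1=a_2$. Thus the pair $(L_h(\tau),a)$ is uniquely determined, even though $\tau$ itself is only unique modulo $\text{Ker}(L_h)$.

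Finally, for the $L^2$-orthogonality with respect to $\mathcal{P}$, note that $\mathcal{P}(h,a)=0$ gives $\mathcal{P}(L_h(\tau),a)=2\mathcal{P}(\text{Sym}(\nabla\tau),a)=2\big((\nabla\cdot\tau)\tr_\gamma a-\langle\nabla\tau,a\rangle\big)$, using that $a$ is symmetric; integrating by parts over $\Sigma$ yields
$$\int\mathcal{P}(L_h(\tau),a)\,dA=2\int\langle\tau,\nabla\cdot a-d\tr_\gamma a\rangle\,dA=0,$$
the last equality because $a\in\ubar\Xi_h$. The main obstacle is the second paragraph: identifying the correct source term $b$ and checking the Fredholm solvability condition, which hinges on the pointwise fact that members of $\text{Ker}(L_h)$ have symmetrized covariant derivative parallel to $h$, combined with $q\in\Xi_h$.
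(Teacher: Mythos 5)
Your proposal is correct and follows essentially the same route as the paper: reduce the decomposition to solving $\mathcal{L}_h(\tau)=\nabla\cdot q-d\tr_\gamma q$, invoke the Fredholm alternative via Proposition \ref{p2}, check the compatibility condition by integration by parts using $\mathcal{P}(q,h)=0$ (your pointwise remark that $\text{Sym}(\nabla\tau_0)\parallel h$ for $\tau_0\in\text{Ker}(L_h)$ is the same fact the paper encodes as $\mathcal{P}(q,\text{Sym}(\nabla\mu))=\tfrac12\mathcal{P}(q,L_h(\mu))$), and obtain uniqueness and the $\mathcal{P}$-orthogonality exactly as in the paper's argument.
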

\begin{proof}
Since $\mathcal{L}_h$ is a strong elliptic operator on a compact manifold, it is a Fredholm operator and the Fredholm Alternative applies. We conclude that a solution, $\tau$, for the equation
$$\mathcal{L}_h(\tau) = \eta$$
exists provided $\int\langle\eta,\mu\rangle dA=0$ whenever $\mu\in\text{Ker}(\mathcal{L}^\star_h)=\text{Ker}(\mathcal{L}_h)$, where $\mathcal{L}^\star_h$ denotes adjoint of $\mathcal{L}_h$ relative to the $L^2$ inner product induced by $\gamma$. Via standard elliptic regularity results, the regularity of $\mu$ will dictate the regularity of our solution $\tau$ provided it exists (see Chapter 5 ,\cite{giamart}). For the equation:
$$\mathcal{L}_h(\tau) = \nabla\cdot(q-(\tr_\gamma q)\gamma),$$
we know a smooth solution exists, since for any $\mu\in\text{Ker}(\mathcal{L}_h)=\text{Ker}(L_h)$, we observe
\begin{align*}
\int\langle \nabla\cdot (q - (\tr_\gamma q)\gamma),\mu\rangle dA&=\int -\langle q-(\tr_\gamma q)\gamma,\nabla\mu\rangle dA\\
&=-\int\langle q-(\tr_\gamma q)\gamma,\text{Sym}(\nabla\mu)\rangle dA\\
&=-\int\mathcal{P}(q,\text{Sym}(\nabla\mu))dA\\
&=-\frac12\int\mathcal{P}(q,L_h(\mu))dA \\
&=0.
\end{align*}
We conclude therefore that $\nabla\cdot\big((L_h(\tau)-q)-\tr_\gamma(L_h(\tau)-q)\gamma\big)=0$. So taking $a:=q-L_h(\tau)$ we've shown the desired decomposition. Substituting $q\to a$ in our calculation above, we also immediately observe that $\int\mathcal{P}(a,L_h(\tau))dA=-2\int\langle \nabla\cdot a-d\tr_\gamma a,\tau\rangle dA = 0$. All that remains is to prove uniqueness. We assume the existence of 1-forms $\tau,\tau'\in \Gamma(T^\star\Sigma)$, and symmetric 2-tensors $a,a'\in\ubar\Xi_h$, such that
$$L_h(\tau-\tau') = a-a'.$$
It follows that $\mathcal{L}_h(\tau-\tau') = \nabla\cdot (a-a')-d\tr_\gamma (a-a')=0$ meaning $\tau-\tau'\in\text{Ker}(\mathcal{L}_h) = \text{Ker}(L_h)$. Therefore $L_h(\tau-\tau')=0$, and we conclude $a=a'$ as desired.
\end{proof}
Returning to the linearized embedding equation, we observe that solving for the pair $(\tau,\phi)$ given a symmetric 2-tensor $\tilde q\in\Gamma(\text{Sym}(T^\star\Sigma\otimes T^\star\Sigma))$ in
\begin{equation}
2\text{Sym}(\nabla\tau)+\phi h = \tilde q
\end{equation}
is equivalent to solving for $\tau$ in the equation
\begin{equation}\label{e6}
L_h(\tau) = q
\end{equation}
whereby $q = \tilde q-\frac{\mathcal{P}(\tilde q,h)}{\mathcal{P}(h,h)}h$, and $\phi = \frac{\mathcal{P}(\tilde q,h)}{\mathcal{P}(h,h)}-2\frac{\mathcal{P}(\nabla\tau,h)}{\mathcal{P}(h,h)}=\frac{\tr_\gamma \tilde q}{\tr_\gamma h}-2\nabla\cdot\tau$. So from Proposition \ref{p3}, solvability of the linearized embedding equation follows for any $q\in\Gamma(\Xi_h)$ satisfying $\int\mathcal{P}(q,a)dA=0$ for any $a\in\ubar\Xi_h$ (i.e. $q\in\ubar\Xi_h^\perp$ with respect to the $L^2$ inner product induced by $\mathcal{P}$).
\begin{theorem}\label{t0}
Suppose $h\in\Gamma(\text{Sym}(T^\star\Sigma\otimes T^\star\Sigma))$ satisfies $\mathcal{P}(h,h)>0$, then:
$$\dim(\text{Ker}(L_h)) = \dim(\text{Ker}(L_\gamma)).$$
\end{theorem}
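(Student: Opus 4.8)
The dependence of $L_h$ on $h$ enters only through the expression $\mathcal{P}(\,\cdot\,,h)\,h/\mathcal{P}(h,h)$, which is invariant under $h\mapsto -h$; hence $L_{-h}=L_h$, and I may assume $\tr_\gamma h>0$, i.e. $h\in\mathcal{S}_+$. As shown above, $\mathcal{S}_+$ is (real-analytically) path connected and contains every positive-function multiple $f\gamma$, on which $L_{f\gamma}=L_\gamma$. So it suffices to prove that $s\mapsto\dim(\text{Ker}(L_{h_s}))$ is constant along a fixed real-analytic path $(h_s)_{s\in[0,1]}$ in $\mathcal{S}_+$ with $h_0=h$ and $h_1$ a multiple of $\gamma$ (for instance a reparametrisation of the explicit path used above to exhibit connectedness); the whole statement thereby becomes a continuity assertion.

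\textbf{Constancy of the index.} By the Remark following Proposition \ref{p2}, each $L_{h_s}$ has injective principal symbol, so $L_{h_s}\colon C^{k,\beta}(T^\star\Sigma)\to C^{k-1,\beta}(\Xi_{h_s})$ is overdetermined-elliptic: finite-dimensional kernel, closed range, finite-dimensional cokernel canonically identified with the kernel of the formal adjoint. Trivialising the subbundle $\{\Xi_{h_s}\}$ over the contractible interval $[0,1]$, the family $\{L_{h_s}\}$ becomes a norm-continuous family of Fredholm operators between fixed Banach spaces, so its index $\dim(\text{Ker}(L_{h_s}))-\dim(\mathrm{coker}\,L_{h_s})$ is locally constant, hence constant on $[0,1]$. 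The Green identity obtained in the proof of Proposition \ref{p3}, namely $\int\mathcal{P}(q,L_h\mu)\,dA=-2\int\langle\nabla\cdot q-d\tr_\gamma q,\mu\rangle\,dA$, shows that the formal adjoint of $L_h$ on $\Gamma(\Xi_h)$ is ($-2$ times) the operator $q\mapsto\nabla\cdot q-d\tr_\gamma q$, whose kernel is exactly $\ubar\Xi_h$. Therefore $\mathrm{coker}\,L_{h_s}\cong\ubar\Xi_{h_s}$, and constancy of the index gives
$$\dim(\text{Ker}(L_h))-\dim\ubar\Xi_h=\dim(\text{Ker}(L_\gamma))-\dim\ubar\Xi_\gamma .$$

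\textbf{From the difference to the statement, and the main obstacle.} It remains to upgrade this to $\dim(\text{Ker}(L_h))=\dim(\text{Ker}(L_\gamma))$, i.e. to show that $s\mapsto\dim(\text{Ker}(L_{h_s}))$ (equivalently $s\mapsto\dim\ubar\Xi_{h_s}$) is itself constant, not merely that their difference is. Here I would use the extra structure of Proposition \ref{p2}: $\mathcal{L}_{h_s}$ is self-adjoint, elliptic, satisfies $\text{Ker}(\mathcal{L}_{h_s})=\text{Ker}(L_{h_s})$, and is \emph{non-negative}, since $\int\langle\mathcal{L}_h\tau,\tau\rangle\,dA=-\tfrac12\int\mathcal{P}(L_h\tau,L_h\tau)\,dA\ge 0$ by Lemma \ref{l3}. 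Along the real-analytic path the eigenvalues of $\mathcal{L}_{h_s}$ are real-analytic in $s$ (Rellich--Kato) and each is $\ge 0$, so every eigenvalue branch either vanishes identically on $[0,1]$ or has only isolated zeros; consequently $\dim(\text{Ker}(L_{h_s}))$ equals its minimal value $N_0$ off a discrete set of parameters. The delicate point — and the one I expect to be the main obstacle — is to exclude that $\dim(\text{Ker}(L_h))$ or $\dim(\text{Ker}(L_\gamma))$ strictly exceeds $N_0$, i.e. that an eigenvalue branch of $\mathcal{L}_{h_s}$ meets $0$ precisely at an endpoint. I would treat this by a Lyapunov--Schmidt reduction at such a parameter $s_0$: the non-negativity forces the leading (finite-dimensional, self-adjoint) reduced operator to vanish, because any $\tau\in\text{Ker}(\mathcal{L}_{h_{s_0}})$ realises a minimum, hence a critical point, of $s\mapsto\langle\mathcal{L}_{h_s}\tau,\tau\rangle$; one then feeds the index identity of the previous paragraph back in to keep $\dim(\text{Ker}(L_{h_s}))$ and $\dim\ubar\Xi_{h_s}$ balanced while pushing the vanishing to higher order. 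Reconciling these two upper-semicontinuous, index-linked invariants into an honest equality is where the real work lies.
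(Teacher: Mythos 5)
Your proposal does not reach the stated conclusion, and the two places where it breaks are concrete. First, the Fredholm step is not available in the generality of the theorem: Theorem \ref{t0} is stated on a compact $\Sigma^n$ of arbitrary dimension, where $L_{h}:\Gamma(T^\star\Sigma)\to\Gamma(\Xi_{h})$ is only \emph{overdetermined} elliptic once $n\geq 3$ (the bundle $\Xi_h$ has rank $\tfrac{n(n+1)}{2}-1>n$). An injective symbol gives finite-dimensional kernel and closed range, but the cokernel you identify via the Green identity is $\ubar\Xi_{h}$, and for $h$ a multiple of $\gamma$ this is the space of transverse-traceless symmetric $2$-tensors, which is infinite-dimensional on any compact manifold of dimension at least three. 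So there is no Fredholm index of $L_{h_s}$ to deform, and the identity $\dim(\text{Ker}(L_h))-\dim\ubar\Xi_h=\dim(\text{Ker}(L_\gamma))-\dim\ubar\Xi_\gamma$ is meaningless outside $n=2$ (in $n=2$ this is exactly the route of Proposition \ref{p4}, which however takes Theorem \ref{t0} as an input). Second, even where the index argument applies, your own final paragraph is the proof: constancy of the index plus Rellich--Kato analyticity only gives that $\dim(\text{Ker}(\mathcal{L}_{h_s}))$ equals its generic value off a discrete set of parameters, and nothing you write excludes that the exceptional set contains the endpoints $s=0$ or $s=1$, i.e.\ precisely $h$ and $\gamma$. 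Non-negativity does not help: first-order perturbation theory at a crossing gives $\dot\lambda(s_0)=-\int\mathcal{P}\big(\tfrac{d}{ds}L_{h_s}\tau,L_{h_{s_0}}\tau\big)dA=0$ whenever $\tau\in\text{Ker}(L_{h_{s_0}})$, which is exactly consistent with an eigenvalue branch behaving like $(s-s_0)^2$, so the kernel can still jump up at an endpoint; the Lyapunov--Schmidt step you gesture at is not carried out and it is not clear it can be closed.

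For comparison, the paper avoids both issues by never using Fredholm theory for $L_h$ itself and by producing an explicit isomorphism rather than a dimension count: along a path $t\to h_t$ with $\mathcal{P}(h_t,h_t)$ fixed, it solves the initial value problem $\frac{d}{dt}\tau_t=\eta_t$, where $\eta_t\perp\text{Ker}(\mathcal{L}_{h_t})$ is obtained by inverting the square, self-adjoint elliptic operator $\mathcal{L}_{h_t}$ on a source built from $\tau_t$ and $a_t=\frac{d}{dt}h_t$ (existence and uniqueness by a contraction mapping with uniform Schauder estimates, plus a continuity lemma for $t\mapsto\mathcal{C}_0(\tau)_t$). One then checks $\frac{d}{dt}L_{h_t}(\tau_t)=b_t-\frac{\mathcal{P}(L_{h_t}(\tau_t),a_t)}{\mathcal{P}(h,h)}h_t$ with $b_t\in\ubar\Xi_{h_t}$, so $\int\mathcal{P}(L_{h_t}(\tau_t),L_{h_t}(\tau_t))dA$ is conserved; since $-\mathcal{P}$ is positive definite on $\Xi_{h_t}$ (Lemma \ref{l3}), initial data in $\text{Ker}(L_h)$ stays in $\text{Ker}(L_{h_t})$ for all $t$. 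Linearity and uniqueness give an injection $\text{Ker}(L_h)\to\text{Ker}(L_\gamma)$, and running the path backwards gives the inverse, so the kernels are isomorphic in every dimension. If you want to salvage your strategy, you would need exactly this kind of ``parallel transport'' of kernel elements to show that a vanishing eigenvalue branch is identically zero; that mechanism, not the index, is the content of the theorem.
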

\begin{proof}
Since the operators $\mathcal{L}_h$, $\mathcal{L}_\gamma$ are Fredholm we know their kernels $\text{Ker}(L_h)$, $\text{Ker}(L_\gamma)$ are finite dimensional vector spaces. We will construct a linear isomorphism between them. We start by choosing some smooth path, $t\to h_t\in \Gamma(\text{Sym}(T^\star\Sigma\otimes T^\star\Sigma))$, connecting $h$ to a multiple of $\gamma$. Although not necessary, it will be more convenient to take the path $t\to h_t$ such that $\mathcal{P}(h_t,h_t)$ remains fixed. If we denote by $a_t:=\frac{d}{dt}h_t$, then along this path we have $2\mathcal{P}(a_t,h_t) = \frac{d}{dt}\mathcal{P}(h_t,h_t) = 0$, giving $a_t\in \Xi_{h_t}$. Given initial data, $\mu\in\Gamma(T^\star\Sigma)$, we now wish to show existence of a unique path $t\to\tau_t\in \Gamma(T^\star\Sigma)$, $t\in[0,1]$, that solves the initial value problem:
\begin{align*}
	\frac{d}{dt}\tau_t &= \eta_t,\,\,\, \mathcal{L}_{h_t}(\eta_t) = \nabla\cdot\big(2\frac{\mathcal{P}(\text{Sym}(\nabla\tau_t),h_t)}{\mathcal{P}(h,h)}(a_t - (\tr_\gamma a_t)\gamma)\big),\\
	\tau_0 &= \mu,\,\,\,\eta_t\perp\text{Ker}(\mathcal{L}_{h_t}),
\end{align*}
where again, orthogonality is with respect to the $L^2$ inner product induced by $\gamma$. To this end, we will consider the complete metric space of continuous paths $t\to\tau_t\in C^{2,\beta}(T^\star\Sigma)$, $t\in[0,\delta]$, with distance function:
$$||\tau_1-\tau_2||_{\infty,\delta}:=\sup_{t\in[0,\delta]}||(\tau_1)_t-(\tau_2)_t||_{2,\beta}.$$
Considering $\mu$ as a continuous path (independent of $t$), we wish to construct a contraction mapping acting on the closed ball $\mathcal{B}_\epsilon(\mu,\delta):=\{\tau\big|||\tau-\mu||_{\infty,\delta}\leq\epsilon\}$ for sufficiently small $\delta$. For arbitrary $\tau_t$, the mapping we will consider is given by $\mathcal{C}(\tau)_t := \mu+\int_0^t\mathcal{C}_0(\tau)_udu$, whereby $\mathcal{C}_0(\tau)_t$, for each $t\in[0,\delta]$, is given by the unique solution to the equation:
$$\mathcal{L}_{h_t}(\mathcal{C}_0(\tau)_t) = \nabla\cdot\big(2\frac{\mathcal{P}(\text{Sym}(\nabla\tau_t),h_t)}{\mathcal{P}(h,h)}(a_t - (\tr_\gamma a_t)\gamma)\big),\,\,\,\mathcal{C}_0(\tau)_t\perp\text{Ker}(\mathcal{L}_{h_t}).$$
\indent Generally, from interior Schauder estimates (see, \cite{giamart} Chapter 5), we have:
$$||\mathcal{C}_0(\tau)_t||_{2,\beta}\leq C(||h_t||_{1,\beta},||\gamma||_{2,\beta})(||\mathcal{L}_{h_t}(\mathcal{C}_0(\tau)_t)||_{0,\beta}+||\mathcal{C}(\tau)_t||_{0,\beta}).$$
Taking $\mathcal{C}_0(\tau)_t\perp\text{Ker}(\mathcal{L}_{h_t})$, a standard compactness argument then gives:
$$||\mathcal{C}_0(\tau)_t||_{2,\beta}\leq C(||h_t||_{1,\beta},||\gamma||_{2,\beta})||\mathcal{L}_{h_t}(\mathcal{C}_0(\tau)_t)||_{0,\beta}.$$
To show this we argue for a contradiction. Assuming this estimate does not hold, then for each $n\in\mathbb{N}_+$ we can find $\nu_n\in \text{Ker}(\mathcal{L}_{h_t})^\perp$ such that $\frac{1}{n}||\nu_n||_{2,\beta}>||\mathcal{L}_{h_t}(\nu_n)||_{0,\beta}$, so that the bounded sequence $\tilde\nu_n:=\frac{\nu_n}{||\nu_n||_{2,\beta}}\in C^{2,\beta}(T^\star\Sigma)$ causes $\mathcal{L}_{h_t}(\tilde\nu_n)\to 0$ in the $C^{0,\beta}(T^\star\Sigma)$ norm, as $n\to\infty$. By the Arz\`{e}la-Ascoli theorem, we know $C^{2,\beta}(T^\star\Sigma)$ is compactly embedded within $C^{0,\beta}(T^\star\Sigma)$. Specifically, up-to a re-indexing, we may assume $\{\tilde\nu_n\}_n$ converges in $C^{0,\beta}(T^\star\Sigma)$. Again from Schauder estimates, we have:
$$||\tilde\nu_n-\tilde\nu_m||_{2,\beta}\leq C(||\mathcal{L}_{h_t}(\tilde\nu_n-\tilde\nu_m)||_{0,\beta}+||\tilde\nu_n-\tilde\nu_m||_{0,\beta})\leq C(\frac{1}{n}+\frac{1}{m}+||\tilde\nu_n-\tilde\nu_m||_{0,\beta}),$$
so that $\{\tilde\nu_n\}_n$ is a Cauchy sequence in $C^{2,\beta}(T^\star\Sigma)$ and therefore also converges within $C^{2,\beta}(T^\star\Sigma)$, a Banach space. We conclude that the limit $\tilde\nu_\infty\in C^{2,\beta}(T^\star\Sigma)$ satisfies $||\tilde\nu_\infty||_{2,\beta}=1$, and $\mathcal{L}_{h_t}(\tilde\nu_\infty) = 0$. Since $\text{Ker}(\mathcal{L}_{h_t})^\perp$ is closed, we have $\tilde\nu_\infty\in\text{Ker}(\mathcal{L}_{h_t})^\perp\cap\text{Ker}(\mathcal{L}_{h_t})=\{0\}$ yet $||\tilde\nu_\infty||_{2,\beta} = 1$, yielding the desired contradiction.
\begin{lemma}\label{l0}
We obtain a continuous path $t\to\mathcal{C}_0(\tau)_t\in C^{2,\beta}(T^\star\Sigma)\cap\text{Ker}(\mathcal{L}_{h_t})^\perp$ from solving the elliptic equation:
$$\mathcal{L}_{h_t}(\mathcal{C}_0(\tau)_t) = \nabla\cdot\big(2\frac{\mathcal{P}(\text{Sym}(\nabla\tau_t),h_t)}{\mathcal{P}(h,h)}(a_t - (\tr_\gamma a_t)\gamma)\big),$$
for each $t$.
\end{lemma}
\begin{proof}
For $t_0\in[0,\delta]$, we wish to show $\displaystyle{\lim_{t\to t_0}||\mathcal{C}_0(\tau)_{t_0}-\mathcal{C}_0(\tau)_t||_{2,\beta} = 0}$. In order to do so, we denote by $\pi_t$ the projection map onto $\text{Ker}(\mathcal{L}_{h_t})$. It follows:
\begin{align*}
\mathcal{L}_{h_{t_0}}\Big(\mathcal{C}_0(\tau)_{t_0}-\big(\mathcal{C}_0(\tau)_t-\pi_{t_0}\mathcal{C}_0(\tau)_t\big)\Big)&=\mathcal{L}_{h_{t_0}}(\mathcal{C}_0(\tau)_{t_0})-\mathcal{L}_{h_{t}}(\mathcal{C}_0(\tau)_t)+\Big(\mathcal{L}_{h_{t}}-\mathcal{L}_{h_{t_0}}\Big)(\mathcal{C}_0(\tau)_{t})\\
&= \nabla\cdot\big(2\frac{\mathcal{P}\big(\text{Sym}(\nabla(\tau_{t_0}-\tau_t)),h_{t_0}\big)}{\mathcal{P}(h,h)}(a_{t_0} - (\tr_\gamma a_{t_0})\gamma)\big)\\
&+\nabla\cdot\big(2\frac{\mathcal{P}(\text{Sym}(\nabla\tau_t),h_{t_0}-h_t)}{\mathcal{P}(h,h)}(a_{t_0} - (\tr_\gamma a_{t_0})\gamma)\big)\\
&+\nabla\cdot\big(2\frac{\mathcal{P}(\text{Sym}(\nabla\tau_t),h_t)}{\mathcal{P}(h,h)}((a_t-a_{t_0}) - (\tr_\gamma (a_t-a_{t_0}))\gamma)\big)\\
&+ \Big(\mathcal{L}_{h_{t}}-\mathcal{L}_{h_{t_0}}\Big)(\mathcal{C}_0(\tau)_{t}).
\end{align*}
From continuity and $C^{2,\beta}$ boundedness of $\{\tau_t,h_t,a_t\}$, we therefore conclude from Schauder estimates that $|| \mathcal{C}_0(\tau)_{t_0}-\big(\mathcal{C}_0(\tau)_t-\pi_{t_0}\mathcal{C}_0(\tau)_t\big) ||_{2,\beta}\to 0$ as $t\to t_0$. Choosing a (smooth) basis $\{\nu_i\}_i\subset \text{Ker}(\mathcal{L}_{h_{t_0}})$, with unit $L^2$ norm, we decompose $\pi_{t_0}\mathcal{C}_0(\tau)_t = \sum_i\langle\nu_i,\mathcal{C}_0(\tau)_t\rangle_{L^2}\nu_i=\sum_i\langle\nu_i-\pi_{t}(\nu_i),\mathcal{C}_0(\tau)_t\rangle_{L^2}\nu_i$, for $\langle\cdot,\cdot\rangle_{L^2}$ the $L^2$ inner product induced by $\gamma$. We also observe,
$$\mathcal{L}_{h_t}(\nu_i-\pi_t(\nu_i))=\mathcal{L}_{h_t}(\nu_i) = \Big(\mathcal{L}_{h_t}-\mathcal{L}_{h_{t_0}}\Big)(\nu_i)$$
giving, via Schauder estimates, that $||\nu_i-\pi_t(\nu_i)||_{2,\beta}\to 0$, as $t\to t_0$. We therefore see:
\begin{align*}
||\pi_{t_0}\mathcal{C}_0(\tau)_t||_{2,\beta}&\leq\sum_i\Big|\langle\nu_i-\pi_t(\nu_i),\mathcal{C}_0(\tau)_t\rangle_{L^2}\Big|||\nu_i||_{2,\beta}\\
&\leq C\Big(\sum_i||\nu_i-\pi_t(\nu_i)||_{2,\beta}||\nu_i||_{2,\beta}\Big)||\mathcal{C}_0(\tau)_t||_{2,\beta},
\end{align*}
having used the Cauchy-Schwarz inequality, and compactness of $\Sigma$ in  obtaining the second inequality. Combining these facts, we observe also $||\pi_{t_0}\mathcal{C}_0(\tau)_t||_{2,\beta}\to0$, as $t\to t_0$. Consequently:
$$||\mathcal{C}_0(\tau)_{t_0}-\mathcal{C}_0(\tau)_t||_{2,\beta}\leq||\mathcal{C}_0(\tau)_{t_0}-\mathcal{C}(\tau)_t+\pi_{t_0}\mathcal{C}_0(\tau)_t||_{2,\beta}+||\pi_{t_0}\mathcal{C}_0(\tau)_t||_{2,\beta}\to 0,\,\,\text{as}\,\,\,t\to t_0.$$
\end{proof}
From Lemma \ref{l0}, we conclude that the mapping $\tau_t\to\mathcal{C}(\tau)_t$ is well defined. From our elliptic estimate above, and smoothness of the paths $t\to h_t$, $t\to a_t$, we also conclude with some constant $C_1$ independent of $t$, such that:
$$||\mathcal{C}_0(\tau)_t||_{2,\beta}\leq C_1||\tau_t||_{2,\beta}\leq C_1(\epsilon+||\mu||_{2,\beta}).$$
It follows that $||\mathcal{C}(\tau)_t-\mu||_{2,\beta}\leq \int_0^t||\mathcal{C}_0(\tau)_u||_{2,\beta}du\leq C_1\delta(\epsilon+||\mu||_{2,\beta})$, so that sufficiently small $\delta$ ensures $\mathcal{C}:\mathcal{B}_\epsilon(\mu,\delta)\to\mathcal{B}_\epsilon(\mu,\delta)$. From linearity of the elliptic equation:
$$
||\mathcal{C}_0(\tau_1)_t-\mathcal{C}_0(\tau_2)_t||_{2,\beta}\leq C_1||(\tau_1)_t-(\tau_2)_t||_{2,\beta},$$
giving $||\mathcal{C}(\tau_1)_t-\mathcal{C}(\tau_2)_t||_{2,\beta}\leq \int_0^t||\mathcal{C}_0(\tau_1)_u-\mathcal{C}_0(\tau_2)_u||_{2,\beta}du\leq C_1\delta||\tau_1-\tau_2||_{\infty,\delta}.$ Shrinking $\delta$, if necessary, we therefore conclude that $\mathcal{C}$ is a contraction mapping with a unique fixed point within $\mathcal{B}_\epsilon(\mu,\delta)$. From a standard continuity argument, we conclude that the initial value problem admits a unique solution for all $t\in[0,1]$.\\
\indent We now observe, for any solution $\tau_t$ of our initial value problem:
\begin{align*}
\frac{d}{dt}L_{h_t}(\tau_t) &= L_{h_t}(\eta_t)-\frac{\mathcal{P}(L_{h_t}(\tau_t),a_t)}{\mathcal{P}(h,h)}h_t-2\frac{\mathcal{P}(\text{Sym}(\nabla\tau_t),h_t)}{\mathcal{P}(h,h)}a_t\\
&=b_t-\frac{\mathcal{P}(L_{h_t}(\tau_t),a_t)}{\mathcal{P}(h,h)}h_t
\end{align*}
for some $b_t\in\ubar\Xi_{h_t}$. Consequently,
\begin{align*}
	\frac{d}{dt}\int\mathcal{P}(L_{h_t}(\tau_t),L_{h_t}(\tau_t))dA &= 2\int\mathcal{P}\big(b_t-\frac{\mathcal{P}(L_{h_t}(\tau_t),a_t)}{\mathcal{P}(h,h)}h_t ,L_{h_t}(\tau_t)\big)dA\\
	& = 2\int\mathcal{P}(b_t,L_{h_t}(\tau_t))dA\\
	&=-4\int\langle\nabla\cdot(b_t-(\tr_\gamma b_t)\gamma),\tau_t\rangle dA\\
	&=0.
\end{align*}
We conclude therefore, that $\int\mathcal{P}(L_{h_t}(\tau_t),L_{h_t}(\tau_t))dA = \int\mathcal{P}(L_h(\mu),L_h(\mu))dA$, for all $t\in[0,1]$. So solving with initial data $\mu\in \text{Ker}(\mathcal{L}_h)$, enforces $\int\mathcal{P}(L_{h_t}(\tau_t),L_{h_t}(\tau_t))dA = 0$, for all $t\in[0,1]$, and therefore $\tau_t\in \text{Ker}(\mathcal{L}_{h_t})$ for all $t$. Moreover, from linearity and uniqueness of our initial value problem, this induces a linear injection $\text{Ker}(\mathcal{L}_h)\to\text{Ker}(\mathcal{L}_\gamma)$. It is also easily seen that traversing the path $t\to h_{1-t}$ for $t\in[0,1]$ produces the inverse linear transformation $\text{Ker}(\mathcal{L}_\gamma)\to\text{Ker}(\mathcal{L}_h)$.

\end{proof}
\begin{corollary}\label{c0}
Suppose $h\in\Gamma(\text{Sym}(T^\star\mathbb{S}^n\otimes T^\star\mathbb{S}^n))$ satisfies $\mathcal{P}(h,h)>0$. If $n=2$, or for $n\geq 3$ the metric $\gamma$ is conformal to the standard round metric, then:
$$\dim(\text{Ker}(L_h))= \frac{(n+1)(n+2)}{2}.$$
\end{corollary}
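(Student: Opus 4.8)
The plan is to reduce the statement, via Theorem \ref{t0}, to a classical computation of the dimension of the conformal algebra of the round sphere. Since $\mathcal{P}(h,h)>0$, Theorem \ref{t0} gives $\dim(\mathrm{Ker}(L_h)) = \dim(\mathrm{Ker}(L_\gamma))$, so it suffices to compute $\dim(\mathrm{Ker}(L_\gamma))$. As observed in the Remark identifying $L_\gamma$ with the conformal Killing operator, $2\,\mathrm{Sym}(\nabla\tau) = \pounds_{\tau^{\#}}\gamma$, so $L_\gamma(\tau)$ is exactly the $\gamma$-trace-free part of $\pounds_{\tau^{\#}}\gamma$; hence the musical isomorphism $\tau\mapsto\tau^{\#}$ carries $\mathrm{Ker}(L_\gamma)$ bijectively onto the space of conformal Killing vector fields of $(\mathbb{S}^n,\gamma)$.

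The first step is the standard observation that conformal Killing fields are a conformal invariant: from $\pounds_X(e^{2f}\gamma) = e^{2f}\bigl(\pounds_X\gamma + 2X(f)\gamma\bigr)$ (and the corresponding transformation of the divergence) one sees that $X$ is conformal Killing for $\gamma$ if and only if it is conformal Killing for $e^{2f}\gamma$, so $\dim(\mathrm{Ker}(L_\gamma))$ is unchanged under a conformal change of the metric. For $n\ge 3$ the hypothesis says $\gamma$ is globally conformal to the round metric $g_0$; for $n=2$ the uniformization theorem on the compact surface $\mathbb{S}^2$ provides a global conformal factor relating $\gamma$ to $g_0$. In either case $\dim(\mathrm{Ker}(L_\gamma))$ equals the dimension of the space of conformal Killing fields of $(\mathbb{S}^n,g_0)$.

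The final step invokes the classical identification of the conformal group of the round $n$-sphere with (the identity component of) the M\"obius group $O(n+1,1)$: by Liouville's theorem for $n\ge 3$, and by $\mathrm{Conf}^{+}(\mathbb{S}^2,g_0)\cong PSL(2,\mathbb{C})\cong SO^{+}(3,1)$ for $n=2$. Its Lie algebra, the space of conformal Killing fields of $g_0$, has dimension $\dim\mathfrak{so}(n+1,1)=\tfrac{(n+1)(n+2)}{2}$, which is the claimed value. (Equivalently, one may cite the classical bound that the conformal algebra of an $n$-dimensional Riemannian manifold has dimension at most $\tfrac{(n+1)(n+2)}{2}$, with equality precisely for the round sphere.) I do not anticipate a genuine obstacle here: the only things to be careful about are matching the precise normalization of $L_\gamma$ in the Remark to the conformal Killing equation, and not dropping the conformal-flatness hypothesis for $n\ge 3$, since a generic metric on $\mathbb{S}^n$ has no nonzero conformal Killing fields at all. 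The substantive input, Theorem \ref{t0}, is already in hand; what remains is standard conformal geometry.
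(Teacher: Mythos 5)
Your proposal is correct and follows essentially the same route as the paper: reduce to $L_\gamma$ via Theorem \ref{t0}, use conformal invariance of the conformal Killing operator (plus uniformization when $n=2$) to pass to the round metric, and then quote that the conformal algebra of $(\mathbb{S}^n,\mathring\gamma)$ is $\mathfrak{so}(n+1,1)$ of dimension $\tfrac{(n+1)(n+2)}{2}$. The only cosmetic difference is that for $n=2$ uniformization gives conformality to the round metric only up to a diffeomorphism, which (as you implicitly use) does not affect the kernel dimension.
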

\begin{proof}
We assume $\gamma = e^{2\varphi}\mathring\gamma$ for some smooth function $\varphi\in\mathcal{F}(\Sigma)$, and $\mathring\gamma$ the standard round metric on $\mathbb{S}^n$. The conformal killing operator enjoys conformal invariance of the form:
$$\nabla^i\tau^j+\nabla^j\tau^i-\frac{2}{n}\nabla\cdot\tau\gamma^{ij} = e^{-2\varphi}\big(\mathring\nabla^i\tau^j+\mathring\nabla^j\tau^i - \frac{2}{n}\mathring\nabla\cdot\tau\mathring\gamma^{ij}\big),$$
whereby $\mathring\nabla$ is the induce covariant derivative on $\mathbb{S}^n$ with respect to the standard round metric $\mathring\gamma$. Consequently, via a metric isomorphism, we conclude $\dim(\text{Ker}(L_\gamma)) = \dim(\text{Ker}(L_{\mathring\gamma}))$. It is a well known fact that the group of conformal diffeomorphisms of $(\mathbb{S}^n,\mathring\gamma)$ is isomorphic to $SO(n+1,1)$. Meaning that the dimension of the space of conformal vector fields, that generate the corresponding Lie algebra, is given by:
$$\dim(SO(n+1,1)) = \frac{(n+1)(n+2)}{2}.$$
Regarding the case $n=2$, the famous Uniformization Theorem (see Proposition \ref{p9} below) states, upto a diffeomorphism, any Riemannian metric on $\mathbb{S}^2$ is conformal to the standard round metric. 

\end{proof}

\subsection{The $\Sigma = \mathbb{S}^2$ case}
In the $\mathbb{S}^2$ case, with the help of a local orthonormal frame, we observe $\mathcal{P}(h,h)=2\frac{\det h}{\det\gamma}$. We may decompose any $h$ into trace and traceless parts (or $\mathcal{P}$-linear to $\gamma$ and $\mathcal{P}$-orthogonal to $\gamma$ parts)
$$h = \frac12(\tr_\gamma h)\gamma+\hat{h}.$$
We therefore observe
\begin{align*}
\frac{\det h}{\det \gamma}&=\frac12\mathcal{P}\Big(\frac{1}{\sqrt{2}}(\tr_\gamma h)\frac{\gamma}{\sqrt{2}}+|\hat{h}|\frac{\hat{h}}{|\hat{h}|}, \frac{1}{\sqrt{2}}(\tr_\gamma h)\frac{\gamma}{\sqrt{2}}+|\hat{h}|\frac{\hat{h}}{|\hat{h}|}\Big)\\
&=\frac14(\tr_\gamma h)^2-\frac12|\hat{h}|^2.
\end{align*}
Also, at a given $p\in\mathbb{S}^2$, with the aid of a local coordinate chart, we observe
$$(h^{-1})^{ij} = \frac{\frac12(\tr_\gamma h)\gamma^{ij}-\hat{h}^{ij}}{\frac14(\tr_\gamma h)^2-\frac12|\hat{h}|^2},$$
where indices are raised in $\hat{h}^{ij}$ using the inverse metric $\gamma^{ij}:=(\gamma^{-1})^{ij}$. We conclude therefore, for $h$ such that $\mathcal{P}(h,h)>0$, the operator $L_h(\tau)$ in dimension two is precisely the $h$-traceless part of $2\text{Sym}(\nabla\tau)$:
$$L_h(\tau) = 2\text{Sym}(\tau)-\tr_h(\nabla\tau)h.$$ 
Since $(\text{Sym}(T^\star\mathbb{S}^2\otimes T^\star\mathbb{S}^2),-\mathcal{P})$ is a three dimensional Lorentzian vector bundle we observe that the sub-bundle $\Xi_h$, corresponding with the $h$-traceless 2-tensors, is two dimensional. Therefore, since the principal symbol of $L_h$ is injective, it is an isomorphism. We conclude that $L_h$ as in-fact an elliptic operator. In \cite{li2020} (Theorem 11), Li-Wang show that the equation (\ref{e6}) is solvable for any $q\in\Gamma(\Xi_h)$. Their argument  combines various results from the invariance of index for perturbations of an elliptic operator, the cohomology of $\mathbb{S}^2$, the Maximal principle, and the unique continuation property of elliptic systems. Consequently, they're able to show that the cokernel of $L_h$ is trivial. Invariance of index then also yields $\dim(\text{Ker}(L_h))=6$. We've included an adaptation of their beautiful argument in an Appendix. Using Theorem \ref{t0}, we provide a different proof. 
\begin{proposition}\label{p4}
Given $h\in\Gamma(\text{Sym}(T^\star\mathbb{S}^2\otimes T^\star\mathbb{S}^2))$ such that $\mathcal{P}(h,h)>0$, then the operator $L_h:\Gamma(T^\star\mathbb{S}^2)\to\Gamma(\Xi_h)$ is surjective, equivalently $\ubar\Xi_h=\{0\}$.
\end{proposition}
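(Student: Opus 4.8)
The plan is to realise $\ubar\Xi_h$ as the cokernel of $L_h$ and then compute the Fredholm index of $L_h$. By Proposition \ref{p3}, every $q\in\Gamma(\Xi_h)$ splits uniquely as $q=L_h(\tau)+a$ with $a\in\ubar\Xi_h$, and uniqueness of the $a$-component makes this a direct sum $\Gamma(\Xi_h)=\mathrm{Im}(L_h)\oplus\ubar\Xi_h$; moreover $\ubar\Xi_h=\mathrm{Ker}(D_h)$ for the adjoint-type (hence likewise elliptic) operator $D_h(a):=\nabla\cdot\big(a-(\tr_\gamma a)\gamma\big)$, so it is finite dimensional. Thus, after passing to Hölder completions, $\mathrm{Coker}(L_h)\cong\ubar\Xi_h$, and since $L_h$ is a first order elliptic operator between the rank-two bundles $T^\star\mathbb{S}^2$ and $\Xi_h$ over the compact surface $\mathbb{S}^2$ (ellipticity as observed in Section 2.2, rank two by Lemma \ref{l3}), it is Fredholm. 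It therefore suffices to prove $\mathrm{index}(L_h)=\dim\mathrm{Ker}(L_h)$.

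For this I would invoke homotopy invariance of the Fredholm index. Using the explicit path constructed above, one connects $h$ to a nonzero multiple $c\gamma$ through tensors $h_t$ with $\mathcal{P}(h_t,h_t)>0$; then $t\mapsto\Xi_{h_t}$ is a continuous family of rank-two subbundles and $t\mapsto L_{h_t}$ a continuous family of elliptic operators, whence $\mathrm{index}(L_h)=\mathrm{index}(L_{c\gamma})=\mathrm{index}(L_\gamma)$, the last step because $L_{c\gamma}=L_\gamma$. Now $L_\gamma$ is metrically the conformal Killing operator: its kernel is the space of conformal Killing $1$-forms, of dimension $(n+1)(n+2)/2=6$ by Corollary \ref{c0}, while its cokernel is $\ubar\Xi_\gamma$, the traceless divergence-free symmetric $2$-tensors on $(\mathbb{S}^2,\gamma)$. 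The conformal invariance of the conformal Killing operator used in the proof of Corollary \ref{c0} reduces $\ubar\Xi_\gamma$ to the round metric, where a Bochner identity --- equivalently, the absence of holomorphic quadratic differentials on $\mathbb{P}^1$ --- forces $\ubar\Xi_\gamma=\{0\}$. Hence $\mathrm{index}(L_\gamma)=6$, so $\mathrm{index}(L_h)=6$.

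Finally, $\dim\mathrm{Ker}(L_h)=\dim\mathrm{Ker}(L_\gamma)=6$ by Theorem \ref{t0} (or directly by Corollary \ref{c0}), so $\dim\mathrm{Coker}(L_h)=6-6=0$. Therefore $L_h$ is surjective, i.e. $\ubar\Xi_h=\{0\}$, which is the assertion.

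I expect the main obstacle to be the bookkeeping in the homotopy step: one must fix Hölder (or Sobolev) completions, verify that $L_{h_t}$ together with the subbundle $\Xi_{h_t}$ depends continuously on $t$ in the relevant operator topology so that the index is locally constant, and confirm that the identification $\mathrm{Coker}(L_h)\cong\ubar\Xi_h$ survives completion. The only other genuinely global input is the vanishing of $\ubar\Xi_\gamma$ on $\mathbb{S}^2$; this is standard, but it is precisely where the topology of the $2$-sphere enters --- the same fact Li-Wang extract instead through the maximum principle and unique continuation.
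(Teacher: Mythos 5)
Your argument is essentially the paper's own proof: identify $\mathrm{Coker}(L_h)\cong\ubar\Xi_h$ via Proposition \ref{p3}, deform $h$ to a (positive function) multiple of $\gamma$ inside $\mathcal{S}$, use invariance of the Fredholm index together with Theorem \ref{t0}/Corollary \ref{c0}, and conclude from the classical vanishing of $\ubar\Xi_\gamma$ on $\mathbb{S}^2$. The one detail you flag as bookkeeping --- that the target bundle $\Xi_{h_t}$ varies along the homotopy --- is resolved in the paper exactly by composing with the fixed $\mathcal{P}$-orthogonal projection $\pi:\text{Sym}(T^\star\mathbb{S}^2\otimes T^\star\mathbb{S}^2)\to\Xi_h$, which restricts to a bundle isomorphism on each $\Xi_{h_t}$, so the family $\pi\circ L_{h_t}$ acts between fixed bundles and the index is constant.
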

\begin{proof}
We again connect the positive definite symmetric 2-tensor $h$ to a multiple of the metric, $\gamma$, by a smooth path $t\to h_t$, within $\mathcal{S}$. For each $t\in[0,1]$, we observe that $L_{h_t}:\Gamma(T^\star\Sigma)\to\Gamma(\Xi_{h_t})$ is an elliptic operator. We also have the $h$-orthogonal $\mathcal{P}$ projection $\pi:\text{Sym}(T^\star\mathbb{S}^2\otimes T^\star\mathbb{S}^2)\to\Xi_h$: 
$$\pi(a) = a-\frac12\tr_h(a)h.$$
We now show $\pi|_{\Xi_{h_t}}$ is injective, thus a bundle isomorphism. Assuming $a = \frac12\tr_h(a)h$, for $a\in\Xi_{h_t}$, we observe $0 =\tr_{h_t}(a)= \frac12\tr_h(a)\tr_{h_t}(h)$. Recalling $\frac12\tr_{h_t}(h) = \frac{\mathcal{P}(h_t,h)}{\mathcal{P}(h_t,h_t)}\neq 0$, we therefore conclude $a = \frac12\tr_h(a)h = 0$, and $\pi$ is injective. The operator $\pi\circ L_{h_t}:\Gamma(T^\star\mathbb{S}^2)\to\Gamma(\Xi_h)$ observes the principal symbol $\sigma_\xi(\pi\circ L_{h_t}) = \pi\circ\sigma_\xi(L_{h_t})$. Since this is a composition of injective linear maps, it is an isomorphism. Consequently, the resulting family of operators $\{\pi\circ L_{h_t}\}_{t\in[0,1]}$ are elliptic, and smoothly connects $L_h$ at $t=0$, to $\pi\circ L_\gamma$ at $t=1$. From ellipticity of $\pi\circ L_{h_t}$ for each $t$, and compactness of $\mathbb{S}^2$, these operators are Fredholm. Therefore, the cokernel:
$$ \text{Coker}(\pi\circ L_{h_t}):=((\pi\circ L_{h_t})(\Gamma(T^\star\mathbb{S}^2)))^\perp,$$
where orthogonality is with respect to the $H^1$ inner product induced by $\gamma$ on $\Xi_h$, is a finite dimensional vector space. Standard results apply on the invariance of index for small perturbations of elliptic operators (see, for example Theorem 19.2.2 \cite{hormander2007analysis}). Therefore, we conclude by a simple continuity argument that $\text{index}(L_h) = \text{index}(\pi\circ L_\gamma)$ whereby:
$$\text{index}(L_h):=\dim(\text{Ker}(L_h))-\dim(\text{Coker}(L_h)).$$
Theorem \ref{t0} now implies $\dim(\text{Coker}(\pi\circ L_\gamma)) = \dim(\text{Coker}(L_h))$. However, it's a well known fact that the divergence operator is injective on the space of traceless 2-tensors on a 2-sphere, specifically, $\ubar\Xi_\gamma=\{0\}$. So Proposition \ref{p3} implies $L_\gamma$ is surjective, and since $\pi$ is a bundle isomorphism, $\pi\circ L_\gamma$ is also surjective. Consequently, $\text{Coker}(L_h) = \{0\}$, and therefore $\ubar\Xi_h = \{0\}$. 

\end{proof}
\begin{corollary}
Given $h\in\Gamma(\text{Sym}(T^\star\mathbb{S}^2\otimes T^\star\mathbb{S}^2))$ such that $\mathcal{P}(h,h)>0$, then,
$$\{a\in\Gamma(\Xi_h)|\nabla\cdot a = 0\}=\{0\}.$$
\end{corollary}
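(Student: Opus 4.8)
The plan is to reduce to Proposition \ref{p4} (the vanishing of $\ubar\Xi_{h'}$ for any $h'$ with $\mathcal{P}(h',h')>0$) by means of the trace-reversal involution on symmetric $2$-tensors. The only subtlety is that the hypothesis here is $\nabla\cdot a = 0$ rather than the defining relation $\nabla\cdot a = d\tr_\gamma a$ of $\ubar\Xi_h$; trace-reversal is precisely the device that interchanges these two conditions while simultaneously carrying $\Xi_h$ to $\Xi_{h'}$ for the appropriate choice of $h'$.

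For a symmetric $2$-tensor $b$ on $\mathbb{S}^2$ write $\tilde b := b - (\tr_\gamma b)\gamma$. First I would record three elementary facts, all special to $\dim = 2$ (where $\tr_\gamma\gamma = 2$): (i) $\tr_\gamma\tilde b = -\tr_\gamma b$ and $\tilde{\tilde b} = b$, so $b\mapsto\tilde b$ is a fiberwise linear involution; (ii) $\mathcal{P}(\tilde b_1,\tilde b_2) = \mathcal{P}(b_1,b_2)$, i.e. the involution is a fiberwise $\mathcal{P}$-isometry, which one checks directly in a $\gamma$-orthonormal frame, or from the identities $\tr_\gamma\tilde b = (1-n)\tr_\gamma b$ and $\langle\tilde b_1,\tilde b_2\rangle = \langle b_1,b_2\rangle + (n-2)\tr_\gamma b_1\tr_\gamma b_2$; (iii) $\nabla\cdot\tilde b = \nabla\cdot b - d\tr_\gamma b$, since $\nabla\cdot(f\gamma) = df$ for every function $f$.

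Now let $a\in\Gamma(\Xi_h)$ satisfy $\nabla\cdot a = 0$, and set $h':=\tilde h$. By (ii), $\mathcal{P}(h',h') = \mathcal{P}(h,h)>0$, so Proposition \ref{p4} applies to $h'$. Again by (ii), $\mathcal{P}(\tilde a,h') = \mathcal{P}(\tilde a,\tilde h) = \mathcal{P}(a,h) = 0$, hence $\tilde a\in\Gamma(\Xi_{h'})$; and by (iii) together with $\nabla\cdot a = 0$ we obtain $\nabla\cdot\tilde a = -d\tr_\gamma a = d\tr_\gamma\tilde a$, so in fact $\tilde a\in\ubar\Xi_{h'}$. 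Proposition \ref{p4} forces $\tilde a = 0$, and applying the involution (i) yields $a = \tilde{\tilde a} = 0$.

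Everything here is routine and I do not anticipate a real obstacle. The one idea is the passage to $h' = \tilde h$, and it is essentially forced: to invoke Proposition \ref{p4} one needs a smooth section $h'$ with $\mathcal{P}(h',h')>0$ that is Euclidean-orthogonal to $a$ pointwise, and $\tilde h$ supplies exactly this — it is automatically smooth, and, because $a\in\Xi_h$, automatically $\mathcal{P}$-orthogonal to $\tilde a$. The only point deserving care is that identities (i) and (ii) are genuinely two-dimensional phenomena, but that is consistent with the hypotheses of Proposition \ref{p4} anyway.
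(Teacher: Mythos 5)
Your proposal is correct and follows essentially the same route as the paper: the paper's proof also reduces to Proposition \ref{p4} via trace reversal, implemented there through the volume-form conjugation $\varepsilon(b)=(\tr_\gamma b)\gamma-b=-\tilde b$, with the auxiliary tensor $\varepsilon(h)_{ij}=\tfrac12\mathcal{P}(h,h)(h^{-1})_{ij}$ playing the role of your $\tilde h$ (the two differ only by a sign and a positive scalar factor, hence determine the same bundle $\Xi$ and the same subset $\ubar\Xi$). The only cosmetic difference is that you verify the required orthogonality through the pointwise $\mathcal{P}$-isometry of the involution, whereas the paper uses the identities $\langle\varepsilon(h),a\rangle=\langle h,\varepsilon(a)\rangle$ and $\varepsilon(\Xi_h)=\Xi_{h^{-1}}$.
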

\begin{proof}
For a given metric $\gamma$, in a local coordinate neighborhood $(x^i,\mathcal{U})$, we recall the volume 2-form on the 2-sphere is given by $\varepsilon_\gamma:=\sqrt{\det\gamma}dx^1\wedge dx^2$. We also recall the famous identity:
 \begin{equation}\label{i0}
 \varepsilon^{ij}\varepsilon_{kl} =\delta^i_k\delta^j_l-\delta^j_k\delta^i_l.
 \end{equation}
Given $a\in\text{Sym}(T^\star\mathbb{S}^2\otimes T^\star\mathbb{S}^2)$, we define the map $\varepsilon: \text{Sym}(T^\star\mathbb{S}^2\otimes T^\star\mathbb{S}^2) \to \text{Sym}(T^\star\mathbb{S}^2\otimes T^\star\mathbb{S}^2)$, by $\varepsilon(a)_{ij} = a_{kl}\varepsilon_i\,^k\varepsilon_j\,^l$. This induces a bundle isomorphism since $\varepsilon\circ\varepsilon = Id$. We also observe
$$\varepsilon(a) = (\tr_\gamma a)\gamma-a.$$
We recognize therefore, with the help of a coordinate neighborhood, that $\frac12\mathcal{P}(h,h)(h^{-1})_{ij} = \varepsilon(h)_{ij}$. Since $\mathcal{P}(h,h)>0$ identifies $h\in \Gamma(\text{Sym}(T^\star\mathbb{S}^2\otimes T^\star\mathbb{S}^2)) $ as either globally positive, or negative definite, we also have $\mathcal{P}(h^{-1},h^{-1})>0$ (in other words, matrix inversion leaves $\mathcal{S}_-,\mathcal{S}_+$ invariant). Now for $a\in\Xi_h$, it therefore follows that $0=\langle\varepsilon(h),a\rangle =\langle h,\varepsilon(a)\rangle = \langle (h^{-1})^{-1},\varepsilon(a)\rangle=\tr_{h^{-1}}(\varepsilon(a))$, namely $\varepsilon(\Xi_h) = \Xi_{h^{-1}}$. So for $b\in\Gamma(\Xi_h)$ satisfying $\nabla\cdot b = 0$, we observe $0=\nabla\cdot(\varepsilon(\varepsilon(b))) = \nabla\cdot((\tr_\gamma (\varepsilon(b))\gamma-\varepsilon(b))=d\tr_\gamma(\varepsilon(b))-\nabla\cdot\varepsilon(b)$. By Proposition \ref{p4}, $\varepsilon(b)\in\ubar\Xi_{h^{-1}}=\{0\}$.

\end{proof}
 \subsection{Openness I}
This section is largely an adaptation of the openness argument due to Lin-Wang in \cite{linwang1} to our more general setting. We present it here not only to incorporate our earlier results, but for the reader to gain an intimate understanding of the procedure that will be involved in later sections.\\\\ 
\indent We assume the existence of an embedding, $B:I_0\times \mathbb{S}^n\to\mathcal{M}$, for an interval $I_0 = (s_0-\epsilon,s_0+\epsilon)$, such that $0<\epsilon<s_0$. We wish to identify a diffeomorphism of $I_0\times\mathbb{S}^n$ with an annulus in $\mathbb{R}^{n+1}$. It will help to introduce a global coordinate system $(z^\mu)$ for $\mathbb{R}^{n+1}$, along with the standard Euclidean norm $|\vec{z}|:=\sqrt{\sum_\mu(z^\mu)^2}$. We take the standard embedding to the $s_0$-radius sphere $\mathbb{S}^n\hookrightarrow_{\iota}\mathbb{R}^{n+1}$, whereby $|\vec{z}|\circ\iota(p) = s_0$ for $p\in\mathbb{S}^n$. This then identifies a diffeomorphism onto the annulus, $\mathcal{A}:=\{\vec{z}\in\mathbb{R}^{n+1}|\sqrt{\sum_\mu(z^\mu)^2}\in I_0\}$, through the mapping $\Phi:I_0\times\mathbb{S}^n\to \mathcal{A}$, whereby $\Phi(s,p) = s\frac{\vec{z}}{|\vec{z}|}\circ\iota(p)\in\mathcal{A}$. The resulting map $\tilde B = B\circ \Phi^{-1}:\mathcal{A}\to\mathcal{M}$ allows us to pull-back the metric $g$ to $\mathcal{A}$, we will denote this pull-back $\sigma = \tilde B^\star(g)$.\\ 
\indent As in the previous section, we again assume the existence of a mapping $F:I\times\mathbb{S}^n\to\mathcal{M}$, such that $F_\lambda:\mathbb{S}^n\hookrightarrow\mathcal{M}$ for each $\lambda\in I$, moreover, we assume that $F$ factors through $\mathcal{A}$. Specifically, we have a mapping $R:I\times\mathbb{S}^n\to \mathcal{A}$ such that $F = \tilde B\circ R$, inducing a family of embeddings $\{R_\lambda:\mathbb{S}^n\hookrightarrow\mathcal{A}\}$. It will be helpful to distinguish $R_0$ so we denote instead $r:=R_0$. We recall that $dF(\partial_\lambda)|_{F_0(\mathbb{S}^n)}\in\Gamma(T^\perp F_0(\mathbb{S}^n))$, inducing also that $\upsilon:=dR(\partial_\lambda)|_{r(\mathbb{S}^n)}\in\Gamma(T^\perp r(\mathbb{S}^n))$. We also recall the `second fundamental form', $h: = \pounds_{\partial_\lambda}(F^\star(g))|_{\{\lambda = 0\}}=\pounds_{\partial_\lambda}(R^\star(\sigma))|_{\{\lambda = 0\}}$.\\
\indent Both embeddings $\mathbb{S}^n\hookrightarrow_{F_0}(\mathcal{M},g)$ and $\mathbb{S}^n\hookrightarrow_{r}(\mathcal{A},\sigma)$ induce the same metric $\gamma$ on $\mathbb{S}^n$. In this section, our strategy will be to deal directly with $(\mathcal{A},\sigma)$ to investigate when sufficiently small perturbations of the metric $\gamma$, denoted $\tilde\gamma$, can be isometrically embedded into $(\mathcal{A},\sigma)$, thus $(\mathcal{M},g)$. At every point of $q\in\mathcal{A}\subset\mathbb{R}^{n+1}$, we have a natural isomorphism between $\mathbb{R}^{n+1}$ and $T_q\mathbb{R}^{n+1}$. 
Therefore, it will be convenient when the context is clear, albeit an abuse of notation, to not distinguish between points, vectors, or coordinate representations thereof. We will signify this abuse with the use of an $\vec{a}$\textit{rrow}.\\
\indent We observe in a local coordinate neighborhood $(x^i,\mathcal{U})$ of $p\in\mathbb{S}^n$:
 $$r^\star(\sigma) = \gamma\iff\sigma_{\mu\nu}(\vec{r})\frac{\partial r^\mu}{\partial x^i}\frac{\partial r^\nu}{\partial x^j}=\gamma_{ij}.$$  
Following the Nirenberg approach \cite{nirenberg1}, if $\tilde\gamma$ represents a perturbation of $\gamma$, we wish to investigate the existence of some vector field $\vec{y}:\mathbb{S}^n\to\mathbb{R}^{n+1}$, such that
 $$\sigma_{\mu\nu}(\vec{r}+\vec{y})\frac{\partial(r^\mu+y^\mu)}{\partial x^i}\frac{\partial (r^\nu+y^\nu)}{\partial x^j}=\tilde{\gamma}_{ij}.$$
Writing, $\vec{y} = \tau^i\vec{r}_i+\phi\vec{\upsilon}$, whereby $\vec{r}_i:=\frac{\partial r^\mu}{\partial x^i}\partial_\mu$, we decompose this expression:
\begin{align*}
\tilde{\gamma}_{ij}-\gamma_{ij} &= \sigma_{\mu\nu}(\vec{r})(r^\mu_i y^\nu_j+r^\nu_j y^\mu_i)+\sigma_{\mu\nu}(\vec{r})y^\mu_i y^\nu_j+\Big(\sigma_{\mu\nu}(\vec{r}+\vec{y})-\sigma_{\mu\nu}(\vec{r})\Big)(r^\mu_i r^\nu_j+r^\mu_i y^\nu_j+r^\mu_j y^\nu_i+y^\mu_i y^\nu_j)\\
&=(\tau^m,_{j}\gamma_{mi}+\tau^m,_{i}\gamma_{mj})+\tau^m\sigma_{\mu\nu}(\vec{r})(r^\mu_i r^\nu_{mj}+r^\nu_j r^\mu_{mi})+\phi\sigma_{\mu\nu}(\vec{r})(r^\mu_i \upsilon^\nu_j+r^\nu_j \upsilon^\mu_i)\\
&\qquad+ \sigma_{\mu\nu}(\vec{r})y^\mu_i y^\nu_j+\Big(\sigma_{\mu\nu}(\vec{r}+\vec{y})-\sigma_{\mu\nu}(\vec{r})\Big)(r^\mu_i r^\nu_j+r^\mu_i y^\nu_j+r^\nu_j y^\mu_i+y^\mu_i y^\nu_j),
\end{align*}
having used $\vec{\upsilon}\in\Gamma(T^\perp r(\mathbb{S}^n))$ in the second equality. Concentrating on the second term of the second equality:
\begin{align*}
\tau^m\sigma_{\mu\nu}(\vec{r})(r^\mu_i r^\nu_{mj}+r^\nu_j r^\mu_{mi})&=\tau^m\sigma_{\mu\nu}(\vec{r})\partial_m(r^\mu_i r^\nu_j)=\tau^m\gamma_{ij,m}-\tau^m\sigma_{\mu\nu,\eta}(\vec{r})r^\mu_i r^\nu_j r^\eta_m\\
&=\tau^m\gamma_{ij,m}-\sigma_{\mu\nu,\eta}(\vec{r})r^\mu_i r^\nu_j y^\eta+\phi\sigma_{\mu\nu,\eta}(\vec{r})r^\mu_i r^\nu_j \upsilon^\eta.
\end{align*}
We now observe,
\begin{align*}
h_{ij}&=\pounds_{\partial_\lambda}(R^\star(\sigma))(\partial_i,\partial_j)\circ r\\
&=\partial_\lambda((\sigma_{\mu\nu}\circ R)R^\mu_i R^\nu_j)\circ r\\
&=\sigma_{\mu\nu,\eta}(\vec{r})r^\mu_i r^\nu_j R^\eta_\lambda\circ r+\sigma_{\mu\nu}(\vec{r})r^\nu_j R^\mu_{\lambda i} \circ r+\sigma_{\mu\nu}(\vec{r})r^\mu_i R^\nu_{\lambda j}\circ r\\
&=\sigma_{\mu\nu,\eta}(\vec{r})r^\mu_i r^\nu_j\upsilon^\eta+\sigma_{\mu\nu}(\vec{r})(\upsilon^\mu_i r^\nu_j+r^\mu_i\upsilon^\nu_j),
\end{align*}
putting everything together, we conclude:
\begin{align*}
\tilde{\gamma}_{ij}-\gamma_{ij} &= \Big(\nabla_i\tau_j+\nabla_j\tau_i+\phi h_{ij}\Big)+ \sigma_{\mu\nu}(\vec{r})y^\mu_i y^\nu_j-\partial_\eta(\sigma_{\mu\nu})r^\mu_i r^\nu_j y^\eta\\
&\qquad+\Big(\sigma_{\mu\nu}(\vec{r}+\vec{y})-\sigma_{\mu\nu}(\vec{r})\Big)(r^\mu_i r^\nu_j+r^\mu_i y^\nu_j+r^\nu_j y^\mu_i+y^\mu_i y^\nu_j).
\end{align*}
From the Taylor approximation theorem:
\begin{align*}	
\sigma_{\mu\nu}(\vec{r}+\vec{y}) &= \sigma_{\mu\nu}(\vec{r})+y^\eta\int_0^1\partial_\eta\sigma_{\mu\nu}(\vec{r}+t\vec{y})dt\\
&= \sigma_{\mu\nu}(\vec{r})+\partial_\eta\sigma_{\mu\nu}(\vec{r})y^\eta+ y^\eta y^\xi\int_0^1(1-t)\partial^2_{\mu\nu}\sigma_{\eta\xi}(\vec{r}+t\vec{y})dt
\end{align*}
from which we conclude, in the notation of Li-Wang,
\begin{align}\label{e10}
\nabla_i\tau_j+\nabla_j\tau_i +\phi h_{ij} &= \tilde{\gamma}_{ij}-\gamma _{ij}-\sigma_{\mu\nu}(\vec{r})y^\mu_i y^\nu_j-y^\eta y^\xi r^\mu_i r^\nu_j F_{\mu\nu\eta\xi}(\vec{r},\vec{y})\\
&\qquad-G_{\mu\nu\eta}(\vec{r},\vec{y})y^\eta(r^\mu_i y^\nu_j+r^\nu_j y^\mu_i+y^\mu_i y^\nu_j)\notag,
\end{align}
whereby,
$$F_{\mu\nu\eta\xi}(\vec{r},\vec{y}):=\int_0^1(1-t)\partial^2_{\eta\xi}\sigma_{\mu\nu}(\vec{r}+t\vec{y})dt,\,\,\,G_{\mu\nu\eta}(\vec{r},\vec{y}):=\int_0^1\partial_\eta\sigma_{\mu\nu}(\vec{r}+t\vec{y})dt.$$
We may alternatively denote equation (\ref{e10}): $\nabla_i\tau_j+\nabla_j\tau_i +\phi h_{ij}=\tilde q(\vec{y},\nabla\vec{y})_{ij}$. Regarding the existence of $\vec{y} = \tau^i\vec{r}_i+\phi\vec{\upsilon}$, we have:
\begin{theorem}\label{t1}
Consider,
\begin{align*}
h\in C^{2,\beta}(\text{Sym}(T^\star\mathbb{S}^n\otimes T^\star\mathbb{S}^n)),\,\,\, \gamma &\in C^{3,\beta}(\text{Sym}(T^\star\mathbb{S}^n\otimes T^\star\mathbb{S}^n)),\,\,\,\tilde\gamma \in C^{2,\beta}(\text{Sym}(T^\star\mathbb{S}^n\otimes T^\star\mathbb{S}^n)),\\
\vec{\upsilon}&\in C^{2,\beta}(\mathbb{S}^n,\mathbb{R}^{n+1}),\,\,\,\vec{r}\in C^{3,\beta}(\mathbb{S}^n,\mathbb{R}^{n+1}).
\end{align*}
Then, provided $\mathcal{P}(h,h)>0$, there exists $\epsilon,\delta>0$, such that any $||\tilde\gamma-\gamma||_{2,\beta}\leq \epsilon$  gives rise to unique $\vec{y}\in C^{2,\beta}(\mathbb{S}^n,\mathbb{R}^{n+1})$, $||\vec{y}||_{2,\beta}\leq \delta$, $\tau(\vec{y})\in C^{2,\beta}(T^\star\mathbb{S}^n)$, and $a(\vec{y})\in C^{1,\beta}(\text{Sym}(T^\star\mathbb{S}^n\otimes T^\star\mathbb{S}^n))$ such that:
\begin{align*}
\tilde\gamma&+a(\vec{y})= (r+y)^\star(\sigma),\\
\vec{y} &= d\vec{r}(\tau^\# (\vec{y}))+\phi(\vec{y})\vec{\upsilon},
\end{align*}
whereby:
\begin{itemize}
\item $\mathcal{P}(a,h)$ = 0, $\nabla\cdot a = d\tr_\gamma a.$
\item $\phi(\vec{y}) = \frac{\mathcal{P}(\tilde q(\vec{y},\nabla\vec{y})-2\text{Sym}(\nabla\tau(\vec{y})),h)}{\mathcal{P}(h,h)}$, for $q(\vec{y},\nabla\vec{y}) = \tilde q(\vec{y},\nabla\vec{y}) -\frac{\mathcal{P}(\tilde q(\vec{y},\nabla\vec{y}),h)}{\mathcal{P}(h,h)}h$, and $\tilde q(\vec{y},\nabla\vec{y})$ in (\ref{e10}).
\end{itemize}

\end{theorem}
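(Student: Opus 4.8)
The plan is to recast the nonlinear system (\ref{e10}) as a fixed-point equation for $\vec y$ and solve it by a contraction on a small closed ball in $C^{2,\beta}(\mathbb{S}^n,\mathbb{R}^{n+1})$, following the same pattern as the contraction argument in the proof of Theorem \ref{t0}. Given a candidate $\vec y$ of small $C^{2,\beta}$-norm, I would first assemble the source $\tilde q=\tilde q(\vec y,\nabla\vec y)$ on the right of (\ref{e10}); using the Lorentzian pairing $\mathcal{P}$ of Definition \ref{d1}, split off its $h$-parallel part to obtain $q:=\tilde q-\tfrac{\mathcal{P}(\tilde q,h)}{\mathcal{P}(h,h)}h\in\Gamma(\Xi_h)$ and record $\phi:=\tfrac{\mathcal{P}(\tilde q-2\,\text{Sym}(\nabla\tau),h)}{\mathcal{P}(h,h)}$; apply Proposition \ref{p3} to $q$ to get the unique decomposition $q=L_h(\tau)+a$ with $a\in\ubar\Xi_h$ and $\tau\perp\text{Ker}(L_h)$ in $L^2(\gamma)$; and then set $\Phi(\vec y):=d\vec r(\tau^\#)+\phi\,\vec\upsilon$. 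Since $2\,\text{Sym}(\nabla\tau)+\phi h=\tilde q-a$ by construction, tracing back through (\ref{e10}) shows that a fixed point $\vec y=\Phi(\vec y)$ is exactly a $\vec y=d\vec r(\tau^\#)+\phi\vec\upsilon$ with $\mathcal{P}(a,h)=0$, $\nabla\cdot a=d\tr_\gamma a$, and $\tilde\gamma+a=(r+y)^\star(\sigma)$; the uniqueness of the fixed point gives the uniqueness clause.

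The estimates split into two halves. On the linear side, $\mathcal{P}(h,h)>0$ makes $\mathcal{L}_h$ strongly elliptic, so the Schauder-plus-compactness argument already carried out for the family $\mathcal{L}_{h_t}$ in the proof of Theorem \ref{t0} supplies, for the normalized solution, a bound $\|\tau\|_{2,\beta}\le C(\|h\|_{2,\beta},\|\gamma\|_{3,\beta})\|q\|_{1,\beta}$, and hence control of $\|a\|_{1,\beta}$ and $\|\phi\|_{1,\beta}$ in terms of $\|\tilde q\|_{1,\beta}$; here the elliptic solve of Proposition \ref{p3} is what recovers the derivative spent in building $\tilde q$ out of $\nabla\vec y$. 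On the nonlinear side, the part of $\tilde q$ beyond $\tilde\gamma-\gamma$ vanishes to second order in $(\vec y,\nabla\vec y)$ and $\sigma$ is smooth on $\mathcal{A}$, so expanding the $F$- and $G$-remainders gives the quadratic smallness $\|\tilde q(\vec y,\nabla\vec y)-(\tilde\gamma-\gamma)\|_{1,\beta}\le C(\|\vec r\|_{3,\beta})\|\vec y\|_{2,\beta}^2$ together with the Lipschitz bound $\|\tilde q(\vec y_1,\cdot)-\tilde q(\vec y_2,\cdot)\|_{1,\beta}\le C(\|\vec y_1\|_{2,\beta}+\|\vec y_2\|_{2,\beta})\|\vec y_1-\vec y_2\|_{2,\beta}$. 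Combining, $\Phi$ maps the ball of radius $\delta$ into itself once $\|\tilde\gamma-\gamma\|_{2,\beta}\le\epsilon$ with $C(\epsilon+\delta^2)\le\delta$, and contracts once $C\delta<1$; fixing $\delta$ and then $\epsilon$ small, Banach's fixed-point theorem delivers the unique $\vec y$, with the stated regularities of $\tau(\vec y)$, $\phi(\vec y)$, $a(\vec y)$ read off from the construction, followed if necessary by a short bootstrap of $\vec y$ through the identity $\tilde\gamma+a=(r+y)^\star(\sigma)$.

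The hard part is precisely this regularity bookkeeping, which is why the hypotheses insist on one extra derivative for $\gamma$ and $\vec r$. The linearized embedding operator $(\tau,\phi)\mapsto 2\,\text{Sym}(\nabla\tau)+\phi h$ is genuinely of mixed type: its principal symbol is injective only in the tangential directions, so the normal coefficient $\phi$ is recovered purely algebraically from $\tilde q$ and $\nabla\tau$ and gains no derivative, whereas $\tilde q$ itself costs one through its dependence on $\nabla\vec y$. The mechanism that makes the loop close is the elliptic gain inside Proposition \ref{p3} — because $\mathcal{P}(h,h)>0$, the operator $L_h$ (equivalently $\mathcal{L}_h$) is elliptic, so $L_h(\tau)=q$ returns $\tau$ one degree smoother than $q$ — and the residual $a\in\ubar\Xi_h$, which is unavoidable when $n\ge 3$ but is $\equiv 0$ when $n=2$ by Proposition \ref{p4}, does no harm since it is simply absorbed into the target metric. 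Making every constant in these estimates uniform over the iteration, in particular in the nonlinear remainder terms and exactly as in the compactness step of Theorem \ref{t0}, is where the real work lies; the remainder is the standard small-data contraction scheme, and uniqueness is automatic from it.
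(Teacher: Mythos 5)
Your fixed-point scheme is set up exactly as in the paper, but it has a genuine gap at the point you yourself identify and then wave away: the regularity of $\phi$. In your iteration, $\vec y\in C^{2,\beta}$ gives $\tilde q(\vec y,\nabla\vec y)\in C^{1,\beta}$, the elliptic solve returns $\tau\in C^{2,\beta}$, but $\phi$ is recovered \emph{algebraically} from $\tilde q$ and $\nabla\tau$, both only $C^{1,\beta}$, so $\phi\in C^{1,\beta}$ and the normal part $\phi\,\vec\upsilon$ of $\Phi(\vec y)$ is only $C^{1,\beta}$. Hence $\Phi$ maps the $C^{2,\beta}$ ball into $C^{1,\beta}$, not into itself, and the "elliptic gain inside Proposition \ref{p3}" that you invoke to close the loop only helps the tangential component $\tau$; it does nothing for $\phi$. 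Running the contraction one rung lower (in $C^{1,\beta}$) does not help either, since then $\tilde q\in C^{0,\beta}$ and the same loss reappears, and the Schauder estimate behind Proposition \ref{p3} needs $q\in C^{1,\beta}$ in the first place. A terminal "bootstrap through $\tilde\gamma+a=(r+y)^\star(\sigma)$" cannot substitute, because the self-mapping property must hold at every step of the iteration, before any fixed point exists.

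The missing ingredient is precisely the paper's variant of Nirenberg's trick. Writing $2\,\text{Sym}(\nabla\tau)+\phi h=\tilde q+a$ into the linearized scalar curvature identity produces the second-order equation (\ref{e12}),
\begin{equation*}
\nabla\cdot\nabla\cdot\bigl(\phi\bigl((\tr_\gamma h)\gamma-h\bigr)\bigr)=2\nabla\cdot(\text{Ric}(\tau))+\nabla\cdot\nabla\cdot\bigl((\tr_\gamma\tilde q)\gamma-\tilde q\bigr),
\end{equation*}
which is uniformly elliptic in $\phi$ because $\mathcal{P}(h,h)>0$ forces $\tr_\gamma h>|h|$, and whose right-hand side — this is the structural point — contains no derivatives of $\vec y$ of order higher than two, by the cancellation of the top-order terms under the double divergence (the computation with $S^1,S^2$ in the paper). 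Schauder estimates for (\ref{e12}) then give $\phi(\vec z)\in C^{2,\beta}$ with the bound (\ref{e13}), and only with this does $\mathcal{C}:C^{2,\beta}\to C^{2,\beta}$ hold so that your contraction and Lipschitz estimates (which are otherwise the right ones, matching (\ref{e14})--(\ref{e16})) apply. Note also that since the data are only $C^{2,\beta}/C^{3,\beta}$, the paper has to justify using (\ref{e12}) by smoothing $\vec z$ and passing to the limit in a weaker H\"older norm; that approximation step, together with the $\gamma\in C^{3,\beta}$, $\vec r\in C^{3,\beta}$ hypotheses feeding the coefficients $\text{Ric}$ and $h$, is exactly the bookkeeping your proposal attributes to the wrong mechanism.
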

\begin{proof}
As in \cite{li2020}, we construct a contraction mapping. We do so by taking any $\vec{z}\in C^{2,\beta}(\mathbb{S}^n,\mathbb{R}^{n+1})$, then for $\tilde q(\vec{z},\nabla\vec{z})\in C^{1,\beta}(\text{Sym}(T^\star\mathbb{S}^n\otimes T^\star\mathbb{S}^n))$, we solve the equation:
$$\mathcal{L}_h(\tau) = \nabla\cdot q(\vec{z},\nabla\vec{z}) - d\tr_\gamma q(\vec{z},\nabla\vec{z})\in C^{0,\beta}(T^\star\mathbb{S}^n),$$
yielding $\tau(\vec{z})\in C^{2,\beta}(T^\star\mathbb{S}^n)$ by elliptic regularity. As in Theorem \ref{t0}, for any $\tau\in \text{Ker}(\mathcal{L}_h)^\perp$:
$$||\tau||_{2,\beta}\leq C(||h||_{1,\beta},||\gamma||_{2,\beta})||\mathcal{L}_h(\tau)||_{0,\beta}.$$
\indent So, for a solution $\tau(\vec{z})\in\text{Ker}(\mathcal{L}_h)^\perp\cap C^{2,\beta}(T^\star\mathbb{S}^n)$ we observe the estimate
\begin{equation}\label{e11}
||\tau(\vec{z})||_{2,\beta}\leq C(||h||_{1,\beta},||\gamma||_{2,\beta})||\tilde q(\vec{z},\nabla\vec{z})||_{1,\beta}.
\end{equation}
We recall, along with $\phi(\vec{z}) = \frac{\mathcal{P}(\tilde q(\vec{z},\nabla\vec{z})-2\text{Sym}(\nabla\tau(\vec{z})),h)}{\mathcal{P}(h,h)}\in C^{1,\beta}(\mathbb{S}^n)$, the  pair $(\tau(\vec{z}),\phi(\vec{z}))$ solves the linearized embedding equation: 
$$2\text{Sym}(\nabla\tau(\vec{z}))+\phi(\vec{z})h = \tilde q(\vec{z},\nabla\vec{z})+a(\vec{z}),$$
upto some unique $a(\vec{z})\in C^{1,\beta}(\text{Sym}(T^\star\mathbb{S}^n\otimes T^\star\mathbb{S}^n))$ satisfying $\mathcal{P}(a,h)=0$, $\nabla\cdot a = d\tr_\gamma a$. Currently, we have a mapping $\mathcal{C}:C^{2,\beta}(\mathbb{S}^n,\mathbb{R}^{n+1})\to C^{1,\beta}(\mathbb{S}^n,\mathbb{R}^{n+1})$, given by
\begin{align*}
\mathcal{C}(\vec{z})&= d\vec{r}(\tau^\#(\vec{y}))+\phi(\vec{z})\vec{\upsilon}\\
&=\tau(\vec{z})^i\vec{r}_i+\phi(\vec{z})\vec{\upsilon},
\end{align*}
in local coordinates. Our result will follow as soon as we can show $\mathcal{C}$ is in-fact a contraction mapping on the appropriate domain. Therefore, our first goal is to show $\phi(\vec{z})\in C^{2,\beta}(\mathbb{S}^n)$ so that $\mathcal{C}: C^{2,\beta}(\mathbb{S}^n,\mathbb{R}^{n+1})\to C^{2,\beta}(\mathbb{S}^n,\mathbb{R}^{n+1})$. In order to do so, we use a variant of Nirenberg's trick that generalizes the procedure noticed by Li-Wang in \cite{li2020}. \\\\
\indent We recall for a variation of the metric $\gamma$, denoted $\delta\gamma$, the Scalar Curvature varies according to
$$
\delta R =\langle \text{Ric},\delta\gamma\rangle+\nabla\cdot\nabla\cdot(\delta\gamma - (\tr_\gamma\delta\gamma)\gamma).$$
We temporarily assume smooth data $\{\tau,\phi,\tilde q,a\}$, and take $2\text{Sym}(\nabla\tau)+\phi h = \delta\gamma = \tilde q+a$, for some $a\in\ubar\Xi_h$. The following equation results:
$$\nabla\cdot\nabla\cdot(\phi((\tr_\gamma h)\gamma-h)) = 2\nabla\cdot\nabla\cdot(\text{Sym}(\nabla\tau)-(\nabla\cdot\tau)\gamma)+\nabla\cdot\nabla\cdot((\tr_\gamma\tilde q)\gamma-\tilde q).$$
We calculate:
\begin{align*}
2\nabla\cdot\nabla\cdot\text{Sym}(\nabla\tau)&=\gamma^{ij}\gamma^{mn}\nabla_i\nabla_m(\nabla_j\tau_n+\nabla_n\tau_j)\\
&=\gamma^{ij}\gamma^{mn}\nabla_i\nabla_m\nabla_j\tau_n+\gamma^{ij}\gamma^{mn}\nabla_m\nabla_i\nabla_n\tau_j+\gamma^{ij}\gamma^{mn}( R_n\,^r\,_{mi}\nabla_r\tau_j+R_j\,^r\,_{mi}\nabla_n\tau_r)\\
&=2\gamma^{ij}\gamma^{mn}\nabla_i(\nabla_j\nabla_m\tau_n+R_{nrjm}\tau^r)\\
&=2\Delta(\nabla\cdot\tau)-2\nabla\cdot(\text{Ric}(\tau)).
\end{align*}
From this we conclude that $\phi$ satisfies the equation:
\begin{equation}\label{e12}
\nabla\cdot\nabla\cdot(\phi\Big((\tr_\gamma h)\gamma-h\Big)) = 2\nabla\cdot(\text{Ric}(\tau))+\nabla\cdot\nabla\cdot\Big((\tr_\gamma \tilde q)\gamma-\tilde q\Big).
\end{equation}
If we regard (\ref{e12}) as an equation on $\phi$, multiplying with the appropriate sign to induce $\tr_\gamma h>0$, we see this equation observes the principal symbol:
$$\Big((\tr_\gamma h)|\xi|^2-h(\xi,\xi)\Big)\phi\geq\Big((\tr_\gamma h)-|h|\Big)|\xi|^2\phi.$$
Since $\mathcal{P}(h,h)>0$ gives $(\tr_\gamma h)>|h|$, we conclude that (\ref{e12}) is a uniformly elliptic equation acting on $\phi$. We wish to use (\ref{e12}) on our data, $\{\tau(\vec{z}),\phi(\vec{z}),\tilde q(\vec{z},\nabla\vec{z})\}$, to improve the regularity of $\phi(\vec{z})$. Before we do so, we can already see that the operation induced upon $\tau(\vec{z})$ will yield a term $\nabla\cdot(\text{Ric}(\tau(\vec{z})))\in C^{0,\beta}(\mathbb{S}^n)$ (since $\gamma\in C^{3,\beta}(\text{Sym}(T^\star\mathbb{S}^n\otimes T^\star\mathbb{S}^n))$. However, since $\vec{z}\in C^{2,\beta}(\mathbb{S}^n,\mathbb{R}^{n+1})$, it appears that the operation induced upon $\tilde q(\vec{z},\nabla\vec{z})\in C^{1,\beta}(\text{Sym}(T^\star\mathbb{S}^n\otimes T^\star\mathbb{S}^n))$ is ill-defined. What saves us here can be observed with the help of some $\vec{y}\in C^3(\mathbb{S}^n,\mathbb{R}^{n+1})$. Specifically, the operation acting on $\tilde q(\vec{y},\nabla\vec{y})$ does not result in any derivatives of $\vec{y}$ larger than two. To see this we take a local coordinate neighborhood $(x^i,\mathcal{U})$, and observe from (\ref{e10}), $\tilde q(\vec{y},\nabla\vec{y})_{ij} = S_{\mu\nu}^1(\vec{y})\big(y^\mu_i R^\nu_j(\vec{y})+y^\mu_j R^\nu_i(\vec{y})\big)+S_{\mu\nu}^2(\vec{y})y^\mu_i y^\nu_j+S^3_{ij}(\vec{y})$ for some symmetric 2-tensors $S^1,S^2$ on $\mathbb{R}^{n+1}$, and $S^3$ on $\mathbb{S}^n$. The highest order derivatives on $\vec{y}$ in the quantity $\nabla\cdot\nabla\cdot((\tr_\gamma\tilde q(\vec{y},\nabla\vec{y}))\gamma - \tilde q(\vec{y},\nabla\vec{y}))$ take the form:
\begin{enumerate}
\item Terms contracted with $S^1$:
\begin{align*}
&\gamma^{mn}\gamma^{kl}\Big(\gamma^{ij}\gamma_{nl}\partial_m\partial_k\big(S^1_{\mu\nu}(\vec{y})(y^\mu _iR^\nu _j(\vec{y})+y_j^\mu R^\nu _i(\vec{y}))\big)-\partial_m\partial_k\big(S^1_{\mu\nu}(\vec{y})(y^\mu _nR^\nu _l(\vec{y})+y_l^\mu R^\nu _n(\vec{y}))\big)\Big)\\
&=2\gamma^{mk}\gamma^{ij} S^1_{\mu\nu}(\vec{y})(\partial_m\partial_k y^\mu_i)R^\nu_j(\vec{y})-\gamma^{mn}\gamma^{kl}S^1_{\mu\nu}(\vec{y})\big((\partial_m\partial_k y^\mu_n)R^\nu_l(\vec{y})+(\partial_m\partial_k y^\mu_l) R^\nu_n(\vec{y})\big)\\
&\qquad\qquad+R_1(\nabla^2\vec{y},\nabla\vec{y},\vec{y})\\
&=2\gamma^{mk}\gamma^{ij} S^1_{\mu\nu}(\vec{y})(\partial_m\partial_k y^\mu_i)R^\nu_j(\vec{y})-\gamma^{mn}\gamma^{kl}S^1_{\mu\nu}(\vec{y})\big((\partial_m\partial_ny^\mu_k)R^\nu_l(\vec{y})+(\partial_k\partial_ly^\mu_m)R^\nu_n(\vec{y})\big)\\
&\qquad\qquad +R_1(\nabla^2\vec{y},\nabla\vec{y},\vec{y})\\
&=R_1(\nabla^2\vec{y},\nabla\vec{y},\vec{y})
\end{align*}
\item Terms contracted with $S^2$:
\begin{align*}
&\gamma^{mn}\gamma^{kl}\Big(\gamma^{ij}\gamma_{nl}\partial_m\partial_k\big(S^2_{\mu\nu}(\vec{y})y^\mu _iy^\nu_j\big)-\partial_m\partial_k\big(S^2_{\mu\nu}(\vec{y})y^\mu _ny^\nu _l\big)\Big)\\
&=2\gamma^{mk}\gamma^{ij}S^2_{\mu\nu}(\vec{y})(\partial_m\partial_k y^\mu_i) y^\nu_j-\gamma^{mn}\gamma^{kl}S^2_{\mu\nu}(\vec{y})\big((\partial_m\partial_k y^\mu_n)y^\nu_l+y^\nu_n(\partial_m\partial_k y^\nu_l)\big)\\
&\qquad\qquad+R_2(\nabla^2\vec{y},\nabla\vec{y},\vec{y})\\
&= 2\gamma^{mk}\gamma^{ij}S^2_{\mu\nu}(\vec{y})(\partial_m\partial_k y^\mu_i) y^\nu_j-\gamma^{mn}\gamma^{kl}S^2_{\mu\nu}(\vec{y})\big((\partial_m\partial_n y^\mu_k)y^\nu_l+y^\nu_n(\partial_k\partial_l y^\nu_m)\big)\\
&\qquad\qquad+R_2(\nabla^2\vec{y},\nabla\vec{y},\vec{y})\\
&= R_2(\nabla^2\vec{y},\nabla\vec{y},\vec{y}).
\end{align*}
\end{enumerate}
It follows for $\tilde q = \tilde q(\vec{y},\nabla\vec{y})$, that the right hand of the equality in equation (\ref{e12}) is an expression in terms of $\{\tau,\vec{y}\}$ with no higher than second derivatives.\\
\indent In order to translate the above analysis onto our data $\{\tau(\vec{z}),\phi(\vec{z}), \tilde q(\vec{z},\nabla\vec{z})\},$ we will use the well known fact than any element of our index $(\mathfrak{n},\beta)$-H\"{o}lder spaces can be approximated by a smooth counterpart				 within a less regular, say $||\cdot||_{\mathfrak{n},\beta'}$, norm for any $0<\beta'<\beta$. Consequently, we take a smooth sequence $\{\vec{z}_n\}\subset C^\infty(\mathbb{S}^n,\mathbb{R}^{n+1})$ converging in $C^{2,\beta'}(\mathbb{S}^n,\mathbb{R}^{n+1})$ to $\vec{z}\in C^{2,\beta}(\mathbb{S}^2,\mathbb{R}^3)$. With the improved regularity on terms of the sequence $\{\vec{z}_n\}$, we obtain a sequence $\{\tau'_n\}\subset C^{3,\beta'}(T^\star\mathbb{S}^n)\cap\text{Ker}(\mathcal{L}_h)^\perp$ with interior Schauder estimates giving $||\tau'_n||_{3,\beta'}\leq C(||h||_{2,\beta'},||\gamma||_{3,\beta'})||\tilde q(\vec{z}_n,\nabla\vec{z}_n)||_{2,\beta'}$. From the estimate (\ref{e11}): 
$$||\tau'_n-\tau'_m||_{2,\beta'}\leq C(||h||_{1,\beta'},||\gamma||_{2,\beta'
})||\tilde q(\vec{z}_n,\nabla\vec{z}_n)-\tilde q(\vec{z}_m,\nabla\vec{z}_m)||_{1,\beta'},$$
and we obtain a Cauchy sequence. It follows that $\tau'_n\to\tau(\vec{z})$ as $n\to\infty$ in $C^{2,\beta'}(T^\star\mathbb{S}^n)$. Each $\tau'_n$ yields an associated $\phi'_n\in C^{2,\beta'}(\mathbb{S}^n)$, and by the convergence of $\tau'_n$ we have $\phi'_n\to\phi(\vec{z})$ in $C^{1,\beta'}(\mathbb{S}^n)$. For each $n\in\mathbb{N}$, the data $\{\tau'_n,\phi'_n,\tilde q(\vec{z}_n,\nabla\vec{z}_n)\}$ satisfies equation (\ref{e12}) and we obtain the interior Schauder estimates:
\begin{align*}
||\phi'_n||_{2,\beta'}&\leq C(||h||_{2,\beta'},||\gamma||_{3,\beta'})\Big(||\tau'_n||_{1,\beta'}+|| \nabla\cdot\nabla\cdot\Big((\tr_\gamma \tilde q(\vec{z}_n,\nabla\vec{z}_n))\gamma-\tilde q(\vec{z}_n,\nabla\vec{z}_n)\Big)||_{0,\beta'}+||\phi'_n||_{0,\beta'}\Big)\notag\\
&\leq C(||h||_{2,\beta'},||\gamma||_{3,\beta'})\Big(||\tilde q(\vec{z}_n,\nabla\vec{z}_n)||_{1,\beta'}+ || \nabla\cdot\nabla\cdot\Big((\tr_\gamma \tilde q(\vec{z}_n,\nabla\vec{z}_n))\gamma-\tilde q(\vec{z}_n,\nabla\vec{z}_n)\Big)||_{0,\beta'}\Big).
\end{align*}
By linearity of (\ref{e12}) and the analysis on derivatives above, we also obtain from this estimate (similarly as for $\{\tau'_n\}$) that $\{\phi'_n\}$ is a Cauchy sequence in $C^{2,\beta'}(\mathbb{S}^n)$. It follows that the limit $\phi(\vec{z})\in C^{2,\beta'}(\mathbb{S}^n)$ satisfies (\ref{e12}). Since all coefficients are of regularity $C^{0,\beta}(\mathbb{S}^n)$ we can improve the regularity to $\phi(\vec{z})\in C^{2,\beta}(\mathbb{S}^n)$ along with the estimate:
\begin{equation}\label{e13}
||\phi(\vec{z})||_{2,\beta} \leq C(||h||_{2,\beta},||\gamma||_{3,\beta})\Big(||\tilde q(\vec{z},\nabla\vec{z})||_{1,\beta}+ || \nabla\cdot\nabla\cdot\Big((\tr_\gamma \tilde q(\vec{z},\nabla\vec{z}))\gamma-\tilde q(\vec{z},\nabla\vec{z})\Big)||_{0,\beta}\Big),
\end{equation}
where we understand the final term above as an element of $C^{0,\beta}(\mathbb{S}^n)$ since it holds no derivatives of $\vec{z}$ larger than two. We conclude $\mathcal{C}:C^{2,\beta}(\mathbb{S}^{n},\mathbb{R}^{n+1})\to C^{2,\beta}(\mathbb{S}^{n},\mathbb{R}^{n+1}) $ given by: 
$$\mathcal{C}(\vec{z}) = \tau(\vec{z})^i\vec{r}_i+\phi(\vec{z})\vec{\upsilon}$$
is well defined. Moreover, from (\ref{e10},\ref{e11},\ref{e13}):
\begin{align}\label{e14}
||\mathcal{C}(\vec{z})||_{2,\beta}&=||\tau(\vec{z})^i\vec{r}_i+\phi(\vec{z})\vec{\upsilon}||_{2,\beta}\notag\\
&\leq C(||h||_{2,\beta},||\vec\upsilon||_{2,\beta},||\gamma||_{3,\beta},||\vec{r}||_{3,\beta})\Big(||\gamma-\tilde\gamma||_{2,\beta}+||\vec{z}||^2_{2,\beta}+ ||\vec{z}||^3_{2,\beta}+||\vec{z}||^4_{2,\beta}\Big).
\end{align} 
Since all exponents of $||\vec{z}||_{2,\beta}$ are at least two, we observe, provided $||\gamma-\tilde\gamma||_{2,\beta}\leq\epsilon,||\vec{z}||_{2,\beta}\leq\delta$ for sufficiently small $\epsilon,\delta$, that $\mathcal{C}(B_\delta)\subseteq B_\delta$ (for $B_\delta \subset C^{2,\beta}(\mathbb{S}^n,\mathbb{R}^{n+1})$, the closed ball of radius $\delta$). We also have (see \cite{li2020}, Lemma 14):
\begin{align}\label{e15}
\Big(q(\vec{z}_1,\nabla\vec{z}_1)&-q(\vec{z}_2,\nabla\vec{z}_2)\Big)_{ij}=\notag\\
&-\sigma_{\mu\nu}(\vec{r})\Big[((z_1^\mu)_i-(z_2^\mu)_i)(z_1^\nu)_j+(z_2^\mu)_i\Big((z_1^\nu)_j-(z^\nu_2)_j\Big)\Big]\notag\\
&-(z_1^\rho-z_2^\rho)\Big(\int_0^1\frac{\partial F_{\mu\nu\eta\xi}}{\partial y^\rho}(t\vec{z}_1+(1-t)\vec{z}_2)dt\Big)z_1^\eta z_1^\xi r^\mu_i r^\nu_j\notag\\
&+F_{\mu\nu\eta\xi}(\vec{z}_2)(z_1^\eta+z_2^\eta)(z_1^\xi-z_2^\xi)r_i^\mu r_j^\nu\notag\\
&-(z_1^\rho-z_2^\rho)\Big(\int_0^1\frac{\partial G_{\mu\nu\eta}}{\partial y^\rho}(t\vec{z}_1+(1-t)\vec{z}_2)dt\Big)z_1^\eta\Big(r^\mu_i(z_1^\nu)_j+r^\nu_j(z_1^\mu)_i+(z_1^\mu)_i(z_1^\nu)_j\Big)\notag\\
&+G_{\mu\nu\eta}(\vec{z}_2)(z_1^\eta-z_2^\eta
)\Big(r^\mu_i(z_1^\nu)_j+r^\nu_j(z_1^\mu)_i+(z_1^\mu)_i(z_1^\nu)_j\Big)\notag\\
&+G_{\mu\nu\eta}(\vec{z}_2)z_2^\eta\Big(r_i^\mu(z_1^\nu-z_2^\nu)_j+r^\nu_j(z_1^\mu-z_2^\mu)_i\Big)\notag\\
&+G_{\mu\nu\eta}(\vec{z}_2)z_2^\eta\Big((z_1^\mu-z_2^\mu)_i(z_1^\nu)_j+(z_2^\mu)_i(z_1^\nu-z_2^\nu)_j\Big).
\end{align}
From this and (\ref{e13}), we observe:
\begin{equation}\label{e16}
||\mathcal{C}(\vec{z}_1)-\mathcal{C}(\vec{z}_2)||_{2,\beta}\leq C(||h||_{2,\beta},||\gamma||_{3,\beta})\text{max}(||\vec{z}_1||_{2,\beta},||\vec{z}_2||_{2,\beta})||\vec{z}_1-\vec{z}_2||_{2,\beta}\leq C\delta||\vec{z}_1-\vec{z}_2||_{2,\beta}.
\end{equation}
By shrinking $\delta$, if necessary, we conclude that $\mathcal{C}$ is a contraction map, giving rise to a unique fixed point by the Banach space fixed point theorem.
\end{proof}
\begin{corollary}\label{c2}
Consider, 
\begin{align*}
h\in C^{2,\beta}(\text{Sym}(T^\star\mathbb{S}^2\otimes T^\star\mathbb{S}^2)),\,\,\,\gamma & \in C^{3,\beta}(\text{Sym}(T^\star\mathbb{S}^2\otimes T^\star\mathbb{S}^2)),\,\,\, \tilde\gamma\in C^{3,\beta}(\text{Sym}(T^\star\mathbb{S}^2\otimes T^\star\mathbb{S}^2)),\\
\vec{\upsilon}&\in C^{2,\beta}(\mathbb{S}^n,\mathbb{R}^{n+1}),\,\,\,\vec{r}\in C^{3,\beta}(\mathbb{S}^2,\mathbb{R}^{3}).
\end{align*}
Then, provided $\mathcal{P}(h,h)>0$, there exists $\epsilon,\delta>0$, such that any $||\gamma-\tilde\gamma||_{2,\beta}\leq \epsilon$ gives rise to a unique $\vec{y}\in C^{2,\beta}(\mathbb{S}^2,\mathbb{R}^{3})$, $||\vec{y}||_{2,\beta}\leq \delta$, and $\tau(\vec{y})\in C^{2,\beta}(T^\star\mathbb{S}^2)$, such that:
\begin{align*}
\tilde\gamma &=(r+y)^\star(\sigma),\\
\vec{y} &= d\vec{r}(\tau^\# (\vec{y}))+\phi(\vec{y})\vec{\upsilon},
\end{align*}
whereby $q(\vec{y},\nabla\vec{y}) = \tilde q(\vec{y},\nabla\vec{y}) -\frac{\mathcal{P}(\tilde q(\vec{y},\nabla\vec{y}),h)}{\mathcal{P}(h,h)}h$, for $\tilde q(\vec{y},\nabla\vec{y})$ given by (\ref{e10}), and 
$$\phi(\vec{y}) = \frac{\mathcal{P}(\tilde q(\vec{y},\nabla\vec{y})-2\text{Sym}(\nabla\tau(\vec{y})),h)}{\mathcal{P}(h,h)}.$$
\end{corollary}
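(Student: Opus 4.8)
The plan is to obtain the statement as a direct specialization of Theorem \ref{t1} to $n=2$, the only genuinely new ingredient being the surjectivity result of Proposition \ref{p4}. Since $C^{3,\beta}\subset C^{2,\beta}$, the hypotheses of Theorem \ref{t1} hold with $n=2$ and the given $h,\gamma,\tilde\gamma,\vec\upsilon,\vec r$. Applying it, we obtain $\epsilon,\delta>0$ so that whenever $||\gamma-\tilde\gamma||_{2,\beta}\le\epsilon$ there is a unique $\vec y\in C^{2,\beta}(\mathbb{S}^2,\mathbb{R}^3)$ with $||\vec y||_{2,\beta}\le\delta$, together with $\tau(\vec y)\in C^{2,\beta}(T^\star\mathbb{S}^2)$ and $\phi(\vec y)\in C^{2,\beta}(\mathbb{S}^2)$ given by the stated formulas, and a residual symmetric $2$-tensor $a(\vec y)\in C^{1,\beta}(\text{Sym}(T^\star\mathbb{S}^2\otimes T^\star\mathbb{S}^2))$ with $\mathcal{P}(a(\vec y),h)=0$ and $\nabla\cdot a(\vec y)=d\tr_\gamma a(\vec y)$, such that $\tilde\gamma+a(\vec y)=(r+y)^\star(\sigma)$ and $\vec y=d\vec r(\tau(\vec y)^\#)+\phi(\vec y)\vec\upsilon$. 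Thus the whole statement, including uniqueness of $\vec y$ and all the closed-form expressions, follows from Theorem \ref{t1} once we establish $a(\vec y)\equiv 0$.

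To see that the residual tensor vanishes, note that $\mathcal{P}(a(\vec y),h)=0$ together with $\nabla\cdot a(\vec y)=d\tr_\gamma a(\vec y)$ says precisely that $a(\vec y)\in\ubar\Xi_h$, and on $\mathbb{S}^2$ Proposition \ref{p4} gives $\ubar\Xi_h=\{0\}$ (equivalently one may invoke the corollary following it, via the duality $\varepsilon(\Xi_h)=\Xi_{h^{-1}}$ together with $\nabla\cdot\varepsilon(a)=d\tr_\gamma a-\nabla\cdot a$). The one subtlety is that Proposition \ref{p4} is phrased for smooth sections, whereas $a(\vec y)=(r+y)^\star\sigma-\tilde\gamma$ is a priori only $C^{1,\beta}$, since $(r+y)^\star\sigma\in C^{1,\beta}$ for $\vec y\in C^{2,\beta}$. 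I would dispatch this by observing that the entire argument behind Proposition \ref{p4} — ellipticity of $\mathcal{L}_{h_t}$ and of $\pi\circ L_{h_t}$, the Fredholm property on the compact $\mathbb{S}^2$, invariance of the index under the homotopy $t\to h_t$, and surjectivity of $L_\gamma$ on traceless $2$-tensors — is valid verbatim on the Hölder scale, so that $\ubar\Xi_h\cap C^{1,\beta}(\text{Sym}(T^\star\mathbb{S}^2\otimes T^\star\mathbb{S}^2))=\{0\}$. Hence $a(\vec y)\equiv 0$ and $(r+y)^\star(\sigma)=\tilde\gamma$, which is the first asserted identity.

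I expect the genuine (if modest) obstacle to be exactly this regularity bookkeeping: one must be sure that the index-theoretic proof of the vanishing of $\ubar\Xi_h$ is run, or can be re-run, on $C^{1,\beta}$ sections rather than on smooth ones, since the residual tensor produced by the contraction mapping of Theorem \ref{t1} is no more regular than $(r+y)^\star\sigma$. Everything else — the contraction-mapping existence, uniqueness within the $\delta$-ball, the Nirenberg-type improvement giving $\phi(\vec y)\in C^{2,\beta}$, and the explicit forms of $q(\vec y,\nabla\vec y)$ and $\phi(\vec y)$ — is imported unchanged from Theorem \ref{t1}. If desired, the extra regularity $\tilde\gamma\in C^{3,\beta}$ (more than the argument strictly requires) can be used afterwards to bootstrap $\vec y$ to higher regularity from the now-exact equation $(r+y)^\star\sigma=\tilde\gamma$ and the ellipticity of $L_h$, but this plays no role in the statement as given.
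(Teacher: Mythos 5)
Your overall strategy coincides with the paper's: invoke Theorem \ref{t1} to produce the unique small fixed point $\vec y$ together with the residual tensor $a(\vec y)\in C^{1,\beta}$ satisfying $\mathcal{P}(a,h)=0$ and $\nabla\cdot a=d\tr_\gamma a$, and then use the $\mathbb{S}^2$-specific triviality of such tensors to conclude $a(\vec y)=0$; you also correctly isolate the only delicate point, namely that Proposition \ref{p4} (and the chain Proposition \ref{p2}, Theorem \ref{t0}, Proposition \ref{p3} behind it) is proved in the smooth category while $a(\vec y)$ is only $C^{1,\beta}$ and $h$ only $C^{2,\beta}$. Where you diverge from the paper is in how this regularity mismatch is handled. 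You propose to re-run the index-theoretic proof of Proposition \ref{p4} on the H\"older scale so that $\ubar\Xi_h\cap C^{1,\beta}(\text{Sym}(T^\star\mathbb{S}^2\otimes T^\star\mathbb{S}^2))=\{0\}$ directly; this is plausible, since the Schauder, Fredholm and index-invariance arguments, as well as the kernel comparison of Theorem \ref{t0}, only use finite H\"older norms of $h$ and $\gamma$, but the word ``verbatim'' is doing real work: one must recheck the Fredholm property and norm-continuity of the family $\pi\circ L_{h_t}:C^{2,\beta}(T^\star\mathbb{S}^2)\to C^{1,\beta}(\Xi_h)$, Theorem \ref{t0} for a merely $C^{2,\beta}$ path $t\to h_t$, and a low-regularity version of the decomposition of Proposition \ref{p3} in order to pass from trivial cokernel to triviality of $\ubar\Xi_h$. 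The paper sidesteps this entirely: it approximates $h$, $\gamma$ and $q(\vec y,\nabla\vec y)$ by smooth tensors in a slightly weaker $(\cdot,\beta')$-H\"older norm, applies Proposition \ref{p4} in the smooth category to write $q_n=L_{h_n}(\tau_n)$ with no residual term, uses the same Schauder estimates as in Theorem \ref{t1} to show $\{\tau_n\}$ is Cauchy, and concludes $q(\vec y,\nabla\vec y)=L_h(\tau)$ exactly, whereupon the uniqueness part of Proposition \ref{p3} forces $a(\vec y)=0$. The approximation route buys the conclusion without re-proving any Section 2 result at low regularity (at the harmless cost of passing through $\beta'<\beta$), whereas your route, if actually carried out, would give the somewhat stronger statement that $\ubar\Xi_h$ is trivial among $C^{1,\beta}$ sections for $C^{2,\beta}$ data, but as written it asserts rather than performs that verification.
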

\begin{proof}
The proof comes down to showing that $a\in C^{1,\beta}(\text{Sym}(T^\star\mathbb{S}^2\otimes T^\star\mathbb{S}^2))$ satisfying $\nabla\cdot a = d\tr_\gamma a$ and $\mathcal{P}(a,h) = 0$ is trivial. We do so by taking a sequence of smooth tensors $\{h_n\},\{\gamma_n\},\{q_n\}$ converging respectively to $h,\gamma, q(\vec{y},\nabla\vec{y})$ within the appropriate index $(\mathfrak{n},\beta')$-H\"{o}lder space, for $0<\beta'<\beta$. For each $n$, Proposition \ref{p4} enforces $q_n=L_{h_n}(\tau_n)$ in Proposition \ref{p3}. Exploiting the resulting Cauchy sequence in $\{\tau_n\}$ coming from Schauder estimates as in Theorem \ref{t1} then yields $q(\vec{y},\nabla\vec{y}) = L_h(\tau)$ for some $\tau \in C^{2,\beta'}(T^\star\mathbb{S}^2)$. Since the uniqueness result of Proposition \ref{p3} holds here also, we conclude $a=0$. 
\end{proof}

We take this opportunity to prove:
\begin{proposition}\label{p5}
Given $\delta,\epsilon>0$ from Corollary \ref{c2}, we consider a continuous path of metrics $t\to \gamma_t\in C^{2,\beta}(\text{Sym}(T^\star\mathbb{S}^2\otimes T^\star\mathbb{S}^2))$, $t\in[0,b]$, such that $||\gamma_t-\gamma||_{2,\beta}\leq \epsilon$ and each $\gamma_t$ furnishes a fixed point $\vec{y}_t\in C^{2,\beta}(\mathbb{S}^2,\mathbb{R}^3))$, $||\vec{y}_t||_{2,\beta}\leq \delta$. If the map $t\to\gamma_t$ is $\mathfrak{m}$-times Fr\'{e}chet differentiable, so is $t\to\vec{y}_t$. 
\end{proposition}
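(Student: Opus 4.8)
The plan is to read $\vec{y}_t$ as the fixed point of a contraction that depends on the target metric only through its source term, and then to apply the implicit function theorem in Banach spaces to the map ``metric $\mapsto$ fixed point''. First I would make explicit the dependence of the contraction operator $\mathcal{C}$ of Theorem \ref{t1} (equivalently Corollary \ref{c2}) on the target metric: putting $\tilde\gamma=\gamma_t$ in (\ref{e10}) turns $\tilde q(\vec{z},\nabla\vec{z})$ into $\tilde q(\vec{z},\nabla\vec{z};\gamma_t)$, which is affine in $\gamma_t$, so that the solution $\tau=\tau(\vec{z};\gamma_t)$ of $\mathcal{L}_h(\tau)=\nabla\cdot q-d\tr_\gamma q$, the associated $\phi=\phi(\vec{z};\gamma_t)$, and hence $\mathcal{C}=\mathcal{C}(\vec{z};\gamma_t)=\tau(\vec{z};\gamma_t)^i\vec{r}_i+\phi(\vec{z};\gamma_t)\vec{\upsilon}$ all acquire a parameter. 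The fixed point $\vec{y}_t$ then solves $\Phi(\vec{y}_t,\gamma_t)=0$, where $\Phi(\vec{z},g):=\vec{z}-\mathcal{C}(\vec{z};g)$ is a map $C^{2,\beta}(\mathbb{S}^2,\mathbb{R}^3)\times C^{2,\beta}(\text{Sym}(T^\star\mathbb{S}^2\otimes T^\star\mathbb{S}^2))\to C^{2,\beta}(\mathbb{S}^2,\mathbb{R}^3)$.

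Next I would verify the two hypotheses of the Banach implicit function theorem at $(\vec{y}_{t_0},\gamma_{t_0})$ for an arbitrary $t_0\in[0,b]$. For joint $C^{\mathfrak m}$ regularity of $\Phi$ (in fact $C^\infty$ when the ambient metric $\sigma$ is smooth, as in our applications), one decomposes $\mathcal{C}$ into the composite of: (a) the map $(\vec{z},g)\mapsto q(\vec{z},\nabla\vec{z};g)$, affine and bounded in $g$ and, in $(\vec{z},\nabla\vec{z})$, a finite sum of products of the fixed fields $\vec{r}_i$ with the smooth functions $F_{\mu\nu\eta\xi}(\vec{r},\vec{z})$, $G_{\mu\nu\eta}(\vec{r},\vec{z})$, hence smooth as a superposition map on Hölder spaces; (b) the $t$-independent bounded linear solution operator $q\mapsto\tau$ of $\mathcal{L}_h(\tau)=\nabla\cdot q-d\tr_\gamma q$ normalized by $\tau\perp\text{Ker}(\mathcal{L}_h)$, bounded $C^{1,\beta}\to C^{2,\beta}$ by the Schauder estimate (\ref{e11}); (c) the analogous $t$-independent bounded linear solution operator for $\phi$ coming from the uniformly elliptic equation (\ref{e12}), which by the highest-order-derivative count in the proof of Theorem \ref{t1} (the vanishing of the $R_1,R_2$ terms) has right-hand side depending on $(\tau,\vec{z},g)$ through at most two derivatives, hence maps into $C^{2,\beta}$ via (\ref{e13}); and (d) multiplication by the fixed $\vec{r}_i,\vec{\upsilon}$. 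Each factor is smooth, so $\Phi$ is. For invertibility of $D_{\vec{z}}\Phi(\vec{y}_{t_0},\gamma_{t_0})=I-D_{\vec{z}}\mathcal{C}(\vec{y}_{t_0};\gamma_{t_0})$, note that the Lipschitz estimate (\ref{e16}) is independent of the target metric (the $\tilde\gamma$-term cancels in the difference (\ref{e15})) and has constant $C\delta<1$ after the shrinking of $\delta$ performed in the proof of Theorem \ref{t1}, so $\|D_{\vec{z}}\mathcal{C}\|_{\mathrm{op}}\le C\delta<1$ and the inverse is the convergent Neumann series $\sum_{k\ge0}\big(D_{\vec{z}}\mathcal{C}\big)^k$.

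Then the implicit function theorem produces a neighbourhood $U$ of $\gamma_{t_0}$ and a $C^{\mathfrak m}$ map $\mathcal{Y}:U\to C^{2,\beta}(\mathbb{S}^2,\mathbb{R}^3)$ with $\Phi(\mathcal{Y}(g),g)=0$ and (after shrinking $U$) $\|\mathcal{Y}(g)\|_{2,\beta}\le\delta$; by the uniqueness clause of Corollary \ref{c2} the fixed point in the $\delta$-ball is unique, so $\mathcal{Y}(\gamma_t)=\vec{y}_t$ for all $t$ with $\gamma_t\in U$, i.e.\ for $t$ near $t_0$. Composing the $C^{\mathfrak m}$ map $\mathcal{Y}$ with the $\mathfrak m$-times Fréchet differentiable path $t\mapsto\gamma_t$ and using the chain rule shows $t\mapsto\vec{y}_t$ is $\mathfrak m$-times Fréchet differentiable in a neighbourhood of $t_0$; since $t_0\in[0,b]$ was arbitrary, this holds on all of $[0,b]$. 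In particular, differentiating $\vec{y}_t=\mathcal{C}(\vec{y}_t;\gamma_t)$ once yields $\dot{\vec{y}}_t=(I-D_{\vec{z}}\mathcal{C})^{-1}\big(D_g\mathcal{C}(\vec{y}_t;\gamma_t)[\dot\gamma_t]\big)$, the form used later in Theorem \ref{T2}.

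The main obstacle I anticipate is hypothesis (c): confirming that the Nirenberg-type regularity improvement encoded in (\ref{e12}) is realised by a \emph{bounded linear} operator $q\mapsto\phi$ between the stated Hölder spaces, uniformly in the parameter, so that differentiating in $t$ (which replaces $\tilde q$ by its linearization in $(\dot{\vec{y}}_t,\nabla\dot{\vec{y}}_t,\dot\gamma_t)$) loses no derivative — this is exactly the content of the ``highest-order derivatives'' computation in the proof of Theorem \ref{t1}, reread as a statement about the operator rather than about one particular $\vec{y}$. Once that is in place the argument is routine. As an alternative to invoking the abstract implicit function theorem one can argue by induction on $\mathfrak m$: continuity of $t\mapsto\vec{y}_t$ follows directly from (\ref{e16}); differentiability follows by checking that the candidate $\dot{\vec{y}}_t$ above is the genuine limit of difference quotients in $C^{2,\beta}$; and the $k$-th derivative of $\vec{y}_t$ is seen to solve a linear equation with the invertible operator $I-D_{\vec{z}}\mathcal{C}$ on the left and a right-hand side assembled from derivatives of $\gamma_t$ of order $\le k$ and of $\vec{y}_t$ of order $\le k-1$, closing the induction.
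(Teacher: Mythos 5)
Your proposal is correct in outline, but it takes a genuinely different route from the paper. You recast the problem as an implicit function theorem for $\Phi(\vec{z},g)=\vec{z}-\mathcal{C}(\vec{z};g)$, getting invertibility of $I-D_{\vec{z}}\mathcal{C}$ from the contraction constant $C\delta<1$ and then $\mathfrak{m}$-fold differentiability of $t\to\vec{y}_t$ by the chain rule; the paper instead works directly on the path: continuity from (\ref{e16}), convergence of difference quotients via the explicit decomposition (\ref{e15}) together with Schauder estimates, and then an induction on the order of the derivative using the differentiated equation (\ref{e17}) --- essentially the ``alternative'' you sketch at the end. Your route buys a cleaner statement (a locally defined $C^{\mathfrak{m}}$ solution map $g\mapsto\mathcal{Y}(g)$, hence joint regularity in the metric) and much lighter bookkeeping; its cost is exactly the point you flag, namely that the $\phi$-component of $\mathcal{C}$ must be shown to be a $C^{\mathfrak{m}}$ map of a ball in $C^{2,\beta}(\mathbb{S}^2,\mathbb{R}^3)$ into $C^{2,\beta}(\mathbb{S}^2)$. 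Be careful with the phrasing there: the regularity gain is not a bounded linear operator $q\mapsto\phi$ on all of $C^{1,\beta}$ (for a general $q\in C^{1,\beta}$ the double divergence in (\ref{e12}) is not even a H\"older function); what is true is the structural cancellation that $\nabla\cdot\nabla\cdot\big((\tr_\gamma\tilde q)\gamma-\tilde q\big)$ contains at most two derivatives of $\vec{z}$, so the nonlinear map $\vec{z}\mapsto(\text{RHS of }(\ref{e12}))$ is smooth from $C^{2,\beta}$ to $C^{0,\beta}$, and differentiability of $\vec{z}\mapsto\phi(\vec{z})$ into $C^{2,\beta}$ then follows by applying the Schauder estimate for the fixed elliptic operator $\phi\mapsto\nabla\cdot\nabla\cdot\big(\phi((\tr_\gamma h)\gamma-h)\big)$ to differences and difference quotients, exactly as in the smoothing argument of Theorem \ref{t1}; superposition with the smooth coefficients $F_{\mu\nu\eta\xi},G_{\mu\nu\eta}$ is smooth on H\"older balls, and since the anchor data $(\vec{r},h,\gamma,\vec{\upsilon})$ are fixed in this proposition the linear solution operators do not vary, so parts (a), (b), (d) are indeed routine. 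Two small touch-ups: when identifying $\mathcal{Y}(\gamma_t)=\vec{y}_t$, work in a slightly larger ball of radius $\delta'$ with $C\delta'<1$ so the boundary case $||\vec{y}_{t_0}||_{2,\beta}=\delta$ causes no trouble with the uniqueness clause, and at the endpoints $t=0,b$ one-sided Fr\'{e}chet derivatives are what is meant.
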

\begin{proof}
We begin by showing the result for the first derivative. For fixed points $\vec{y}_{t_0},\vec{y}_{t}$, a small modification to (\ref{e16}), using our Schauder estimates yield:
$$||\vec{y}_{t_0}-\vec{y}_{t}||_{2,\beta}\leq C||\gamma_{t_0}-\gamma_{t}||_{2,\beta}+C\delta||\vec{y}_{t_0}-\vec{y}_{t}||_{2,\beta}.$$
Since $C\delta<1$, we conclude $t\to\vec{y}_t$ is continuous with bound of the form $||\vec{y}_{t_0}-\vec{y}_{t}||\leq \frac{C}{1-C\delta}||\gamma_{t_0}-\gamma_{t}||$. Moreover, 
$$\Big|\Big|\frac{\vec{y}_{t_0}-\vec{y}_{t}}{t_0-t}\Big|\Big|_{2,\beta}\leq\frac{C}{1-C\delta}\Big|\Big|\frac{\gamma_{t_0}-\gamma_t}{t_0-t}\Big|\Big|_{2,\beta},$$
so the difference quotient $||\frac{\vec{y}_{t_0}-\vec{y}_t}{t_0-t}||_{2,\beta}$ remains bounded as $t\to t_0$. From (\ref{e10}), writing $\vec{y}_t=\tau^i_t\vec{r}_i+\phi_t\vec{\upsilon}$:
\begin{align*}
&\nabla_i\Big(\frac{\tau_{t_0}-\tau_{t_1}}{t_0-t_1}-\frac{\tau_{t_0}-\tau_{t_2}}{t_0-t_2}\Big)_j+\nabla_j\Big(\frac{\tau_{t_0}-\tau_{t_1}}{t_0-t_1}-\frac{\tau_{t_0}-\tau_{t_2}}{t_0-t_2}\Big
)_i+\Big(\frac{\phi_{t_0}-\phi_{t_1}}{t_0-t_1}-\frac{\phi_{t_0}-\phi_{t_2}}{t_0-t_2}\Big)h_{ij}\\
 &=\Big(\frac{\gamma_{t_0}-\gamma_{t_1}}{t_0-t_1}-\frac{\gamma_{t_0}-\gamma_{t_2}}{t_0-t_2}\Big)_{ij}+\frac{(q(\vec{y}_{t_0},\nabla\vec{y}_{t_0})-q(\vec{y}_{t_1},\nabla\vec{y}_{t_1}))_{ij}}{t_0-t_1}-\frac{(q(\vec{y}_{t_0},\nabla\vec{y}_{t_0})-q(\vec{y}_{t_2},\nabla\vec{y}_{t_2}))_{ij}}{t_0-t_2}.
\end{align*}
If we temporarily omit spherical indices, we observe from (\ref{e15}) that the last two terms decompose into the form:
\begin{align*}
\frac{q(\vec{y}_{t_0},\nabla\vec{y}_{t_0})-q(\vec{y}_{t_k},\nabla\vec{y}_{t_k})}{t_0-t_k}&=\sum_m Q_m(\vec{y}_{t_0},\vec{y}_{t_k},\nabla\vec{y}_{t_0},\nabla\vec{y}_{t_k})_{\mu\nu}\frac{y^\mu_{t_0}-y^\mu_{t_k}}{t_0-t_k}y^\nu_{t_{l_k(m)}}\\
&=\sum_m \tilde{Q}_m(\vec{y}_{t_k},\nabla\vec{y}_{t_k})_{\mu\nu} \frac{y^\mu_{t_0}-y^\mu_{t_k}}{t_0-t_k}y^\nu_{t_{l_k(m)}},
\end{align*}
for $k=1,2$, and $l_k(m)\in\{0,k\}$. Therefore:
\begin{align*}
&\frac{(q(\vec{y}_{t_0},\nabla\vec{y}_{t_0})-q(\vec{y}_{t_1},\nabla\vec{y}_{t_1}))}{t_0-t_1}-\frac{(q(\vec{y}_{t_0},\nabla\vec{y}_{t_0})-q(\vec{y}_{t_2},\nabla\vec{y}_{t_2}))}{t_0-t_2}\\
&=\sum_m \Big(\tilde{Q}_m(\vec{y}_{t_1},\nabla\vec{y}_{t_1})_{\mu\nu}-\tilde{Q}_m(\vec{y}_{t_2},\nabla\vec{y}_{t_2})_{\mu\nu}\Big)\frac{y^\mu_{t_0}-y^\mu_{t_1}}{t_0-t_1}y^\nu_{t_{l_1(m)}}\\
&\qquad+\sum_m\tilde{Q}_m(\vec{y}_{t_2},\nabla\vec{y}_{t_2})_{\mu\nu}\Big(\frac{y^\mu_{t_0}-y^\mu_{t_1}}{t_0-t_1}-\frac{y^\mu_{t_0}-y^\mu_{t_2}}{t_0-t_2}\Big)y^\nu_{t_{l_1(m)}}\\
&\qquad\qquad+\sum_m\tilde{Q}_m(\vec{y}_{t_2},\nabla\vec{y}_{t_2})\frac{y^\mu_{t_0}-y^\mu_{t_2}}{t_0-t_2}(y^\nu_{t_{l_1(m)}}-y^\nu_{t_{l_2(m)}}),
\end{align*}
where $y^\nu_{t_{l_1(m)}}-y^\nu_{t_{l_2(m)}} \in\{ 0,\,y^\nu_{t_1}-y^\nu_{t_2}\}$.\\ 
Now $\tau_{t_k}\perp\text{Ker}(L_h)$, ($k=0,1,2$), so again from Schauder estimates and the procedure leading to (\ref{e14}):
\begin{align*}
\Big|\Big|\frac{\vec{y}_{t_0}-\vec{y}_{t_1}}{t_0-t_1}-\frac{\vec{y}_{t_0}-\vec{y}_{t_2}}{t_0-t_2}\Big|\Big|_{2,\beta}&\leq C\Big|\Big|\frac{\gamma_{t_0}-\gamma_{t_1}}{t_0-t_1}-\frac{\gamma_{t_0}-\gamma_{t_2}}{t_0-t_2}\Big|\Big|_{2,\beta}\\
&+C_\star||\vec{y}_{t_1}-\vec{y}_{t_2}||_{2,\beta}\Big|\Big|\frac{\vec{y}_{t_0}-\vec{y}_{t_1}}{t_0-t_1}\Big|\Big|_{2,\beta}\text{max}(||\vec{y}_{t_0}||,||\vec{y}_{t_1}||)\\
&+C\Big|\Big|\frac{\vec{y}_{t_0}-\vec{y}_{t_1}}{t_0-t_1}-\frac{\vec{y}_{t_0}-\vec{y}_{t_2}}{t_0-t_2}\Big|\Big|_{2,\beta} \text{max}(||\vec{y}_{t_0}||,||\vec{y}_{t_1}||)\\
&+C\Big|\Big|\frac{\vec{y}_{t_0}-\vec{y}_{t_2}}{t_0-t_2}\Big|\Big|_{2,\beta}||\vec{y}_{t_1}-\vec{y}_{t_2}||_{2,\beta},
\end{align*}
where $C_\star$ is the only ``new" constant coming from Taylor approximations of the terms $\tilde{Q}_m(\vec{y}_{t_1},\nabla\vec{y}_{t_1})-\tilde{Q}_m(\vec{y}_{t_2},\nabla\vec{y}_{t_2})$. Since $C\delta<1$, we use this estimate and prior results to conclude therefore that any sequence $\{t_m\}$ such that $t_m\to t_0$, gives rise to a Cauchy sequence $\{\frac{\vec{y}_{t_0}-\vec{y}_{t_m}}{t_0-t_m}\}_m$. Moreover, the resulting limit $\dot{\vec{y}}_{t_0}=\dot{\tau}_{t_0}^i\vec{r}_i+\dot{\phi}_{t_0}\frac{\vec{r}}{r}$ is independent of $\{t_m\}$.\\

By induction we assume the path $t\to\vec{y}_t$, is $n$-times Fr\'{e}chet differentiable with bounds:
\begin{align*}
||\vec{y}_{t}^{(i)}-\vec{y}_{t_0}^{(i)}||_{2,\beta}&\leq \frac{C}{1-C\delta}||\gamma_{t}^{(i)}-\gamma_{t_0}^{(i)}||_{2,\beta}+\sum_{m=0}^{i-1} C_{m}^i||\gamma_{t}^{(m)}-\gamma_{t_0}^{(m)}||_{2,\beta},\,\,\,1\leq i\leq n-1,\\
||\vec{y}_t^{(n)}||_{2,\beta}&\leq \frac{C}{1-C\delta}||\gamma_t^{(n)}||_{2,\beta}+\sum_{m=0}^{n-1}C_{m}^n||\gamma_t^{(m)}||_{2,\beta},
\end{align*}
where $\vec{y}_t^{(i)}$ represents the $i^{th}$ derivative of $t\to\vec{y}_t$ with respect to the norm on $C^{2,\beta}(\mathbb{S}^2,\mathbb{R}^3)$.
Taking $n$ derivatives of (\ref{e10}) we have:
\begin{align}
\nabla_i(\tau_t^{(n)})_j&+\nabla_j(\tau_t^{(n)})_i+\phi_t^{(n)}h_{ij}=(\gamma_t^{(n)})_{ij}\notag\\
&-2\Big(\sigma_{\mu\nu}(\vec{r})(y_t)^\mu_i (y_t^{(n)})^\nu_j+(y_t^{(n)})^\eta y_t^\xi r^\mu_i r^\nu_j F_{\mu\nu\eta\xi}(\vec{r},\vec{y_t})+G_{\mu\nu\eta}(\vec{r},\vec{y}_t)(y_t)^\mu_i(y^{(n)}_t)_j\Big)\notag\\
&-G_{\mu\nu\eta}(\vec{r},\vec{y}_t)\Big((y^{(n)}_t)^\eta r^\mu_i (y_t)^\nu_j+y_t^\eta r^\mu_i (y^{(n)}_t)^\nu_j + (y^{(n)}_t)^\eta r^\nu_j (y_t)^\mu_i+y_t^\eta r^\nu_j (y^{(n)}_t)^\mu_i\Big)\notag\\
&-\partial_\rho F_{\mu\nu\eta\xi}(\vec{r},\vec{y}_t)(y_t^{(n)})^\rho y_t^\eta y_t^\xi r^\mu_i r^\nu_j\notag\\
&-\partial_\rho G_{\mu\nu\eta}(\vec{r},\vec{y}_t)(y^{(n)}_t)^\rho y_t^\eta \Big( r^\mu_i(y_t)^\nu_j+r^\nu_j(y_t)^\mu_i+(y_t)^\mu_i(y_t)^\nu_j\Big)\notag\\
&+P(\vec{y}_t^{(n-1)},\vec{y}_t^{(n-2)},...,\vec{y}_t^{(1)},\vec{y}_t)_{ij},\label{e17}
\end{align}
where the polynomial $P$ is the sum of monomials of degree at least two, and coefficients consisting of $\sigma_{\mu\nu}(\vec{r}),F_{\mu\nu\eta\xi}(\vec{r},\vec{y}_t),G_{\mu\nu\eta}(\vec{r},\vec{y}_t),$ and their partial derivatives of order upto $n$. Since $\tau_t^{(n)},\tau_{t_0}^{(n)}\perp\text{Ker}(L_h)$ for any $t,t_0<b$, we observe, again suppressing spherical indices and also conflating similar terms:
\begin{align*}
2\text{Sym}\Big(\nabla(\tau^{(n)}_t-\tau^{(n)}_{t_0})\Big)&+(\phi^{(n)}_t-\phi^{(n)}_{t_0})h=\gamma_t^{(n)}-
\gamma_{t_0}^{(n)}\\
&+Q(\vec{y}_t,\nabla\vec{y}_t)_{\mu\nu}y_t^\mu\Big((y_t^{(n)})^\nu-(y_{t_0}^{(n)})^\nu\Big)\\
&+\Big(Q(\vec{y}_{t},\nabla\vec{y}_{t})_{\mu\nu}-Q(\vec{y}_{t_0},\nabla\vec{y}_{t_0})_{\mu\nu}\Big)y_t^\mu(y_t^{(n)})^\nu\\
&+Q(\vec{y}_{t_0},\nabla\vec{y}_{t_0})\Big((y_t)^\mu-(y_{t_0})^\mu\Big)(y_{t_0}^{(n)})^\nu\\
&+P(\vec{y}_t^{(n-1)},...,\vec{y}_t)-P(\vec{y}_{t_0}^{(n-1)},...,\vec{y}_{t_0}).
\end{align*}
From this, our Schauder estimates yield:
\begin{align*}
||\vec{y}_t^{(n)}-\vec{y}_{t_0}^{(n)}||_{2,\beta}&\leq C||\gamma^{(n)}_t-\gamma^{(n)}_{t_0}||_{2,\beta}+C\delta||\vec{y}_t^{(n)}-\vec{y}_{t_0}^{(n)}||_{2,\beta}+C_\star\delta||\vec{y}_t^{(n)}||_{2,\beta}||\vec{y}_t-\vec{y}_{t_0}||_{2,\beta}\\
&+C||\vec{y}_{t_0}^{(n)}||_{2,\beta}||\vec{y}_t-\vec{y}_{t_0}||_{2,\beta}+|| P(\vec{y}_t^{(n-1)},...,\vec{y}_t)-P(\vec{y}_{t_0}^{(n-1)},...,\vec{y}_{t_0}) ||_{2,\beta},
\end{align*}
we conclude with an analogous continuity bound for $t\to\vec{y}_t^{(n)}$ as for lower derivatives, given the structure of the polynomial $P$. Moreover, the ratio $\frac{||\vec{y}_t^{(n)}-\vec{y}_{t_0}^{(n)}||}{t-t_0}$ remains bounded as $t\to t_0$. It is also evident from the expression above, we have with a similar abuse of notation:
\begingroup
\allowdisplaybreaks
\begin{align*}
2\text{Sym}\Big(\nabla\Big(\frac{\tau_{t_1}^{(n)}-\tau_{t_0}^{(n)}}{t_1-t_0}&-\frac{\tau_{t_2}^{(n)}-\tau_{t_0}^{(n)}}{t_2-t_0}\Big)\Big)+ \Big(\frac{\phi^{(n)}_{t_1}-\phi^{(n)}_{t_0}}{t_1-t_0}- \frac{\phi^{(n)}_{t_2}-\phi^{(n)}_{t_0}}{t_2-t_0}\Big)h=\frac{\gamma_{t_1}^{(n)}-\gamma_{t_0}^{(n)}}{t_1-t_0}-\frac{\gamma_{t_2}^{(n)}-\gamma_{t_0}^{(n)}}{t_1-t_0}\\
&+Q(\vec{y}_{t_1},\nabla\vec{y}_{t_1})_{\mu\nu}y_{t_1}^\mu\Big(\frac{(y_{t_1}^{(n)})^\nu-(y_{t_0}^{(n)})^\nu}{t_1-t_0}-\frac{(y_{t_2}^{(n)})^\nu-(y_{t_0}^{(n)})^\nu}{t_2-t_0}\Big)\\
&+Q(\vec{y}_{t_1},\nabla\vec{y}_{t_1})\Big(y_{t_1}^\mu-y_{t_2}^\mu\Big)\frac{(y_{t_2}^{(n)})^\nu-(y_{t_0}^{(n)})^\nu}{t_2-t_0}\\
&+\Big(Q(\vec{y}_{t_1},\nabla\vec{y}_{t_1})-Q(\vec{y}_{t_2},\nabla\vec{y}_{t_2})\Big)y_{t_2}^\mu\frac{(y_{t_2}^{(n)})^\nu-(y_{t_0}^{(n)})^\nu}{t_2-t_0}\\
&+\Big(\frac{Q(\vec{y}_{t_1},\nabla\vec{y}_{t_1})-Q(\vec{y}_{t_0},\nabla\vec{y}_{t_0})}{t_1-t_0}-\frac{Q(\vec{y}_{t_2},\nabla\vec{y}_{t_2})-Q(\vec{y}_{t_0},\nabla\vec{y}_{t_0})}{t_2-t_0}\Big)y_{t_1}^\mu(y_{t_1}^{(n)})^\nu\\
&+\frac{Q(\vec{y}_{t_2},\nabla\vec{y}_{t_2})-Q(\vec{y}_{t_0},\nabla\vec{y}_{t_0})}{t_2-t_0}\Big(y_{t_1}^\mu-y_{t_2}^\mu\Big)(y_{t_1}^{(n)})^\nu\\
&+ \frac{Q(\vec{y}_{t_2},\nabla\vec{y}_{t_2})-Q(\vec{y}_{t_0},\nabla\vec{y}_{t_0})}{t_2-t_0}y_{t_2}^\mu\Big((y_{t_1}^{(n)})^\nu-(y_{t_2}^{(n)})^\nu\Big)\\
&+Q(\vec{y}_{t_0},\nabla\vec{y}_{t_0})(y_{t_0}^{(n)})^\nu\Big(\frac{y_{t_1}^\mu-y_{t_0}^\mu}{t_1-t_0}-\frac{y_{t_2}^\mu-y_{t_0}^\mu}{t_2-t_0}\Big)\\
&+\frac{P(\vec{y}_{t_1}^{(n-1)},...,\vec{y}_{t_1})-P(\vec{y}_{t_0}^{(n-1)},...,\vec{y}_{t_0})}{t_1-t_0}-\frac{P(\vec{y}_{t_2}^{(n-1)},...,\vec{y}_{t_2})-P(\vec{y}_{t_0}^{(n-1)},...,\vec{y}_{t_0})}{t_2-t_0}.
\end{align*}
\endgroup
Finally, with the expression above we now proceed similarly as the case of the first derivative. We leave it for the reader to observe, utilizing all our assumed bounds, Schauder estimates again confirm that for any sequence $\{t_m\}$ such that $t_m\to t_0$ we have a Cauchy sequence $\{\frac{\vec{y}_{t_m}^{(n)}-\vec{y}_{t_0}^{(n)}}{t_m-t_0}\}_m$. Again the limit $\vec{y}^{(n+1)}_{t_0}$ is independent of the sequence, moreover, the hypothesized bounds for $\{\vec{y}^{(n)}_t\}$ also follow from the expression above. 
\end{proof}

\section{Null Cone: Openness}
\subsection{Initial Setup}
We will now assume an ambient geometry $(\mathcal{M}^n,g)$ with non-degenerate metric $g$. For convenience we will denote $\langle X,Y\rangle: =g(X,Y)$ for $X,Y\in \Gamma(T\mathcal{M})$. This metric induces an ambient Levi-Civita connection denoted by $D$, and consequently we can calculate the ambient Riemannian curvature tensor, for any $X,Y,Z\in \Gamma(T\mathcal{M})$:
$$R_{XY}Z:=D_{[X,Y]}Z-[D_X,D_Y]Z.$$
Taking a local orthonormal frame $\{e_\mu|0\leq \mu\leq n\}\subset T_p\mathcal{M}$, we also define the ambient Einstein tensor point-wise:
$$G(X,Y):=\sum_{\mu}\langle R_{X,e_\mu}Y,e_\mu\rangle.$$
\indent We now take $\Omega\subset\mathcal{M}$ to be a smooth, orientable, and connected hypersurface within an ambient geometry $(\mathcal{M},g)$. We say $\Omega$ is a \textit{null hypersurface} whenever the induced metric $g|_\Omega$ is degenerate. Alternatively, from the orientability assumption, we can find a non-vanishing vector field $\ubar L\in\Gamma(T\Omega)$, such that $X\in\Gamma(T\Omega)$ if and only if $\langle \ubar L,X\rangle = 0$. Non degeneracy of the ambient metric $g$ therefore enforces $\text{span}(\ubar L_p) = T_p^\perp\Omega\subset T_p\Omega$, for any $p\in\Omega$. Since $\Omega$ is a hypersurface, any $p\in\Omega$ admits a neighborhood $U_p\subset \mathcal{M}$, and a function $\mathfrak{f}$ on $U_p$ (denoted $\mathfrak{f}\in\mathcal{F}(U_p)$) such that $V_p:=\Omega\cap U_p = \{\mathfrak{f}=0\}$, and the gradient $\text{grad}(\mathfrak{f})\in\Gamma(T^\perp V_p)$ is nowhere vanishing. It follows that $\ubar L = f_1\text{grad}(\mathfrak{f})$ for some smooth $f_1\neq 0$ on $V_p$, giving $\langle \text{grad}(\mathfrak{f}), \text{grad}(\mathfrak{f})\rangle|_{V_p} \equiv 0$. We say $\ubar L$ is a \textit{null vector field}. If we recall the famous identity, $D_{\text{grad}(f)} \text{grad}(f) = \frac12\text{grad}\langle\text{grad}(f), \text{grad}(f)\rangle$, it follows that $\langle X,D_{\text{grad}(\mathfrak{f})}\text{grad}(\mathfrak{f})\rangle = 0$ for any $X\in\Gamma(T V_p)$. Therefore, $D_{\text{grad}(\mathfrak{f})}\text{grad}(\mathfrak{f}) = f_2\text{grad}(\mathfrak{f})$ on $V_p$, giving in turn $D_{\ubar L}\ubar L = \kappa\ubar L$, for smooth functions $f_2,\kappa\in\mathcal{F}(V_p)$. We therefore conclude that integral curves of $\ubar L$, under a suitable re-parametrization, are null geodesics of $\mathcal{M}$. We therefore conclude that $\Omega$ is in-fact a congruence of null geodesics.\\
\indent We now assume the existence of an embedded 2-sphere, $\iota:\mathbb{S}^2\hookrightarrow\Omega$, $\iota(\mathbb{S}^2)=\Sigma_0$, such that $g|_{\Sigma_0}$ is a Riemannian metric. We also assume that any integral curve of $\ubar L$ intersects $\Sigma_0$ precisely once, forcing our geometry $(\mathcal{M},g)$ to be a Lorentzian four-manifold with co-dimension one null submanifold $\Omega\subset \mathcal{M}$ of dimension three. This gives rise to a natural submersion $\pi:\Omega\to\Sigma_0$ taking $p\in\Omega$ to the intersection with $\Sigma_0$ of the integral curve $\beta_p^{\ubar L}$ of $\ubar L$, for which $\beta_p^{\ubar L}(0) = p$. Given $\ubar L$ and a constant $s_0$, we may then construct a smooth function $s\in\mathcal{F}(\Omega)$ by imposing $\ubar L(s)=1$ and $s|_{\Sigma_0}= s_0$. For $q\in\Sigma_0$, we take the open interval $(s_-(q),s_+(q))$ to represent the range of $s$ along $\beta_q^{\ubar L}$, and denote by ${S_-}:=\sup_{\Sigma_0}s_-$, ${S_+}:=\inf_{\Sigma_0}s_+$. We now notice that the interval $(S_-,S_+)$ is non-empty. Given that $\ubar L(s)=1$, the Implicit Function Theorem implies for $t\in(S_-,S_+)$, a spacelike embedding $\Sigma_t:= \{q\in\Omega|s(q)=t\}$ and a diffeomorphism $\pi|_{\Sigma_t}:\Sigma_t\to\Sigma_0$. More generally, for any continuous function $\omega$ defined on $\Sigma_0$, provided $\sup_{\Sigma_0}\omega\leq S_+$, $\inf_{\Sigma_0}\omega \geq S_-$, we obtain a homeomorphism $\iota_\omega:\Sigma_0\to \Sigma_\omega\subset \Omega$ according to $(s_0,q)\to_{\iota_\omega}(\omega(q),q)$. We will call these spherical embeddings \textit{cross-sections} of $\Omega$. Clearly the regularity of any such $\Sigma_\omega$ is dictated by the function $\omega$. Conversely, any Riemannian spherical embedding $\Sigma\subset\Omega$ is uniquely characterized as a graph over $\Sigma_0$ with graph function $s\circ(\pi|_{\Sigma})^{-1}$. We note for $s<S_-$ or $s>S_+$, in the case that $\Sigma_s$ is non-empty, such surfaces remain smooth but may no longer be connected. Nevertheless, the collection $\{\Sigma_s\}$ gives a foliation of $\Omega$. We highlight that the foliation $\{\Sigma_s\}\subset \Omega$ is dependent upon our choice of $\ubar L$. Any other viable candidate would have to be a re-scaling of $\ubar L$, say to $\ubar L_a:=a\ubar L$, for some $a\neq 0$, whereby $D_{\ubar L_a}\ubar L_a = a(\ubar L\log |a|+\kappa)\ubar L_a$. Consequently, a foliation $\{\Sigma^a_s\}\subset \Omega$, will be called a \textit{geodesic foliation} whenever $\ubar L\log |a| = -\kappa$, equivalently, $a(s,q) = a_0(q)e^{-\int_{s_0}^s\kappa(u,q)du}$, for $q\in \Sigma_{s_0}$. In particular, if we re-scale to $\kappa\equiv 0$, namely take $\ubar L$ to be geodesic, then any other geodesic foliation $\{\Sigma_\lambda\}$ is given by $s|_{\Sigma_\lambda} = (g_1\circ\pi)\lambda+(g_2\circ\pi)$ for functions $g_1,g_2$ on $\Sigma_0$. Once again, the regularity of the geodesic foliation $\{\Sigma_\lambda\}$ depends on the regularity of the functions $g_1,g_2$.  
\begin{definition}\label{d2}
We will define the second fundamental form of $\Omega$ relative to $\ubar L$ by: 
$$\ubar\chi(X,Y):=\langle D_X\ubar L,Y\rangle$$
for $X,Y\in \Gamma(T\Omega)$. 
\end{definition}
By an abuse of notation, for a cross-section $\Sigma\subset\Omega$, we will also denote by $\ubar\chi$ the induced second fundamental form on $\Sigma$. This turns out to be a rather small abuse of notation as we will now explain. Since $X\in \Gamma(T\Omega)$ if and only if $\langle \ubar L,X\rangle=0$, and $\ubar L$ is null, we observe $\ubar L\in\Gamma(T\Omega)\cap\Gamma(T^\perp\Omega)$. So $\ubar L$ is both `tangent' and `normal' to $\Omega$. As a result, at any $p\in\Omega$:
\begin{align*}
\langle X,Y+c\ubar L\rangle &= \langle X,Y\rangle,\\
\ubar\chi(X,Y+c\ubar L)&=\ubar\chi(X,Y),
\end{align*}
for any $X,Y\in T_p\Omega$, $c\in\mathbb{R}$. So both the metric and the second fundamental form are determined `modulo pointwise contributions by $\ubar L$'. Said differently, both $g|_\Omega,\ubar\chi$ are fully determined on points of a cross-section $\Sigma$ by their restrictions $g|_\Sigma,\ubar\chi|_\Sigma$. It also follows that $g|_\Omega,\ubar\chi$ are fully determined throughout $\Omega$ by their restrictions along a foliation $\{\Sigma_s\}$ of $\Omega$, which we will denote $\gamma_s:=g|_{\Sigma_s}$, $\ubar\chi_s:=\ubar\chi|_{\Sigma_s}$. Given such a foliation, we may also construct the function $\tr\ubar\chi(p):=\tr_{\gamma_{s(p)}}\ubar\chi_{s(p)}$, called the \textit{null expansion}. We observe the null expansion is solely dependent upon our choice of $\ubar L$. For a cross-section $\Sigma\subset \Omega$, denoting $\gamma:=g|_\Sigma$, we observe $\tr\ubar\chi(q) = \tr_\gamma\ubar\chi(q)$ for any $q\in \Sigma$. This function is called the null expansion, since on $\Sigma$ we observe: 
$$\ubar\chi = \frac12\pounds_{\ubar L}\gamma,$$
so by variation of the area along $\ubar L$, it follows that $\delta_{\ubar L}dA = \tr\ubar\chi dA$. In other-words, $\tr\ubar\chi(p)$ measures the pointwise area expansion of cross-sections through the point $p$ along $\ubar L$. In the next section we will need the following propagation equations:

\begin{lemma}\label{l} Along the foliation $\{\Sigma_s\}\subset\Omega$, with data $\gamma_s,\ubar\chi_s$ associated to $\ubar L$:
\begin{align}
\pounds_{\ubar L}\gamma_s &= 2\ubar\chi_s\label{i1}\\
\pounds_{\ubar L}\ubar\chi_s &= -\ubar\alpha_s + \ubar\chi^2_s+\kappa\ubar\chi_s\label{i2}\\
\ubar L\tr\ubar\chi_s &= -\frac12(\tr\ubar\chi_s)^2 - |\hat{\ubar\chi}_s|^2 - G(\ubar L,\ubar L) +\kappa \tr\ubar\chi_s\label{i3}
\end{align}
where $\ubar\alpha_s(V,W) = \langle R_{\ubar L V}\ubar L,W\rangle$, $G(\ubar L,\ubar L):=\tr_{\gamma_s}\ubar\alpha_s$.
\end{lemma}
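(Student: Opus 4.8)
The plan is to establish all three identities by a direct frame computation on $\Omega$, using only that $D$ is the torsion-free metric connection, that $D_{\ubar L}\ubar L=\kappa\ubar L$, and the observation recorded after Definition \ref{d2} that $g|_\Omega$ and $\ubar\chi$ are completely determined by their restrictions to the leaves $\Sigma_s$. Concretely, I would fix local coordinates $(x^i)$ on $\Sigma_0$ and transport them along the flow of $\ubar L$; since $\ubar L(s)=1$ this flow carries $\Sigma_s$ to $\Sigma_{s+\delta}$, so the coordinate fields $\partial_i$ satisfy $[\ubar L,\partial_i]=0$ and span $T\Sigma_s$ along every leaf. It then suffices to evaluate each Lie derivative on the pairs $\partial_i,\partial_j$.

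For (\ref{i1}) — essentially the relation $\ubar\chi=\frac12\pounds_{\ubar L}\gamma$ already noted before the lemma — I would expand $(\pounds_{\ubar L}\gamma_s)(\partial_i,\partial_j)=\ubar L\langle\partial_i,\partial_j\rangle$ by metric compatibility and replace $D_{\ubar L}\partial_i$ with $D_{\partial_i}\ubar L$ using $[\ubar L,\partial_i]=0$; this gives $\ubar\chi(\partial_i,\partial_j)+\ubar\chi(\partial_j,\partial_i)$, which equals $2\ubar\chi_s(\partial_i,\partial_j)$ once one records the symmetry $\ubar\chi(X,Y)=\ubar\chi(Y,X)$, itself a consequence of $\langle\ubar L,[X,Y]\rangle=0$ for $X,Y\in\Gamma(T\Omega)$ together with $\langle\ubar L,X\rangle=0$ on $T\Omega$.

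For (\ref{i2}) the same setup gives
$$(\pounds_{\ubar L}\ubar\chi_s)(\partial_i,\partial_j)=\ubar L\langle D_{\partial_i}\ubar L,\partial_j\rangle=\langle D_{\ubar L}D_{\partial_i}\ubar L,\partial_j\rangle+\langle D_{\partial_i}\ubar L,D_{\partial_j}\ubar L\rangle.$$
I would treat the first term by commuting covariant derivatives with the curvature convention of the paper, obtaining $D_{\ubar L}D_{\partial_i}\ubar L=D_{\partial_i}(D_{\ubar L}\ubar L)-R_{\ubar L\partial_i}\ubar L=(\partial_i\kappa)\ubar L+\kappa D_{\partial_i}\ubar L-R_{\ubar L\partial_i}\ubar L$; pairing with $\partial_j$ annihilates the $\ubar L$-term and leaves $\kappa\ubar\chi(\partial_i,\partial_j)-\ubar\alpha_s(\partial_i,\partial_j)$. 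For the second term I would decompose $D_{\partial_j}\ubar L$ in the frame $\{\partial_a,\ubar L\}$: its $\ubar L$-component pairs to zero against $D_{\partial_i}\ubar L$ because $\langle D_{\partial_i}\ubar L,\ubar L\rangle=\frac12\partial_i\langle\ubar L,\ubar L\rangle=0$, while its $T\Sigma_s$-component equals $\gamma^{ab}\ubar\chi_{jb}\partial_a$, so the term contributes exactly $(\ubar\chi_s^2)_{ij}$. Adding the two contributions gives (\ref{i2}).

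Finally (\ref{i3}) follows by taking the $\gamma_s$-trace of (\ref{i2}) via the identity $\ubar L(\tr_{\gamma_s}\ubar\chi_s)=\tr_{\gamma_s}(\pounds_{\ubar L}\ubar\chi_s)-\langle\pounds_{\ubar L}\gamma_s,\ubar\chi_s\rangle$, then substituting (\ref{i1}) into the correction term and using $\tr_{\gamma_s}\ubar\alpha_s=G(\ubar L,\ubar L)$ and $\tr_{\gamma_s}\ubar\chi_s^2=|\ubar\chi_s|^2$; this yields $\ubar L\tr\ubar\chi_s=\kappa\tr\ubar\chi_s-G(\ubar L,\ubar L)-|\ubar\chi_s|^2$, and splitting $\ubar\chi_s=\hat{\ubar\chi}_s+\frac12(\tr\ubar\chi_s)\gamma_s$ into its trace-free and pure-trace parts turns $|\ubar\chi_s|^2$ into $|\hat{\ubar\chi}_s|^2+\frac12(\tr\ubar\chi_s)^2$, which is the stated form. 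I expect the only genuinely delicate point to be the bookkeeping of the $\ubar L$-components of $D_X\ubar L$ and the dual role of $\ubar L$ as being simultaneously tangent and normal to $\Omega$: it is precisely this dual role that makes every auxiliary $\ubar L$-component drop out, so that the computation closes without introducing a transverse (incoming) null normal or a torsion one-form.
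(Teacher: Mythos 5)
Your proposal is correct, and it is in fact more than the paper offers: the paper does not prove Lemma \ref{l} at all, but simply cites \cite{R,G}, so your direct frame computation supplies the standard derivation that those references contain. I checked the key points and they all close: with coordinate fields Lie-transported along $\ubar L$ (the same construction the paper itself uses later via $E(\Sigma)$), equation (\ref{i1}) follows from torsion-freeness plus the symmetry $\ubar\chi(X,Y)-\ubar\chi(Y,X)=-\langle\ubar L,[X,Y]\rangle=0$; for (\ref{i2}) your commutation $D_{\ubar L}D_{\partial_i}\ubar L=D_{\partial_i}(\kappa\ubar L)-R_{\ubar L\partial_i}\ubar L$ has the right sign under the paper's convention $R_{XY}Z=D_{[X,Y]}Z-[D_X,D_Y]Z$, which is exactly what produces the $-\ubar\alpha_s$ term (and hence the focusing sign needed later in Lemma \ref{l11}); and the quadratic term is handled correctly because $\langle D_X\ubar L,\ubar L\rangle=0$ forces $D_X\ubar L\in T\Omega$, so its $\ubar L$-component drops out of the pairing and only $\gamma^{ab}\ubar\chi_{ia}\ubar\chi_{jb}=(\ubar\chi_s^2)_{ij}$ survives. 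Your trace identity $\ubar L(\tr_{\gamma_s}\ubar\chi_s)=\tr_{\gamma_s}(\pounds_{\ubar L}\ubar\chi_s)-\langle\pounds_{\ubar L}\gamma_s,\ubar\chi_s\rangle$ together with $|\ubar\chi_s|^2=|\hat{\ubar\chi}_s|^2+\tfrac12(\tr\ubar\chi_s)^2$ (valid since the leaves are two-dimensional) then gives (\ref{i3}) exactly as stated, with the $\kappa$ terms correctly retained for a non-geodesic choice of $\ubar L$; your closing remark that no transverse null normal or torsion one-form is needed is also accurate, since every quantity in the lemma is intrinsic to $(\Omega,\ubar L)$.
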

\begin{proof}
See, for example \cite{R,G}.
\end{proof}
\begin{definition}\label{d3}\
\begin{enumerate}
\item We will call $\Omega$ a Null Cone, if $\tr\ubar\chi>0$ throughout $\Omega$.
\item We will call a Null Cone $\Omega$ convex, if $\frac12(\tr\ubar\chi)^2>|\hat{\ubar\chi}|^2$ throughout $\Omega$.
\end{enumerate}
\end{definition}
We will now discard components of $\Omega$ satisfying $s<S_-$, or $s>S_+$. We may also re-parametrize if necessary, so that $S_->0$. If we identify $\{s_0\}\times\mathbb{S}^2$ with $\Sigma_{s_0}$, it follows that we have a global diffeomorphism $B:(S_-,S_+)\times\mathbb{S}^2\to \Omega$, given by $B(s,p) = \beta^{\ubar L}_p(s)$. It also follows from our construction in Section 2, that $(\Omega,g)$ can be equivalently viewed as $(\mathcal{A},\sigma)$, $\mathcal{A}=B_{S_+}(0)-\bar{B}_{S_-}(0)\subset\mathbb{R}^3$, whereby $B_{s} (0)(\bar{B}_s(0))$ represents the open(closed) ball of radius $s$ centered at the origin in $\mathbb{R}^3$, and $\sigma = \tilde B^\star(g|_{\Omega})$. For a differentiable function $f$, we observe that $dB(\partial_s)f|_{(s,p)} = \partial_s f(\beta^p_{\ubar L}(s)) = \ubar L(f)|_{\beta^{\ubar L}_p(s)}$. Therefore, the pull-back of $\ubar L$ to $\mathcal{A}$ is realized via $d\Phi(\partial_s)f|_{(s,p)} = \partial_s f(s\frac{\vec{z}}{|\vec{z}|}\circ\iota(p)) = \frac{z^\mu}{|\vec{z}|}(\partial_\mu f)(s\frac{\vec{z}}{|\vec{z}|}\circ\iota(p))$, equivalently, $d\tilde {B}(\frac{\vec{z}}{|\vec{z}|}) = \ubar L$. If we consider an embedding $r:\mathbb{S}^2\hookrightarrow\mathcal{A}$, the pull-back $r^\star(\sigma)$ will be Riemannian if and only if $\frac{\vec{z}}{|\vec{z}|}\circ r(p)\notin dr(T_p\mathbb{S}^2)$ for every $p\in\mathbb{S}^2$. Equivalently, $r(\mathbb{S}^2)\subset\mathcal{A}\subset\mathbb{R}^3$ must be the boundary of a star-shaped region relative to the origin in $\mathbb{R}^3$. Consequently, a natural way to construct a family of embeddings about $r:\mathbb{S}^2\to\mathcal{A}$, is to take $R:(-\epsilon,\epsilon)\times\mathbb{S}^2\to \mathcal{A}$ whereby $\vec{R}(\lambda,p) = \vec{r}(p)+\lambda\frac{\vec{z}}{|\vec{z}|}\circ r(p)$, thus $F := \tilde B\circ R$.\\ 
\indent For $\vec{r}\in C^{3,\beta}(\mathbb{S}^2,\mathbb{R}^3)$, such that the associated mapping $r:\mathbb{S}^2\to\mathcal{A}$ is star-shaped, we observe $\vec{\upsilon} = \frac{\vec{r}}{|\vec{r}|}\in C^{3,\beta}(\mathbb{S}^2,\mathbb{R}^3)$. It also follows that $h = \pounds_{\partial_\lambda}(F^\star(g))|_{\{\lambda=0\}} = 2F^\star(\ubar\chi)|_{\{\lambda=0\}}\in C^{2,\beta}(\text{Sym}(T^\star\mathbb{S}^2\otimes T^\star\mathbb{S}^2))$. As a consequence of Corollary \ref{c2}:
\begin{theorem} \label{t2}
Consider a convex Null Cone, $(\mathcal{A},\sigma)$. Consider also $\gamma\in C^{3,\beta}(\text{Sym}(T^\star\mathbb{S}^2\otimes T^\star\mathbb{S}^2))$ with isometric embedding $r:(\mathbb{S}^2,\gamma)\hookrightarrow (\mathcal{A},\sigma)$ represented by some $\vec{r}\in C^{3,\beta}(\mathbb{S}^2,\mathbb{R}^3)$. Then, there exists $\epsilon,\delta>0$, such that any $||\tilde\gamma-\gamma||_{2,\beta}\leq \epsilon$ gives rise to a unique $\vec{y}\in C^{2,\beta}(\mathbb{S}^2,\mathbb{R}^{3})$, $||\vec{y}||_{2,\beta}\leq\delta$, and $\tau(\vec{y})\in C^{2,\beta}(T^\star\mathbb{S}^2)$, such that:
\begin{align*}
\tilde\gamma &= (r+y)^\star(\sigma),\\
\vec{y} &= d\vec{r}(\tau^\# (\vec{y}))+\phi(\vec{y})\frac{\vec{r}}{|\vec{r}|},
\end{align*}
whereby $q(\vec{y},\nabla\vec{y}) = \tilde q(\vec{y},\nabla\vec{y}) -\frac{\mathcal{P}(\tilde q(\vec{y},\nabla\vec{y}),h)}{\mathcal{P}(h,h)}h$, for $\tilde q(\vec{y},\nabla\vec{y})$ given by (\ref{e10}), and 
$$\phi(\vec{y}) = \frac{\mathcal{P}(\tilde q(\vec{y},\nabla\vec{y})-2\text{Sym}(\nabla\tau(\vec{y})),h)}{\mathcal{P}(h,h)}.$$
\end{theorem}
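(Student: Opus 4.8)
The plan is to deduce Theorem \ref{t2} directly from Corollary \ref{c2}: the only work is to produce, from the Null Cone geometry, data $(h,\gamma,\tilde\gamma,\vec\upsilon,\vec r)$ of the regularity demanded there together with the sign condition $\mathcal{P}(h,h)>0$. Recall from the initial setup of this section that, after discarding the components with $s<S_-$ or $s>S_+$ and re-parametrizing so that $S_->0$, the Null Cone $(\Omega,g|_\Omega)$ is identified with the annulus $(\mathcal{A},\sigma)$, $\mathcal{A}=B_{S_+}(0)\setminus\bar B_{S_-}(0)\subset\mathbb{R}^3$, $\sigma=\tilde B^\star(g|_\Omega)$, and that under this identification the null generator $\ubar L$ is realized as $d\tilde B(\tfrac{\vec z}{|\vec z|})$. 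Although $\sigma$ is degenerate, with null direction $\ubar L$, none of the constructions of Sections 2 and 3.3 — in particular the contraction mapping of Theorem \ref{t1} and the vanishing of the Codazzi-type defect in Corollary \ref{c2} — used non-degeneracy of the ambient tensor, so they apply verbatim; the only ambient quantities entering are $\sigma$, its derivatives, and the metric induced on cross-sections, which is Riemannian by hypothesis. First I would take the family of embeddings $R:(-\epsilon,\epsilon)\times\mathbb{S}^2\to\mathcal{A}$, $\vec R(\lambda,p)=\vec r(p)+\lambda\tfrac{\vec z}{|\vec z|}\circ r(p)$, and set $F:=\tilde B\circ R$.

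Next I would check the structural hypotheses. Since $dF(\partial_\lambda)|_{\lambda=0}=d\tilde B\big(dR(\partial_\lambda)|_{\lambda=0}\big)=d\tilde B(\tfrac{\vec z}{|\vec z|})=\ubar L$, and $\ubar L\in\Gamma(T^\perp\Omega)$ while $r(\mathbb{S}^2)$ is a cross-section of $\Omega$, it follows that $\ubar L$ is $\sigma$-orthogonal to $dr(T\mathbb{S}^2)$; hence $\vec\upsilon:=dR(\partial_\lambda)|_{\lambda=0}=\tfrac{\vec r}{|\vec r|}$ is the required normal field, and $\vec\upsilon\in C^{3,\beta}(\mathbb{S}^2,\mathbb{R}^3)\subset C^{2,\beta}(\mathbb{S}^2,\mathbb{R}^3)$ because $\vec r\in C^{3,\beta}$ and $r$ is star-shaped. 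The associated second fundamental form is $h=\pounds_{\partial_\lambda}(F^\star g)|_{\lambda=0}=2F^\star(\ubar\chi)|_{\lambda=0}=2r^\star(\ubar\chi)$, using $\ubar\chi=\tfrac12\pounds_{\ubar L}(g|_\Omega)$ on cross-sections; since $h$ is expressed through first derivatives of $\vec r$ and $\vec\upsilon$ and the first derivatives of $\sigma$, we get $h\in C^{2,\beta}(\text{Sym}(T^\star\mathbb{S}^2\otimes T^\star\mathbb{S}^2))$.

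It remains to translate convexity into $\mathcal{P}(h,h)>0$. On $\mathbb{S}^2$ one has $\mathcal{P}(a,a)=2\det a/\det\gamma$ for any symmetric $2$-tensor $a$, so $\mathcal{P}(h,h)=4\,\mathcal{P}(r^\star\ubar\chi,r^\star\ubar\chi)$, and decomposing $\ubar\chi=\tfrac12(\tr\ubar\chi)\gamma+\hat{\ubar\chi}$ gives $\mathcal{P}(r^\star\ubar\chi,r^\star\ubar\chi)=\tfrac12(\tr\ubar\chi)^2-|\hat{\ubar\chi}|^2$. By the definition of a convex Null Cone this is strictly positive at every point of $\Omega$, in particular along $r(\mathbb{S}^2)$, so $\mathcal{P}(h,h)>0$. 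With $h,\gamma,\vec\upsilon,\vec r$ now of the regularity required by Corollary \ref{c2} and $\mathcal{P}(h,h)>0$, that corollary yields $\epsilon,\delta>0$ and, for each $\tilde\gamma$ with $\|\tilde\gamma-\gamma\|_{2,\beta}\le\epsilon$, a unique $\vec y$ with $\|\vec y\|_{2,\beta}\le\delta$ of the stated form $\vec y=d\vec r(\tau^\#(\vec y))+\phi(\vec y)\tfrac{\vec r}{|\vec r|}$, with $q(\vec y,\nabla\vec y)$ and $\phi(\vec y)$ given by the displayed formulas and $\tilde\gamma=(r+y)^\star(\sigma)$ — the correction tensor $a$ being forced to vanish by Proposition \ref{p4}. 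There is no serious analytic obstacle; the only points requiring care are the verification that the null generator supplies the normal variation in the correct H\"older class, the elementary identification of the convexity inequality $\tfrac12(\tr\ubar\chi)^2>|\hat{\ubar\chi}|^2$ with $\mathcal{P}(h,h)>0$, and the observation that the degeneracy of $\sigma$ is harmless because the machinery of Sections 2 and 3 was built to be insensitive to it.
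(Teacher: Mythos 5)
Your proposal is correct and follows essentially the same route as the paper: Theorem \ref{t2} is stated there precisely ``as a consequence of Corollary \ref{c2}'', with the preceding setup paragraph supplying exactly the verifications you make ($\vec\upsilon=\tfrac{\vec r}{|\vec r|}\in C^{3,\beta}$, $h=2F^\star(\ubar\chi)|_{\lambda=0}\in C^{2,\beta}$, and convexity giving $\mathcal{P}(h,h)=4\big(\tfrac12(\tr\ubar\chi)^2-|\hat{\ubar\chi}|^2\big)>0$), together with the observation that the linearized machinery of Sections 2--3 never used non-degeneracy of $\sigma$. Nothing further is needed.
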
	

\subsection{Regularity}
In this section we wish to show that the isometric embedding inherits regularity directly from the metric of the embedding. For convenience, yet with an abuse of notation, we will drop the subscript in $g|_\Omega$ in this section as we will not be referencing the ambient geometry.
\begin{lemma}\label{l5}
Given the background metric $g(s,y)$ of $\Omega$, and $\omega(y)\in C^{2,\beta}(\mathbb{S}^2)$ such that $S_-\leq \omega\leq S_+$. We observe the Gauss curvature, $\mathcal{K}_\omega$, of the cross-section $\Sigma_\omega:=\{s=\omega\}$ associated to the metric $\gamma:=g(\omega(y),y)$: 
$$\mathcal{K}_\omega = -\nabla\cdot\Big(\tr\ubar\chi_\omega\nabla\omega-\ubar\chi_\omega(\nabla\omega)\Big)-(d_s\tr\ubar\chi_s-\nabla_s\cdot\ubar\chi_s)(\nabla\omega) +\mathcal{K}(\omega).$$
Where $\nabla$, $\ubar\chi_\omega$ refers to data on $\Sigma_\omega$, $\nabla_s$, $\ubar\chi_s$ to the background data on $\Sigma_s$.
\end{lemma}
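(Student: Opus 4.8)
\emph{Proof strategy.} Note first that $\mathcal{K}_\omega$ is simply the intrinsic Gauss curvature of the \emph{single} metric $\gamma_{ij}(y)=g_{ij}(\omega(y),y)$ on $\mathbb{S}^2$, so the plan is to compute $\mathcal{K}_\omega=\frac12 R[\gamma]$ directly by the chain rule and then reorganise the output so that the ``frozen leaf'' contribution splits off from the terms built out of $\nabla\omega$.

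Concretely, I would fix coordinates $(s,y^1,y^2)$ on $\Omega$ adapted to the foliation, with $\ubar L=\partial_s$ and leaves $\{s=\text{const}\}$. Since $\ubar L$ is null and at once tangent and normal to $\Omega$, the degenerate metric reads $g=g_{ij}(s,y)\,dy^idy^j$ with $g_{ij}(s,\cdot)=\gamma_s$ and $\partial_s g_{ij}=2(\ubar\chi_s)_{ij}$. Writing $\Sigma_\omega$ as the graph $y\mapsto(\omega(y),y)$, every ``graph term'' involves $g(\partial_s,\cdot)=0$ and drops out, so the induced metric is exactly $\gamma_{ij}(y)=g_{ij}(\omega(y),y)$ and the induced second fundamental form of $\Sigma_\omega$ is $(\ubar\chi_\omega)_{ij}(y)=(\ubar\chi_s)_{ij}\big|_{s=\omega(y)}$; in particular $\tr\ubar\chi_\omega$ is the leaf expansion sampled at level $\omega$ and $\mathcal{K}(\omega)$ is the leaf Gauss curvature at $s=\omega$. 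Differentiating $\gamma_{ij}=g_{ij}(\omega(y),y)$ gives $\partial_k\gamma_{ij}=(\widetilde\partial_k g_{ij})\big|_\omega+2(\ubar\chi_\omega)_{ij}\,\omega_{,k}$ (with $\widetilde\partial$ the $y$-derivative at fixed $s$), from which the Levi-Civita connection of $\gamma$ splits as $\nabla=\bar\nabla+C$, where $\bar\nabla$ has coefficients $\bar\Gamma^k_{ij}(y)=\Gamma^k_{ij}[\gamma_s]\big|_{s=\omega(y)}$ (the leaf connection frozen at level $\omega$) and $C^k_{\ ij}=(\ubar\chi_\omega)^k_{\ i}\omega_{,j}+(\ubar\chi_\omega)^k_{\ j}\omega_{,i}-(\ubar\chi_\omega)_{ij}(\nabla\omega)^k$.

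The heart of the argument is to expand $R[\gamma]$ through the standard curvature identity for a connection of the form $\bar\nabla+C$, producing a ``$\bar\nabla$-piece'' and a ``$C$-piece''. For the $\bar\nabla$-piece: differentiating $\bar\Gamma$ produces the first variation of the leaf Christoffel symbols, $\partial_s\Gamma^k_{ij}[\gamma_s]=\gamma^{km}\big(\nabla_i(\ubar\chi_s)_{mj}+\nabla_j(\ubar\chi_s)_{mi}-\nabla_m(\ubar\chi_s)_{ij}\big)$ (covariant derivatives with $\gamma_s$), contracted against $d\omega$; tracing and using $\partial_s\Gamma^i_{\ ik}=\partial_k\tr\ubar\chi_s$ together with $\gamma^{kj}\partial_s\Gamma^i_{\ jk}=2(\nabla_s\cdot\ubar\chi_s)^i-(\nabla_s\tr\ubar\chi_s)^i$, this piece collapses to $2\mathcal{K}(\omega)-2\big(d_s\tr\ubar\chi_s-\nabla_s\cdot\ubar\chi_s\big)(\nabla\omega)$. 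For the $C$-piece: inserting $C$, converting all $\bar\nabla$-derivatives and $\bar\nabla$-traces into $\nabla$-derivatives and $\gamma$-traces (which spawns extra quadratic terms because $\bar\nabla_m\gamma_{ij}=2\omega_{,m}(\ubar\chi_\omega)_{ij}\neq0$) and combining with the $C*C$ terms already present, one should find that everything assembles into the total divergence $-2\,\nabla\cdot\big(\tr\ubar\chi_\omega\,\nabla\omega-\ubar\chi_\omega(\nabla\omega)\big)$; halving the sum of the two pieces gives the claim. It is prudent to verify the first-order-in-$\omega$ content separately, where $C*C$ drops and $\Sigma_\omega$ is the infinitesimal graph with metric variation $\delta\gamma=2\omega\ubar\chi$: the linearised scalar-curvature formula $\delta R=\langle\text{Ric},\delta\gamma\rangle+\nabla\cdot\nabla\cdot(\delta\gamma-(\tr_\gamma\delta\gamma)\gamma)$ used in the proof of Theorem \ref{t1}, applied with $\delta\gamma=2\omega\ubar\chi$ and also along the foliation with $\delta\gamma=2\ubar\chi_s$ to produce $\partial_s\mathcal{K}$, recovers $-\nabla\cdot(\tr\ubar\chi\,\nabla\omega-\ubar\chi(\nabla\omega))-(d\tr\ubar\chi-\nabla\cdot\ubar\chi)(\nabla\omega)$ after one integration by parts.

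The principal obstacle is the bookkeeping in the $C$-piece. Since $\ubar\chi_\omega$ and $\tr\ubar\chi_\omega$ are leaf quantities sampled at the non-constant level $\omega(y)$, their $y$-gradients — which occur inside the divergence on the right-hand side and inside $\bar\nabla C$ on the left — carry $\partial_s$-derivatives of $\ubar\chi_s$, hence ambient curvature via (\ref{i2})--(\ref{i3}) of Lemma \ref{l}; these do not vanish on their own and must be shown to match across the two sides, while every non-tensorial term generated by $\bar\nabla\gamma\neq0$ must be tracked carefully to confirm it recombines exactly into the stated divergence. Everything else is a direct unwinding of definitions and of the curvature identity for $\bar\nabla+C$.
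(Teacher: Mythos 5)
Your proposal is correct and follows essentially the same route as the paper's proof: a direct chain-rule computation of the Gauss curvature of the graph metric $g(\omega(y),y)$ in foliation-adapted coordinates, splitting the connection into the frozen leaf connection plus the correction tensor $C^k_{\ ij}=\ubar\chi^k_{\ i}\omega_j+\ubar\chi^k_{\ j}\omega_i-\ubar\chi_{ij}\nabla^k\omega$ (the paper produces exactly this difference by working at a point where ${}^\gamma\Gamma=0$), and then invoking the propagation equations (\ref{i1})--(\ref{i3}) to convert the $s$-derivatives of the sampled leaf data and cancel the $\ubar\alpha$, $G(\ubar L,\ubar L)$, $\kappa$, and quadratic $\ubar\chi$ terms. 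The cancellations you flag as the main bookkeeping are precisely the ones the paper's computation carries out, so there is no gap in the idea, only in the executed algebra.
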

\begin{proof}
From the outset, we may pick a coordinate system at a given point of $\Sigma_\omega$ so that the connection coefficients for the metric $\gamma = g(\omega(y),y)$, satisfies ${^\gamma}\Gamma^k_{ij} = 0$. From $\partial_k(g_{ij}(\omega,y)) = 2\ubar\chi_{ij}(\omega,y)\partial_k\omega+g_{ij,k}(\omega,y)$, we conclude pointwise that:
\begin{align*}
0={^\gamma}\Gamma^k_{ij} &= \frac12\gamma^{kl}\Big(-\partial_l(g_{ij}(\omega,y))+\partial_i(g_{jl}(\omega,y))+\partial_j(g_{li}(\omega,y)\Big)\\
&=-\ubar\chi_{ij}\nabla^k\omega+\ubar\chi_{j}^k\omega_i+\ubar\chi^k_i\omega_j+{^g}\Gamma^k_{ij},
\end{align*}
giving, ${^g}\Gamma^k_{ij} = \ubar\chi_{ij}\nabla^k\omega-\ubar\chi_{j}^k\omega_i-\ubar\chi^k_i\omega_j$. We observe also from (\ref{i1},\ref{i2}):
\begin{align*}
\frac12\frac{\partial^2g_{il}(\omega,y)}{\partial y^k\partial y^m} &= \partial_k(\ubar\chi_{il}\omega_m)+\ubar\chi_{il,m}\omega_k+\frac12g_{il,km}\\
&=\nabla_k(\ubar\chi_{il}\omega_m)+\nabla_m(\ubar\chi_{il}\omega_k)-\ubar\chi_{il}\nabla^2_{km}\omega-(-\ubar\alpha_{il}+\ubar\chi^2_{il}+\kappa\ubar\chi_{il})\omega_m\omega_k+\frac12g_{il,km}.
\end{align*}

Expressing the Curvature tensor in partial derivatives of the metric we obtain:
\begin{align*}
{^\gamma}{R}_{iklm}&=\frac12\Big(\frac{\partial^2g_{im}(\omega,y)}{\partial y^k\partial y^l}+\frac{\partial^2g_{kl}(\omega,y)}{\partial y^i\partial y^m}-\frac{\partial^2g_{il}(\omega,y)}{\partial y^k\partial y^m}-\frac{\partial^2g_{km}(\omega,y)}{\partial y^i\partial y^l}\Big)\\
&=\nabla_k(\ubar\chi_{im}\omega_l)+\nabla_l(\ubar\chi_{im}\omega_k)-\ubar\chi_{im}\nabla^2_{kl}\omega-(-\ubar\alpha_{im}+\ubar\chi^2_{im}+\kappa\ubar\chi_{im})\omega_k\omega_l\\
&\quad+ \nabla_i(\ubar\chi_{kl}\omega_m)+\nabla_m(\ubar\chi_{kl}\omega_i)-\ubar\chi_{kl}\nabla^2_{im}\omega-(-\ubar\alpha_{kl}+\ubar\chi^2_{kl}+\kappa\ubar\chi_{kl})\omega_i\omega_m\\
&\quad-\nabla_k(\ubar\chi_{il}\omega_m)-\nabla_m(\ubar\chi_{il}\omega_k)+\ubar\chi_{il}\nabla^2_{km}\omega+(-\ubar\alpha_{il}+\ubar\chi^2_{il}+\kappa\ubar\chi_{il})\omega_m\omega_k\\
&\quad- \nabla_i(\ubar\chi_{km}\omega_l)-\nabla_l(\ubar\chi_{km}\omega_i)+\ubar\chi_{km}\nabla^2_{il}\omega+(-\ubar\alpha_{km}+\ubar\chi^2_{km}+\kappa\ubar\chi_{km})\omega_i\omega_l\\
&+{^g}R_{iklm}+\gamma_{np}\Big({^g}\Gamma^n_{km}{^g}\Gamma^p_{il}-{^g}\Gamma^n_{kl}{^g}\Gamma^p_{im}\Big).
\end{align*}
Taking a trace over the `$il$' and `$km$' indices:
\begin{align*}
{{^\gamma}R}&=-4\nabla\cdot(\tr\ubar\chi\nabla\omega)+4\nabla\cdot(\ubar\chi(\nabla\omega))-2\ubar\chi\cdot\cdot\nabla^2\omega+2\tr\ubar\chi\Delta\omega\\
&\quad-2(-\ubar\alpha(\nabla\omega,\nabla\omega)+\ubar\chi^2(\nabla\omega,\nabla\omega)+\kappa\ubar\chi(\nabla\omega,\nabla\omega))+2(-G(\ubar L,\ubar L)+|\ubar\chi|^2+\kappa\tr\ubar\chi)|\nabla\omega|^2\\
&\quad+{^g}R+\Big(\Big|-\tr\ubar\chi\nabla\omega+2\ubar\chi(\nabla\omega)\Big|^2-\Big|-\ubar\chi_{im}\omega_q+\ubar\chi_{mq}\omega_i+\ubar\chi_{qi}\omega_m\Big|^2\Big)\\
&=-2\nabla\cdot\Big(\tr\ubar\chi\nabla\omega-\ubar\chi(\nabla\omega)\Big)-2(\nabla\tr\ubar\chi-\nabla\cdot\ubar\chi)(\nabla\omega)\\
&\quad-2 (-\ubar\alpha(\nabla\omega,\nabla\omega)+\ubar\chi^2(\nabla\omega,\nabla\omega)+\kappa\ubar\chi(\nabla\omega,\nabla\omega))+2(-G(\ubar L,\ubar L)+|\ubar\chi|^2+\kappa\tr\ubar\chi)|\nabla\omega|^2\\
&\quad+{^g}R+\Big((\tr\ubar\chi)^2|\nabla\omega|^2-4\tr\ubar\chi\ubar\chi(\nabla\omega,\nabla\omega)+6\ubar\chi^2(\nabla\omega,\nabla\omega)-3|\ubar\chi|^2|\nabla\omega|^2\Big).
\end{align*}
We now observe from (\ref{i2},\ref{i3})
\begin{align*}
\nabla\tr\ubar\chi &= (\frac{d}{ds}\tr\ubar\chi)\nabla\omega+\nabla_s\tr\ubar\chi_s=-(G(\ubar L,\ubar L)+|\ubar\chi|^2-\kappa\tr\ubar\chi)\nabla\omega+\nabla_s\tr\ubar\chi_s\\
\nabla\cdot\ubar\chi &=(\pounds_{\ubar L}\ubar\chi)(\nabla\omega)+\nabla_s\cdot\ubar\chi_s+\Big(-|\ubar\chi|^2d\omega+\tr\ubar\chi\ubar\chi(\nabla\omega)-2\ubar\chi^2(\nabla\omega)\Big)\\
&=\nabla_s\cdot\ubar\chi_s-\ubar\alpha(\nabla\omega)-\ubar\chi^2(\nabla\omega)+\kappa\ubar\chi(\nabla\omega)-|\ubar\chi|^2d\omega+\tr\ubar\chi\ubar\chi(\nabla\omega),
\end{align*}
where, in the parentheses of the second equality, we corrected for the connection terms. Substituting these two identities into the expression above, and using the fact $\ubar\chi^2(\nabla\omega,\nabla\omega)-\tr\ubar\chi\ubar\chi(\nabla\omega,\nabla\omega) = \frac12\Big(|\ubar\chi|^2-(\tr\ubar\chi)^2\Big)|\nabla\omega|^2$, we obtain the desired relation up to a factor of two.
\end{proof}
\begin{proposition}\label{p6}
Suppose we have an embedding $r:\mathbb{S}^2\hookrightarrow\mathcal{A}$ into a convex Null Cone $(\mathcal{A},\sigma)$, which induces $\vec{r}\in C^{2,\beta}(\mathbb{S}^2,\mathbb{R}^3)$. Then, for $\mathfrak{n}\in\mathbb{N}$, $\mathfrak{n}\geq 2$, if $\gamma := r^\star(\sigma)\in C^{\mathfrak{n},\beta}(\text{Sym}(T^\star\mathbb{S}^2\otimes T^\star\mathbb{S}^2))$ is a Riemannian metric, we have
$$\vec{r}\in C^{\mathfrak{n},\beta}(\mathbb{S}^2,\mathbb{R}^3).$$
Moreover, $||\vec{r}||_{\mathfrak{n},\beta}\leq C(||\gamma||_{\mathfrak{n},\beta},||\vec{r}||_{1,\beta})$.
\end{proposition}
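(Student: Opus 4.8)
The plan is to realise the cross-section $\Sigma:=r(\mathbb S^2)\subset\mathcal A$ as a radial graph over the reference sphere $\Sigma_0$ and to bootstrap the corresponding graph function through the elliptic identity of Lemma~\ref{l5}. Since $\Sigma$ bounds a star-shaped region, the direction map $\psi:p\mapsto \vec r(p)/|\vec r(p)|$ is a diffeomorphism of $\mathbb S^2$ and $\vec r(p)=\omega(\psi(p))\,\psi(p)$ for a positive function $\omega$ on $\mathbb S^2$, the graph function transported to $\Sigma_0\cong\mathbb S^2$. Because the change of variables $\vec r\leftrightarrow(\omega,\psi)$ and its inverse are smooth on the annulus $\mathcal A$ (bounded away from the origin), one has $\vec r\in C^{k,\beta}$ iff $\omega\in C^{k,\beta}$ and $\psi\in C^{k,\beta}$, with $\|\omega\|_{1,\beta}+\|\psi\|_{1,\beta}\le C(\|\vec r\|_{1,\beta})$ and $\|\vec r\|_{k,\beta}\le C(\|\omega\|_{k,\beta},\|\psi\|_{k,\beta})$; so it suffices to show $\omega,\psi\in C^{\mathfrak n,\beta}$ with the matching bound. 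In the graph parametrisation the induced metric is $\bar\gamma:=g(\omega(\cdot),\cdot)$, which is as regular as $\omega$, and since $\gamma=\psi^\star\bar\gamma$ the map $\psi:(\mathbb S^2,\gamma)\to(\Sigma_0,\bar\gamma)$ is an isometry.

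By Lemma~\ref{l5}, $\omega$ solves a second order quasilinear equation whose principal part is $a^{ij}\nabla_i\nabla_j\omega$ with $a^{ij}=\tr\ubar\chi_\omega\,\bar\gamma^{ij}-\ubar\chi_\omega^{ij}$, whose lower order terms are built from $\bar\gamma$, $\nabla\omega$ and the smooth background data of $\Omega$, and whose right-hand side is the Gauss curvature $\mathcal K_\omega$ of $\bar\gamma$. Diagonalising $\ubar\chi_\omega$ with $\bar\gamma$-eigenvalues $\lambda_1,\lambda_2$, the tensor $a$ has eigenvalues $\lambda_2,\lambda_1$; convexity of the Null Cone gives $\tfrac12(\lambda_1+\lambda_2)^2>\tfrac12(\lambda_1-\lambda_2)^2$, i.e. $\lambda_1\lambda_2>0$, and with $\lambda_1+\lambda_2=\tr\ubar\chi_\omega>0$ this forces $\lambda_1,\lambda_2>0$, so the equation is uniformly elliptic with constants fixed by the background. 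Finally, because $\psi$ is an isometry, $\mathcal K_\omega=\mathcal K_\gamma\circ\psi^{-1}$ as functions on $\Sigma_0$, so $\mathcal K_\omega\in C^{\min(\mathfrak n-2,\,k),\beta}$ whenever $\psi\in C^{k,\beta}$ and $\gamma\in C^{\mathfrak n,\beta}$.

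The bootstrap is a joint induction: I claim $\omega,\psi\in C^{k,\beta}$ for $k=2,\dots,\mathfrak n$, the base case $k=2$ being the hypothesis via the first paragraph. Assume $\omega,\psi\in C^{k,\beta}$ with $2\le k\le\mathfrak n-1$. Then $\bar\gamma\in C^{k,\beta}$, the coefficients of the $\omega$-equation lie in $C^{k-1,\beta}$, and by the previous paragraph $\mathcal K_\omega\in C^{\min(\mathfrak n-2,k),\beta}\subseteq C^{k-1,\beta}$ — this is exactly where $k\le\mathfrak n-1$ is used — so interior Schauder estimates upgrade $\omega$ to $C^{k+1,\beta}$. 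Now $\bar\gamma=g(\omega(\cdot),\cdot)\in C^{k+1,\beta}$, and the regularity theory for isometries (equivalently: in $\bar\gamma$-harmonic coordinates on $\Sigma_0$, $\psi$ solves the harmonic-map system $\Delta_\gamma\psi^a+{}^{\bar\gamma}\Gamma^a_{bc}(\psi)\,\gamma^{ij}\partial_i\psi^b\partial_j\psi^c=0$, whose coefficients and inhomogeneity lie in $C^{k-1,\beta}$) upgrades $\psi$ to $C^{k+1,\beta}$. Iterating to $k=\mathfrak n$ gives $\omega,\psi\in C^{\mathfrak n,\beta}$, hence $\vec r\in C^{\mathfrak n,\beta}$. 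At each step the Schauder constant depends only on the fixed ellipticity constants and the $C^{0,\beta}$ norms of the coefficients — controlled by $\|\vec r\|_{1,\beta}$ and the background — while the source terms are controlled by $\|\gamma\|_{\mathfrak n,\beta}$; chaining the finitely many steps yields $\|\vec r\|_{\mathfrak n,\beta}\le C(\|\gamma\|_{\mathfrak n,\beta},\|\vec r\|_{1,\beta})$ (the case $\mathfrak n=2$ being a single application of these two estimates).

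The main obstacle is precisely the circularity being exploited: the natural elliptic equation governs the graph function $\omega$, but its source is the Gauss curvature of $\gamma$ pulled back by the tangential reparametrisation $\psi$, whose regularity is exactly what one is trying to gain; symmetrically, $\psi$ can only be improved once $\bar\gamma$, hence $\omega$, has been improved. The joint induction breaks the loop because each half gains one full derivative per step and the book-keeping constraint $k\le\mathfrak n-1$ keeps the two gains synchronised. The only other delicate point is the uniform ellipticity of the Lemma~\ref{l5} operator, which as shown follows directly from convexity of the Null Cone; everything else is routine elliptic bootstrapping.
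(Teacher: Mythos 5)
Your proof is correct and follows the same overall strategy as the paper: the radial-graph decomposition $\vec r\leftrightarrow(\omega,\psi)$ (the paper's pair $(\omega,\Psi)$ with $\omega=\varphi\circ\Psi^{-1}$), uniform ellipticity of the Lemma~\ref{l5} equation coming from convexity of the Null Cone, the key observation that the Gauss curvature of the graph metric equals $\mathcal K_\gamma\circ\psi^{-1}$, and an alternating bootstrap between $\omega$ and $\psi$ with the bookkeeping constraint $k\le\mathfrak n-1$ ensuring $\mathcal K_\omega\in C^{k-1,\beta}$, exactly as in the paper's induction. The one genuine difference is how you regain derivatives for the reparametrization $\psi$: the paper differentiates the isometry relation $g_{kl}(\varphi,\Psi)\Psi^k_{,i}\Psi^l_{,j}=\gamma_{ij}$ to obtain the pointwise identity (\ref{e19}), which expresses $\partial^2\Psi$ algebraically in first derivatives of $\gamma,\varphi,\Psi$ and the background, so $\Psi$ gains a derivative (indeed one more than $\varphi$, giving $\Psi\in C^{\mathfrak n+1,\beta}$, which the paper exploits later, e.g.\ in Theorem~\ref{t3}) with no further PDE input; you instead observe that $\psi$ is an isometry $(\mathbb S^2,\gamma)\to(\Sigma_0,\bar\gamma)$, hence a harmonic map, and apply Schauder to the harmonic-map system, which is precisely the $\gamma$-trace of (\ref{e19}) and suffices for the stated $C^{\mathfrak n,\beta}$ conclusion. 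Your eigenvalue argument ($\lambda_1\lambda_2>0$ plus $\tr\ubar\chi>0$) is a correct restatement of the paper's use of convexity for ellipticity of (\ref{e21}). Two small points of care, neither a gap: at step $k$ the Schauder constant involves $C^{k-1,\beta}$ (not merely $C^{0,\beta}$) norms of the coefficients, which is harmless since these are inductively controlled exactly as in the paper's chain of estimates; and the $C^{1,\beta}$ control of $\psi^{-1}$ (hence of $\omega$) in terms of $\|\vec r\|_{1,\beta}$ is best justified, as the paper does, via the isometry identity $(d\Psi^{-1})^i_{\ j}=\gamma^{ik}\Psi^n_{,k}g_{nj}$ rather than only by smoothness of the change of variables on the annulus.
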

\begin{proof}
We recall the diffeomorphism $B:(S_-,S_+)\times\mathbb{S}^2\to\Omega$. We denote by $\tilde\pi:(S_-,S_+)\times\mathbb{S}^2\to\mathbb{S}^2$ the canonical projection. Given any cross-section $\Sigma\subset\Omega$, we have the diffeomorphism $\tilde\pi\circ(B^{-1}|_{\Sigma_{s_0}})\circ\pi:\Sigma\to\mathbb{S}^2$. If we choose a local coordinate chart $(x^i,\mathcal{U})$ on $\mathbb{S}^2$ this diffeomorphism also induces the same coordinate chart on $\Sigma$. For convenience, we will shortly omit any further mention of the mapping $\tilde\pi\circ(B^{-1}|_{\Sigma_{s_0}})\circ\pi $ and refer to the coordinate neighborhood $(x^i,\mathcal{U})$ as a shared chart amongst all cross-sections of $\Omega$. We also observe $\Psi:= \tilde\pi\circ(B^{-1}|_{\Sigma_{s_0}})\circ\pi\circ \tilde B\circ r = \tilde\pi\circ \Phi^{-1}\circ r\in C^{2,\beta}\mathcal{D}(\mathbb{S}^2)$, a $C^{2,\beta}$ diffeomorphism of $\mathbb{S}^2$. It follows, in the local coordinates $(s,y^i)\in I\times \mathcal{U}$ for $\Omega$, that $r:(\gamma,\mathbb{S}^2)\hookrightarrow(g,\Omega)$ takes the form:
$$g(\varphi(x),\Psi(x))_{kl}\Psi^k_{,i}\Psi^l_{,j} = \gamma_{ij},$$
or $g_{kl}\Psi^k_{,i}\Psi^l_{,j} = \gamma_{ij}$ for short, whereby $\Psi^k_{,i}(x):=\frac{\partial (y^k \circ\Psi)}{\partial x^i}(x)$, $\varphi: = |\vec{r}|$. We also observe the the function $\omega = \varphi\circ\Psi^{-1}$ uniquely identifying the cross-section $\Sigma\subset\Omega$. From the above expression, we also observe:
$$(d\Psi^{-1})^i_{j} = \gamma^{ik}\Psi^n_{,k}g_{nj},$$
having used the fact that $\Psi^i_{,j},g_{kl}$ are invertible two-by-two matrices in a local coordinate neighborhood. We calculate:
\begin{align*}
g_{kl}\Psi^k_{,im}\Psi^l_{,j}&=\gamma_{ij,m}-\frac{\partial}{\partial y^n}(g(s,y))_{kl}\Psi^n_{,m}\Psi^k_{,i}\Psi^l_{,j}-\frac{\partial}{\partial s}(g(s,y))_{kl}\varphi_{,m}\Psi^k_{,i}\Psi^l_{,j}-g_{kl}\Psi^k_{,i}\Psi^l_{,jm}\\
&=\gamma_{ij,m}-g_{kl,n} \Psi^n_{,m}\Psi^k_{,i}\Psi^l_{,j}-2\ubar\chi_{kl}\varphi_{,m}\Psi^k_{,i}\Psi^l_{,j}-g_{kl}\Psi^k_{,i}\Psi^l_{,jm}.
\end{align*}	
From a symmetric summation in $\{ijm\}$ we therefore observe:
\begin{align}	
g_{kl}\Psi^k_{,im}\Psi^l_{,j} &= \frac12\Big(\gamma_{ij,m}-\gamma_{im,j}+\gamma_{jm,i}+g_{kl,n}(\Psi^k_{,m}\Psi^l_{,i}\Psi^n_{,j}-\Psi^k_{,j}\Psi^l_{,m}\Psi^n_{,i}-\Psi^k_{,i}\Psi^l_{,j}\Psi^n_{,m})\notag\\
&\qquad\qquad+2\ubar\chi_{kl}(\varphi_{,j}\Psi^k_{,i}\Psi^l_{,m}-\varphi_{,m}\Psi^k_{,i}\Psi^l_{,j}-\varphi_{,i}\Psi^k_{,m}\Psi^l
_{,j})\Big)\notag,\\
\Psi^s_{,im}&=\frac12g^{sr}(d\Psi^{-1})^j_r \Big(\gamma_{ij,m}-\gamma_{im,j}+\gamma_{jm,i}+g_{kl,n}(\Psi^k_{,m}\Psi^l_{,i}\Psi^n_{,j}-\Psi^k_{,j}\Psi^l_{,m}\Psi^n_{,i}-\Psi^k_{,i}\Psi^l_{,j}\Psi^n_{,m})\notag\\
&\qquad\qquad+2\ubar\chi_{kl}(\varphi_{,j}\Psi^k_{,i}\Psi^l_{,m}-\varphi_{,m}\Psi^k_{,i}\Psi^l_{,j}-\varphi_{,i}\Psi^k_{,m}\Psi^l
_{,j})\Big)\notag\\
&=\frac12g^{sr}\Big((d\Psi^{-1})^j_r\big(\gamma_{ij,m}-\gamma_{im,j}+\gamma_{jm,i}\big)+\big(g_{kl,r}-g_{rk,l}-g_{lr,k}\big)\Psi^k_{,m}\Psi^l_{,i}\notag\\
&\qquad\qquad+2\big(\ubar\chi_{kl}\varphi_{,j}(d\Psi^{-1})^j_r\Psi^k_{,i}\Psi^l_{,m}
-\ubar\chi_{kr}\varphi_{,m}\Psi^k_{,i}-\ubar\chi_{kr}\varphi_{,i}\Psi^k_{,m}\big)\Big)\notag\\
&=\frac12\gamma^{nj} \big(\gamma_{ij,m}-\gamma_{im,j}+\gamma_{jm,i}\big)\Psi^s_{,n}+\frac12g^{sr} \big(g_{kl,r}-g_{rk,l}-g_{lr,k}\big)\Psi^k_{,m}\Psi^l_{,i}\notag\\
&\qquad\qquad+\ubar\chi_{kl}\varphi_{,j}\gamma^{nj}\Psi^s_{,n}\Psi^k_{,i}\Psi^l_{,m}-\ubar\chi_{kr}\varphi_{,m}g^{sr}\Psi^k_{,i}-\ubar\chi_{kr}g^{sr}\varphi_{,i}\Psi^k_{,m}\notag\\
&={^\gamma}\Gamma^n_{im}\Psi^s_{,n}-{^g}\Gamma^s_{kl}\Psi^k_{,m}\Psi^l_{,i} +\ubar\chi_{kl}\varphi_{,j}\gamma^{nj}\Psi^s_{,n}\Psi^k_{,i}\Psi^l_{,m}-\ubar\chi_{kr}\varphi_{,m}g^{sr}\Psi^k_{,i}-\ubar\chi_{kr}g^{sr}\varphi_{,i}\Psi^k_{,m}\label{e19}.
\end{align}
We conclude that the local regularity of $\Psi^i(x)$ is always a single derivative larger in comparison to $\varphi(x)$. Combining all these facts, derivatives of (\ref{e19}) gives the apriori estimate (ignoring contributions from the locally smooth metric $g$):
\begin{equation}
||\Psi||_{k+1,\beta}\leq C(||\gamma||_{k,\beta},||\varphi||_{k,\beta},||\Psi||_{1,\beta}).\label{e20}
\end{equation}
From the fact that $\vec{r}\in C^{2,\beta}(\mathbb{S}^2,\mathbb{R}^3)$, we have $\varphi\in C^{2,\beta}(\mathbb{S}^2)$, $\Psi\in C^{2,\beta}\mathcal{D}(\mathbb{S}^2)$. From (\ref{e19}), we therefore conclude that, in-fact, $\Psi\in C^{3,\beta}\mathcal{D}(\mathbb{S}^2)$. Therefore, in order to raise the regularity of the pair $(\Psi,\varphi)$, we're required to raise the regularity of $\varphi$ first. We do so using Lemma \ref{l5}:

\begin{equation}\label{e21}
\mathcal{K}\circ\Psi^{-1}=-\nabla_\omega\cdot(\tr\ubar\chi_\omega d\omega-\ubar\chi_\omega(\nabla_\omega\omega))-(d_s\tr\ubar\chi_s-\nabla_s\cdot\ubar\chi_s)(\nabla_s\omega)+\mathcal{K}_s(\omega).
\end{equation}
Since $(\Omega,g)$ is convex, we conclude that (\ref{e21}) is a quasi-linear elliptic equation for $\omega(y)$. From interior Schauder estimates, $$||\omega||_{2,\beta}\leq C(||\ubar\chi_\omega||_{1,0},||g_\omega||_{1,0})||\mathcal{K}\circ\Psi^{-1}-\mathcal{K}_s(\omega)||_{0,\beta}\leq C(||\gamma||_{2,\beta}, ||\omega||_{1,\beta},||\Psi^{-1}||_{0,\beta}).$$ 
For $n\geq 3$, we also observe $\mathcal{K}\circ\Psi^{-1}\in C^{1,\beta}(\mathbb{S}^2)$. From interior Schauder estimates we conclude $\omega\in C^{3,\beta}(\mathbb{S}^2)$, moreover,
\begin{align*}
||\omega||_{3,\beta}&\leq C(||\ubar\chi_\omega||_{2,0},||g_\omega||_{2,0})(||\mathcal{K}\circ \Psi^{-1}-\mathcal{K}_s(\omega)||_{1,\beta})\leq C(||\gamma||_{3,\beta},||\omega||_{2,\beta},||\Psi^{-1}||_{1,\beta})\\
&\leq C(||\gamma||_{3,\beta},||\omega||_{1,\beta},||\Psi^{-1}||_{1,\beta}).
\end{align*}
Since $\varphi = \omega\circ\Psi$, $(d\Psi^{-1})^i_{j} = \gamma^{ik}\Psi^n_{,k}g_{nj}$, we have 
\begin{align*}	
||\varphi||_{3,\beta}&\leq C(||\omega||_{3,\beta},||\Psi||_{3,\beta})\leq C(||\gamma||_{3,\beta},||\varphi||_{1,\beta},||\Psi||_{1,\beta}),\\
||\Psi||_{4,\beta}&\leq C(||\gamma||_{3,\beta},||\varphi||_{1,\beta},||\Psi||_{1,\beta}),
\end{align*}
giving,
$$||\vec{r}||_{3,\beta}\leq C(||\gamma||_{3,\beta},||\vec{r}||_{1,\beta}).$$
By induction, for $k\leq \mathfrak{n}$ we assume: 
\begin{align*}
||\varphi||_{k-1,\beta}&\leq C(||\gamma||_{k-1,\beta},||\varphi||_{1,\beta},||\Psi||_{1,\beta}),\\
||\Psi||_{k,\beta}&\leq C(||\gamma||_{k-1,\beta},||\varphi||_{1,\beta},||\Psi||_{1,\beta}).
\end{align*}
It follows that $\mathcal{K}\circ\Psi^{-1}\in C^{k-2,\beta}(\mathbb{S}^2)$, from interior Schauder estimates of (\ref{e21}):
\begin{align*}
||\omega||_{k,\beta}&\leq C(||\ubar\chi_\omega||_{k-1,0},||g_\omega||_{k-1,0})(||\mathcal{K}\circ\Psi^{-1}-\mathcal{K}_s(\omega)||_{k-1,\beta})\\
&\leq C(||\gamma||_{k,\beta},||\omega||_{k-1,\beta},||\Psi^{-1}||_{k-1,\beta})\\
&\leq C(||\gamma||_{k,\beta},||\omega||_{1,\beta},||\Psi||_{1,\beta},||\varphi||_{0,\beta})\\
&\leq C(||\gamma||_{k,\beta},||\varphi||_{1,\beta},||\Psi||_{1,\beta}).
\end{align*}
Again $\varphi  = \omega\circ \Psi$, followed by (\ref{e19}) implies 
\begin{align*}
||\varphi||_{k,\beta}&\leq C(||\omega||_{k,\beta},||\Psi||_{k,\beta})\leq C(||\gamma||_{k,\beta},||\varphi||_{1,\beta},||\Psi||_{1,\beta}),\\
||\Psi||_{k+1,\beta}&\leq C(||\gamma||_{k,\beta},||\varphi||_{1,\beta},||\Psi||_{1,\beta}),\\
||\vec{r}||_{k,\beta}&\leq C(||\gamma||_{k,\beta},||\vec{r}||_{1,\beta}).
\end{align*}
\end{proof}
\subsection{Openness II: Existence}
Given a continuous path of metrics $t\to\gamma_t$ on $\mathbb{S}^2$, $J$ an interval with $0,t\in J$, we know from Theorem \ref{t2}, given an isometric embedding of $(\mathbb{S}^2,\gamma_0)\hookrightarrow(\mathcal{A},\sigma)$, sufficiently small values of $t$ will ensure $(\mathbb{S}^2,\gamma_t)\hookrightarrow(\mathcal{A},\sigma)$ also, say for $t\in(-\epsilon,\epsilon)\subset J$. If the path is continuously Fr\'{e}chet differentiable, Proposition \ref{p5} also ensures the path of embeddings $t\to\vec{r}_t = \vec{r}_0+\vec{y}_t$ is continuously differentiable. If we fix some $t_0\in (-\epsilon,\epsilon)$, we can instead decompose the expression:
$$\sigma_{\mu\nu}(\vec{r}_{t_0}+\vec{z}_t)\frac{\partial (r_{t_0}^\mu+z^\mu_{t})}{\partial x^i}\frac{\partial (r_{t_0}^\nu+z^\nu_{t})}{\partial x^j} = (\gamma_t)_{ij}$$
for perturbations about the metric $\gamma_{t_0}$. Subsequently, the analogous expression for (\ref{e10}), with $\vec{z}_t=\vec{y}_{t}-\vec{y}_{t_0}=\bar{\tau}_t^i(\vec{r}_{t_0})_i+\bar{\phi}_t\frac{\vec{r}_{t_0}}{|\vec{r}_{t_0}|}=(\tau_t-\tau_{t_0})^i\vec{r}_i+(\phi_t-\phi_{t_0})\frac{\vec{r}_0}{|\vec{r}_0|}$, gives:
\begin{align*}
\nabla^{\gamma_{t_0}}_i(\bar{\tau}_t)_j+\nabla^{\gamma_{t_0}}_j(\bar{\tau}_t)_i +\bar{\phi}_t h(\vec{r}_{t_0})_{ij} &= (\gamma_{t})_{ij}-(\gamma_{t_0})_{ij}-\sigma_{\mu\nu}(\vec{r}_{t_0})({z}_t)^\mu_i (z_t)^\nu_j-z_t^\eta z_t^\xi (r_{t_0})^\mu_i (r_{t_0})^\nu_j F_{\mu\nu\eta\xi}(\vec{r}_{t_0},\vec{z}_t)\\
&\qquad-G_{\mu\nu\eta}(\vec{r}_{t_0},\vec{z}_t)z_t^\eta((r_{t_0})^\mu_i (z_t)^\nu_j+(r_{t_0})^\nu_j (z_t)^\mu_i+(z_t)^\mu_i (z_t)^\nu_j).
\end{align*}
Since $\vec{z}_t$ is Fr\'{e}chet differentiable, the same is true for its linearly independent constituents. Dividing through by $(t-t_0)$ and taking the limit as $t\to t_0$ we observe:
$$2\text{Sym}(\nabla^{\gamma_{t_0}}\dot{\bar{\tau}})+\dot{\bar\phi}h(\vec{r}_{t_0}) = \dot{\gamma}_{t_0}\iff L_{h(\vec{r}_{t_0})}(\dot{\bar\tau}) = \dot{\gamma}_{t_0}-\frac12\tr_{h(\vec{r}_{t_0})}(\dot{\gamma}_{t_0})h(\vec{r}_{t_0}),\,\,\,\dot{\bar{\phi}} = \frac{\tr_{\gamma_{t_0}}(\dot{\gamma}_{t_0})-2\nabla^{\gamma_{t_0}}\cdot\dot{\bar{\tau}}}{\tr_{\gamma_{t_0}}h(\vec{r}_{t_0})}.$$
So for the metric $\gamma_{t_0}$, we observe that the linearized embedding equation is satisfied by the derivative $(\dot{\bar{\tau}},\dot{\bar\phi})$ as expected. Beyond just $(\mathbb{S}^2,\gamma_0)$, we wish to exploit this fact in order to deduce Schauder estimates further along the path of metrics $t\to\gamma_t$. We notice there's no a priori reason to conclude $\dot{\bar\tau}\in\text{Ker}(L_{h(\vec{r}_{t_0})})^\perp$ with respect to the $L^2$ inner produce induced by $\gamma_{t_0}$, from which our Schauder estimates are derived in Theorem \ref{t1}. Fortunately, we can find a unique path of embeddings from the constraint $\dot{\bar{\tau}}\in\text{Ker}(L_{h(\vec{r}_{t_0})})^\perp$. We will prove the existence and uniqueness parts separately.
\begin{theorem}\label{t3}(Existence)
Suppose $(\mathcal{A},\sigma)$ is a convex Null Cone. For $\mathfrak{n}\geq 2$, consider also a continuous path of metrics $t\to\gamma_t\in C^{\mathfrak{n}+1,\beta}(\text{Sym}(T^\star\mathbb{S}^2\otimes T^\star\mathbb{S}^2))$, $t\in[0,b]$, with the following properties:
\begin{enumerate}
\item There exists an isometric embedding $r_0:(\mathbb{S}^2,\gamma_0)\hookrightarrow(\mathcal{A},\sigma)$,
\item $t\to\gamma_t\in C^{\mathfrak{n},\beta}(\text{Sym}(T^\star\mathbb{S}^2\otimes T^\star\mathbb{S}^2))$ is $\mathfrak{m}$-times continuously Fr\'{e}chet differentiable within $C^{\mathfrak{n},\beta}(\text{Sym}(T^\star\mathbb{S}^2\otimes T^\star\mathbb{S}^2))$.
\end{enumerate}
Then, there exists $0<\epsilon<b$, and a family of embeddings $r_t:(\mathbb{S}^2,\gamma_t)\hookrightarrow (\mathcal{A},\sigma)$, $t\in[0,\epsilon]$, with continuous associated path $t\to\vec{r}_t\in C^{\mathfrak{n}+1,\beta}(\mathbb{S}^2,\mathbb{R}^3)$ that is $\mathfrak{m}$-times continuously Fr\'{e}chet differentiable within $C^{\mathfrak{n},\beta}(\mathbb{S}^2,\mathbb{R}^3)$. Moreover, $\epsilon$ is independent of $\mathfrak{n},\mathfrak{m}$, and the first Fr\'{e}chet derivative satisfies: 
\begin{align*}
\dot{\vec{r}}_t &= d\vec{r}_t(\tau_t^\#)+\phi_t\frac{\vec{r}_t}{|\vec{r}_t|},\\
L_{h(\vec{r}_t)}(\tau_t) = \dot{\gamma}_t-&\frac12\tr_{h(\vec{r}_t)}(\dot{\gamma}_t)h(\vec{r}_t),\,\,\,\phi_t=\frac{\tr_{\gamma_t}(\dot{\gamma}_t)-2\nabla^{\gamma_t}\cdot\tau_t}{\tr_{\gamma_t}h(\vec{r}_t)},
\end{align*}
whereby $\tau_t\perp\text{Ker}(L_{h(\vec{r}_t)})$ with respect to the $L^2$ inner product induced by $\gamma_t$.
\end{theorem}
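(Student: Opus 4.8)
The plan is to run a continuity/bootstrap argument along the parameter $t\in[0,b]$, using Theorem \ref{t2} for local existence and Proposition \ref{p5} for differentiability, but with the crucial modification that at each base point we re-center the perturbation problem and impose the gauge condition $\tau\perp\mathrm{Ker}(L_{h(\vec r)})$ so that the Schauder estimates of Theorem \ref{t1} remain available with constants that do not degenerate. First I would establish local existence near an arbitrary base point $t_0$: given an isometric embedding $\vec r_{t_0}\in C^{\mathfrak n+1,\beta}$ with $h(\vec r_{t_0})$ satisfying $\mathcal P(h,h)>0$ (which holds because $(\mathcal A,\sigma)$ is a convex Null Cone, so every cross-section is convex — this is exactly the point flagged after Theorem \ref{T2}), Corollary \ref{c2}/Theorem \ref{t2} produces, for $|t-t_0|$ small, a unique small $\vec z_t = \vec y_t-\vec y_{t_0}$ solving the perturbed isometric embedding equation about $\vec r_{t_0}$, with $\vec z_t = d\vec r_{t_0}(\bar\tau_t^\#)+\bar\phi_t\tfrac{\vec r_{t_0}}{|\vec r_{t_0}|}$ and $\bar\tau_t\perp\mathrm{Ker}(L_{h(\vec r_{t_0})})$. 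The key uniformity to extract is that the length $\epsilon$ of the existence interval, as well as the contraction constant in (\ref{e16}), depends only on $\|h(\vec r_{t_0})\|_{2,\beta}$, $\|\gamma_{t_0}\|_{3,\beta}$, $\|\vec r_{t_0}\|_{3,\beta}$, and on a positive lower bound for $\mathcal P(h(\vec r_{t_0}),h(\vec r_{t_0}))$; by the regularity Proposition \ref{p6} these are all controlled by $\|\gamma_t\|_{\mathfrak n,\beta}$ and $\|\vec r_t\|_{1,\beta}$, and the convexity of the Null Cone gives a uniform lower bound on $\mathcal P(h,h)$ along the entire geodesic foliation. Hence $\epsilon$ can be chosen uniformly, and a standard open–closed argument on $[0,b]$ (closedness coming from the uniform Schauder bound $\|\vec r_t\|_{\mathfrak n+1,\beta}\le C(\|\gamma_t\|_{\mathfrak n+1,\beta},\|\vec r_t\|_{1,\beta})$ of Proposition \ref{p6}, together with Arzelà–Ascoli to pass to a limit embedding) extends the family to all of $[0,\epsilon]$, independent of $\mathfrak n,\mathfrak m$.

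Next I would address Fréchet differentiability of $t\mapsto\vec r_t$. Near each $t_0$ the path $\vec r_t = \vec r_{t_0}+\vec z_t$ with $\vec z_t$ the fixed point of the contraction mapping centered at $\vec r_{t_0}$, so Proposition \ref{p5} applies verbatim (with $\gamma$ replaced by $\gamma_{t_0}$ and $\vec r$ by $\vec r_{t_0}$) and yields that $t\mapsto\vec z_t$ — hence $t\mapsto\vec r_t$ — is $\mathfrak m$-times continuously Fréchet differentiable within $C^{\mathfrak n,\beta}$, with the difference-quotient bounds propagated inductively exactly as in the proof of Proposition \ref{p5}. The gauge choice $\bar\tau_t\perp\mathrm{Ker}(L_{h(\vec r_{t_0})})$ is what makes the Schauder estimate $\|\bar\tau_t\|_{2,\beta}\le C\|\mathcal L_{h(\vec r_{t_0})}\bar\tau_t\|_{0,\beta}$ hold, and this is inherited locally in $t$ because the kernel dimension is constant (Corollary \ref{c0}, $\dim\mathrm{Ker}(L_h)=6$) and the kernels vary continuously, so the projection argument from Lemma \ref{l0} carries over. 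Finally, the formula for the first derivative is obtained by differentiating the centered version of (\ref{e10}) at $t=t_0$: all the quadratic-and-higher remainder terms in $\vec z_t$ carry a factor $\vec z_t\to 0$ and a factor $\dot{\vec z}_t$ which stays bounded, so they drop out in the limit, leaving precisely the linearized embedding equation $2\,\mathrm{Sym}(\nabla^{\gamma_{t_0}}\dot{\bar\tau})+\dot{\bar\phi}\,h(\vec r_{t_0})=\dot\gamma_{t_0}$; projecting onto $\Xi_{h}$ via $L_h$ and reading off the trace part as in (\ref{e6}) gives $L_{h(\vec r_t)}(\tau_t)=\dot\gamma_t-\tfrac12\tr_{h(\vec r_t)}(\dot\gamma_t)h(\vec r_t)$ and $\phi_t=\big(\tr_{\gamma_t}\dot\gamma_t-2\nabla^{\gamma_t}\!\cdot\tau_t\big)/\tr_{\gamma_t}h(\vec r_t)$, with $\tau_t\perp\mathrm{Ker}(L_{h(\vec r_t)})$ by construction; note that $\dot{\bar\tau}$ computed at the base point agrees with $\tau_{t_0}$ precisely because of the orthogonality gauge, so the derivative formula is genuinely along the whole path and not just at $t=0$.

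The main obstacle I anticipate is \emph{the uniformity of $\epsilon$ and of the Schauder constants along the path}, i.e. ruling out that the linearized operator $L_{h(\vec r_t)}$ or $\mathcal L_{h(\vec r_t)}$ degenerates as $t$ increases. This has two facets: (i) one must keep $\mathcal P(h(\vec r_t),h(\vec r_t))$ bounded away from zero uniformly — this is where the global convexity hypothesis on the Null Cone is essential, since by the remarks in Section 3.1 convexity of $\Omega$ forces every cross-section to be convex with a uniform margin dictated by the geodesic foliation, so $h(\vec r_t)\in\mathcal S_+$ with controlled $\mathcal P$; and (ii) one must ensure the constant in $\|\tau\|_{2,\beta}\le C\|\mathcal L_{h}\tau\|_{0,\beta}$ on $\mathrm{Ker}^\perp$ does not blow up, which follows from the compactness/continuity argument already used in Theorem \ref{t0} and Lemma \ref{l0} once $\|h(\vec r_t)\|_{2,\beta}$, $\|\gamma_t\|_{3,\beta}$ are uniformly bounded — and these are bounded by Proposition \ref{p6} in terms of $\|\gamma_t\|_{\mathfrak n+1,\beta}$ and $\|\vec r_t\|_{1,\beta}$, the latter staying bounded because the embeddings remain star-shaped cross-sections of a fixed compact region of $\mathcal A$. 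Assembling (i) and (ii) gives a $t$-independent $\epsilon$, closing the continuity argument; the regularity claim $\vec r_t\in C^{\mathfrak n+1,\beta}$ is then immediate from Proposition \ref{p6}, and independence of $\epsilon$ from $\mathfrak n,\mathfrak m$ follows since all the obstructions above live at the fixed regularity level $(2,\beta)$–$(3,\beta)$.
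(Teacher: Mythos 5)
There is a genuine gap, and it sits exactly at the point the paper flags just before Theorem \ref{t3}: how to obtain a \emph{single} path whose first derivative satisfies the gauge $\tau_t\perp\text{Ker}(L_{h(\vec{r}_t)})$ at \emph{every} time $t$. Your construction patches together contractions re-centered at successive base points (any open--closed argument forces finitely many anchors on each compact interval), and for such a patched path the derivative at a time $t$ strictly between anchors is associated to a $\bar\tau$ orthogonal to $\text{Ker}(L_{h(\vec{r}_{t_{\mathrm{anchor}}})})$, not to $\text{Ker}(L_{h(\vec{r}_t)})$. Your remedy --- that ``$\dot{\bar\tau}$ computed at the base point agrees with $\tau_{t_0}$ because of the orthogonality gauge, so the formula holds along the whole path'' --- tacitly assumes that near every $t_0$ the global path coincides with the path produced by the contraction anchored at $\vec{r}_{t_0}$. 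That consistency is precisely what must be proved and is false in general: isometric embeddings of a fixed metric into the Null Cone are not unique (the six-dimensional kernel of $L_h$ generates genuinely distinct nearby embeddings of the same metric, e.g.\ the boosted cross-sections of the Minkowski cone), so contractions anchored at different points generally select different continuations. The paper's proof handles this by anchoring on partitions of mesh $\epsilon/n$, letting $n\to\infty$, extracting a limit path via equicontinuity and Arzel\`a--Ascoli, and then proving in Step 3 that the limit's derivative solves $L_{h(\vec{r}_t)}(\tau_t)=\dot\gamma_t-\tfrac12\tr_{h(\vec{r}_t)}(\dot\gamma_t)h(\vec{r}_t)$ with $\tau_t\perp\text{Ker}(L_{h(\vec{r}_t)})$ for each $t$ (convergence ${}^n\dot\tau_t\to\tau_t$, ${}^n\dot\phi_t\to\phi_t$, and the dominated-convergence identification $\vec{r}_t=\vec{r}_t^{\,\star}$); uniqueness of a path with this gauge is then a separate result (Theorem \ref{t4}). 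None of this limiting/identification machinery appears in your proposal, and without it the asserted derivative formula along the whole path is unjustified.

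A secondary overstatement: Proposition \ref{p5} is a statement about Fr\'echet differentiability of the fixed point with respect to the $C^{2,\beta}$ norm, so it does not apply ``verbatim'' to give $\mathfrak m$-times differentiability within $C^{\mathfrak n,\beta}$, nor continuity of $t\to\vec{r}_t$ in $C^{\mathfrak n+1,\beta}$. Upgrading the regularity in $t$ requires the $(\varphi_t,\Psi_t)$ decomposition together with equation (\ref{e19}) and the quasi-linear Gauss curvature equation (\ref{e21}) (Steps 2--4 of the paper's proof, Lemma \ref{l7}, and the induction through (\ref{e17})), which your sketch replaces with an appeal to Proposition \ref{p6} that only controls the norms, not their continuity or differentiability in $t$. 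Your uniformity discussion (convexity giving a lower bound on $\mathcal P(h,h)$, Proposition \ref{p6} controlling $\|\vec r_t\|_{3,\beta}$ on the $c_0$-ball) is sound and matches the paper's Step 1, but it does not repair the two gaps above.
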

\begin{proof}
\underline{\bf Step 1: Finding $\vec{r}_t$}\\
We have the embedding $r_0:(\mathbb{S}^2,\gamma_0)\hookrightarrow(\mathcal{A},\sigma)$. For some $c_0>0$, such that $S_-<\min_{p\in\mathbb{S}^2}|\vec{r}_0|(p)-c_0$ we consider the the closed ball:
$$B_{c_0}^{\vec{r}_0}(C^{2,\beta}(\mathbb{S}^2,\mathbb{R}^3)):=\{\vec{r}\in C^{2,\beta}(\mathbb{S}^2,\mathbb{R}^3)\big|||\vec{r}-\vec{r}_0||_{2,\beta}\leq c_0\}.$$
\indent We now consider some arbitrary isometric embedding $r:(\mathbb{S}^2,\gamma)\hookrightarrow(\mathcal{A},\sigma)$, such that $\vec{r}\in B^{\vec{r}_0}_{c_0}(C^{2,\beta}(\mathbb{S}^2,\mathbb{R}^3))$. From Proposition \ref{p6}, we conclude $\vec{r}\in C^{3,\beta}(\mathbb{S}^2,\mathbb{R}^3)$, with $||\vec{r}||_{3,\beta}\leq C(c_0,\gamma)$, where $C(c_0,\gamma)$ indicates a constant only dependent upon $c_0,\gamma$. We will simply denote $\ubar\chi_{\mu\nu}:=\tilde B^\star(\ubar\chi)_{\mu\nu}$, it follows in a local coordinate neighborhood that $h_{ij}(\vec{r}) = 2\ubar\chi_{\mu\nu}(\vec{r})r^\mu_i r^\nu_j$. We also recall in dimension two:
$$(h^{-1})^{ij}(\vec{r}) = \frac{\frac12\tr_\gamma h\gamma^{ij} - \hat{h}^{ij}}{\frac14(\tr_\gamma h)^2 - \frac12|\hat{h}|^2}(\vec{r}),$$
giving $||h^{-1}(\vec{r})||_{2,\beta},||h(\vec{r})||_{2,\beta}\leq C(c_0,\gamma)$. It follows, since the path $t\to\gamma_t$ is continuous on a compact interval, that \textit{any} isometric embedding $r:(\mathbb{S}^2,\gamma_t)\hookrightarrow(\mathcal{A},\sigma)$, provided $\vec{r}\in B^{\vec{r}_0}_{c_0}(C^{2,\beta}(\mathbb{S}^2,\mathbb{R}^3))$, yields $||h^{-1}(\vec{r})||_{2,\beta},||h(\vec{r})||_{2,\beta}\leq C(c_0,\{\gamma_t\})$, where $C(c_0,\{\gamma_t\})$ indicates a constant dependent on $c_0$ and the path $\{\gamma_t\}$, but independent of $t$. We now notice from (\ref{e10}-\ref{e14}), by enlarging $C(c_0,\{\gamma_t\})$ if necessary, we obtain for -any- embedding $r:(\mathbb{S}^2,\gamma_t)\hookrightarrow(\mathcal{A},\sigma)$ such that $\vec{r}\in B_{c_0}^{\vec{r}_0}(C^{2,\beta}(\mathbb{S}^2,\mathbb{R}^3))$:
$$||\mathcal{C}_r(\vec{z})||_{2,\beta}\leq C(c_0,\{\gamma_t\})\big(||\tilde\gamma-\gamma||_{2,\beta}+||\vec{z}||^2_{2,\beta}(1+||\vec{z}||_{2,\beta}+||\vec{z}||_{2,\beta}^2)\big),$$
where $\mathcal{C}_r(\vec{z}) = \tau^i(\vec{z})\vec{r}_i+\phi(\vec{z})\frac{\vec{r}}{|\vec{r}|}$ is the mapping of Theorem \ref{t2} ``anchored" at $\vec{r}$. Also, from (\ref{e15},\ref{e16}), by enlarging $C(c_0,\{\gamma_t\})$ if necessary, we also have for $\vec{z}_1,\vec{z}_2\in C^{2,\beta}(\mathbb{S}^2,\mathbb{R}^3)$, such that $||\vec{z}_i||_{2,\beta}\leq \delta$ (say for example, any $\delta\leq c_0$):
$$||\mathcal{C}_r(\vec{z}_1)-\mathcal{C}_r(\vec{z}_2)||_{2,\beta}\leq C(c_0,\{\gamma_t\})\delta||\vec{z}_1-\vec{z}_2||_{2,\beta}.$$
\indent For an $0<\epsilon<b$ that we shall choose shortly, we now partition $[0,\epsilon]$ with $\{{^n}t_m\}_{m=0}^n$, whereby ${^n}t_m:=\epsilon\frac{m}{n}$. On the first interval $[0,{^n}t_1]$, we isometrically embed the path of metrics $\{\gamma_t\}_{t\in[0,{^n}t_1]}$ with a continuous family of embeddings resulting from the contraction mapping of Theorem \ref{t2}. As a result of our analysis in the previous paragraph, we have:
$$||{^n}\mathcal{C}_0(\vec{z})||_{2,\beta}\leq C(c_0,\{\gamma_t\}) \big(||\gamma_t-\gamma_0||_{2,\beta}+||\vec{z}||^2_{2,\beta}(1+||\vec{z}||_{2,\beta}+||\vec{z}||_{2,\beta}^2)\big),$$
whereby ${^n}\mathcal{C}_0 := \mathcal{C}_{r_{{^n}t_0}}$, the contraction mapping ``anchored" to $\vec{r}_{{^n}t_0}=\vec{r}_0$. We may now choose any $\delta>0$ such that $C(c_0,\{\gamma_t\})\delta(1+\delta+\delta^2)<1$. Since the path $t\to\gamma_t$ is continuously differentiable, $||\gamma_{t_1}-\gamma_{t_2}||_{2,\beta}\leq(\sup_t||\dot{\gamma}_t||_{2,\beta})|t_1-t_2|$. So for sufficiently large $n(\delta)\in\mathbb{N}$, we have, given any $||\vec{z}||_{2,\beta}\leq\delta$:
$$||{^n}\mathcal{C}_0(\vec{z})||_{2,\beta}\leq C(c_0,\{\gamma_t\})\sup_t||\dot{\gamma}_t||_{2,\beta}\frac{\epsilon}{n}+\ C(c_0,\{\gamma_t\})\delta^2(1+\delta+\delta^2) \leq\delta,$$
and:
$$||{^n}\mathcal{C}_0(\vec{z}_1)-{^n}\mathcal{C}_0(\vec{z}_2)||_{2,\beta}\leq C(c_0,\{\gamma_t\})\delta||\vec{z}_1-\vec{z}_2||_{2,\beta},$$
where $\delta C(c_0,\{\gamma_t\})<1$. So ${^n}\mathcal{C}_0(\vec{z})={^n}\tau(\vec{z})_0^i(\vec{r}_0)_i+{^n}\phi(\vec{z})_0\frac{\vec{r}_0}{|\vec{r}_0|}$ is a contraction map for any choice of $\gamma_t,\,t\in[0,{^n}t_1]$. Denoting the path of embeddings by $t\to{^n}\vec{r}_t$ for $t\in[0,{^n}t_1]$, we have ${^n}\vec{r}_t =\vec{r}_0+{^n}\vec{y}_t$ for the continuous path of ${^n}\mathcal{C}_0$ fixed points ${^n}\vec{y}_t:={^n}\tau_t^i(\vec{r}_0)_i+{^n}\phi_t\frac{\vec{r}_0}{|\vec{r}_0|}$. If we also denote the elliptic operator ``anchored" to the embedding ${^n}\vec{r}_{{^n}t_0}=\vec{r}_0$ by ${^n}L_0(\tau):=2\text{Sym}(\nabla\tau)-\tr_{h(\vec{r}_0)}(\nabla\tau)h(\vec{r}_0)$, it follows that ${^n}\tau_t\perp\text{Ker}({^n}L_0)$ for $t\in[0,{^n}t_1]$. Our choice of $\delta$ also gives 
\begin{align*}
||{^n}\vec{r}_{{^n}t_1}-\vec{r}_0||_{2,\beta}&=||{^n}\vec{y}_{{^n}t_1}||_{2,\beta}\leq \frac{C(c_0,\{\gamma_t\})}{1-\delta C(c_0,\{\gamma_t\})(1+\delta+\delta^2)}||\gamma_{{^n}t_1}-\gamma_0||_{2,\beta}\\
&\leq \frac{C(c_0,\{\gamma_t\})\sup_t||\dot{\gamma}_t||_{2,\beta}}{1-\delta C(c_0,\{\gamma_t\})(1+\delta+\delta^2)}\frac{\epsilon}{n}.
\end{align*}
So finally, if we choose $\epsilon$ such that
$$\epsilon \frac{C(c_0,\{\gamma_t\})\sup_t||\dot{\gamma}_t||_{2,\beta}}{1-\delta C(c_0,\{\gamma_t\})(1+\delta+\delta^2)} \leq c_0,$$
we conclude $||{^n}\vec{r}_{{^n}t_1}-\vec{r}_0||_{2,\beta}\leq \frac{c_0}{n}$. We may therefore re-apply the contraction mapping principle of Theorem \ref{t2}, with a contraction mapping ${^n}\mathcal{C}_1$, at ${^n}\vec{r}_{{^n}t_1}$. This allows us to continue to define the path $t\to {^n}\vec{r}_t$ on the interval $t\in[{^n}t_1,{^n}t_2]$, now ``anchored" to the embedding ${^n}\vec{r}_{{^n}t_1}$. Our choice of $\epsilon,\delta$ ensures, with identical estimates as above, that ${^n}\mathcal{C}_1$ is again a contraction map. By induction, on the interval $t\in[{^n}t_{m-1},{^n}t_m]$, we have: 
$${^n}\vec{r}_t ={^n}\vec{r}_{{^n}t_{m-1}}+{^n}\vec{y}_t= {^n}\vec{r}_{{^n}t_{m-1}}+{^n}\tau_t^i\big({^n}\vec{r}_{{^n}t_{m-1}}\big)_i+{^n}\phi_t\frac{{^n}\vec{r}_{{^n}t_{m-1}}}{| {^n}\vec{r}_{{^n}t_{m-1}} |},$$
for ${^n}\tau_t\perp\text{Ker}({^n}L_{m-1})$, $t\in[{^n}t_{m-1},{^n}t_m]$. Again, our choice of $\epsilon,\delta$ gives $||{^n}\vec{r}_{{^n}t_m}-\vec{r}_0||\leq\sum_{i=1}^m||{^n}\vec{y}_{{^n}t_i}||\leq c_0\frac{m}{n}\leq c_0$, and we may continue to isometrically embed all metrics within $\{\gamma_t\},\,t\in[0,\epsilon]$. As a result of Proposition \ref{p5}, we conclude that $t\to{^n}\vec{r}_t$ is a piecewise differentiable path with Fr\'{e}chet derivative, ${^n}\dot{\vec{r}}_t$, that we may take to be right semi-continuous on $t\in[0,\epsilon]$.\\ 
\indent We denote by $C([0,\epsilon],B^{\vec{r}_0}_{c_0}(C^{2,\beta}(\mathbb{S}^2,\mathbb{R}^3)))$, the space of continuous functions $t\to\vec{r}'_t\in C^{2,\beta}(\mathbb{S}^2,\mathbb{R}^3)$. The space $C([0,\epsilon],B^{\vec{r}_0}_{c_0}(C^{2,\beta}(\mathbb{S}^2,\mathbb{R}^3)))$ is itself a complete metric space when equipped with the distance function $d(\vec{r}', \vec{r}''):={\sup_t||\vec{r}'_t-\vec{r}''_t||_{2,\beta}}$. For increasing integer values $n\geq n(\delta)$, we observe our sequence $\{{^n}\vec{r}_t\}_n\subset C([0,\epsilon],B^{\vec{r}_0}_{c_0}(C^{2,\beta}(\mathbb{S}^2,\mathbb{R}^3)))$. Moreover, for any $t_1\leq t_2$, we find $m_1,m_2\in\mathbb{N}$, $m_i\leq n$, such that $t_1 \leq \epsilon\frac{m_1}{n}\leq \epsilon\frac{m_2}{n}\leq t_2$, and $|t_i-\epsilon\frac{m_i}{n}|\leq \frac{\epsilon}{n}$. Therefore, we have:
\begin{align*}
||{^n}\vec{r}_{t_1}-{^n}\vec{r}_{t_2}||_{2,\beta}&\leq||{^n}\vec{r}_{t_1}-{^n}\vec{r}_{{^n}t_{m_1}}||_{2,\beta}+||{^n}\vec{r}_{{^n}t_{m_2}}-{^n}\vec{r}_{t_2}||_{2,\beta}+\sum_{i=m_1+1}^{m_2}||{^n}\vec{y}_{{^n}t_{i}}||_{2,\beta}\\
&=||{^n}\vec{y}_{{^n}t_{m_1}}-{^n}\vec{y}_{t_1}||_{2,\beta} +||{^n}\vec{y}_{{^n}t_{m_2}}-{^n}\vec{y}_{t_2}||_{2,\beta}+\sum_{i=m_1+1}^{m_2}||{^n}\vec{y}_{{^n}t_i}||_{2,\beta}\\
&\leq \frac{c_0}{\epsilon}\Big((\epsilon\frac{m_1}{n}-t_1)+(t_2-\epsilon\frac{m_2}{n})+\epsilon\frac{m_2-m_1}{n}\Big)=\frac{c_0}{\epsilon}(t_2-t_1).
\end{align*}
So the sequence of embeddings, $\{{^n}\vec{r}_t\}_n$, satisfy the boundedness and equicontinuity hypotheses of the Arz\`{e}la-Ascoli Theorem. It follows that a subsequence converges in $C([0,\epsilon],B^{\vec{r}_0}_{c_0}(C^{2,\beta}(\mathbb{S}^2,\mathbb{R}^3))) $ to a Lipshitz function, $t \to \vec{r}_t\in B_{c_0}^{\vec{r}_0}(C^{2,\beta}(\mathbb{S}^2,\mathbb{R}^3))$, representing a family of isometric embeddings, $r_t:(\mathbb{S}^2,\gamma_t)\hookrightarrow(\mathcal{A},\sigma)$. \\\\
\underline{\bf Step 2: $t\to\vec{r}_t\in C^{\mathfrak{n}+1,\beta}(\mathbb{S}^2,\mathbb{R}^3)$ is continuous}\\
From Proposition \ref{p6} we know $\vec{r}_t\in C^{\mathfrak{n}+1,\beta}(\mathbb{S}^2,\mathbb{R}^3)$ for each $t\in[0,\epsilon]$. We will argue instead that the pair $(\varphi_t,\Psi_t)$ from Proposition \ref{p6} are continuous in $t$. Since $t\to \vec{r}_t\in C^{2,\beta}(\mathbb{S}^2,\mathbb{R}^3)$ is continuous we conclude that $t\to\Psi_t\in C^{2,\beta}\mathcal{D}(\mathbb{S}^2)$, and $t\to\varphi_t\in C^{2,\beta}(\mathbb{S}^2)$ are as well. Taking a derivative of (\ref{e19}), we therefore conclude $t\to\Psi_t\in C^{3,\beta}\mathcal{D}(\mathbb{S}^2)$ is also continuous. For convenience, if we denote (\ref{e20}) by $E_\omega(\omega) = \mathcal{K}\circ\Psi^{-1} - \mathcal{K}_s(\omega)$, from which we deduce that $E_\omega$ is a quasi-linear elliptic operator dependent upon $\omega$. For $\omega_t:=\varphi_t\circ\Psi_t^{-1}$, we have for some $t_0\in[0,\epsilon]$:
$$ E_{\omega_{t_0}}(\omega_{t_0} - \omega_{t}) = \mathcal{K}_{t_0}\circ\Psi^{-1}_{t_0}-\mathcal{K}_{t}\circ\Psi^{-1}_{t}-(\mathcal{K}_s(\omega_{t_0})-\mathcal{K}_s(\omega_t))- (E_{\omega_{t_0}} - E_{\omega_{t}})(\omega_{t}).$$
Recalling our analysis in Proposition \ref{p6}, since $\mathfrak{n}\geq 2$, similar interior Schauder estimates for the above elliptic equation gives:
\begin{align*}	
||\omega_{t_0}-\omega_t||_{3,\beta}&\leq C(||\gamma_{t_0}||_{3,\beta},||\omega_{t_0}||_{1,\beta},||\Psi^{-1}_{t_0}||_{1,\beta})\big(||\mathcal{K}_{t_0}\circ\Psi^{-1}_{t_0}-\mathcal{K}_t\circ\Psi^{-1}_t||_{1,\beta}\\
&\qquad +||\mathcal{K}_s(\omega_{t_0})-\mathcal{K}_s(\omega_t)||_{1,\beta} +||(E_{\omega_{t_0}}-E_{\omega_t})(\omega_t)||_{1,\beta}\big).
\end{align*}
From Proposition \ref{p6}, we know that $\sup_t||\omega_t||_{\mathfrak{n}+1,\beta}$ is bounded. We also observe that 
$$||(E_{\omega_{t_0}}-E_{\omega_t})(\omega_t)||_{1,\beta}\leq C(||g||^\text{loc}_{4,0})||\omega_{t_0}-\omega_t||_{2,\beta}||\omega_t||_{3,\beta}.$$
Since we're assuming $\mathfrak{n}\geq 2$, we therefore conclude that $\lim_{t\to t_0}||\omega_{t_0}-\omega_t||_{3,\beta} = 0$. Thus, $t\to \varphi_t = \omega_t\circ\Psi_t\in C^{3,\beta}(\mathbb{S}^2)$ is continuous. For $\mathfrak{n}\geq 3$, by induction, we assume $t\to\Psi_t\in C^{\mathfrak{n},\beta}\mathcal{D}(\mathbb{S}^2)$, and $t\to\varphi\in C^{\mathfrak{n},\beta}(\mathbb{S}^2)$ are continuous. Derivatives of (\ref{e19}) again ensures $t\to\Psi_t\in C^{\mathfrak{n}+1,\beta}\mathcal{D}(\mathbb{S}^2)$ is continuous. Similarly as above,
\begin{align*}	
||\omega_{t_0}-\omega_t||_{\mathfrak{n}+1,\beta}&\leq C(||\gamma_{t_0}||_{\mathfrak{n}+1,\beta},||\omega_{t_0}||_{1,\beta},||\Psi^{-1}_{t_0}||_{1,\beta})\big(||\mathcal{K}_{t_0}\circ\Psi^{-1}_{t_0}-\mathcal{K}_t\circ\Psi^{-1}_t||_{\mathfrak{n}-1,\beta}\\
&\qquad +||\mathcal{K}_s(\omega_{t_0})-\mathcal{K}_s(\omega_t)||_{\mathfrak{n}-1,\beta} +||(E_{\omega_{t_0}}-E_{\omega_t})(\omega_t)||_{\mathfrak{n}-1,\beta}\big),\\
||(E_{\omega_{t_0}}-E_{\omega_t})(\omega_t)||_{\mathfrak{n}-1,\beta}&\leq C(||g||^\text{loc}_{\mathfrak{n}+2,0})||\omega_{t_0}-\omega_t||_{\mathfrak{n},\beta}||\omega_t||_{\mathfrak{n}-1,\beta},
\end{align*}
and we therefore conclude that $\lim_{t\to t_0}||\omega_{t_0}-\omega_t||_{\mathfrak{n}+1,\beta} = 0$. Consequently, $t\to\varphi_t\in C^{\mathfrak{n}+1,\beta}(\mathbb{S}^2)$ is continuous, and therefore $t\to\vec{r}_t\in C^{\mathfrak{n}+1,\beta}(\mathbb{S}^2,\mathbb{R}^3)$ is as-well.\\\\
\underline{\bf Step 3: $t\to\dot{\vec{r}}_t\in C^{\mathfrak{n},\beta}(\mathbb{S}^2,\mathbb{R}^3)$ exists and is continuous}\\
For each t, we now solve the equation:
$$L_{h(\vec{r}_t)}(\tau_t) = \dot{\gamma}_t-\tr_{h(\vec{r}_t)}\dot{\gamma}_t,$$
for unique $\tau_t\perp\text{Ker}(L_{h(\vec{r}_t)})$. Since the elliptic operator $L_{h(\vec{r}_t)}$ varies continuously with $t$, we have:
\begin{lemma}\label{l7}
We obtain a continuous path $t\to\tau_t\in C^{\mathfrak{n}+1,\beta}(T^\star\mathbb{S}^2)\cap\text{Ker}(L_{h(\vec{r}_t)})^\perp$ from solving the elliptic equation:
$$L_{h(\vec{r}_t)}(\tau_t)= \dot{\gamma}_t-(\tr_{h(\vec{r}_t)}\dot{\gamma}_t)h(\vec{r}_t),\,\,\,\text{for each t}.$$
\end{lemma}

\begin{proof}
We will follow a very similar argument as in Theorem \ref{t0}. First, we recall in a local coordinate neighborhood that $h(\vec{r}_t)_{ij} = 2\ubar\chi_{\mu\nu}(\vec{r}_t)(r^\mu_t)_i(r^\nu_t)_j$, such that $h(\vec{r}_t)\in C^{\mathfrak{n},\beta}(\text{Sym}(T^\star\mathbb{S}^2\otimes T^\star\mathbb{S}^2)).$
Solving instead the $2^\text{nd}$-order elliptic equation:
$$\mathcal{L}_{h(\vec{r}_t)}(\tau_t) = \nabla\cdot(\dot{\gamma}_t-(\tr_{h(\vec{r}_t)}\dot{\gamma}_t)h(\vec{r}_t)) - d\big(\tr_{\gamma_t}\dot{\gamma}_t-(\tr_{h(\vec{r}_t)}\dot\gamma_t)(\tr_{\gamma_t}h(\vec{r}_t))\big),$$
we observe a solution $\tau_t\in C^{\mathfrak{n}+1,\beta}(T^\star\mathbb{S}^2)$ for each $t\in[0,\epsilon]$ (Theorem 5.20, \cite{giamart}). Proposition \ref{p4} then ensures also $L_{h(\vec{r}_t)}(\tau_t)= \dot{\gamma}_t-(\tr_{h(\vec{r}_t)}\dot{\gamma}_t)h(\vec{r}_t)$.\\
\indent We start by fixing some $t_0\in[0,\epsilon]$, and wish to show $\lim_{t\to t_0}||\tau_{t_0}-\tau_t||_{\mathfrak{n}+1,\beta} = 0$. In order to do so, we denote by $\pi_t$ the projection map onto $\text{Ker}(L_{h(\vec{r}_t)})$. It follows:
\begin{align*}
L_{h(\vec{r}_{t_0})}\Big(\tau_{t_0}-(\tau_t-\pi_{t_0}(\tau_t))\Big)&=L_{h(\vec{r}_{t_0})}(\tau_{t_0})-L_{h(\vec{r}_{t})}(\tau_t)+\Big(L_{h(\vec{r}_{t})}-L_{h(\vec{r}_{t_0})}\Big)(\tau_{t})\\
&=\dot{\gamma}_{t_0}-\dot{\gamma}_t+\Big(\tr_{h(\vec{r}_t)}-\tr_{h(\vec{r}_{t_0})}\Big)(\dot{\gamma}_{t_0})h(\vec{r}_{t_0})+\tr_{h(\vec{r}_t)}\Big(\dot{\gamma}_t-\dot{\gamma}_{t_0}\Big)h(\vec{r}_{t_0})\\
&\qquad+\tr_{h(\vec{r}_t)}(\dot{\gamma}_t)\Big(h(\vec{r}_{t_0})-h(\vec{r}_t)\Big)+\Big(L_{h(\vec{r}_t)}-L_{h(\vec{r}_{t_0})}\Big)(\tau_t),
\end{align*}
from continuity of $\{\dot{\gamma}_t,\vec{r}_t\}$, the $C^{\mathfrak{n},\beta}$ boundedness of $\dot{\gamma}_t$, and $C^{\mathfrak{n}+1,\beta}$ boundedness of $\{\tau_t,\vec{r}_t\}$ for all values of $t$, we conclude from Schauder estimates that $||\tau_{t_0}-\tau_t+\pi_{t_0}(\tau_t)||_{\mathfrak{n}+1,\beta}\to 0$ as $t\to t_0$. If we now choose a basis $\{\eta_i\}\subset \text{Ker}(L_{h(\vec{r}_{t_0})})\subset C^{\mathfrak{n}+1,\beta}(T^\star\mathbb{S}^2)$ (by elliptic regularity), for $1\leq i\leq 6$, and with unit $L^2$ norm, we may write $\pi_{t_0}(\tau_t) = \sum_i\langle\eta_i,\tau_t\rangle_{L^2}\eta_i=\sum_i\langle\eta_i-\pi_{t}(\eta_i),\tau_t\rangle_{L^2}\eta_i$, for $\langle\cdot,\cdot\rangle_{L^2}$ the $L^2$ inner product induced by $\gamma_{t_0}$. We have,
$$L_{h(\vec{r}_t)}(\eta_i-\pi_t(\eta_i))=L_{h(\vec{r}_t)}(\eta_i) = \Big(L_{h(\vec{r}_t)}-L_{h(\vec{r}_{t_0})}\Big)(\eta_i)$$
giving us, via Schauder estimates, that $||\eta_i-\pi_t(\eta_i)||_{\mathfrak{n}+1,\beta}\to 0$, as $t\to t_0$. We therefore see:
\begin{align*}
||\pi_{t_0}(\tau_t)||_{\mathfrak{n}+1,\beta}&\leq\sum_i\Big|\langle\eta_i-\pi_t(\eta_i),\tau_t\rangle_{L^2}\Big|||\eta_i||_{\mathfrak{n}+1,\beta}\\
&\leq C\Big(\sum_i||\eta_i-\pi_t(\eta_i)||_{\mathfrak{n}+1,\beta}||\eta_i||_{\mathfrak{n}+1,\beta}\Big)||\tau_t||_{\mathfrak{n}+1,\beta},
\end{align*}
having used the Cauchy-Schwarz inequality, and the fact that $\mathbb{S}^2$ is compact to obtain the second inequality. We therefore also conclude that $||\pi_{t_0}(\tau_t)||_{\mathfrak{n}+1,\beta}\to0$, as $t\to t_0$. It follows:
$$||\tau_{t_0}-\tau_t||_{\mathfrak{n}+1,\beta}\leq||\tau_{t_0}-\tau_t+\pi_{t_0}(\tau_t)||_{\mathfrak{n}+1,\beta}+||\pi_{t_0}(\tau_t)||_{\mathfrak{n}+1,\beta}\to 0,\,\,\text{as}\,\,\,t\to t_0.$$
\end{proof}
From Lemma \ref{l7}, we observe the continuous path $t\to\phi_t\in C^{\mathfrak{n},\beta}(\mathbb{S}^2)$ given by:
$$\phi_t: = \frac{ \tr_{\gamma_t}(\dot{\gamma}_t)-2\nabla^{\gamma_t}\cdot\tau_t}{\tr_{\gamma_t}h(\vec{r}_t)}.$$
We may now construct a path $t\to\vec{r}_t^\star\in C^{\mathfrak{n},\beta}(\mathbb{S}^2,\mathbb{R}^3)$ given by:
 $$\vec{r}_t^\star:=\vec{r}_0+\int_0^t\tau_u^i(\vec{r}_u)_i+\frac{\phi_u}{|\vec{r}_u|}\vec{r}_u du.$$
It follows that $\vec{r}_t^\star$ admits a Fr\'{e}chet derivative in the $t$-variable with respect to the norm on $C^{\mathfrak{n},\beta}(\mathbb{S}^2,\mathbb{R}^3)$, and the derivative $t\to\dot{\vec{r}}_t^\star = \tau^i_t(\vec{r}_t)_i+ \frac{\phi_t}{|\vec{r}_t|}\vec{r}_t\in C^{\mathfrak{n},\beta}(\mathbb{S}^2,\mathbb{R}^3)$ is continuous. We need to show $\vec{r}_t = \vec{r}_t^\star$.\\
\indent By a re-indexing, we have that our sequence $\{{^n}\vec{r}_t\}$ converges to $\vec{r}_t$. Each $t\to{^n}\vec{r}_t$ is piecewise differentiable with right semi-continuous Fr\'{e}chet derivative with respect to the norm on $C^{2,\beta}(\mathbb{S}^2,\mathbb{R}^3)$, given by: $$t\to{^n}\dot{\vec{r}}_t={^n}\dot{\tau}_t^i({^n}\vec{r}_{{^n}t_{m-1}})_i+{^n}\dot{\phi}_t\frac{{^n}\vec{r}_{{^n}t_{m-1}}}{| {^n}\vec{r}_{{^n}t_{m-1}} |}$$
for ${^n}\dot{\tau}_t\perp\text{Ker}(L_{h({^n}\vec{r}_{{^n}t_{m-1}})})$, on the interval $t\in[{^n}t_{m-1},{^n}t_m)$. So for fixed $t$, as $n\to\infty$ we obtain $m(n)\to\infty$ such that ${^n}t_{m(n)-1}\to t$. We observe:
\begin{align*}
L_{h(\vec{r}_t)}(\tau_t-{^n}\dot{\tau}_t)&=L_{h(\vec{r}_t)}(\tau_t)-L_{h({^n}\vec{r}_{{^n}t_{m-1}})}({^n}\dot{\tau}_t)+\Big(L_{h({^n}\vec{r}_{{^n}t_{m-1}})}-L_{h(\vec{r}_t)}\Big)({^n}\dot{\tau}_t)\\
&= \dot{\gamma}_t-(\tr_{h(\vec{r}_t)}\dot{\gamma}_t)h(\vec{r}_t)-L_{h({^n}\vec{r}_{{^n}t_{m-1}})}({^n}\dot{\tau}_t) +\Big(L_{h({^n}\vec{r}_{{^n}t_{m-1})}}-L_{h(\vec{r}_t)}\Big)({^n}\dot{\tau}_t).
\end{align*}
From continuity of individual terms in the operator $L_{h({^n}\vec{r}_{{^n}t_{m-1}})}$, and boundedness of $||{^n}\dot{\tau}_t||_{2,\beta}$, we have
$$||\Big(L_{h({^n}\vec{r}_{{^n}t_{m-1}})}-L_{h(\vec{r}_t)}\Big)({^n}\dot{\tau}_t)||_{1,\beta}\to 0,\,\,\,\text{as}\,\,\,n\to\infty.$$ 
Taking a derivative of (\ref{e10}), we also observe an expression of the form:
\begin{align*}
\nabla^{\gamma_{{^n}t_{m-1}}}_i({^n}\dot{\tau}_t)_j +\nabla^{\gamma_{{^n}t_{m-1}}}_j({^n}\dot{\tau}_t)_i+&{^n}\dot{\phi}_t h({^n}\vec{r}_{{^n}t_{m-1}})\\
&= \dot{\gamma}_t+({^n}\dot{\vec{y}}_{t}^\mu)({^n}{\vec{y}}_{t}^\nu) q_1({^n}\vec{y}_t,\nabla{^n}\vec{y}_t)_{\mu\nu ij}\\
&+ ({^n}{\vec{y}}_t)^\nu\big(({^n}\dot{\vec{y}}_t)_i^\mu q_2({^n}\vec{y}_t,\nabla{^n}\vec{y}_t)_{\mu\nu j}+ ({^n}\dot{\vec{y}}_t)_j^\mu q_2({^n}\vec{y}_t,\nabla{^n}\vec{y}_t)_{\mu\nu i}\big)\\
&+\big(({^n}\dot{\vec{y}}_t)_j^\mu ({^n}{\vec{y}}_t)_i^\nu+ ({^n}{\vec{y}}_t)_j^\mu ({^n}\dot{\vec{y}}_t)_i^\nu\big) q_3({^n}\vec{y}_t,\nabla{^n}\vec{y}_t)_{\mu\nu},
\end{align*}
it therefore follows that
\begin{align*}
|| \dot{\gamma}_t&-(\tr_{h(\vec{r}_t)}\dot{\gamma}_t)h(\vec{r}_t)-L_{h({^n}\vec{r}_{{^n}t_{m-1}})}({^n}\dot{\tau}_t)||_{1,\beta}\\
&\leq C(c_0,\{\gamma_t\})||{^n}\dot{\vec{y}}_t||_{2,\beta}|| {^n}{\vec{y}}_t ||_{2,\beta}\leq \tilde{C}(c_0,\{\gamma_t\})\frac{c_0}{b}(t-{^n}t_{m-1})\to 0,\,\,\,\text{as}\,\,\,n\to\infty.
\end{align*}
From Schauder estimates we conclude $||\tau_t-{^n}\dot{\tau}_t+\pi_t({^n}\dot{\tau}_t)||_{2,\beta}\to0$, as $n\to\infty$. Arguing as in Lemma \ref{l7}, we also see $||\pi_t({^n}\dot{\tau}_t)||_{2,\beta}\to0$, thus $||\tau_t-{^n}\dot{\tau}_t||_{2,\beta}\to 0$, as $n\to\infty$. We consequently observe 
$$||\phi_t-{^n}\dot{\phi}_t||_{1,\beta} = \Big|\Big|\frac{\tr_{\gamma_t}(\dot{\gamma}_t)-2\nabla^{\gamma_t}\cdot\tau_t}{\tr_{\gamma_t}h(\vec{r}_t)}-\frac{\tr_{\gamma_t}(\dot{\gamma}_t)-2\nabla^{\gamma_t}\cdot({^n}\dot{\tau}_t)}{\tr_{\gamma_t}h({^n}\vec{r}_t)}\Big|\Big|_{1,\beta}\to 0,\,\,\,\text{as}\,\,\, n\to\infty.$$  
Combining all these facts, we finally conclude that:
$$||\dot{\vec{r}}_t^n-\dot{\vec{r}}_t^\star||_{1,\beta}\to 0,\,\,\,\text{as}\,\,n\to\infty,\,\,\,\text{for each}\,\,t\in[0,\epsilon].$$
Since $\vec{r}^n_t,\vec{r}_t\in B_{c_0}^{\vec{r}_0}(C^{2,\beta}(\mathbb{S}^2,\mathbb{R}^3))$, and Schauder estimates enforce $\sup_t||\tau_t||_{2,\beta}$, $\sup_t||{^n}\tau_t||_{2,\beta}$ are bounded, we also observe that $\{\sup_t||\dot{\vec{r}}^n_t-\dot{\vec{r}}_t^\star||_{1,\beta}\}_n$ is a bounded sequence. Therefore, by the Dominated Convergence Theorem, 
$$||\vec{r}_t-\vec{r}_t^\star||_{1,\beta} = \lim_{n\to\infty}||{^n}\vec{r}_t-\vec{r}_t^\star||_{1,\beta}\leq\int_0^t\lim_{n\to\infty}||\dot{\vec{r}}_u^n-\dot{\vec{r}}_u^\star||_{1,\beta}du=0,\,\,\,\text{for each}\,\,t\in[0,\epsilon].$$
\underline{\bf Step 4: $t\to\vec{r}^{(m)}_t\in C^{\mathfrak{n},\beta}(\mathbb{S}^2,\mathbb{R}^3)$ exists and is continuous for $m\leq\mathfrak{m}$}\\
This step is largely analogous to the previous one. We proceed by induction assuming the existence of continuous paths 
$$t\to\vec{r}_t^{(m-i)}\in C^{\mathfrak{n},\beta}(\mathbb{S}^2,\mathbb{R}^3),\,\,\,1\leq i\leq m.$$ 
We now recall equation (\ref{e17}), and within (\ref{e17}) we substitute: $n=m$, $\vec{r}\to\vec{r}_t$, $\vec{y}_t^{(m-i)}\to\vec{r}_t^{(m-i)}$, $\vec{y}_t = 0$. As a result, we observe an equation:
$$\nabla_i^{\gamma_t}(\tau_t^{(m)})_j+\nabla_j^{\gamma_t}(\tau_t^{(m)})_i+\phi_t^{(m)}h(\vec{r}_t)_{ij} = (\gamma_t^{(m)})_{ij}+P(\vec{r}_t^{(m-1)},\vec{r}_t^{(m-2)},\dots,\vec{r}_t^{(1)},\vec{r}_t)_{ij},$$
for the unknown pair $(\tau_t^{(m)},\phi_t^{(m)})$. Consequently, for each $t$, we deduce a well defined elliptic equation:
$$L_{h(\vec{r}_t)}(\tau_t^{(m)}) = \gamma_t^{(m)}-\tr_{h(\vec{r}_t)}\gamma_t^{(m)}+P(\vec{r}_t^{(m-1)},\dots,\vec{r}_t)-\tr_{h(\vec{r}_t)}P (\vec{r}_t^{(m-1)},\dots,\vec{r}_t),$$
which yields a unique solution $\tau_t^{(m)}\perp\text{Ker}(L_{h(\vec{r}_t)})\cap C^{\mathfrak{n}+1,\beta}(T^\star\mathbb{S}^2)$. We leave the reader the completely analogous argument to Lemma \ref{l7} that identifies the continuous path $t\to\tau_t^{(m)}\in C^{\mathfrak{n}+1,\beta}(T^\star\mathbb{S}^2)$. Consequently, we have the continuous path $t\to\phi_t^{(m)}\in C^{\mathfrak{n},\beta}(\mathbb{S}^2)$, given by:
$$\phi_t^{(m)}:= \frac{ \tr_{\gamma_t}(\gamma_t^{(m)})-2\nabla^{\gamma_t}\cdot\tau_t^{(m)}}{\tr_{\gamma_t}h(\vec{r}_t)}.$$
Again, we construct a continuous path $t\to(\vec{r}_t^{(m-1)})^\star\in C^{\mathfrak{n},\beta}(\mathbb{S}^2,\mathbb{R}^3)$ given by:
$$(\vec{r}_t^{(m-1)})^\star:=\vec{r}_0^{(m-1)}+\int_0^t(\tau_u^{(m)})^i(\vec{r}_u)_i+\frac{\phi_u^{(m)}}{|\vec{r}_u|}\vec{r}_u du,\,\,\,\text{for each}\,\,t\in[0,\epsilon],$$
from which we again observe a continuous Fr\'{e}chet derivative within $C^{\mathfrak{n},\beta}(\mathbb{S}^2,\mathbb{R}^3)$. By Proposition \ref{p5}, each term in our sequence $\{{^n}\vec{r}_t\}$ is piecewise differentiable with respect to the norm on $C^{2,\beta}(\mathbb{S}^2,\mathbb{R}^3)$, with right semi-continuous Fr\'{e}chet derivatives of order up to $\mathfrak{m}$. For $m\leq \mathfrak{m}$, $t\in[{^n}t_{k-1},{^n}t_{k})$, we denote these:
$$t\to {^n}\vec{r}_t^{(m)} = ({^n}\tau_t^{(m)})^i({^n}\vec{r}_{{^n}t_{k-1}})_i+{^n}\phi_t^{(m)}\frac{{^n}\vec{r}_{{^n}t_{k-1}}}{|{^n}\vec{r}_{{^n}t_{k-1}}|},$$
whereby ${^n}\tau_t^{(m)}\perp\text{Ker}(L_{h({^n}\vec{r}_{{^n}t_{k-1}})})$. From our use of (\ref{e17}) in the construction of the path $t\to \tau_t^{(m)}$, a completely analogous argument as in the first derivative case now ensues to show $||\tau_t^{(m)}-{^n}\tau_t^{(m)}||_{2,\beta}\to 0$, and also $||\phi_t^{(m)}-{^n}\phi_t^{(m)}||_{1,\beta}\to 0$ as $n\to\infty$. We leave for the reader again the completely analogous observation that this enforces that $\vec{r}_t^{(m-1)} = (\vec{r}_t^{(m-1)})^\star$.
\end{proof}
\begin{corollary}\label{c4}
Suppose $(\mathcal{A},\sigma)$ is a convex Null Cone, and consider a smooth path of Riemannian metrics $t\to\gamma_t$, $t\in[0,b]$ such that $(\mathbb{S}^2,\gamma_0)\hookrightarrow(\mathcal{A},\sigma)$. Then, there exists $0<\epsilon<b$, and a family of smooth isometric embeddings $r_t:(\mathbb{S}^2,\gamma_t)\hookrightarrow(\mathcal{A},\sigma)$, $t\in[0,\epsilon]$. We also find smooth sections $\tau:[0,\epsilon]\times \mathbb{S}^2\to T^\star\mathbb{S}^2$, $\phi:[0,\epsilon]\times\mathbb{S}^2\to\mathbb{R}$, such that the associated mapping $\vec{r}:[0,b]\times\mathbb{S}^2\to \mathbb{R}^3$ satisfies:
	\begin{align*}
d\vec{r}(\partial_t) &= d\vec{r}(\tau^\#)+\phi\frac{\vec{r}}{|\vec{r}|},\\
L_{h(\vec{r}_t)}(\tau_t) = \dot{\gamma}_t-\frac12\tr_{h(\vec{r}_t)}&(\dot{\gamma}_t)h(\vec{r}_t),\,\,\,\phi_t=\frac{\tr_{\gamma_t}(\dot{\gamma}_t)-2\nabla^{\gamma_t}\cdot\tau_t}{\tr_{\gamma_t}h(\vec{r}_t)},
\end{align*}
whereby $\tau_t\perp\text{Ker}(L_{h(\vec{r}_t)})$ for each $t\in[0,\epsilon]$, with respect to the $L^2$ inner product induced by $\gamma_t$.
\end{corollary}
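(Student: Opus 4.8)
The plan is to obtain the smooth family as a single object to which Theorem \ref{t3} applies with arbitrarily high regularity indices, and then to upgrade the separate regularity in the $t$- and $\mathbb{S}^2$-directions to joint smoothness. First I would note that a smooth path $t\to\gamma_t$ is, for every $\mathfrak{n}\geq 2$, a continuous path into $C^{\mathfrak{n}+1,\beta}(\text{Sym}(T^\star\mathbb{S}^2\otimes T^\star\mathbb{S}^2))$ that is $\mathfrak{m}$-times continuously Fr\'echet differentiable within $C^{\mathfrak{n},\beta}$ for every $\mathfrak{m}$, so the hypotheses of Theorem \ref{t3} hold for all $(\mathfrak{n},\mathfrak{m})$. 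Since the $\epsilon$ produced there is, as stated, independent of $\mathfrak{n},\mathfrak{m}$ — it is fixed in Step 1 of that proof from $c_0$ and the $C^{2,\beta}$-data of the path alone — and since the embedding $\vec{r}_t$ is built only from the $C^{2,\beta}$ contraction mapping of Theorem \ref{t2} and then characterised uniquely in Step 3 by the integral identity $\vec{r}_t=\vec{r}_t^\star$, the family $t\to\vec{r}_t$, $t\in[0,\epsilon]$, is one and the same for every choice of $(\mathfrak{n},\mathfrak{m})$. Letting $\mathfrak{n}\to\infty$ in Theorem \ref{t3} then gives $\vec{r}_t\in\bigcap_{\mathfrak{n}}C^{\mathfrak{n}+1,\beta}(\mathbb{S}^2,\mathbb{R}^3)=C^\infty(\mathbb{S}^2,\mathbb{R}^3)$ for each $t$, while letting $\mathfrak{m}\to\infty$ gives that $t\to\vec{r}_t$ is $C^\infty$ into $C^{\mathfrak{n},\beta}(\mathbb{S}^2,\mathbb{R}^3)$ for every $\mathfrak{n}$; in particular $\partial_t^k\vec{r}_t\in C^{\mathfrak{n},\beta}(\mathbb{S}^2,\mathbb{R}^3)$ for all $k,\mathfrak{n}$, so in any local chart every iterated partial derivative $\partial_t^k\partial_x^\alpha\vec{r}$ of $\vec{r}:[0,\epsilon]\times\mathbb{S}^2\to\mathbb{R}^3$ exists and is continuous, whence $\vec{r}$ is jointly smooth on $[0,\epsilon]\times\mathbb{S}^2$ by the standard fact that a map all of whose mixed partials exist and are continuous is $C^\infty$.

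Next I would produce $\tau$ and $\phi$. By convexity of $(\mathcal{A},\sigma)$ every cross-section is convex, so $h(\vec{r}_t)=2F^\star(\ubar\chi)|_{\{\lambda=0\}}$ satisfies $\mathcal{P}(h(\vec{r}_t),h(\vec{r}_t))>0$ for all $t$; hence $\{L_{h(\vec{r}_t)}\}$ and $\{\mathcal{L}_{h(\vec{r}_t)}\}$ are families of elliptic operators on $\mathbb{S}^2$ depending smoothly (jointly) on $(t,p)$, with $\dim(\text{Ker}(L_{h(\vec{r}_t)}))=6$ constant by Corollary \ref{c0}. A smooth family of self-adjoint elliptic operators of constant kernel dimension has smoothly varying $L^2(\gamma_t)$-orthogonal kernel projection and smoothly varying Green's operator on the orthogonal complement — this is the smooth version of the continuity arguments in Lemmas \ref{l0} and \ref{l7}, carried out with a resolvent/Riesz-projection representation. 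Defining $\tau_t$ as the unique element of $\text{Ker}(L_{h(\vec{r}_t)})^\perp$ solving $\mathcal{L}_{h(\vec{r}_t)}(\tau_t)=\nabla\cdot(\dot{\gamma}_t-(\tr_{h(\vec{r}_t)}\dot{\gamma}_t)h(\vec{r}_t))-d\big(\tr_{\gamma_t}\dot{\gamma}_t-(\tr_{h(\vec{r}_t)}\dot{\gamma}_t)(\tr_{\gamma_t}h(\vec{r}_t))\big)$ — which by Proposition \ref{p4} equivalently solves $L_{h(\vec{r}_t)}(\tau_t)=\dot{\gamma}_t-\tfrac12\tr_{h(\vec{r}_t)}(\dot{\gamma}_t)h(\vec{r}_t)$ — presents $\tau$ as the Green's operator applied to jointly smooth data, hence jointly smooth on $[0,\epsilon]\times\mathbb{S}^2$; then $\phi_t:=(\tr_{\gamma_t}(\dot{\gamma}_t)-2\nabla^{\gamma_t}\cdot\tau_t)/\tr_{\gamma_t}h(\vec{r}_t)$ is jointly smooth as well. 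Finally, the first Fr\'echet derivative formula in Theorem \ref{t3} identifies exactly this pair, so $d\vec{r}(\partial_t)=d\vec{r}(\tau^\#)+\phi\,\vec{r}/|\vec{r}|$, which is the claimed relation.

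I expect the only genuinely delicate points to be two routine-but-technical upgrades: (i) passing from ``all mixed partials exist and are continuous'' to honest joint $C^\infty$ regularity of $\vec{r}$ (and therefore of the coefficients of $L_{h(\vec{r}_t)}$), and (ii) upgrading the continuous $t$-dependence of the kernel projections and Green's operators established in Lemmas \ref{l0} and \ref{l7} to smooth $t$-dependence; both follow from constancy of the kernel dimension (Corollary \ref{c0}) together with the smoothness already obtained, via the standard resolvent machinery, but they merit being spelled out. Everything else reduces to direct appeals to Theorem \ref{t3}, Proposition \ref{p4}, and Corollary \ref{c0}.
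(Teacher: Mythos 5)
Your proposal is correct and follows essentially the same route the paper intends: the paper states Corollary \ref{c4} as an immediate consequence of Theorem \ref{t3}, relying precisely on the fact that $\epsilon$ is independent of $\mathfrak{n},\mathfrak{m}$ and that the family constructed at the $C^{2,\beta}$ level carries all higher regularity, so one lets $\mathfrak{n},\mathfrak{m}\to\infty$ exactly as you do. Your added details (joint smoothness from continuity of all mixed partials, and smooth $t$-dependence of the solutions $(\tau_t,\phi_t)$ of the elliptic system) are the routine upgrades the paper leaves implicit, and they are handled correctly.
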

\subsection{Openness II: Uniqueness}
As a consequence of Theorem \ref{t3}, we observe a solution of the system of first order equations:
\begin{align} 
L_{h(\vec{r}_t)}(\tau_t)&=\dot{\gamma}_t-(\tr_{h(\vec{r}_t)}\dot{\gamma}_t)h(\vec{r}_t),\,\,\tau_t\perp\text{Ker}(L_{h(\vec{r}_t)})\label{e22}\\
\frac{d}{dt}(\vec{r}_t)&=\tau_t^i(\vec{r}_t)_i+\phi_t\frac{\vec{r}_t}{|\vec{r}_t|},\,\,\vec{r}_t|_{t=0}=\vec{r}_0,\label{e23} 
\end{align}
on an interval $t\in[0,\epsilon]$. We can provide an explicit expression of the path $t\to\vec{r}_t$, in order to do so we will need a result in the theory of ODE flows on compact manifold:
\begin{proposition}\label{p7}[\cite{ebin1970groups}, Theorems 3.4, 3.5]
Consider a smooth compact manifold $M$, and for $\mathfrak{n}\geq 2$, $0<\beta<1$, a continuous path $t\to\tau_t\in C^{\mathfrak{n},\beta}(T^\star M)$, with $t,0\in I$. Then, the associated flow $\psi:I\times M\to M$, with $\psi(0,\cdot) = id$, is a $C^1$ curve in $C^{\mathfrak{n},\beta}\mathcal{D}(M)$, the group of $C^{\mathfrak{n},\beta}$ diffeomorphisms of $M$.
\end{proposition}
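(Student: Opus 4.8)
The plan is to construct the flow $\psi$ pointwise on $M$ by classical ODE theory and then upgrade the elementary conclusions to statements in the H\"{o}lder topology by the same Gronwall/Schauder bookkeeping used throughout this paper. First I would extract from the hypothesis --- continuity of $t\mapsto\tau_t$ into $C^{\mathfrak{n},\beta}(T^\star M)$ --- that on every compact $K\subset I$ there is a uniform bound $\sup_{t\in K}||\tau_t||_{\mathfrak{n},\beta}<\infty$ and a modulus of continuity $\omega_K$ for $t\mapsto\tau_t$. Viewing $\tau_t$ as a vector field through the metric, $\tau$ is continuous in $t$ and $C^{\mathfrak{n}}$ (hence $C^1$) in the base point, so Picard--Lindel\"{o}f produces a unique flow $\psi:I\times M\to M$, $\psi(0,\cdot)=\mathrm{id}$, with each $\psi_t:=\psi(t,\cdot)$ a diffeomorphism whose inverse is obtained by running the flow backwards. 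Writing the flow in a chart as $\psi_t(p)=p+\int_0^t\tau_u(\psi_u(p))\,du$, differentiating in $p$ up to order $\mathfrak{n}$, and feeding the resulting hierarchy of integral identities into Gronwall's inequality yields $\psi_t\in C^{\mathfrak{n},\beta}\mathcal{D}(M)$ for each $t$, with $\sup_{t\in K}||\psi_t||_{\mathfrak{n},\beta}\le C(K,\{\tau_t\})$ and symmetrically for $\psi_t^{-1}$. The structural fact to record here is that the top-order derivative $D^{\mathfrak{n}}\psi_t$ satisfies a \emph{linear} ODE whose coefficient is multiplication by $D\tau_t\circ\psi_t$ and whose inhomogeneity is a polynomial in the quantities $D^j\tau_t\circ\psi_t$, $j\le\mathfrak{n}$, and $D^i\psi_t$, $i\le\mathfrak{n}-1$.

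Next I would check that $t\mapsto\psi_t$ is continuous --- indeed locally Lipschitz --- into $C^{\mathfrak{n},\beta}\mathcal{D}(M)$. This is immediate from the previous step: since the same $\psi_u$ appears on both sides of the integral identity, $\psi_{t_0}(p)-\psi_t(p)=\int_t^{t_0}\tau_u(\psi_u(p))\,du$, so $||\psi_{t_0}-\psi_t||_{\mathfrak{n},\beta}\le|t_0-t|\sup_{u\in K}||\tau_u\circ\psi_u||_{\mathfrak{n},\beta}\le C(K,\{\tau_t\})|t_0-t|$, the composition bound on the right requiring only the uniform norm control just obtained and no continuity. Combined with the standard fact that $C^{\mathfrak{n},\beta}\mathcal{D}(M)$ is a smooth Banach manifold modelled on $C^{\mathfrak{n},\beta}(TM)$ on which right translation by a fixed element is smooth, this exhibits $t\mapsto\psi_t$ as a continuous curve in the group.

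The remaining point, that the curve is $C^1$ with $\dot\psi_t=\tau_t\circ\psi_t$, is the delicate one; the candidate derivative is forced by the pointwise equation, so one must show $\tfrac{1}{h}(\psi_{t+h}-\psi_t)=\tfrac{1}{h}\int_t^{t+h}\tau_u\circ\psi_u\,du$ converges to it in the relevant topology, which after differentiating in $p$ reduces to time-continuity of $t\mapsto\tau_t\circ\psi_t$ in the model space; equivalently, one differentiates the linear variation equation for $D^{\mathfrak{n}}\psi_t$ above and solves the linear ODE $\dot Y=(D\tau_t\circ\psi_t)Y+B_t$ in a space of $C^{0,\beta}$ tensors, using $\mathfrak{n}\ge2$ so that $D\tau_t$ has a derivative to spare and $t\mapsto D\tau_t\circ\psi_t$ is controlled through the continuity of Step 2 and the mean value theorem in the inner variable. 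I expect this to be the main obstacle: the ``vector field'' $\psi\mapsto\tau_t\circ\psi$ on $C^{\mathfrak{n},\beta}\mathcal{D}(M)$ is continuous but not locally Lipschitz in the $C^{\mathfrak{n},\beta}$ topology --- its formal derivative requires $D\tau_t$, one order rougher than $\tau_t$ --- so the Banach-space Picard--Lindel\"{o}f theorem does not apply on the group itself, and the apparent loss of a derivative must instead be absorbed through the linear structure governing the highest base-point derivatives of $\psi_t$. This is precisely the mechanism behind Theorems~3.4--3.5 of \cite{ebin1970groups}, which I would invoke directly; the Gronwall estimates above are its concrete realization in the H\"{o}lder scale.
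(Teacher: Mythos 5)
The paper offers no argument for this proposition beyond the citation to Ebin--Marsden (Theorems 3.4, 3.5), and your proposal ends up in the same place: the elementary Picard--Lindel\"{o}f/Gronwall bookkeeping you sketch is sound, and the genuinely delicate point --- $C^1$ dependence in the $C^{\mathfrak{n},\beta}$ topology despite the failure of the $\omega$-lemma for compositions at top H\"{o}lder regularity --- is exactly the content you delegate back to the cited theorems. The only caveat worth recording is that Ebin--Marsden work in the Sobolev ($H^s$) category, so both you and the paper are implicitly invoking the H\"{o}lder-space analogue; your Gronwall estimates on H\"{o}lder difference quotients are precisely the mechanism by which the paper later realizes this transfer concretely inside the proof of Theorem \ref{t4}.
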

\begin{lemma}\label{l8}
For the flow $\psi:I\times M\to M$ of a continuous path $t\to\tau_t\in C^{\mathfrak{n},\beta}(T^\star M)$ as in Proposition \ref{p7}, there exists a unique $C^1$ curve in $C^{\mathfrak{n},\beta}\mathcal{D}(M)$, $\psi^{-1}:I\times M\to M$, such that $p\to \psi^{-1}(t,p)$ is the inverse of $p\to \psi(t,p)$ for each $t$. Moreover,
$$d\psi^{-1}(\partial_t|_{(t,p)}) = -d\psi^{-1}(\tau(t,p)).$$
\end{lemma}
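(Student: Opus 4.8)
The plan is to build $\psi^{-1}$ from the pointwise-in-$t$ inverses $\psi_t^{-1}$, obtain its $t$-regularity and the evolution identity by differentiating the algebraic relation $\psi_t\circ\psi_t^{-1}=\mathrm{id}$, and throughout exploit that the \emph{forward} flow $\psi$ is well-behaved (Proposition \ref{p7}) while being careful that inversion on $C^{\mathfrak{n},\beta}\mathcal{D}(M)$ is not itself differentiable. Concretely, note that $\psi^{-1}(t,\cdot)$ is nothing but the two-parameter flow $\Phi(0,t,\cdot)$ that takes the solution of $\dot x=\tau_u(x)$ at time $t$ back to time $0$; but it is cleaner to work directly with inverses of the $\psi_t$.

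\textbf{Step 1: the pointwise inverse, uniqueness, and continuity in $C^{\mathfrak{n},\beta}\mathcal{D}(M)$.} For each fixed $t\in I$ the map $\psi_t:=\psi(t,\cdot):M\to M$ is a $C^{\mathfrak{n},\beta}$ diffeomorphism of the compact manifold $M$; applying the inverse function theorem in coordinate charts, with constants uniform over $M$ by compactness, its set-theoretic inverse $\psi_t^{-1}$ is again a $C^{\mathfrak{n},\beta}$ diffeomorphism. This defines $\psi^{-1}(t,p):=\psi_t^{-1}(p)$. Since the inverse of a bijection is unique, any curve with the claimed property must coincide with this one, which gives uniqueness immediately. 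Continuity of $t\mapsto\psi_t^{-1}$ into $C^{\mathfrak{n},\beta}\mathcal{D}(M)$ then follows from the continuity of $t\mapsto\psi_t$ there (Proposition \ref{p7}) together with the continuity of the inversion map on $C^{\mathfrak{n},\beta}\mathcal{D}(M)$.

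\textbf{Step 2: joint $C^1$ regularity and the identity.} Since $\tau\in C^{\mathfrak{n},\beta}(T^\star M)$ with $\mathfrak{n}\geq 2$, the vector field $\tau$ is continuous in $(t,p)$ and $C^1$ in $p$ with $d_p\tau$ continuous in $(t,p)$, so standard smooth-dependence theory for ODEs gives that $\psi:I\times M\to M$ is jointly $C^1$, with $\partial_t\psi=\tau(t,\psi)$ and $d_p\psi$ solving the variational equation. Hence $\Psi:I\times M\to I\times M$, $\Psi(t,p):=(t,\psi(t,p))$, is a $C^1$ bijection whose differential is block lower-triangular with invertible diagonal blocks $\mathrm{id}$ and $d_p\psi$; the inverse function theorem on the finite-dimensional manifold $I\times M$ gives that $\Psi^{-1}(t,q)=(t,\psi^{-1}(t,q))$ is $C^1$, so $\psi^{-1}:I\times M\to M$ is $C^1$. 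Differentiating $\psi(t,\psi^{-1}(t,q))=q$ in $t$, using $(\partial_t\psi)(t,\psi^{-1}(t,q))=\tau(t,q)$ and, from the chain rule in $q$, $(d_p\psi)(t,\psi^{-1}(t,q))=\big[(d_q\psi^{-1})(t,q)\big]^{-1}$, we obtain $(\partial_t\psi^{-1})(t,q)=-(d\psi^{-1})(t,q)\big(\tau(t,q)\big)$, i.e.\ $d\psi^{-1}(\partial_t|_{(t,p)})=-d\psi^{-1}(\tau(t,p))$.

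\textbf{Step 3 and the main obstacle.} For fixed $t$, the derivative section $p\mapsto -d_p\psi_t^{-1}(\tau(t,p))$ along $\psi_t^{-1}$ is a contraction of $d\psi_t^{-1}$ with $\tau_t$, hence lies in the relevant Hölder class along $\psi_t^{-1}$, and it depends continuously on $t$ because all ingredients do; combined with Step 1 this shows $t\mapsto\psi_t^{-1}$ is a $C^1$ curve in $C^{\mathfrak{n},\beta}\mathcal{D}(M)$ with derivative $-d\psi^{-1}(\tau(t,\cdot))$, as stated. The delicate point is precisely here: one cannot simply differentiate the inversion map $C^{\mathfrak{n},\beta}\mathcal{D}(M)\to C^{\mathfrak{n},\beta}\mathcal{D}(M)$ along $t\mapsto\psi_t$, since inversion (like left composition) is only continuous, not differentiable, on these diffeomorphism groups. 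The argument above circumvents this by treating $\psi^{-1}$ as a map of the finite-dimensional manifold $I\times M$ and using the ordinary inverse function theorem there, reading the evolution identity off $\psi\circ\psi^{-1}=\mathrm{id}$ only after joint $C^1$ regularity has been secured.
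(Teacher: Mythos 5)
Your Step 2 is sound and is essentially the paper's own computation: the paper likewise obtains the identity $d\psi^{-1}(\partial_t|_{(t,p)}) = -d\psi^{-1}(\tau(t,p))$ from a finite-dimensional implicit function theorem argument applied to $\psi(t,x)=p$, and uniqueness is trivial in both treatments. The genuine gap is in how you obtain the $C^{\mathfrak{n},\beta}\mathcal{D}(M)$-regularity of $t\mapsto\psi^{-1}(t,\cdot)$. In Step 1 you invoke ``continuity of the inversion map on $C^{\mathfrak{n},\beta}\mathcal{D}(M)$'' and you repeat in Step 3 that inversion is ``only continuous, not differentiable''. On the full H\"older group inversion is in general \emph{not} continuous: the top-order part of $d(\psi^{-1})$ is $\big((d\psi)^{-1}\big)\circ\psi^{-1}$, a composition whose inner factor varies, and composition with a varying inner map fails to be continuous in $C^{0,\beta}$ (for instance $x\mapsto|x|^{\beta}$ precomposed with translations by $1/n$ does not converge in the $\beta$-H\"older seminorm). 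Equivalently, since $h\mapsto h\circ g$ with $g$ fixed \emph{is} continuous, continuity of inversion would force continuity of $g\mapsto f\circ g=(g^{-1}\circ f^{-1})^{-1}$, which is false in the big H\"older category. Moreover, Step 3 passes from the pointwise formula plus ``all ingredients depend continuously on $t$'' to the assertion that $t\mapsto\psi_t^{-1}$ is a $C^1$ curve in $C^{\mathfrak{n},\beta}\mathcal{D}(M)$; this is not an argument, because joint $C^1$-regularity of $\psi^{-1}$ as a map $I\times M\to M$ says nothing about convergence of its difference quotients in the $C^{\mathfrak{n},\beta}$ norm.

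The paper sidesteps inversion entirely: for each $t_0$ it applies Proposition \ref{p7} to the time-reversed path $u\mapsto-\tau_{t_0-u}$, obtaining a flow $\tilde\psi_{t_0}$, and defines $\psi^{-1}(t,p):=\tilde\psi_t(t,p)$; that this is indeed the inverse of $\psi(t,\cdot)$ follows because $u\mapsto\tilde\psi_{t_0}(u,p)$ traverses the forward integral curve backwards. In this way membership in $C^{\mathfrak{n},\beta}\mathcal{D}(M)$ (and the needed regularity in $t$) is inherited directly from the forward-flow theorem rather than from any property of the inversion map. To repair your proposal, replace Steps 1 and 3 by this reverse-flow construction (or by a direct estimate showing the difference quotients of $\psi_t^{-1}$ converge in $C^{\mathfrak{n},\beta}$); your Step 2 derivation of the evolution identity then goes through unchanged.
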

\begin{proof}
Using Proposition \ref{p7}, for each $t_0\in I$, we take $J:=I\cap (-\infty,t_0+\epsilon)$ if $t_0>0$ or $J:=I\cap(t_0-\epsilon,\infty)$ if $t_0<0$, and obtain a flow $\tilde\psi_{t_0}:J\times M\to M$ associated to the continuous path $t\to-\tau_{t_0-t}\in C^{k,\beta}(T^\star M)$. We then define the mapping $\psi^{-1}:I\times M\to M$ by $\psi^{-1}(t,p) := \tilde\psi_t(t,p)$. We see $p\to\psi^{-1}(t,p)$ is a $C^{\mathfrak{n},\beta}$ diffeomorphism by construction, moreover, from the fact that:
$$d\tilde{\psi}_{t_0}(\partial_t|_{(t,p)}) = -\tau(t_0-t,\tilde{\psi}_{t_0}(t,p)),\,\,\,\tilde{\psi}_{t_0}(0,p)=p,$$
we see that the curve $t\to\tilde\psi_{t_0}(t,p)$ is the reverse of the curve $t\to\psi(t,\tilde\psi_{t_0}(t_0,p))$. Therefore, $p\to\psi^{-1}(t,p)$ is the inverse of $p\to \psi(t,p)$ for each $t\in I$. To show that $\psi^{-1}$ is $C^1$, for fixed $t_0\in I$, we observe that any $p\in M$ is given uniquely as $\psi(t_0,q) = p$ for some $q\in M$. By the Implicit Function Theorem, we may then find a unique $C^1$ path $x_q:(t_0-\epsilon,t_0+\epsilon)\to M$ such that $\psi(t,x_q(t)) = p$, and $x_q(t_0) = q$. Taking a derivative, we conclude that $d\psi(\dot{x}(t_0)) = -\tau(t,p)$, and since $d\psi^{-1}|_{T M}$ is well defined, and the inverse of $d\psi|_{T M}$, we have
$$\dot{x}_q(t_0) = -d\psi^{-1}|_{T M}(\tau(t,p)).$$
Now, $\psi^{-1}(t,p)=\psi^{-1}(t,\psi(t,x_q(t)) = x_q(t)$ for $t\in (t_0-\epsilon,t_0+\epsilon)$ and fixed $p$. So we conclude that $d\psi^{-1}(\partial_t)$ exists and
$$d\psi^{-1}(\partial_t|_{(t,p)}) = -d\psi^{-1}(\tau(t,p)).$$
\end{proof}
Given a flow $\psi:I\times M\to M$ with respect to a continuous path $t\to\tau_t\in C^{\mathfrak{n},\beta}(T^\star M)$, we will refer to $\psi^{-1}$ as the \textit{inverse flow} with respect to the flow $\psi$. Finally, our analysis of ODE flows allows us to solve linear equations using an adaptation of the Method of Characteristics: 
\begin{lemma}\label{l9}
Given continuous paths $t\to\tau_t\in C^{\mathfrak{n},\beta}(T^\star M)$, $t\to\phi_t\in C^{\mathfrak{n},\beta}(M,\mathbb{R})$, $t\in I$, and initial data $f\in C^{\mathfrak{n},\beta}(M,\mathbb{R})$, we can solve the problem:
 $$dg(\partial_t|_{(t,p)})=dg(\tau(t,p))+\phi(t,p),\,\,\,g(0,p)=f(p).$$
The unique solution $t\to g_t\in C^{\mathfrak{n},\beta}(M)$, is given by:
$$g(t,p) = (f\circ\psi^{-1})(t,p)+\int_0^t\phi\big(u,\psi(u,\psi^{-1}(t,p))\big)du,$$
where $\psi: I\times M\to M$ is the flow associated to $t\to-\tau_t$.
\end{lemma}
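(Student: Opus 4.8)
The plan is to verify directly that the proposed formula solves the transport problem, and then to prove uniqueness by a standard characteristics argument leveraging the inverse flow of Lemma \ref{l8}. First I would fix notation: let $\psi:I\times M\to M$ be the flow associated to $t\to-\tau_t$, so that $d\psi(\partial_t|_{(t,p)}) = -\tau(t,\psi(t,p))$ and $\psi(0,\cdot)=\mathrm{id}$; by Proposition \ref{p7} this is a $C^1$ curve in $C^{\mathfrak{n},\beta}\mathcal{D}(M)$, and by Lemma \ref{l8} it admits a $C^1$ inverse flow $\psi^{-1}$ in $C^{\mathfrak{n},\beta}\mathcal{D}(M)$ with $d\psi^{-1}(\partial_t|_{(t,p)}) = -d\psi^{-1}(-\tau(t,p)) = d\psi^{-1}(\tau(t,p))$ (applying Lemma \ref{l8} to the path $t\to-\tau_t$; note the sign). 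The regularity claim $g_t\in C^{\mathfrak{n},\beta}(M)$ then follows from the formula: $f\circ\psi^{-1}_t$ is a composition of a $C^{\mathfrak{n},\beta}$ function with a $C^{\mathfrak{n},\beta}$ diffeomorphism, hence $C^{\mathfrak{n},\beta}$, and the integral term is likewise a $t$-integral of $C^{\mathfrak{n},\beta}$-in-$p$ data depending continuously on the parameter $u$, hence lies in $C^{\mathfrak{n},\beta}(M)$ for each $t$, with continuous dependence on $t$.

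Next I would verify the initial condition and the PDE. At $t=0$, $\psi^{-1}(0,p)=p$ and the integral is empty, so $g(0,p)=f(p)$. For the differential equation, observe that along the characteristic curve $t\to\psi(t,q)$ for fixed $q$, the composition $g(t,\psi(t,q))$ has $\psi^{-1}(t,\psi(t,q))=q$ constant, so
\begin{align*}
g(t,\psi(t,q)) &= f(q) + \int_0^t \phi\big(u,\psi(u,q)\big)\,du.
\end{align*}
Differentiating in $t$ gives $\frac{d}{dt}\big[g(t,\psi(t,q))\big] = \phi(t,\psi(t,q))$. On the other hand, by the chain rule the left side equals $dg(\partial_t|_{(t,\psi(t,q))}) + dg\big(d\psi(\partial_t|_{(t,q)})\big) = dg(\partial_t) - dg(\tau(t,\psi(t,q)))$, using $d\psi(\partial_t) = -\tau$. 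Setting $p=\psi(t,q)$ (every $p\in M$ arises this way since $\psi_t$ is a diffeomorphism) and equating yields $dg(\partial_t|_{(t,p)}) = dg(\tau(t,p)) + \phi(t,p)$, as desired.

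For uniqueness, suppose $t\to \tilde g_t\in C^{\mathfrak{n},\beta}(M)$ is any solution. Then along each characteristic $t\to\psi(t,q)$ the same chain-rule computation shows $\frac{d}{dt}\big[\tilde g(t,\psi(t,q))\big] = \phi(t,\psi(t,q))$, and $\tilde g(0,q)=f(q)$; integrating in $t$ forces $\tilde g(t,\psi(t,q)) = f(q) + \int_0^t\phi(u,\psi(u,q))\,du$ for all $q$, i.e. $\tilde g = g$. I do not expect a genuine obstacle here: the only points requiring care are (i) getting the sign conventions consistent between the flow of $-\tau_t$, the inverse flow, and Lemma \ref{l8} (the formula in the statement reads $\phi(u,\psi(u,\psi^{-1}(t,p)))$, and with $q:=\psi^{-1}(t,p)$ one has $\psi(u,\psi^{-1}(t,p))=\psi(u,q)$, which matches the characteristic computation above), and (ii) justifying that differentiation under the integral sign and the chain rule are valid at the $C^1$-in-$t$, $C^{\mathfrak{n},\beta}$-in-$p$ level of regularity, which is exactly what Proposition \ref{p7} and Lemma \ref{l8} were set up to provide. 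These are bookkeeping matters rather than substantive difficulties.
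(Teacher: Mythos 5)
Your proposal is correct and follows essentially the same method-of-characteristics argument as the paper: the paper verifies the formula by differentiating it directly in $t$ using the identity $d(h\circ\psi^{-1})(\partial_t)=d(h\circ\psi^{-1})(\tau)$ from Lemma \ref{l8}, while you verify it by composing with the flow so that $\psi^{-1}$ cancels and then applying the chain rule, which is the same computation read in the other direction. Your uniqueness step (integrating any solution along characteristics and using that $\psi_t$ is a diffeomorphism) is precisely the paper's argument with $G(t,p):=\tilde g(t,\psi(t,p))-\int_0^t\phi(u,\psi(u,p))du-f(p)$.
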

\begin{proof}
We start by showing that the proposed solution satisfies the equation. In order to do so, we recall from Lemma \ref{l8} that:
$$d\psi^{-1}(\partial_t|_{(t,p)}) = d\psi^{-1}(\tau(t,p)).$$
It follows, for \textit{any} function $h\in C^{k,\beta}(M,\mathbb{R})$, that $d(h\circ\psi^{-1})(\partial_t|_{(t,p)}) = d(h\circ\psi^{-1})(\tau(t,p))$. Therefore, we calculate:
\begin{align*}
dg(\partial_t)&=d(f\circ\psi^{-1})(\partial_t)+\phi\big(t,\psi(t,\psi^{-1}(t,p))\big)+\int_0^td\phi\big(u,\psi(u,\psi^{-1}(t,p))\big)(\partial_t)du\\
&=d(f\circ\psi^{-1})(\tau)+\phi(t,p)+\int_0^td\phi\big(u,\psi(u,\psi^{-1}(t,p))\big)(\tau)du\\
&=dg(\tau(t,p))+\phi(t,p).
\end{align*}
For any solution $\tilde g: I\times M\to\mathbb{R}$ of our linear problem, we take 
$$G(t,p):=  \tilde{g}(t,\psi(t,p))-\int_0^t\phi\big(u,\psi(u,p)\big)du - f(p),$$
and calculate:
\begin{align*}
dG(\partial_t|_{(t,p)})&= \Big(d\tilde{g}(\partial_t)+d\tilde{g}(d\psi(\partial_t))-\phi\Big) \circ(t,\psi(t,p))\\
&=\Big(d\tilde{g}(\tau)+\phi+d\tilde{g}(-\tau)-\phi\Big) \circ(t,\psi(t,p))\\
&=0.
\end{align*}   
Since $G(0,p) = \tilde{g}(0,\psi(0,p))-f(p) = 0$ we conclude that $G\equiv 0$. Now given $t\in I$, any $q\in M$ is uniquely identified as $q=\psi(t,p)$ for some $p\in M$, and therefore $p=\psi^{-1}(t,q)$. It follows that 
$$\tilde g(t,q) = (f\circ\psi^{-1})(t,q)+\int_0^t\phi\big(u,\psi(u,\psi^{-1}(t,q))\big)du = g(t,q).$$  
\end{proof}
\begin{lemma}\label{l10}
Given continuous paths $t\to\tau_t\in C^{\mathfrak{n},\beta}(T^\star \mathbb{S}^2)$, $t\to\phi_t\in C^{\mathfrak{n},\beta}(\mathbb{S}^2)$, and initial data $\vec{r}_0\in C^{\mathfrak{n},\beta}(\mathbb{S}^2,\mathbb{R}^3)$, we solve the system:
$$d\vec{r}(\partial_t|_{(t,p)}) = d\vec{r}(\tau^\#(t,p))+\phi(t,p)\frac{\vec{r}}{|\vec{r}|},\,\,\,\vec{r}(0,p) = \vec{r}_0(p)$$
with unique solution $t\to \vec{r}_t\in C^{\mathfrak{n},\beta}(\mathbb{S}^2,\mathbb{R}^3)$ given by:
$$\vec{r}(t,p) = \Big(1+\frac{\int_0^t\phi\big(u,\psi(u,\psi^{-1}(t,p))\big)}{(|\vec{r}_0|\circ\psi^{-1})(t,p)}\Big)(\vec{r}_0\circ\psi^{-1})(t,p),$$
as long as $|\vec{r}|(t,p)>0$, whereby $\psi:I\times\mathbb{S}^2\to\mathbb{S}^2$ is the flow associated to $t\to-\tau_t$.
\end{lemma}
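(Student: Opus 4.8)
The plan is to reduce the vector system to a scalar linear transport equation of exactly the type handled in Lemma \ref{l9}, exploiting the radial structure of the second term. First I would write $\vec{r}(t,p) = \rho(t,p)\,\vec{e}(t,p)$ where $\rho := |\vec{r}|$ and $\vec{e} := \vec{r}/|\vec{r}|$, valid wherever $\rho>0$ by hypothesis. The idea is that the term $\phi\,\vec{r}/|\vec{r}|$ is purely ``radial'' and $d\vec{r}(\tau^\#)$ is purely ``angular'' in the sense of the flow, so the angular part of $\vec{r}$ should be transported by the flow of $-\tau$ while only the length gets an inhomogeneous contribution from $\phi$. Concretely, I would guess the ansatz $\vec{r}(t,p) = \lambda(t,p)\,(\vec{r}_0\circ\psi^{-1})(t,p)$ for a scalar function $\lambda$ with $\lambda(0,\cdot)=1$, where $\psi$ is the flow of $-\tau_t$ furnished by Proposition \ref{p7} and $\psi^{-1}$ is its inverse flow from Lemma \ref{l8}. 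Substituting this ansatz and using the key identity $d(h\circ\psi^{-1})(\partial_t|_{(t,p)}) = d(h\circ\psi^{-1})(\tau(t,p))$ established inside the proof of Lemma \ref{l9} (applied componentwise to $\vec{r}_0$), the $d\vec{r}(\tau^\#)$ terms on both sides cancel, and one is left with the scalar equation
\begin{align*}
d\lambda(\partial_t|_{(t,p)})\,(\vec{r}_0\circ\psi^{-1})(t,p) - d\lambda(\tau(t,p))\,(\vec{r}_0\circ\psi^{-1})(t,p) &= \phi(t,p)\,\frac{\vec{r}}{|\vec{r}|}.
\end{align*}
Since $\vec{r}/|\vec{r}| = (\vec{r}_0\circ\psi^{-1})/|\vec{r}_0\circ\psi^{-1}|$ (the scalar $\lambda$ drops out of the unit vector, assuming $\lambda>0$ which holds near $t=0$ and wherever $|\vec{r}|>0$), this collapses to the genuinely scalar linear problem
\begin{align*}
d\lambda(\partial_t|_{(t,p)}) &= d\lambda(\tau(t,p)) + \frac{\phi(t,p)}{(|\vec{r}_0|\circ\psi^{-1})(t,p)}, \qquad \lambda(0,p)=1.
\end{align*}

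Next I would apply Lemma \ref{l9} verbatim with $M = \mathbb{S}^2$, initial data $f\equiv 1$, and inhomogeneous term $\phi(t,p)/(|\vec{r}_0|\circ\psi^{-1})(t,p)$ — which lies in $C^{\mathfrak{n},\beta}(\mathbb{S}^2)$ for each $t$ since $\vec{r}_0\in C^{\mathfrak{n},\beta}$, $|\vec{r}_0|>0$ on the compact $\mathbb{S}^2$, and $\psi^{-1}$ is a $C^1$ curve in $C^{\mathfrak{n},\beta}\mathcal{D}(\mathbb{S}^2)$. Lemma \ref{l9} then gives the unique solution
\begin{align*}
\lambda(t,p) &= 1 + \int_0^t \frac{\phi\big(u,\psi(u,\psi^{-1}(t,p))\big)}{(|\vec{r}_0|\circ\psi^{-1})\big(u,\psi(u,\psi^{-1}(t,p))\big)}\,du.
\end{align*}
Here I would need the small bookkeeping observation that $|\vec{r}_0|\circ\psi^{-1}$ evaluated along the characteristic through $(t,p)$ is constant in $u$: indeed $\psi^{-1}(u,\psi(u,\psi^{-1}(t,p))) = \psi^{-1}(t,p)$ since $\psi^{-1}(u,\cdot)$ inverts $\psi(u,\cdot)$, so $(|\vec{r}_0|\circ\psi^{-1})(u,\psi(u,\psi^{-1}(t,p))) = (|\vec{r}_0|\circ\psi^{-1})(t,p)$, a quantity independent of $u$. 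Pulling it out of the integral yields exactly
\begin{align*}
\lambda(t,p) &= 1 + \frac{\int_0^t \phi\big(u,\psi(u,\psi^{-1}(t,p))\big)\,du}{(|\vec{r}_0|\circ\psi^{-1})(t,p)},
\end{align*}
and multiplying by $(\vec{r}_0\circ\psi^{-1})(t,p)$ reproduces the stated formula for $\vec{r}(t,p)$.

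For uniqueness, suppose $\tilde{\vec{r}}$ is any $C^{\mathfrak{n},\beta}$ solution with $|\tilde{\vec{r}}|>0$; I would set $\tilde\lambda(t,p) := \langle \tilde{\vec{r}}(t,p), (\vec{r}_0\circ\psi^{-1})(t,p)\rangle / |(\vec{r}_0\circ\psi^{-1})(t,p)|^2$ — or more cleanly, first show directly from the equation that $\tilde{\vec{r}}(t,p)$ remains parallel to $(\vec{r}_0\circ\psi^{-1})(t,p)$ by differentiating $\tilde{\vec{r}} \wedge (\vec{r}_0\circ\psi^{-1})$ in $t$ and observing it satisfies a homogeneous linear transport equation with zero initial data — and then reduce $\tilde\lambda$ to the same scalar problem, invoking the uniqueness half of Lemma \ref{l9}. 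The main obstacle, and the step requiring the most care, is precisely this reduction: justifying that the decomposition $\vec{r} = \lambda\,(\vec{r}_0\circ\psi^{-1})$ is legitimate for \emph{arbitrary} solutions (not just the constructed one) and that the nonlinear-looking dependence of the unit vector $\vec{r}/|\vec{r}|$ on $\vec{r}$ genuinely decouples. Once one commits to tracking the angular part via the flow $\psi$ of $-\tau$ and recognizes that $\phi$ only feeds the radial length, everything else is an application of Lemmas \ref{l8} and \ref{l9} together with the elementary identity $\psi^{-1}(u,\psi(u,q))=q$; I do not anticipate any analytic difficulty beyond maintaining $\lambda>0$, which the hypothesis $|\vec{r}|(t,p)>0$ supplies.
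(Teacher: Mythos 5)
Your construction is correct and rests on the same two ingredients as the paper — the flow $\psi$ of $-\tau$ from Proposition \ref{p7}/Lemma \ref{l8} and the transport solver of Lemma \ref{l9} — but you organize the reduction around a different decomposition. The paper uses the polar split $\vec{r}=|\vec{r}|\cdot\frac{\vec{r}}{|\vec{r}|}$ and shows, by differentiating the system for an \emph{arbitrary} solution, that $\partial_t|\vec{r}|=\tau^\#|\vec{r}|+\phi$ and $d\big(\frac{\vec{r}}{|\vec{r}|}\big)(\partial_t)=d\big(\frac{\vec{r}}{|\vec{r}|}\big)(\tau^\#)$; Lemma \ref{l9} (the second time with $\phi\equiv0$) then pins down both factors, so uniqueness is immediate and only the check that the resulting formula solves the system is left to the reader. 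Your ansatz $\vec{r}=\lambda\,(\vec{r}_0\circ\psi^{-1})$ trades these: existence comes out directly (the substitution is the verification, and your bookkeeping identity $\psi^{-1}(u,\psi(u,\psi^{-1}(t,p)))=\psi^{-1}(t,p)$ correctly converts the Lemma \ref{l9} formula into the stated one), but uniqueness now hinges on the parallelism step you yourself flag, which the paper's route avoids entirely. Your cross-product sketch does close that gap, with one caveat: for $\vec{w}:=\tilde{\vec{r}}\wedge(\vec{r}_0\circ\psi^{-1})$ the equation you obtain is $d\vec{w}(\partial_t)=d\vec{w}(\tau^\#)+\frac{\phi}{|\tilde{\vec{r}}|}\vec{w}$, which is linear and homogeneous in $\vec{w}$ but carries a zero-order coefficient, so the uniqueness half of Lemma \ref{l9} does not apply verbatim; you need the easy extension of restricting to characteristics $u\mapsto\vec{w}(u,\psi(u,q))$ and invoking ODE uniqueness (Gronwall) with $\vec{w}(0,\cdot)=0$. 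With that sentence added your argument is complete; alternatively, computing $\partial_t|\tilde{\vec{r}}|$ and $d\big(\tilde{\vec{r}}/|\tilde{\vec{r}}|\big)(\partial_t)$ directly for the arbitrary solution, as the paper does, yields uniqueness with no extra machinery.
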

\begin{proof}
We start by observing that any solution of our system satisfies:
\begin{align*}
\partial_t|\vec{r}| &= \frac{1}{|\vec{r}|}\delta_{\mu\nu}(dr^\mu)(\partial_t)r^\nu = \frac{1}{|\vec{r}|}\delta_{\mu\nu}(dr^\mu)(\tau^\#)r^\nu+\frac{1}{|\vec{r}|^2}\phi\delta_{\mu\nu}r^\mu r^\nu\\
&=\tau^\#|\vec{r}|+\phi,
\end{align*}
as long as $|\vec{r}|> 0$. From Lemma \ref{l9}, we therefore conclude:
\begin{equation}\label{e24}
|\vec{r}|(t,p)=(|\vec{r}_0|\circ\psi^{-1})(t,p)+\int_0^t\phi\big(u,\psi(u,\psi^{-1}(t,p))\big)du,
\end{equation}
moreover,
$$d\big(\frac{\vec{r}}{|\vec{r}|}\big)(\partial_t)=\frac{1}{|\vec{r}|}d\vec{r}(\partial_t)-\frac{\vec{r}}{|\vec{r}|^2}\partial_t|\vec{r}| = \frac{1}{|\vec{r}|}\big(d\vec{r}(\tau^\#)+\phi\frac{\vec{r}}{|\vec{r}|}\big)-\frac{\vec{r}}{|\vec{r}|^2}\big(\tau^\#|\vec{r}|+\phi\big)=d\big(\frac{\vec{r}}{|\vec{r}|}\big)(\tau^\#).$$
Again from Lemma \ref{l9} (with $\phi\equiv0$), we conclude, $\frac{\vec{r}}{|\vec{r}|} = \frac{\vec{r}_0}{|\vec{r}_0|}\circ\psi^{-1},$ and therefore:
$$\vec{r}(t,p) = |\vec{r}|(t,p) \frac{\vec{r}_0}{|\vec{r}_0|}\circ\psi^{-1}(t,p)  = \Big(1+\frac{\int_0^t\phi\big(u,\psi(u,\psi^{-1}(t,p))\big)}{(|\vec{r}_0|\circ\psi^{-1})(t,p)}\Big)(\vec{r}_0\circ\psi^{-1})(t,p), $$
as long as $|\vec{r}|>0$. We leave for the reader the straight forward exercise of checking that this expression indeed solves our system.
\end{proof}
Finally, we're ready to show uniqueness of our path of embeddings satisfying (\ref{e22},\ref{e23}). We equip a norm acting on the pair $(\tau_t,\phi_t)$, $t\in[0,\delta]$, for any $\delta\leq \epsilon$, as given by 
$$||(\tau,\phi)||_{\infty}:=\sup_t||\tau(t,\cdot)||_{2,\beta}+\sup_t||\phi(t,\cdot)||_{2,\beta}.$$ 
From the outset, we're given a continuous path of metrics $t\to\gamma_t\in C^{3,\beta}(\text{Sym}(T^\star\mathbb{S}^2\otimes T^\star\mathbb{S}^2))$, and an initial isometric embedding $(\mathbb{S}^2,\gamma_0)\hookrightarrow (\mathcal{A},\sigma)$ represented by $\vec{r}_0\in C^{3,\beta}(\mathbb{S}^2,\mathbb{R}^3)$, that traces out the boundary of a star-shaped region in $\mathbb{R}^3$. We also assume the path $t\to\gamma_t$ is continuously Fr\'{e}chet differentiable with respect to the norm on $C^{2,\beta}(\text{Sym}(T^\star\mathbb{S}^2\otimes T^\star\mathbb{S}^2))$. We next consider two paths $t\to{^1}\vec{r}_t, t\to{^2}\vec{r}_t$ associated to two pairings $({^1}\tau_t,{^1}\phi_t), ({^2}\tau_t,{^2}\phi_t)$ each pair coupled via (\ref{e22},\ref{e23}) on $t\in[0,\epsilon]$. Our goal is to show, if both also represent paths of isometric embeddings for $t\to\gamma_t$, then, ${^1}\vec{r}_t = {^2}\vec{r}_t$, on all of $t\in[0,\epsilon]$.
\begin{theorem}\label{t4}(Uniqueness)
The path $t\to\vec{r}\in C^{\mathfrak{n},\beta}(\mathbb{S}^2,\mathbb{R}^3)$ described in Theorem \ref{t3}, is unique.
\end{theorem}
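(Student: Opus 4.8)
The plan is to reduce the statement to a Gr\"onwall inequality for the difference of two candidate paths, exploiting the explicit representation of solutions of (\ref{e22})--(\ref{e23}) in terms of ODE flows (Lemma \ref{l10}). So let $t\mapsto{^1}\vec{r}_t$ and $t\mapsto{^2}\vec{r}_t$ be two paths of isometric embeddings of $t\mapsto\gamma_t$ on $[0,\epsilon]$, each with initial value $\vec{r}_0$, with associated pairs $({^i}\tau_t,{^i}\phi_t)$ satisfying (\ref{e22}), the accompanying formula for ${^i}\phi_t$, and ${^i}\tau_t\perp\text{Ker}(L_{h({^i}\vec{r}_t)})$ in the $L^2$ product of $\gamma_t$. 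Theorem \ref{t3} makes $\sup_t||{^i}\vec{r}_t||_{\mathfrak{n}+1,\beta}$, $\sup_t||{^i}\tau_t||_{\mathfrak{n}+1,\beta}$, $\sup_t||{^i}\phi_t||_{\mathfrak{n},\beta}$ finite; convexity of the Null Cone and compactness of $[0,\epsilon]\times\mathbb{S}^2$ give uniform positive lower bounds for $\mathcal{P}(h({^i}\vec{r}_t),h({^i}\vec{r}_t))$ and $\tr_{\gamma_t}h({^i}\vec{r}_t)=2\tr_{\gamma_t}\ubar\chi({^i}\vec{r}_t)$; and Proposition \ref{p6} gives the extra regularity $\vec{r}_0\in C^{\mathfrak{n}+1,\beta}(\mathbb{S}^2,\mathbb{R}^3)$. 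I will estimate ${^1}\vec{r}_t-{^2}\vec{r}_t$ one order below the regularity of the paths, i.e. in $C^{\mathfrak{n},\beta}$; this is precisely the level at which the compositions appearing in the formula of Lemma \ref{l10} are well behaved given $\vec{r}_0\in C^{\mathfrak{n}+1,\beta}$. All constants below are independent of $t\in[0,\epsilon]$.

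The first step is an elliptic comparison estimate
\[
||{^1}\tau_t-{^2}\tau_t||_{\mathfrak{n},\beta}+||{^1}\phi_t-{^2}\phi_t||_{\mathfrak{n}-1,\beta}\le C\,||{^1}\vec{r}_t-{^2}\vec{r}_t||_{\mathfrak{n},\beta},\qquad t\in[0,\epsilon].
\]
Both the source of (\ref{e22}) and the operator $L_{h(\vec{r})}$ depend on the embedding only through $h(\vec{r})_{ij}=2\ubar\chi_{\mu\nu}(\vec{r})r^\mu_i r^\nu_j$, a smooth function of $(\vec{r},\nabla\vec{r})$, so $||h({^1}\vec{r}_t)-h({^2}\vec{r}_t)||_{\mathfrak{n}-1,\beta}\le C\,||{^1}\vec{r}_t-{^2}\vec{r}_t||_{\mathfrak{n},\beta}$ and the differences of operators and sources are controlled accordingly. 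The one delicate point is that ${^1}\tau_t,{^2}\tau_t$ are normalized against the \emph{different} kernels $\text{Ker}(L_{h({^i}\vec{r}_t)})$. I would handle this as in Lemma \ref{l7} but keeping the inequalities quantitative: since $\dim\text{Ker}(L_{h(\vec{r}_t)})=6$ is constant (Corollary \ref{c0}), elliptic regularity and compactness in $t$ produce a $C^{\mathfrak{n}+1,\beta}$-bounded family of bases $\{\eta_j\}$ of $\text{Ker}(L_{h({^1}\vec{r}_t)})$; denoting by $\pi^{(i)}_t$ the $L^2(\gamma_t)$-projection onto $\text{Ker}(L_{h({^i}\vec{r}_t)})$, the Schauder estimate for $\mathcal{L}_{h({^2}\vec{r}_t)}$ applied to $\eta_j-\pi^{(2)}_t(\eta_j)$, which solves $\mathcal{L}_{h({^2}\vec{r}_t)}(\eta_j-\pi^{(2)}_t(\eta_j))=(\mathcal{L}_{h({^2}\vec{r}_t)}-\mathcal{L}_{h({^1}\vec{r}_t)})(\eta_j)$, gives $||\eta_j-\pi^{(2)}_t(\eta_j)||_{\mathfrak{n},\beta}\le C\,||{^1}\vec{r}_t-{^2}\vec{r}_t||_{\mathfrak{n},\beta}$, hence the same bound for $||\pi^{(1)}_t({^2}\tau_t)||_{\mathfrak{n},\beta}$. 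Feeding $\mathcal{L}_{h({^1}\vec{r}_t)}\big({^1}\tau_t-({^2}\tau_t-\pi^{(1)}_t({^2}\tau_t))\big)$ into the Schauder estimate for $\mathcal{L}_{h({^1}\vec{r}_t)}$ on $\text{Ker}(L_{h({^1}\vec{r}_t)})^\perp$ and then restoring $\pi^{(1)}_t({^2}\tau_t)$ yields the $\tau$-bound; the $\phi$-bound follows from the defining formula for ${^i}\phi_t$ and the lower bound on $\tr_{\gamma_t}h({^i}\vec{r}_t)$.

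The second step closes the loop through the flow representation. Since $\vec{r}_0$ traces the boundary of a star-shaped region, $|{^i}\vec{r}_t|\ge S_->0$, so Lemma \ref{l10} expresses ${^i}\vec{r}_t$ explicitly through $\vec{r}_0$, ${^i}\phi_t$, and the flow ${^i}\psi^{-1}$ of $-{^i}\tau$; the transport is carried entirely by that flow, which by Proposition \ref{p7}, Lemma \ref{l8}, and standard differentiate-and-Gr\"onwall estimates in H\"older spaces depends Lipschitz-continuously on its generating field \emph{with no loss of derivative}, $||{^1}\psi^{-1}_t-{^2}\psi^{-1}_t||_{\mathfrak{n},\beta}\le C\int_0^t||{^1}\tau_u-{^2}\tau_u||_{\mathfrak{n},\beta}\,du$. (This is exactly why one cannot apply Gr\"onwall to (\ref{e23}) directly: the term ${^2}\tau_t^i\big(({^1}\vec{r}_t)_i-({^2}\vec{r}_t)_i\big)$ costs a derivative, and it is to circumvent this that Lemmas \ref{l8}--\ref{l10} were introduced.) Combining this with the compositions $\vec{r}_0\circ\psi^{-1}$, handled in $C^{\mathfrak{n},\beta}$ using $\vec{r}_0\in C^{\mathfrak{n}+1,\beta}$, together with the elliptic comparison estimate above, the formula of Lemma \ref{l10} gives
\[
||{^1}\vec{r}_t-{^2}\vec{r}_t||_{\mathfrak{n},\beta}\le C\int_0^t||{^1}\vec{r}_u-{^2}\vec{r}_u||_{\mathfrak{n},\beta}\,du,\qquad t\in[0,\epsilon].
\]
Since ${^1}\vec{r}_0={^2}\vec{r}_0$, Gr\"onwall's inequality forces ${^1}\vec{r}_t={^2}\vec{r}_t$ throughout $[0,\epsilon]$, and uniqueness of the associated $\tau_t,\phi_t$ follows from (\ref{e22}) and its accompanying formula. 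The main obstacle is the quantitative kernel comparison inside the elliptic estimate --- reconciling the $\vec{r}$-dependent normalizations of ${^1}\tau_t$ and ${^2}\tau_t$ --- though the inequalities needed are already essentially contained in the proof of Lemma \ref{l7}; everything else is bookkeeping with the flow lemmas and the interior Schauder estimates already in hand.
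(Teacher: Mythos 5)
Your overall route---representing both candidate paths through the flow formula of Lemma \ref{l10} rather than estimating (\ref{e23}) directly, and comparing the data $(\tau_t,\phi_t)$ via Schauder estimates with a quantitative version of the kernel comparison in Lemma \ref{l7}---is the same skeleton as the paper's proof, and the parts you worry about most (the kernel normalization, the flow-difference estimate, the composition $\vec{r}_0\circ\psi^{-1}$ using $\vec{r}_0\in C^{\mathfrak{n}+1,\beta}$) are indeed fine. The genuine gap is in the closing Gr\"onwall step, exactly where you call it bookkeeping: the $\phi$-terms in Lemma \ref{l10}. To bound $||{^1}\vec{r}_t-{^2}\vec{r}_t||_{\mathfrak{n},\beta}$ from that formula you must control
$${^1}\phi\big(u,{^1}\psi(u,{^1}\psi^{-1}(t,\cdot))\big)-{^2}\phi\big(u,{^2}\psi(u,{^2}\psi^{-1}(t,\cdot))\big)$$
in $C^{\mathfrak{n},\beta}$. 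Splitting it, the piece $({^1}\phi-{^2}\phi)$ composed with the second set of flows needs $||{^1}\phi_t-{^2}\phi_t||_{\mathfrak{n},\beta}$, whereas your elliptic comparison only yields $C^{\mathfrak{n}-1,\beta}$: the algebraic formula $\phi_t=(\tr_{\gamma_t}\dot{\gamma}_t-2\nabla^{\gamma_t}\cdot\tau_t)/\tr_{\gamma_t}h(\vec{r}_t)$ loses one derivative relative to $\tau_t$, and the Schauder estimate for ${^1}\tau_t-{^2}\tau_t$ only recovers to the level of the $\vec{r}$-difference because $h(\vec{r})$ depends on $\nabla\vec{r}$. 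The other piece, ${^1}\phi\circ(\text{flows}_1)-{^1}\phi\circ(\text{flows}_2)$, at order $\mathfrak{n}$ produces the term $\big((\nabla^{\mathfrak{n}}{^1}\phi)\circ\Psi_1-(\nabla^{\mathfrak{n}}{^1}\phi)\circ\Psi_2\big)(d\Psi_1)^{\otimes\mathfrak{n}}$, which is Lipschitz in the flow difference only if ${^1}\phi\in C^{\mathfrak{n}+1}$; Theorem \ref{t3} gives only ${^1}\phi_t\in C^{\mathfrak{n},\beta}$ (since $\dot{\gamma}_t\in C^{\mathfrak{n},\beta}$), so you get at best a H\"older power $|\Psi_1-\Psi_2|^\beta$. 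Hence the inequality $||{^1}\vec{r}_t-{^2}\vec{r}_t||_{\mathfrak{n},\beta}\le C\int_0^t||{^1}\vec{r}_u-{^2}\vec{r}_u||_{\mathfrak{n},\beta}\,du$ does not follow from your ingredients, and the same one-derivative deficit in the $\phi$-difference reappears at every level $m\le\mathfrak{n}$, so lowering the norm does not help: you either lose a derivative or lose linearity, and Gr\"onwall closes in neither case.

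This derivative loss is precisely what the paper's proof is organized around. It measures the pair $({^1}\tau-{^2}\tau,{^1}\phi-{^2}\phi)$ in $C^{2,\beta}\times C^{2,\beta}$ and recovers the missing derivative on the $\phi$-difference not from the algebraic formula but by differencing the second-order elliptic equation (\ref{e12}) (the Nirenberg-type trick already used in Theorem \ref{t1}), whose structure guarantees that no more than two derivatives of ${^1}\vec{r}_t,{^2}\vec{r}_t$ appear in the source; combined with the bootstrap through (\ref{e19}) for $d\Psi$ and $\varphi$, this closes a smallness estimate $||({^1}\tau-{^2}\tau,{^1}\phi-{^2}\phi)||_\infty\le O(\delta)||({^1}\tau-{^2}\tau,{^1}\phi-{^2}\phi)||_\infty$ on a short interval, after which Lemma \ref{l10} forces ${^1}\vec{r}_t={^2}\vec{r}_t$ and a continuity argument extends this to $[0,\epsilon]$. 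To repair your argument you would need an estimate of $||{^1}\phi_t-{^2}\phi_t||$ at top order that does not pass solely through $\nabla\cdot({^1}\tau_t-{^2}\tau_t)$---for instance by differencing (\ref{e12}), or by extracting top-order control of the radius difference from the embedding equations (\ref{e19}), (\ref{e21})---at which point you have essentially reconstructed the paper's proof.
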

\begin{proof}
	We start by choosing antipodal points on $\mathbb{S}^2$, we can then use stereographic projections from these to arrange a covering of $\mathbb{S}^2$ by co-ordinate neighborhoods $\mathcal{U}_n,\mathcal{U}_s\subset\mathbb{S}^2$. We choose these so that they have the same image under stereographic projection, say $B_{2R}(0)\subset\mathbb{R}^2$, where $B_{2R}(0)$ represents the closed ball centered at the origin of radius $2R$. For sufficiently large $R$, we obtain a sub-cover $\{\mathcal{U}'_n,\mathcal{U}'_s\}$, subordinate to the original, with coincident projections onto $B_{R}(0)$. Now, given $\Lambda>0$ sufficiently large so that $||({^1}\tau,{^1}\phi)||_\infty,||({^2}\tau,{^2}\phi)||_\infty \leq \Lambda$, we choose $\delta$ sufficiently small so that any $t\to\tau_t$ satisfying $||\tau||_\infty \leq \Lambda$ gives rise to a flow and inverse flow satisfying $\psi(t,\cdot),\psi^{-1}(t,\cdot):\mathcal{U}_{\cdot}'\to \mathcal{U}_{\cdot}$, $t\in[0,\delta]$. In stereographic coordinates:
$$\psi(t,p)^i=p^i-\int_0^t\tau(u,\psi(u,p))^idu,\,\,\,\psi^{-1}(t,p)^i=p^i+\int_0^t\tau(t-u,\psi^{-1}(u,p))^idu.$$
To start, we need to show $\sup_t||d\psi(t,\cdot)||_{1,\beta},||d\psi^{-1}(t,\cdot)||_{1,\beta}\leq C(\Lambda,\delta)$. To this end:
$$|\vec{\psi}_i(t,p)|\leq 1+\int_0^t|\nabla\vec{\tau}(u,\psi(u,p))||\vec{\psi}_i(u,p)|du\leq 1+C\Lambda\int_0^t| \vec{\psi}_i(u,p)|du,$$
and from standard ODE differential inequalities we have:
$$| \vec{\psi}_i(t,p)| \leq(t+|\vec{\psi}_i(0,p)|)e^{C\Lambda t} = (1+t)e^{C\Lambda t}.$$
Using a coordinate patch-work we therefore conclude $\sup_t||d\psi(t,\cdot)||_{0}\leq C_1(\Lambda,\delta)$.
Similarly,
\begin{align*}
	|\vec{\psi}_{ij}(t,p)|&\leq \int_0^t |\nabla\vec{\tau}(u,\psi(u,p))||\vec{\psi}_{ij}(u,p)|+|\nabla^2\vec{\tau}(u,\psi(u,p))||\vec{\psi}_i(u,p)||\vec{\psi}_j(u,p)|du\\
	&\leq C\Lambda\big(\int_0^t|\vec{\psi}_{ij}(u,p)|du+tC_1(\Lambda,\delta)^2\big),
\end{align*}
 from which differential inequalities again yield $\sup_t||d\psi(t,\cdot)||_1\leq C_2(\Lambda,\delta)$. Finally, we have
 \begin{align*}
 |\vec{\psi}_{ij}(t,p_1) - \vec{\psi}_{ij}(t,p_2)|&\leq \int _0^t \Big(|\nabla\vec{\tau}(u,\psi(u,p_1))-\nabla\vec{\tau}(u,\psi(u,p_2))||\vec{\psi}_{ij}(u,p_1)|\\
 &+|\nabla\vec{\tau}(u,\psi(u,p_2))||\vec{\psi}_{ij}(u,p_1)-\vec{\psi}_{ij}(u,p_2)|\\
 &+|\nabla^2\vec{\tau}(u,\psi(u,p_1))-\nabla^2\vec{\tau}(u,\psi(u,p_2))||\vec{\psi}_i(u,p_1)||\vec{\psi}_j(u,p_1)|\\
 &+|\nabla^2\vec{\tau}(u,\psi(u,p_2))||\vec{\psi}_i(u,p_1)-\vec{\psi}_i(u,p_2)||\vec{\psi}_j(u,p_1)|\\
 &+	|\nabla^2\vec{\tau}(u,\psi(u,p_2))||\vec{\psi}_i(u,p_2)| |\vec{\psi}_j(u,p_1)-\vec{\psi}_j(u,p_2)|\Big)du\\
 &\leq tC\Lambda C_1(\Lambda,\delta)C_2(\Lambda,\delta)|\vec{p}_1-\vec{p}_2|\\
 &+C\Lambda\int_0^t |\vec{\psi}_{ij}(u,p_1)-\vec{\psi}_{ij}(u,p_2)| du\\
 &+tC\Lambda C_1(\Lambda,\delta)^{2+\beta}|\vec{p}_1-\vec{p}_2|^\beta\\
 &+2C\Lambda C_2(\Lambda,\delta)C_1(\Lambda,\delta)|\vec{p}_1-\vec{p}_2|.
 \end{align*}
Consequently, we observe an ODE differential inequality for the quantity $\sup_{p_1\neq p_2}\frac{|\vec{\psi}_{ij}(t,p_1) - \vec{\psi}_{ij}(t,p_2)|}{|\vec{p}_1-\vec{p}_2|^\beta}$, yielding the desired estimate through a coordinate patch-work, namely $\sup_t||d\psi(t,\cdot)||_{1,\beta}\leq C(\Lambda,\delta)$. The exact same analysis yields also $||d\psi^{-1}(t,\cdot)||_{1,\beta}\leq \tilde{C}(\Lambda,\delta)$.\\
\indent Now, from $({^1}\tau,{^1}\phi),({^2}\tau,{^2}\phi)$, we wish to estimate $\sup_t||(d({^1}\psi)-d({^2}\psi))(t,\cdot)||_{1,\beta}$, we observe: 
\begin{align*}
|{^1}\vec{\psi}(t,p)-{^2}\vec{\psi}(t,p)| &\leq \int_0^t|({^1}\vec{\tau})(u,{^1}\psi(u,p))-({^1}\vec{\tau})(u,{^2}\psi(u,p))|du\\
&+\int_0^t|({^1}\vec{\tau})(u,{^2}\psi(u,p))-{^2}\vec{\tau}(u,{^2}\psi(u,p))|du\\
&\leq C\Lambda\int_0^t|{^1}\vec{\psi}(u,p)-{^2}\vec{\psi}(u,p)|du+t||{^1}\tau-{^2}\tau||_{\infty}.
\end{align*}
Again results relating to ODE differential inequalities yield,  

$$|{^1}\vec{\psi}(t,p)-{^2}\vec{\psi}(t,p)|\leq \frac{||{^1}\tau-{^2}\tau||_\infty}{C\Lambda}\Big(\frac{1}{C\Lambda}(e^{C\Lambda \delta}-1)-\delta\Big)=O(\delta^2)||{^1}\tau-{^2}\tau||_\infty.$$
The stereographic projection is a local conformal diffeomorphism $p
\to(x^1,x^2)(p)$, with the push-forward of the standard metric given by:
$$\ring{\gamma}_{ij}dx^idx^j = \frac{4}{(1+(x^1)^2+(x^2)^2)^2} (dx^1)^2+(dx^2)^2.$$
As a result, we conclude:
$$\text{dist}({^1}\psi(t,x),{^2}\psi(t,x))\leq 2|{^1}\vec{\psi}(t,p)-{^2}\vec{\psi}(t,p)|=O(\delta^2)||{^1}\tau-{^2}\tau||_\infty.$$
Similarly: 
\begin{align*}
|({^1}\vec{\psi})_i&-({^2}\vec{\psi})_i| (t,p)\leq \int_0^t|({^1}\vec{\tau})_k(u,{^1}\psi(u,p))({^1}\psi)^k_i(u,p)-({^2}\vec{\tau})_k(u,{^2}\psi(u,p))({^2}\psi)^k_i(u,p)|du\\
&\leq\int_0^t|(\nabla({^1}\vec{\tau}))(u,{^1}\psi(u,p))|\big|({^1}\vec{\psi})_i-({^2}\vec{\psi})_i\big|du\\
&+\int_0^t\big|(\nabla({^1}\vec{\tau}))(u,{^1}\psi(u,p))-(\nabla({^2}\vec{\tau}))(u,{^2}\psi(u,p))\big||({^2}\vec{\psi})_i|du\\
&\leq C\Lambda\int_0^t |({^1}\vec{\psi})_i-({^2}\vec{\psi})_i\big|(u,p)du\\
&+C(\Lambda,\delta)\int_0^t \big|(\nabla({^1}\vec{\tau}))(u,{^1}\psi(u,p))-(\nabla({^1}\vec{\tau}))(u,{^2}\psi(u,p))|du\\
&+ C(\Lambda,\delta)\int_0^t| (\nabla({^1}\vec{\tau}))(u,{^2}\psi(u,p))-(\nabla({^2}\vec{\tau}))(u,{^2}\psi(u,p))\big| du\\
&\leq C\Lambda\int_0^t |({^1}\vec{\psi})_i-({^2}\vec{\psi})_i\big|(u,p)du+t\tilde{C}(\Lambda,\delta)||\tau_1-\tau_2||_\infty,
\end{align*}
yielding again $\sup_t||d({^1}\psi)(t,\cdot)-d({^2}\psi)(t,\cdot)||_{0} \leq O(\delta^2)||{^1}\tau-{^2}\tau||_{\infty}$. Analogously, we also obtain $\sup_t||d({^1}\psi^{-1})(t,\cdot)-d({^2}\psi^{-1})(t,\cdot)||_{0} \leq O(\delta^2)||{^1}\tau-{^2}\tau||_{\infty}$. In the notation of Proposition \ref{p6}, writing $\varphi = |\vec{r}|$, we observe from (\ref{e24}):
\begin{align*}
||({^1}\varphi-{^2}\varphi)(t,\cdot)||_{1,0}&\leq||\big(|\vec{r}_0|\circ{^1}\psi^{-1}-|\vec{r}_0|\circ{^2}\psi^{-1}\big)(t,\cdot)||_{1,0}\\
&+\int_0^t||{^1}\phi\big(u,{^1}\psi(u,{^1}\psi^{-1}(t,\cdot))\big)-{^1}\phi\big(u,{^1}\psi(u,{^2}\psi^{-1}(t,\cdot))\big)||_{1,0}du\\
&+\int_0^t||{^1}\phi\big(u,{^1}\psi(u,{^2}\psi^{-1}(t,\cdot))\big)-{^1}\phi\big(u,{^2}\psi(u,{^2}\psi^{-1}(t,\cdot))\big)||_{1,0}du\\
&+ \int_0^t ||{^1}\phi\big(u,{^2}\psi(u,{^2}\psi^{-1}(t,\cdot))\big)-{^2}\phi\big(u,{^2}\psi(u,{^2}\psi^{-1}(t,\cdot))\big)||_{1,0}du\\
&\leq C(|||\vec{r}_0|||_{2,0})O(\delta^2)||{^1}\tau-{^2}\tau||_{\infty}+t\Lambda C(\Lambda,\delta)O(\delta^2)||{^1}\tau-{^2}\tau||_\infty+t||{^1}\phi-{^2}\phi||_\infty\\
&\leq O(\delta)||({^1}\tau-{^2}\tau,{^1}\phi-{^2}\phi)||_\infty.
\end{align*}
We now note, if we apply our estimates thus far to (\ref{e19}) (noting the relationship $\Psi = \psi^{-1}$), and utilize a coordinate patchwork, we conclude with the estimate $\sup_t||d({^1}\psi^{-1})-d({^2}\psi^{-1})||_{1}\leq O(\delta)||({^1}\tau-{^2}\tau,{^1}\phi-{^2}\phi)||_\infty$. Subsequently, we can introduce this new estimate above, to conclude $||({^1}\varphi-{^2}\varphi)(t,\cdot)||_{1,\beta}\leq O(\delta) ||({^1}\tau-{^2}\tau,{^1}\phi-{^2}\phi)||_\infty$. Again, using (\ref{e19}) and a subsequent estimate as above we have: 
\begin{align*}
\sup_t||d({^1}\psi^{-1})-d({^2}\psi^{-1})||_{1,\beta}&\leq O(\delta)||({^1}\tau-{^2}\tau,{^1}\phi-{^2}\phi)||_\infty,\\
||{^1}\varphi-{^2}\varphi||_\infty&\leq O(\delta)||({^1}\tau-{^2}\tau,{^1}\phi-{^2}\phi)||_\infty.
\end{align*}
Consequently, Lemma \ref{l10} yields
$$\sup_t||{^1}\vec{r}_t-{^2}\vec{r}_t||_{2,\beta}\leq O(\delta)||({^1}\tau-{^2}\tau,{^1}\phi-{^2}\phi)||_\infty.$$
From (\ref{e22}), we leave to the reader the analogous argument as in Lemma \ref{l7} to observe from subsequent Schauder estimates:
$$||{^1}\tau-{^2}\tau||_\infty\leq C \sup_t||{^1}\vec{r}_t-{^2}\vec{r}_t||_{2,\beta}\leq O(\delta)||({^1}\tau-{^2}\tau,{^1}\phi-{^2}\phi)||_\infty. $$ 
From (\ref{e12}), we now observe the equation:
\begin{align*}
\nabla\cdot\nabla\cdot\Big(({^1}\phi_t-{^2}\phi_t)\big((\tr_{\gamma_t}h({^1}\vec{r}_t))\gamma_t-h({^1}\vec{r}_t)\big)\Big)&=\nabla\cdot\nabla\cdot\Big(({^2}\phi_t)\big(\tr_{\gamma_t}\big(h({^2}\vec{r}_t)-h({^1}\vec{r}_t)\big)\gamma_t+ h({^1}\vec{r}_t)-h({^2}\vec{r}_t)\big)\Big)\\
&\qquad+2\nabla\cdot(\mathcal{K}_t({^1}\tau_t-{^2}\tau_t)).
\end{align*}
The same analysis that proceeds (\ref{e12}) can be applied to this equation, specifically, that no more than two derivatives survive on either ${^1}\vec{r}_t,{^2}\vec{r}_t$. We therefore conclude from Schauder estimates:
\begin{align*}
||{^1}\phi_t-{^2}\phi_t||_{2,\beta}&\leq C(\Lambda,\delta)\big(||{^1}\vec{r}_t-{^2}\vec{r}_t||_{2,\beta}+||{^1}\tau_t-{^2}\tau_t||_{1,\beta}+|| {^1}\phi_t-{^2}\phi_t||_{0,\beta}\big)\\
&\leq \tilde{C}(\Lambda,\delta)\big(||{^1}\vec{r}_t-{^2}\vec{r}_t||_{2,\beta}+||{^1}\tau_t-{^2}\tau_t||_{1,\beta}\big),
\end{align*}
giving,
$$||{^1}\phi-{^2}\phi||_\infty \leq O(\delta)||({^1}\tau-{^2}\tau,{^1}\phi-{^2}\phi)||_\infty.$$
Finally, we therefore observe:
$$||({^1}\tau-{^2}\tau,{^1}\phi-{^2}\phi)||_\infty \leq O(\delta) ||({^1}\tau-{^2}\tau,{^1}\phi-{^2}\phi)||_\infty,$$
so that sufficiently small $\delta$ induces that ${^1}\tau_t = {^2}\tau_t$, ${^1}\phi_t = {^2}\phi_t$, on $t\in[0,\delta]$. By the uniqueness result of Lemma \ref{l10}, we conclude ${^1}\vec{r}_t = {^2}\vec{r}_t$ for $t\in[0,\delta]$. If both ${^1}\vec{r}_t,{^2}\vec{r}_t$ exist for $t\in[0,\epsilon]$ whereby $\delta< \epsilon$, then a simple continuity argument using our analysis above ensures that ${^1}\vec{r}_t = {^2}\vec{r}_t$ throughout $t\in[0,\epsilon]$.
\end{proof}
\begin{corollary}\label{c5}
Suppose $(\mathcal{A},\sigma)$ is a convex Null Cone. For $\mathfrak{n}\geq 2$, consider also a continuous path of metrics $t\to\gamma_t\in C^{\mathfrak{n}+1,\beta}(\text{Sym}(T^\star\mathbb{S}^2\otimes T^\star\mathbb{S}^2))$, $t\in(-b,b)$, with the following properties:
\begin{enumerate}
\item There exists an isometric embedding $r_0:(\mathbb{S}^2,\gamma_0)\hookrightarrow(\mathcal{A},\sigma)$,
\item $t\to\gamma_t\in C^{\mathfrak{n},\beta}(\text{Sym}(T^\star\mathbb{S}^2\otimes T^\star\mathbb{S}^2))$ is $\mathfrak{m}$-times continuously Fr\'{e}chet differentiable within $C^{\mathfrak{n},\beta}(\text{Sym}(T^\star\mathbb{S}^2\otimes T^\star\mathbb{S}^2))$.
\end{enumerate}
Then, there exists $0<\epsilon<b$, and a unique family of embeddings $r_t:(\mathbb{S}^2,\gamma_t)\hookrightarrow (\mathcal{A},\sigma)$, $t\in(-\epsilon,\epsilon)$, with a associated path $t\to\vec{r}_t\in C^{\mathfrak{n}+1,\beta}(\mathbb{S}^2,\mathbb{R}^3)$ that is $\mathfrak{m}$-times continuously Fr\'{e}chet differentiable within $C^{\mathfrak{n},\beta}(\mathbb{S}^2,\mathbb{R}^3)$. Moreover, $\epsilon$ is independent of $\mathfrak{n},\mathfrak{m}$, and the first Fr\'{e}chet derivative satisfies: 
\begin{align*}
\dot{\vec{r}}_t &= d\vec{r}_t(\tau_t^\#)+\phi_t\frac{\vec{r}_t}{|\vec{r}_t|},\\
L_{h(\vec{r}_t)}(\tau_t) = \dot{\gamma}_t-&\frac12\tr_{h(\vec{r}_t)}(\dot{\gamma}_t)h(\vec{r}_t),\,\,\,\phi_t=\frac{\tr_{\gamma_t}(\dot{\gamma}_t)-2\nabla^{\gamma_t}\cdot\tau_t}{\tr_{\gamma_t}h(\vec{r}_t)},
\end{align*}
whereby $\tau_t\perp\text{Ker}(L_{h(\vec{r}_t)})$ with respect to the $L^2$ inner product induced by $\gamma_t$.
\end{corollary}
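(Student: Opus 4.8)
The plan is to obtain Corollary \ref{c5} from Theorems \ref{t3} and \ref{t4} by running the one-sided construction in both time directions and gluing at $t=0$. First I would apply Theorems \ref{t3} and \ref{t4} to the restriction of the path to $[0,b)$, producing $\epsilon_+>0$ and a unique family of embeddings $r_t:(\mathbb{S}^2,\gamma_t)\hookrightarrow(\mathcal{A},\sigma)$, $t\in[0,\epsilon_+)$, with $t\to\vec{r}_t\in C^{\mathfrak{n}+1,\beta}(\mathbb{S}^2,\mathbb{R}^3)$ being $\mathfrak{m}$-times continuously Fr\'{e}chet differentiable in $C^{\mathfrak{n},\beta}(\mathbb{S}^2,\mathbb{R}^3)$ and carrying the stated first derivative. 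Next I would apply the same two theorems to the reflected path $s\to\tilde{\gamma}_s:=\gamma_{-s}$, $s\in[0,b)$: since $s\to -s$ is smooth, $\tilde\gamma$ satisfies all the hypotheses (in particular $\tilde\gamma_0=\gamma_0$ retains the embedding $r_0$), so we obtain $\epsilon_->0$ and a unique family $\tilde{r}_s$, $s\in[0,\epsilon_-)$. Setting $\vec{r}_t:=\tilde{\vec{r}}_{-t}$ for $t\in(-\epsilon_-,0]$ and $\epsilon:=\min(\epsilon_+,\epsilon_-)$ gives a family of isometric embeddings on all of $(-\epsilon,\epsilon)$, the two pieces agreeing at $t=0$ because both reduce to $r_0$. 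Independence of $\epsilon$ from $\mathfrak{n}$ and $\mathfrak{m}$ is inherited from Theorems \ref{t3} and \ref{t4}, where $\epsilon_\pm$ depend only on $c_0$ and on the path through $\sup_t\|\dot\gamma_t\|_{2,\beta}$.

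The point to verify is that the glued path is $\mathfrak{m}$-times continuously Fr\'{e}chet differentiable \emph{across} $t=0$. For $t>0$ the path solves the first-order system (\ref{e22})--(\ref{e23}); writing this as $\tfrac{d}{dt}\vec{r}_t=\Phi(\gamma_t,\dot\gamma_t,\vec{r}_t)$, where $\tau_t$ (hence $\phi_t$, hence $\Phi$) depends only on $\vec{r}_t$, $\gamma_t$ and $\dot\gamma_t$, one observes that $\Phi$ is \emph{linear} in the velocity argument $\dot\gamma_t$: $\tau_t$ is the solution of a linear elliptic equation in $\dot\gamma_t$ orthogonal to $\text{Ker}(L_{h(\vec{r}_t)})$, $\phi_t$ is linear in $(\dot\gamma_t,\tau_t)$, and $\tfrac{d}{dt}\vec{r}_t$ is linear in $(\tau_t,\phi_t)$. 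Consequently, applying $s\to -s$ to the left-hand construction (for which $\dot{\tilde\gamma}_s=-\dot\gamma_{-s}$) turns $\tilde{\vec{r}}_s$ into a path $\vec{r}_t=\tilde{\vec{r}}_{-t}$ with $\tfrac{d}{dt}\vec{r}_t=-\Phi(\gamma_t,-\dot\gamma_t,\vec{r}_t)=\Phi(\gamma_t,\dot\gamma_t,\vec{r}_t)$ --- \emph{the same} system. Since $\gamma$ is $C^1$ at $t=0$ the right-hand side is continuous there, so the glued path is $C^1$ on $(-\epsilon,\epsilon)$, with first derivative $\dot{\vec{r}}_t=d\vec{r}_t(\tau_t^\#)+\phi_t\tfrac{\vec{r}_t}{|\vec{r}_t|}$ and $(\tau_t,\phi_t)$ as in the statement. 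Differentiating the system $k$ times and using $\gamma\in C^{\mathfrak{m}}$ together with the elliptic argument of Step 4 of Theorem \ref{t3} (which already gives $\mathfrak{m}$-fold differentiability on each half) then shows the glued path is $\mathfrak{m}$-times continuously Fr\'{e}chet differentiable across the junction; the regularity $t\to\vec{r}_t\in C^{\mathfrak{n}+1,\beta}$ and the continuity of each $t\to\vec{r}_t^{(k)}$ in $C^{\mathfrak{n},\beta}$ are the one-sided facts from Theorem \ref{t3} applied on each half-interval.

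For uniqueness on $(-\epsilon,\epsilon)$: any competing family satisfying the conclusion restricts on $[0,\epsilon)$ to a solution of (\ref{e22})--(\ref{e23}) with $\tau_t\perp\text{Ker}(L_{h(\vec{r}_t)})$, hence agrees with $\vec{r}_t$ there by Theorem \ref{t4}, and its reflection agrees with $\tilde{\vec{r}}_s$ on $[0,\epsilon)$ for the same reason, giving agreement on $(-\epsilon,0]$. The only genuinely nontrivial point in the whole argument is the matching of the one-sided derivatives at $t=0$, which is exactly what the linearity of $\Phi$ in $\dot\gamma$ delivers; everything else is direct inheritance from the two one-sided theorems.
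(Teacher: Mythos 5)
Your proposal is correct and follows essentially the same route as the paper: the paper likewise obtains $[0,\epsilon)$ from Theorems \ref{t3} and \ref{t4} and then applies them to the reversed path $t\to\gamma_{-t}$, shrinking $\epsilon$ as needed and invoking continuity of the first $\mathfrak{m}$ Fr\'{e}chet derivatives of $(\tau_t,\phi_t)$ to glue at $t=0$. Your explicit observation that the system (\ref{e22})--(\ref{e23}) is linear in $\dot{\gamma}_t$, so the reflection's sign cancels and both halves satisfy the same equation at $t=0$, is just a more detailed justification of the matching the paper asserts.
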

\begin{proof}
On the interval $t\in[0,\epsilon)$ the result follows from Theorems \ref{t3} and \ref{t4}. To extend this to $t\in(-\epsilon,0]$ (following a shrinkage of $\epsilon$, if necessary), we simply observe Theorems \ref{t3}, \ref{t4} for the path $t\to \gamma_{-t}$, noting the continuity of upto $\mathfrak{m}$ Fr\'{e}chet derivatives of the pair $(\tau_t,\phi_t)$ in the final step of Theorem \ref{t3}.	
\end{proof}

\section{Null Cone: Closedness}
In order for us to extend our unique path of isometric embeddings from Theorem \ref{t3}, we will need the following control:
\begin{proposition}\label{p8}
We consider a path of isometric embeddings $r_t:(\mathbb{S}^2,\gamma_t)\hookrightarrow (\mathcal{A},\sigma)$, with associated path $t\to\vec{r}_t\in C^{2,\beta}(\mathbb{S}^2,\mathbb{R}^3)$ on an interval $[0,\epsilon)$, for some $\epsilon<b$, according to the the hypotheses of Theorem \ref{t3}. Provided:
$$\inf_{t\in[0,\epsilon)}\big(\inf_{\mathbb{S}^2}\varphi_t\big)\geq C_-,\,\,\,\sup_{t\in[0,\epsilon)}\big(\sup_{\mathbb{S}^2}\varphi_t\big)\leq C_+,\,\,\, 
\sup_{t\in[0,\epsilon)}||d\varphi_t||_0\leq C_0,$$
for some $C_0>0$, $S_-<C_-<C_+<S_+$, and $\varphi_t:=|\vec{r}_t|$, then the path $t\to r_t$ can be uniquely extended.
\end{proposition}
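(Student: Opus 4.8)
The plan is to turn the a priori bounds on $\varphi_t=|\vec r_t|$ into uniform control of $\vec r_t$ in a sufficiently strong H\"older space, extract a limiting isometric embedding as $t\to\epsilon$, and then restart the flow of Theorem \ref{t3} at $t=\epsilon$ to extend it past $\epsilon$; uniqueness from Theorem \ref{t4} then guarantees the extension agrees with the original path, so it is a genuine extension. First I would observe that the hypothesized bounds $C_-\le\varphi_t\le C_+$ (with $S_-<C_-<C_+<S_+$) say precisely that every $r_t(\mathbb S^2)$ is the graph over $\Sigma_{s_0}$ of a function $\omega_t$ lying in a fixed compact subinterval of $(S_-,S_+)$, so the convexity of the Null Cone is uniformly nondegenerate along the whole family: the quasilinear elliptic operator $E_{\omega_t}$ from Proposition \ref{p6} is uniformly elliptic with coefficients controlled by $C_\pm$, and $\tr\ubar\chi_{\omega_t}$, $\ubar\chi_{\omega_t}$, $g_{\omega_t}$ and their derivatives are bounded in terms of the (now $t$-independent, since the path $t\to\gamma_t$ is continuous on the compact interval $[0,\epsilon]$) data. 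Together with $\|d\varphi_t\|_0\le C_0$ this gives, via the identities $\Psi=\psi^{-1}$ and $\varphi=\omega\circ\Psi$ together with \eqref{e19} and \eqref{e21}, a bound $\|\vec r_t\|_{1,\beta}\le C$ uniform in $t$: concretely one bootstraps the $C^0$-bound on $\omega_t$ and $d\omega_t$ through the elliptic equation \eqref{e21} to get $\|\omega_t\|_{2,\beta}\le C$, hence $\|\Psi_t\|_{2,\beta},\|\varphi_t\|_{2,\beta}\le C$, hence $\|\vec r_t\|_{2,\beta}\le C$, all uniformly on $[0,\epsilon)$.

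Next, with $\sup_{t}\|\vec r_t\|_{2,\beta}\le C$ in hand I would feed this back into Proposition \ref{p6}: since $\gamma_t\in C^{\mathfrak n+1,\beta}$ with $\sup_t\|\gamma_t\|_{\mathfrak n+1,\beta}<\infty$, the proof of Proposition \ref{p6} upgrades this to $\sup_{t\in[0,\epsilon)}\|\vec r_t\|_{\mathfrak n+1,\beta}\le C$. Now pick any sequence $t_k\uparrow\epsilon$; by Arzel\`a--Ascoli the sequence $\{\vec r_{t_k}\}$ is precompact in $C^{\mathfrak n+1,\beta'}(\mathbb S^2,\mathbb R^3)$ for $\beta'<\beta$, and since $t\to\vec r_t$ is already continuous into $C^{\mathfrak n+1,\beta}$ on $[0,\epsilon)$ (Step 2 of Theorem \ref{t3}) the difference-quotient/Lipschitz estimates there force the whole family to converge: the limit $\vec r_\epsilon:=\lim_{t\to\epsilon}\vec r_t$ exists in $C^{\mathfrak n+1,\beta}$, satisfies $C_-\le|\vec r_\epsilon|\le C_+$ so it still traces the boundary of a star-shaped region strictly inside $\mathcal A$, and by continuity of the pull-back map $r\mapsto r^\star(\sigma)$ one has $r_\epsilon^\star(\sigma)=\gamma_\epsilon$, i.e. $r_\epsilon$ is an isometric embedding of $(\mathbb S^2,\gamma_\epsilon)$ into $(\mathcal A,\sigma)$.

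Having produced the endpoint embedding $r_\epsilon$, I would apply Theorem \ref{t3} (and Corollary \ref{c5}) again, this time with initial data $\gamma_\epsilon$ and initial embedding $r_\epsilon$, to the reparametrized path $t\mapsto\gamma_{t+\epsilon}$: this yields a family of isometric embeddings on some $[\epsilon,\epsilon+\epsilon')$, $\mathfrak m$-times Fr\'echet differentiable, with the same first-variation formulas, and crucially with $\epsilon'$ bounded below only in terms of the (uniform) data $C(c_0,\{\gamma_t\})$ — so the step size does not degenerate. Concatenating with the original path and invoking the uniqueness Theorem \ref{t4} on the overlap, the concatenation is $C^1$ (indeed $C^{\mathfrak m}$) across $t=\epsilon$ and coincides with the original path on $[0,\epsilon)$, giving the desired unique extension. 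The main obstacle is the first paragraph: one must check carefully that the bounds on $\varphi_t$ alone are enough to drive the elliptic bootstrap of Proposition \ref{p6} \emph{uniformly} in $t$ — in particular that the constant in the interior Schauder estimate for \eqref{e21} depends only on $C_\pm,C_0$ and the fixed background geometry, and that $\|\Psi_t^{-1}\|_{1,\beta}$ does not blow up, which follows from the uniform star-shapedness but needs the explicit structure \eqref{e19}. Once that uniform $C^{\mathfrak n+1,\beta}$ bound is secured, the compactness and restart arguments are routine.
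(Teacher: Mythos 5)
Your proposal is correct and follows essentially the same route as the paper: use the graph description and the metric equation plus (\ref{e19}) and the convexity-driven elliptic equation (\ref{e21}) to bootstrap the $C^0$ bounds on $\varphi_t$, $d\varphi_t$ into uniform $C^{2,\beta}$ control of $(\varphi_t,\Psi_t)$, extract a limiting isometric embedding of $\gamma_\epsilon$ by compactness, recover full regularity from Proposition \ref{p6}, and restart via the openness/uniqueness results to extend. The only cosmetic difference is that the paper is content with a subsequential limit in $C^{2,\alpha}$ (uniqueness from Corollary \ref{c5} then identifies the extension), so your stronger claims of uniform $C^{\mathfrak n+1,\beta}$ bounds and full-family convergence, while plausible, are not needed.
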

 \begin{proof}
We recall the local equation:
$$g_{kl}(\varphi_t,\Psi_t(x))(\Psi_t)^k_{,i}(\Psi_t)^l_{,j} = (\gamma_t)_{ij}.$$
Since we can find a constant $C_1>0$ such that:
$$\frac{1}{C_1}\mathring\gamma\leq \frac{1}{s^2}g_s\leq C_1\mathring\gamma$$
throughout $\{C_-\leq s\leq C_+\}\subset\Omega$, we immediately conclude via a coordinate patch-work, that $\sup_t||\varphi_t d\Psi_t||_0\leq C_2$. From the $C^0$ control of $\varphi_t$ it follows that $\sup_t||d\Psi_t||_0\leq C_3$. From (\ref{e19}) we observe $||d\Psi||_{1,\beta}\leq C_4$, and therefore $\omega = \varphi\circ\Psi^{-1}$ satisfies $\sup_t||\omega_t||_1\leq C_5$. From (\ref{e21}) we conclude, using interior Schauder estimates, that $\sup_t||\omega_t||_{2,\beta}\leq C_6$, therefore $\sup_t||\varphi_t||_{2,\beta}\leq C_7$. Since $C^{2,\beta}(\mathbb{S}^2)\subset\subset C^{2,\alpha}(\mathbb{S}^2)$ for any $\alpha<\beta$, we have a convergent sequence $\{\varphi_{t_n}\}\in C^{2,\alpha}(\mathbb{S}^2)$ for some $\{t_n\}\subset[0,\epsilon)$ such that $t_n\to\epsilon$, say $\varphi_{t_n}\to \varphi_\star\in C^{2,\alpha}(\mathbb{S}^2)$. Moving to a subsequence if necessary, we conclude from (\ref{e19}) that $\Psi_{t_n}\to \Psi_\star\in C^{2,\alpha}(\mathcal{D})$. By continuity, we conclude that $(\varphi_\star,\Psi_\star)$ represents a $C^{2,\alpha}$ isometric embedding $r_\star:(\mathbb{S}^2,\gamma_\epsilon)\hookrightarrow(\mathcal{A},\sigma)$. By Proposition \ref{p6}, $\vec{r}_\star\in C^{\mathfrak{n}+1,\beta}(\mathbb{S}^2,\mathbb{R}^3)$. By the uniqueness of Corollary \ref{c5} 	the result follows.
 \end{proof}

\subsection{Additional Setup}
We will be able to gain the control needed to apply Proposition \ref{p8} by imposing asymptotic constraints on $\Omega$. We will describe these in this section.\\\\
\indent From this point forth, we will assume a rescaling $\ubar L_a = a\ubar L$, $a>0$, exists such that:
 $$S_+(a)=\infty,\,\,\,\ubar L\log a = -\kappa\iff D_{\ubar L_a}\ubar L_a = 0.$$
For convenience, we will continue to refer to $\ubar L$ instead of $\ubar L_a$. Consequently, our foliation $\{\Sigma_s\}$ becomes a geodesic foliation.
 \begin{definition}\label{d4}
 We say $\Omega$ satisfies the null energy condition provided:
 $$G(\ubar L,\ubar L)\geq0.$$
\end{definition}
The null energy condition is a natural physical constraint (clearly independent of our choice of re-scaling) motivated by the fact that the Einstein tensor is proportional to the stress energy tensor in the theory of general relativity. If we couple the null energy equation with equation (\ref{i3}), also known as the famous \textit{optical Raychaudhuri equation}, we observe the following:
\begin{lemma}\label{l11}
If $\Omega$ satisfies the null energy condition, then $\Omega$ is a Null Cone provided $\tr\ubar\chi>0$ on a single cross-section $\Sigma\subset\Omega$.	
\end{lemma}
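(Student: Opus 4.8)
The plan is to use the optical Raychaudhuri equation (\ref{i3}), which in our geodesic gauge ($\kappa\equiv 0$) reads
\begin{equation*}
\ubar L\tr\ubar\chi_s = -\tfrac12(\tr\ubar\chi_s)^2 - |\hat{\ubar\chi}_s|^2 - G(\ubar L,\ubar L),
\end{equation*}
together with the null energy condition $G(\ubar L,\ubar L)\geq 0$, to run a forward-in-$s$ argument along each null geodesic generator of $\Omega$. First I would fix a generator $\beta_q^{\ubar L}$ through a point $q$ lying on the cross-section $\Sigma$ where $\tr\ubar\chi>0$ is assumed, say $\Sigma=\Sigma_{s_1}$, and set $f(s):=\tr\ubar\chi_s$ restricted to this generator. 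From the Raychaudhuri equation, the null energy condition, and $|\hat{\ubar\chi}_s|^2\geq 0$, we get the differential inequality $f'(s)\leq -\tfrac12 f(s)^2$ for $s\geq s_1$; this shows $f$ is non-increasing but, more to the point for us, it cannot drive $f$ negative in finite ``time'' while $f$ stays positive — indeed, if $f(s_1)>0$, then comparing with the solution $g(s)$ of $g'=-\tfrac12 g^2$, $g(s_1)=f(s_1)$, namely $g(s)=\tfrac{2f(s_1)}{2+f(s_1)(s-s_1)}$, which is positive for all $s\geq s_1$, we conclude $f(s)\geq g(s)>0$ for all $s$ in the forward range of the generator. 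For the backward direction $s\leq s_1$, I would instead observe that $f'(s)\leq -\tfrac12 f(s)^2\leq 0$ forces $f$ to be \emph{larger} than $f(s_1)>0$ for $s<s_1$, so again $f$ stays strictly positive.

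The only subtlety is that $s$ ranges over the (possibly bounded) interval $(s_-(q),s_+(q))$ determined by how far the generator stays in $\Omega$; but since $\tr\ubar\chi$ remains strictly positive on the entire existence interval of each generator by the above comparison, and since every point of $\Omega$ lies on exactly one such generator meeting $\Sigma$ once (by the construction of $\Omega$ as a congruence of null geodesics with $\pi:\Omega\to\Sigma_0$), we conclude $\tr\ubar\chi>0$ throughout $\Omega$, which is precisely Definition \ref{d3}(1). The key step — and the only place any real care is needed — is the ODE comparison: one must make sure the comparison solution $g$ does not blow down to zero before the endpoint of the generator's parameter interval, and in the forward direction this is automatic because $g(s)=\tfrac{2f(s_1)}{2+f(s_1)(s-s_1)}>0$ for all $s>s_1$ (it only decays algebraically, never reaching zero in finite $s$), while in the backward direction positivity is even easier since $f$ is then bounded below by its value at $s_1$.

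I do not expect a serious obstacle here; the statement is essentially the classical focusing/positivity observation in the contrapositive form. If one wished to be maximally careful, the single point to verify is that along each generator the data $s\mapsto\tr\ubar\chi_s,\,\hat{\ubar\chi}_s$ is smooth (which follows from smoothness of $\Omega$ and of $\ubar L$) so that the ODE manipulation is legitimate, and that the gauge choice $\kappa\equiv 0$ used here is exactly the geodesic normalization fixed in the surrounding text — both already in place in the excerpt. Hence the proof reduces to: write down the Raychaudhuri inequality, compare with the separable ODE $g'=-\tfrac12 g^2$, deduce $f>0$ on the whole generator in both directions, and invoke the generator decomposition of $\Omega$ to globalize.
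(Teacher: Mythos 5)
Your forward-in-$s$ comparison is run in the wrong direction, and this is a genuine gap, not a technicality. From $f'\leq-\tfrac12f^2$ and $g'=-\tfrac12g^2$ with $f(s_1)=g(s_1)$, the standard comparison theorem gives $f(s)\leq g(s)$ for $s\geq s_1$ — an \emph{upper} bound on $f$ — not the lower bound $f\geq g>0$ you assert: the shear and matter terms make the expansion decrease \emph{faster} than the model solution $g$, never slower. Concretely, along a generator satisfying $f'=-\tfrac12f^2-G(\ubar L,\ubar L)$ with $G(\ubar L,\ubar L)$ a large positive constant, $f$ starts positive on $\Sigma$, crosses zero in finite parameter, and then runs off to $-\infty$; every hypothesis you actually use (Raychaudhuri, NEC, smoothness along the generator) is satisfied, so your argument cannot be repaired as stated. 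This also explains why the statement cannot be proved on a ``possibly bounded'' interval $(s_-(q),s_+(q))$ as you allow: truncating such a focusing cone before the blow-up gives a smooth $\Omega$ satisfying the null energy condition with $\tr\ubar\chi>0$ on one cross-section but negative further up, so the standing assumption of this section, $S_+=\infty$ (geodesic generators defined for all $s$), is essential and must be invoked. (Your backward direction is fine: monotonicity $f'\leq0$ immediately gives $f\geq f(s_1)>0$ for $s\leq s_1$.)

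The paper's proof uses exactly this completeness assumption through a contradiction/blow-up argument, which is the idea missing from your proposal: if $\tr\ubar\chi(p)<0$ at some $p\in\Omega$, then as long as $\tr\ubar\chi_s<0$ one has $\frac{d}{ds}\frac{1}{\tr\ubar\chi_s}=\frac12+\frac{|\hat{\ubar\chi}_s|^2+G(\ubar L,\ubar L)}{(\tr\ubar\chi_s)^2}\geq\frac12$, so $\frac{1}{\tr\ubar\chi_s}$ reaches $0$ from below at some finite $s_\star$, i.e. $\tr\ubar\chi_s\to-\infty$, contradicting smoothness of $\Omega$ on the full range $s\in(S_-,\infty)$. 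Hence $\tr\ubar\chi\geq0$ throughout $\Omega$, and then the same inequality for $1/\tr\ubar\chi$ integrated forward from the cross-section where $\tr\ubar\chi>0$ (together with monotonicity backward) upgrades this to strict positivity. If you rewrite your proof around this ``negativity forces finite-parameter focusing, which is incompatible with $S_+=\infty$'' mechanism, rather than a direct lower-bound comparison, it closes.
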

\begin{proof}
Assume for a contradiction that $\tr\ubar\chi(p)<0$ for some $p\in \Omega$. From equation (\ref{i3}):
$$\frac{d}{ds}\frac{1}{\tr\ubar\chi_s} = \frac12+\frac{|\hat{\chi}_s|^2+G(\ubar L,\ubar L)}{\tr\ubar\chi_s^2}\geq \frac12\implies \frac{1}{\tr\ubar\chi_s}-\frac{1}{\tr\ubar\chi(p)}\geq \frac{1}{2}(s-s(p)),$$
as long as $\tr\ubar\chi_s<0$. From the fact that $S_+=\infty$, we observe the existence of some $s_\star$ such that:
$$\frac{1}{\tr\ubar\chi_s}\to 0,\,\,\,\text{as}\,\,\,s\to s_\star^-,$$
causing $\tr\ubar\chi_s\to -\infty$ as $s\to s_\star^-$. This contradicts the fact that $S_+ = \infty$ since $\Omega$ ceases to be smooth at $s = s_\star$. So we conclude that $\tr\ubar\chi\geq 0$ throughout $\Omega$. From our equation above, we have for any $p\in \Sigma$:
$$\frac{1}{\tr\ubar\chi_s}\geq \frac{1}{2}(s-s_0)+\frac{1}{\tr\ubar\chi(p)}>0,$$
for all $s\geq s_0$, so that $\tr\ubar\chi_s>0$. Moreover, 
$$\infty>\frac{1}{\tr\ubar\chi(p)}-\frac{1}{2}(s-s_0)\geq\frac{1}{\tr\ubar\chi_s} ,$$
for all $s\leq s_0$, so again $\tr\ubar\chi_s>0$.
\end{proof}
We observe for $\Omega$ satisfying the null energy condition, that:
$$\tr\ubar\chi_s\leq \frac{1}{\frac{1}{2}(s-s_0)+\frac{1}{\tr\ubar\chi_{s_0}}}< \frac{2}{s-s_0}$$
\indent Given a cross-section $\Sigma\subset\Omega$, and $v\in T_q(\Sigma)$ we may extend $v$ along the generator $\beta_q^{\ubar L}$ according to the equation:
\begin{align*}
\dot{V}(s) &= D_{V(s)}\ubar L\\
V(0)&=v.
\end{align*} 
As a result, we have $\frac{d}{ds}{\langle V(s),\ubar L\rangle}=\frac12V(s)\langle\ubar L,\ubar L\rangle+\langle V(s),D_{\ubar L}\ubar L\rangle=\kappa\langle V(s),\ubar L\rangle$, and since $v\in T_p\Omega\iff \langle \ubar L|_p,v\rangle = 0$, it follows that $\langle V(0),\ubar L_p\rangle=0$. By uniqueness, we conclude $\langle V(s),\ubar L\rangle=0$ for all $s$. As a result, any section $W\in\Gamma(T\Sigma)$ extends througout $\Omega$ satisfying $[\ubar L,W]=0$. Along each generator, $0=[\ubar L,W]s = \ubar L(Ws)=\frac{d}{ds}(Ws)$, so that $Ws|_\Sigma=0$ ensures $Ws = 0$ throughout $\Omega$. We conclude, $W|_{\Sigma_s}\in\Gamma(T\Sigma_s)$, and denote by $E(\Sigma)\subset\Gamma(T\Omega)$ the set of such extensions off of $\Sigma$ along $\ubar L$.  We also note that linear independence is preserved along generators, allowing us to extend a local basis $\{X_1,X_2\}\subset\Gamma(TU)$ ($\mathcal{U}\subset \Sigma$), throughout $\pi^{-1}(\mathcal{U})$. Given a local basis extension $\{X_i\}\subset E(\Sigma_{s_0})$ along $\{\Sigma_s\}$, we impose the following assumptions:
\begin{align}
s^{-2}(\gamma_s)_{ij} &= O(1)\tag{A1}\label{a1}\\
\sup_{\mathbb{S}^2}|\mathfrak{r}_s^2\mathcal{K}_s|,\frac{\text{diam}(\Sigma_s)}{\mathfrak{r}_s}&=O(1)\tag{A2}\label{a2}\\
a_s:=(s-s_0)^2\tr\ubar\chi_s-2(s-s_0)&=O(1)\tag{A3}\label{a3}\\
s^{1+\delta}||\ubar\alpha_s||_{1,\beta},s^{2+\delta}(\pounds_{\ubar L}\ubar\alpha_s)_{ij} &= O(1)\tag{A4}\label{a4}
\end{align}
whereby $4\pi\mathfrak{r}_s^2:=|\Sigma_s|$, $0<\delta<1$.\\
\indent It's a straight forward calculation from (\ref{i3}) to show that
$$\frac{da_s}{ds} = -\frac12\frac{a^2}{(s-s_0)^2}-(s-s_0)^2(|\hat{\ubar\chi}_s|^2+G(\ubar L,\ubar L)-\kappa\tr\ubar\chi_s).$$
So given the case of an affine foliation $\{\Sigma_s\}$ and assuming the null energy condition, from the fact that $\frac{da_s}{ds}\leq 0$, $a(s_0) = 0$, we deduce from (\ref{a3}) that $0\leq -a_s< \alpha^2$ for some $\alpha$. We conclude:
$$\frac{2}{s-s_0}-\frac{\alpha^2}{(s-s_0)^2}\leq \tr\ubar\chi_s <\frac{2}{s-s_0}.$$
This allows us to specify the decay of the inverse metric $\gamma^{-1}$. Since we have from (\ref{i1}) that 
$$\frac{d}{ds}\sqrt{\frac{\det(\gamma_s)_{ij}}{\det{\mathring\gamma_{ij}}}}=\tr\ubar\chi\sqrt{\frac{\det(\gamma_s)_{ij}}{\det\mathring\gamma_{ij}}}$$
it follows that $C_1s^2\leq \sqrt{\frac{\det(\gamma_s)_{ij}}{\det\mathring\gamma_{ij}}} = \sqrt{\frac{\det{(\gamma_{s_0}})_{ij}}{\det\mathring\gamma_{ij}}}e^{\int_{s_0}^s\tr\ubar\chi dt}\leq C_2s^2$, and therefore from (\ref{a1}) we have $\gamma^{ij}=O(s^{-2})$.

\begin{lemma}\label{l12} From (\ref{a4}) we conclude $\Omega$ is convex beyond an affine leaf $\Sigma_s$ for sufficiently large $s$. In the case that $\Omega$ also satisfies the bound, 
$$|\ubar\chi_s|^2G(\ubar L,\ubar L)\geq 2\tr\ubar\chi_s\hat{\ubar\alpha_s}\cdot\cdot\hat{\ubar\chi}_s,$$
we conclude $\frac12(\tr\ubar\chi_s)^2\geq |\hat{\chi}_s|^2$
globally, and $\Omega$ is convex provided some cross-section $\Sigma\subset\Omega$ is convex.
\end{lemma}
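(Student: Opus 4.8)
The plan is to run the optical Raychaudhuri equation \eqref{i3} along the geodesic foliation and feed in the decay assumption \eqref{a4} to control the traceless part $\hat{\ubar\chi}_s$. Recall from Lemma \ref{l} that the propagation equation \eqref{i2} for $\ubar\chi_s$ has a traceless projection of the form $\pounds_{\ubar L}\hat{\ubar\chi}_s = -\hat{\ubar\alpha}_s + (\text{quadratic in }\ubar\chi) + \kappa\hat{\ubar\chi}_s$; since we have arranged the geodesic gauge $\kappa\equiv 0$, this simplifies. First I would write the evolution equation for the scalar quantity $|\hat{\ubar\chi}_s|^2$ (or, more conveniently, for $(s-s_0)^4|\hat{\ubar\chi}_s|^2$, since we already know $\tr\ubar\chi_s \sim \frac{2}{s-s_0}$ and $\gamma^{ij}=O(s^{-2})$ from assumption \eqref{a1} and \eqref{a3}), using the fact that $\pounds_{\ubar L}\gamma^{ij} = -2\ubar\chi^{ij}$ to track how raising indices interacts with the flow. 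The point is that along the geodesic $\pounds_{\ubar L}\big((s-s_0)^2\hat{\ubar\chi}_s\big)$ is driven by $(s-s_0)^2\hat{\ubar\alpha}_s$ plus lower-order terms, and \eqref{a4} gives $\|\ubar\alpha_s\|_{1,\beta} = O(s^{-1-\delta})$, which is integrable against $(s-s_0)^2$ divided by the $s^4$ normalization — i.e. $\int^\infty (s-s_0)^{-2-\delta}\,ds<\infty$.

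Next I would integrate this to conclude $(s-s_0)^2|\hat{\ubar\chi}_s| \to 0$ as $s\to\infty$, in fact $(s-s_0)^2|\hat{\ubar\chi}_s| = O(s^{-\delta})$, so that $|\hat{\ubar\chi}_s|^2 = o\big((s-s_0)^{-4}\big)$. Combined with the sharp lower bound $\tr\ubar\chi_s \geq \frac{2}{s-s_0} - \frac{\alpha^2}{(s-s_0)^2}$ already derived in the excerpt, this yields $\frac12(\tr\ubar\chi_s)^2 \geq \frac{2}{(s-s_0)^2} + O(s^{-3}) > |\hat{\ubar\chi}_s|^2$ for $s$ large enough, which is precisely the convexity condition of Definition \ref{d3}(2) on $\Sigma_s$. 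Since $g|_\Omega$ and $\ubar\chi$ are determined on any cross-section by their restrictions (as explained after Definition \ref{d2}), and convexity is a pointwise open condition that is preserved under the graph identification of cross-sections, convexity of the affine leaf $\Sigma_s$ for large $s$ suffices to give the first claim — ``$\Omega$ is convex beyond an affine leaf for sufficiently large $s$.''

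For the second claim, I would examine the evolution of the scalar $(\tr\ubar\chi_s)^2 - 2|\hat{\ubar\chi}_s|^2$, or equivalently, since $\mathcal{P}(\ubar\chi_s,\ubar\chi_s) = \frac12(\tr\ubar\chi_s)^2 - |\hat{\ubar\chi}_s|^2$ in dimension two, the quantity $\mathcal{P}(\ubar\chi_s,\ubar\chi_s)$. Using \eqref{i2} and \eqref{i3}, differentiating $\frac12(\tr\ubar\chi_s)^2 - |\hat{\ubar\chi}_s|^2$ along $\ubar L$ produces terms $-\tr\ubar\chi_s\big(G(\ubar L,\ubar L) + |\ubar\chi_s|^2\big)$ from the first piece and $+2\,\hat{\ubar\alpha}_s\cdot\cdot\,\hat{\ubar\chi}_s + (\text{cubic})$ from the second; after collecting the cubic-in-$\ubar\chi$ terms (which organize into a nonnegative combination by Cauchy–Schwarz on the eigenvalues, as in the remark $\mathcal{P}(h,h) = 2\sum_{i<j}\kappa_i\kappa_j$), the sign of $\frac{d}{ds}\big(\frac12(\tr\ubar\chi_s)^2 - |\hat{\ubar\chi}_s|^2\big)$ is governed by whether $|\ubar\chi_s|^2 G(\ubar L,\ubar L) \geq 2\tr\ubar\chi_s\,\hat{\ubar\alpha}_s\cdot\cdot\,\hat{\ubar\chi}_s$ — exactly the hypothesized bound. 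This forces the convexity defect to be monotone in the favorable direction, so once it is nonnegative on one cross-section it stays nonnegative for all $s$; running the flow backward from a convex leaf (allowed since $\ubar L$-geodesics are complete on $(S_-,\infty)$ and the quantity is preserved in sign) propagates convexity everywhere, giving ``$\frac12(\tr\ubar\chi_s)^2 \geq |\hat{\ubar\chi}_s|^2$ globally.''

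The main obstacle I anticipate is the bookkeeping in the first step: correctly tracking the index-raising in $\pounds_{\ubar L}|\hat{\ubar\chi}_s|^2$ (the $\pounds_{\ubar L}\gamma^{ij}$ contributions generate extra $\tr\ubar\chi_s\,|\hat{\ubar\chi}_s|^2$-type terms that must be absorbed into the $s^{-4}$-weighted estimate with the right signs), and verifying that the Grönwall-type integration genuinely closes — i.e. that the feedback of $|\hat{\ubar\chi}_s|^2$ on its own evolution has a coefficient that decays fast enough (it does, being $O(\tr\ubar\chi_s) = O(s^{-1})$) so that the homogeneous part does not destroy the $O(s^{-\delta})$ decay coming from the $\ubar\alpha_s$ source. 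A secondary subtlety is justifying the global-in-$s$ backward propagation in the second claim: one must confirm the relevant quantities and the foliation remain smooth on all of $(S_-,\infty)$, which follows from Lemma \ref{l11} and the a priori bound $\tr\ubar\chi_s < \frac{2}{s-s_0}$ preventing focal points in the future while assumption \eqref{a3} and the null energy condition control the past.
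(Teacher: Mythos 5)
Your first claim is handled essentially as in the paper: you evolve $|\hat{\ubar\chi}_s|^2$ along the affine foliation, feed in (\ref{a4}) as the source, use $\tr\ubar\chi_s\sim\frac{2}{s-s_0}$ as damping, and compare with $\frac12(\tr\ubar\chi_s)^2$. One quantitative overreach: from $\frac{d}{ds}|\hat{\ubar\chi}_s|\leq -\frac{2}{s}|\hat{\ubar\chi}_s|+Cs^{-3-\delta}$ you only get boundedness of $s^2|\hat{\ubar\chi}_s|$ (the paper's $|\hat{\ubar\chi}_s|^2=O(s^{-4})$), not $(s-s_0)^2|\hat{\ubar\chi}_s|=O(s^{-\delta})\to0$; the constant of integration survives. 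This is harmless, since $O(s^{-4})$ is already $o\big((\tr\ubar\chi_s)^2\big)$.

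The second claim is where there is a genuine gap. With $\kappa=0$, your quantity $Q:=\frac12(\tr\ubar\chi_s)^2-|\hat{\ubar\chi}_s|^2$ satisfies, from (\ref{i2}),(\ref{i3}),
$$\frac{dQ}{ds}=-\tr\ubar\chi_s\,Q-\big(\tr\ubar\chi_s\,G(\ubar L,\ubar L)-2\hat{\ubar\alpha}_s\cdot\cdot\hat{\ubar\chi}_s\big),$$
so the cubic terms collect exactly into $-\tr\ubar\chi_s\,Q$ (not a separately nonnegative combination), and the hypothesized bound, via $|\ubar\chi_s|^2=(\tr\ubar\chi_s)^2-Q$, only gives
$$\frac{dQ}{ds}\leq-\Big(\tr\ubar\chi_s+\frac{G(\ubar L,\ubar L)}{\tr\ubar\chi_s}\Big)Q.$$
This is not ``monotone in the favorable direction'': where $Q\geq0$ it says $Q$ is non-increasing, and a differential inequality of the form $Q'\leq-cQ$ with $c\geq0$ does \emph{not} forward-propagate nonnegativity (it forward-propagates negativity, equivalently backward-propagates nonnegativity). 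So the step ``once it is nonnegative on one cross-section it stays nonnegative for all $s$'' fails, and, being linear in $Q$, this inequality cannot by itself produce the unconditional statement $\frac12(\tr\ubar\chi_s)^2\geq|\hat{\ubar\chi}_s|^2$ globally — which the lemma asserts without assuming any convex cross-section, whereas your argument makes it contingent on one. The paper's proof avoids this precisely by dividing by $\tr\ubar\chi_s$: the ratio $D_s:=\frac12\tr\ubar\chi_s-\frac{|\hat{\ubar\chi}_s|^2}{\tr\ubar\chi_s}=Q/\tr\ubar\chi_s$ satisfies, under the hypothesized bound, the Riccati inequality $\frac{dD_s}{ds}\leq-D_s^2$, so $D_s<0$ anywhere forces blow-up in finite affine parameter, contradicting $S_+=\infty$ exactly as in Lemma \ref{l11}; this yields $D_s\geq0$ globally, and strict convexity then propagates from a convex cross-section by the same two-sided argument. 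Your route could be repaired either by passing to $D_s$, or by combining the displayed linear inequality with your part one (since $Q>0$ on all leaves with $s$ large, $Q<0$ anywhere would propagate forward and give a contradiction), but as written the key monotonicity assertion is wrong and the global inequality is not obtained in the stated generality.
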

\begin{proof}
We show that $|\hat{\ubar\chi}_s|^2 = O(s^{-4})$ which implies the first part of the lemma. From (\ref{i1}-\ref{i3}), using a frame extension $\{X_i\}\subset E(\Sigma_{s_0})$:
\begin{align*}
\frac{d}{ds}|{\hat{\ubar\chi}_s}|^2 &= \frac{d}{ds}(\gamma_s^{ij}\gamma_s^{kl}(\hat{\ubar\chi}_s)_{ik}(\hat{\ubar\chi}_s)_{jl})\\
&=-4\ubar\chi_s^{ij}\gamma_s^{kl}(\hat{\ubar\chi}_s)_{ik}(\hat{\ubar\chi}_s)_{jl}+2\gamma_s^{ij}\gamma_s^{kl}\big(\frac{d}{ds}((\ubar\chi_s)_{ik}-\frac12\tr\ubar\chi_s(\gamma_s)_{ik})(\hat{\ubar\chi}_s)_{jl}\big)\\
&=-2\tr\ubar\chi_S|\hat{\ubar\chi}_s|^2+2\gamma_s^{ij}\gamma_s^{kl}\big(-(\ubar\alpha_s)_{ik}+(\ubar\chi_s)^2_{ik}-\tr\ubar\chi_s(\ubar\chi_s)_{ik})(\hat{\ubar\chi}_s)_{jl}\big)\\
&=-2\tr\ubar\chi_s|\hat{\ubar\chi}_s|^2-2\hat{\ubar\alpha}_s\cdot\cdot\hat{\ubar\chi}_s+2\ubar\chi_s^2\cdot\cdot\hat{\ubar\chi}_s-2\tr\ubar\chi_s|\hat{\ubar\chi}_s|^2\\
&=-2\hat{\ubar\alpha}_s\cdot\cdot\hat{\ubar\chi}_s-2\tr\ubar\chi_s|\hat{\ubar\chi}_s|^2\\
&\leq s^{5+\delta}|\ubar\alpha_s|^2+|\hat{\ubar\chi}_s|^2(\frac{1}{s^{5+\delta}}-2\tr\ubar\chi_s)\\
&\leq \frac{C_1}{s^{1+\delta}}+|\hat{\ubar\chi}_s|^2(-\frac{4}{s}+\frac{C_2}{s^2})
\end{align*}
 having used (\ref{a4}) in the final inequality. It follows that:
$$|\hat{\ubar\chi}_s|^2e^{\int_{s_0}^s\frac{4}{t}-\frac{C_2}{t^2}dt}\leq \int_{s_0}^s\frac{C_3}{t^{1+\delta}}dt$$	and therefore $|\hat{\ubar\chi}_s|^2 = O(s^{-4})$. For the second part of the lemma we leave the reader the simple verification from (\ref{i3}), that $D_s:=\frac12 \tr\ubar\chi_s - \frac{|\hat{\ubar\chi}_s|^2}{\tr\ubar\chi_s}$ satisfies the equation
$$\frac{dD}{ds} = -D_s^2-\Big(\frac12+\frac{|\hat{\ubar\chi}_s|^2}{\tr\ubar\chi_s^2}\Big)G(\ubar L,\ubar L)+2\frac{\hat{\ubar\alpha}_s\cdot\cdot\hat{\ubar\chi}_s}{\tr\ubar\chi_s}+\kappa D_s.$$
So for a contradiction, assuming $D_{s(p)}<0$ for some $p\in \Omega$, under an affine foliation we have:
$$\frac{d}{ds}\frac{1}{D_s}\geq 1\implies \frac{1}{D_s}-\frac{1}{D_{s(p)}}\geq s-s(p).$$
Analogous to Lemma \ref{l11}, we conclude from $S_+ = \infty$ that $D\to-\infty$ as $s\to s_\star^-$ for some $s_\star^-$ contradicting smoothness. From our equation above, convexity of $\Omega$ follows exactly the same argument as positivity of null expansion in Lemma \ref{l11}.
\end{proof}
A direct consequence of Lemma \ref{l12} and (\ref{a1}) is the fact that an affine foliation $\{\Sigma_s\}$ satisfies $(\hat{\ubar\chi}_s)_{ij}=O(1)$ under a basis extension. We now turn to (\ref{a4}) to improve our control of $(\gamma_s,\,a_s,\,\hat{\ubar\chi}_s)$:
\begin{lemma}\label{l13}
	Along an affine foliation $\{\Sigma_s\}$, we define $\bar{\gamma}_s = s^{-2}\gamma_s$. Then, the system $(\bar{\gamma}_s,a_s , \hat{\ubar\chi}_s)$, converges to a limit $(\gamma_\infty, \theta, \hat\Xi)$ of index $(1,\beta)$-H\"{o}lder space tensors on $\mathbb{S}^2$. 
\end{lemma}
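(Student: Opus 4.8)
The plan is to turn the statement into a question about an ordinary differential equation in the affine parameter $s$ for the components of $(\bar\gamma_s,a_s,\hat{\ubar\chi}_s)$ in a fixed frame, and then to integrate. Fix a local basis extension $\{X_i\}\subset E(\Sigma_{s_0})$, so that $[\ubar L,X_i]=0$ and $\frac{d}{ds}$ acts on components by ordinary differentiation; since $\kappa\equiv 0$ along an affine foliation, Lemma~\ref{l} gives $\frac{d}{ds}(\gamma_s)_{ij}=2(\ubar\chi_s)_{ij}$, and, writing $\ubar\chi_s=\tfrac12\tr\ubar\chi_s\,\gamma_s+\hat{\ubar\chi}_s$, $\tr\ubar\chi_s=\tfrac{2}{s-s_0}+\tfrac{a_s}{(s-s_0)^2}$, and using the two-dimensional identity $(\hat{\ubar\chi}_s^2)_{ij}=\tfrac12|\hat{\ubar\chi}_s|^2(\gamma_s)_{ij}$, one computes
\begin{align*}
\tfrac{d}{ds}(\bar\gamma_s)_{ij}&=\tfrac{a_s}{(s-s_0)^2}(\bar\gamma_s)_{ij}+\tfrac{2}{(s-s_0)^2}(\hat{\ubar\chi}_s)_{ij},\\
\tfrac{d}{ds}a_s&=-\tfrac{a_s^2}{2(s-s_0)^2}-(s-s_0)^2\big(|\hat{\ubar\chi}_s|^2+G(\ubar L,\ubar L)\big),\\
\tfrac{d}{ds}(\hat{\ubar\chi}_s)_{ij}&=-(\ubar\alpha_s)_{ij}+\big(|\hat{\ubar\chi}_s|^2+\tfrac12 G(\ubar L,\ubar L)\big)(\gamma_s)_{ij}.
\end{align*}
I would then record the decay of all the source terms using what the text already provides: from (\ref{a1}) and the area growth via (\ref{i1}) one has $\gamma^{ij}=O(s^{-2})$ and $\mathfrak{r}_s\sim s$ (so $\bar\gamma_s$ is uniformly equivalent to $\mathring\gamma$), from (\ref{i3}) and the null energy condition $0\le -a_s<\alpha^2$, and from Lemma~\ref{l12} $|\hat{\ubar\chi}_s|^2=O(s^{-4})$; together with (A4), $G(\ubar L,\ubar L)=\gamma^{ij}(\ubar\alpha_s)_{ij}=O(s^{-3-\delta})$. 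Hence each right-hand side above is $O((s-s_0)^{-2})$ or $O(s^{-1-\delta})$, both integrable on $[s_0,\infty)$, which already yields $C^0$-convergence of $(\bar\gamma_s,a_s,\hat{\ubar\chi}_s)$.

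The heart of the argument is the uniform-in-$s$ $C^{1,\beta}$ bound on $(\bar\gamma_s,a_s,\hat{\ubar\chi}_s)$. I would differentiate the displayed system with respect to the fixed coordinates on $\mathbb{S}^2$ (which commute with $\frac{d}{ds}$), obtaining an inhomogeneous system for the first derivatives whose coefficients are polynomial in $\bar\gamma_s,\bar\gamma_s^{-1},a_s,\hat{\ubar\chi}_s,\ubar\alpha_s$ — all bounded in $C^0$, and in $C^\beta$ for $\ubar\alpha_s$ by (A4) — and whose forcing, after tracking the powers of $(s-s_0)$ coming from $\gamma_s=(s-s_0)^2\bar\gamma_s$ and $\gamma_s^{-1}=(s-s_0)^{-2}\bar\gamma_s^{-1}$, decays like $(s-s_0)^{-1-\delta}$ by virtue of the $C^{1,\beta}$-decay of $\ubar\alpha_s$ in (A4). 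Using that $C^{1,\beta}(\mathbb{S}^2)$ is a Banach algebra (so products, and $\bar\gamma_s^{-1}$ given the uniform lower bound on $\det\bar\gamma_s$, are controlled), one gets a differential inequality of the shape $\frac{d}{ds}v_s\le \frac{C}{(s-s_0)^{1+\delta}}\,\mathcal{Q}(v_s)$ for $v_s:=\|\bar\gamma_s\|_{1,\beta}+\|a_s\|_{1,\beta}+\|\hat{\ubar\chi}_s\|_{1,\beta}$, with $C$ and the polynomial $\mathcal{Q}$ depending only on the (A1)--(A4) constants; since $\int^\infty(t-s_0)^{-1-\delta}\,dt<\infty$, a continuity/bootstrap argument on the sub-interval where $v_s\le 2v_{s_1}+1$, starting from $s_1$ large enough, closes the bound and shows $\sup_{s\ge s_1}v_s<\infty$. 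This is the step I expect to be the main obstacle: one must keep the $(s-s_0)$-bookkeeping honest so that no term fails to be integrable, handle the mutual coupling of $\nabla\bar\gamma_s$ and $\nabla\hat{\ubar\chi}_s$ by a single coupled Gr\"onwall estimate rather than equation by equation, and deal with the quasilinear feature that the coefficients involve the unknown $v_s$.

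With uniform $C^{1,\beta}$ bounds available, I would revisit the displayed $s$-derivatives in the $C^{1,\beta}$ norm: using $|\hat{\ubar\chi}_s|^2=(s-s_0)^{-4}\bar\gamma_s^{ik}\bar\gamma_s^{jl}(\hat{\ubar\chi}_s)_{ij}(\hat{\ubar\chi}_s)_{kl}$ and $G(\ubar L,\ubar L)=(s-s_0)^{-2}\bar\gamma_s^{ij}(\ubar\alpha_s)_{ij}$, together with the algebra property and (A4), one finds $\big\|\tfrac{d}{ds}\bar\gamma_s\big\|_{1,\beta}=O((s-s_0)^{-2})$ and $\big\|\tfrac{d}{ds}a_s\big\|_{1,\beta},\big\|\tfrac{d}{ds}\hat{\ubar\chi}_s\big\|_{1,\beta}=O(s^{-1-\delta})$. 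Hence $\int^\infty\big\|\tfrac{d}{ds}(\bar\gamma_s,a_s,\hat{\ubar\chi}_s)\big\|_{1,\beta}\,ds<\infty$, so $s\mapsto(\bar\gamma_s,a_s,\hat{\ubar\chi}_s)$ is Cauchy in the complete space $C^{1,\beta}$ and converges to a limit $(\gamma_\infty,\theta,\hat\Xi)$ therein; uniform equivalence of $\bar\gamma_s$ with $\mathring\gamma$ passes to $\gamma_\infty$ (making it a metric) and $\tr_{\gamma_s}\hat{\ubar\chi}_s=0$ passes to $\hat\Xi$. Finally, since $s^{-2}\gamma_s$ and $(s-s_0)^{-2}\gamma_s$ differ by $O(s^{-1})$ in $C^{1,\beta}$, the limit is independent of this normalization, as the statement requires.
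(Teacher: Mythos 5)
Your overall route is the same as the paper's: write the first–order ODE system in $s$ for the components of $(\bar\gamma_s,a_s,\hat{\ubar\chi}_s)$ along a fixed frame extension, get $C^0$ boundedness and integrable decay of the right-hand sides from (\ref{a1}), (\ref{a3}), (\ref{a4}) and Lemma \ref{l12}, and integrate in $s$. Your system is correct (your form of the $\hat{\ubar\chi}$ equation is the paper's after combining $-\ubar\alpha_s+\tfrac12 G(\ubar L,\ubar L)\gamma_s=-\hat{\ubar\alpha}_s$), and your concluding step --- integrability of $\|\tfrac{d}{ds}(\cdot)\|_{1,\beta}$, hence Cauchy and convergence in $C^{1,\beta}$ --- is fine once a uniform $C^{1,\beta}$ bound is in hand, and is in fact slightly sharper than what the paper records (uniform convergence of the values and first derivatives plus a uniform H\"{o}lder bound).

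The gap is precisely in the step you flag as the main obstacle. You lump everything into $v_s=\|\bar\gamma_s\|_{1,\beta}+\|a_s\|_{1,\beta}+\|\hat{\ubar\chi}_s\|_{1,\beta}$ and derive $\tfrac{d}{ds}v_s\le C(s-s_0)^{-1-\delta}\mathcal{Q}(v_s)$; because the Banach-algebra estimate forces you to use H\"{o}lder norms of the coefficients, which themselves involve $v_s$, the polynomial $\mathcal{Q}$ has degree at least two, and your proposed closure (``bootstrap on the interval where $v_s\le 2v_{s_1}+1$, for $s_1$ large enough'') is circular: closing it requires roughly $\mathcal{Q}(2v_{s_1}+1)\int_{s_1}^\infty (t-s_0)^{-1-\delta}\,dt < v_{s_1}+1$, i.e. $v_{s_1}^{\deg\mathcal{Q}-1}\lesssim (s_1-s_0)^{\delta}$, and you have no a priori control on how $v_{s_1}$ grows with $s_1$ --- that growth is exactly what is being proven. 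The repair, which is the paper's argument, is to use the linear structure level by level: the $C^0$ bounds on $(\bar\gamma_s,a_s,\hat{\ubar\chi}_s)$ come directly from (\ref{a1}), (\ref{a3}) and Lemma \ref{l12}, not from the ODE; differentiating the system (the paper applies $\pounds_Y$ for $Y$ in the frame extension) yields an equation that is \emph{linear} in the first derivatives, with coefficient matrix and forcing of size $O(s^{-1-\delta})$ using only those $C^0$ bounds and (\ref{a4}), so a linear Gr\"{o}nwall inequality from any fixed leaf gives a uniform $C^1$ bound with no smallness or choice of large $s_1$; and then the H\"{o}lder difference quotient of the first derivative satisfies another linear ODE inequality whose forcing is controlled by the just-established $C^1$ bound together with the H\"{o}lder part of (\ref{a4}). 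With that three-step cascade replacing your single superlinear inequality, the rest of your proposal goes through.
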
 
\begin{proof}
We have the system of equations:

$$\frac{d}{ds}\begin{pmatrix}
	\bar\gamma_s\\a_s\\\hat{\ubar\chi}_s
\end{pmatrix}	
=\begin{pmatrix}
\frac{a_s}{s^2}\bar\gamma_s+\frac{2}{s^2}\hat{\ubar\chi}_s\\
-\frac{1}{2}\frac{a_s^2}{s^2}-s^2(|\hat{\ubar\chi}_s|^2+G(\ubar L,\ubar L))\\-\hat{\ubar\alpha}_s+s^2|\hat{\ubar\chi}_s|^2\bar\gamma_s	\end{pmatrix}
	.$$
From (\ref{a1},\ref{a3}), and Lemma \ref{l12}, denoting $\vec{X}:=(\bar\gamma, a,\hat{\ubar\chi})$, we have $\vec{X} = O(1)$, and therefore we observe $|\frac{d}{ds}{\vec{X}}| \leq \frac{C_1}{s^{1+\delta}}$ from (\ref{a4}) and Lemma \ref{l12} (here $|\vec{X}|$ refers to a local co-ordinate Euclidean norm). Consequently:
$$|\vec{X}(s_n)-\vec{X}(s_m)|\leq C_1\int_{s_m}^{s_n}\frac{dt}{t^{1+\delta}}.$$
From this Cauchy sequence we conclude that $\vec{X}(s)$ uniformly converges to a limit, say 
$$\displaystyle{\lim_{s\to\infty}\vec{X}(s) = (\gamma_\infty,\theta, \hat{\Xi})}.$$ Since $(\pounds_{Y}(\bar{\gamma}_s)^{-1})^{ij} = -\bar{\gamma}_s^{ik}\pounds_{Y}(\bar{\gamma}_s)_{kl}\bar{\gamma}_s^{lj}$ for any $Y\in E(\Sigma_0)$, established bounds on $\vec{X}$ yield: 
$$\frac{d}{ds}\pounds_{Y}\vec{X} = \pounds_{Y}{\frac{d}{ds}{\vec{X}}} = A_1\pounds_{Y}\vec{X}+A_2$$
having used our system of equations to obtain the final equality so that the matrices $A_1(\bar\gamma_s,a_s,\hat{\ubar\chi}_s,\ubar\alpha_s)$, $A_2(\bar{\gamma}_s,\pounds_Y\ubar\alpha_s)$ satisfy $s^{1+\delta}A_i=O(1)$. Standard ODE differential inquality results enforce that $|\pounds_Y\vec{X}|\leq C_2$ for some constant $C_2$. Therefore, $|\frac{d}{ds}\pounds_Y\vec{X}|\leq\frac{C_3}{s^{1+\delta}}$, and we again observe uniform convergence for $\pounds_Y\vec{X}$ similarly as for $\vec{X}$. For the H\"{o}lder bound we observe:
\begin{align*}
	\frac{d}{ds}\frac{\pounds_{Y}\vec{X}(p)-\pounds_{Y}\vec{X}(q)}{d(p,q)^\beta} &= A_3(p) \frac{\pounds_{Y}\vec{X}(p)-\pounds_{Y}\vec{X}(q)}{d(p,q)^\beta}+\Big(\frac{A_1(p)-A_1(q)}{d(p,q)^\beta}\pounds_{Y}\vec{X}(q)+\frac{A_2(p)-A_2(q)}{d(p,q)^\beta}\Big)\\
	&= A_1(p) \frac{\pounds_{Y}\vec{X}(p)-\pounds_{Y}\vec{X}(q)}{d(p,q)^\beta}+B_2(p,q).
\end{align*}
Here the H\"{o}lder bound in (\ref{a4}) comes into play. Combined with our control of $\pounds_{Y}\vec{X}$ we have $s^{1+\delta}\sup_{p\neq q}|B_2(p,q)| = O(1)$. We therefore observe from standard ODE differential inequalities that:
$$\Big|\frac{\pounds_{Y}\vec{X}(p)-\pounds_{Y}\vec{X}(q)}{d(p,q)^\beta}(s)\Big|\leq C_4,$$
where $C_4$ is independent of $p,q$. So taking the limit $s\to\infty$ we have our result.
\end{proof}
We will also need the following result:
\begin{proposition}\label{p9}(\cite{christodoulou2014global}, Lemma 2.3.2)\\
Consider $\Sigma:=(\mathbb{S}^2,\gamma)$, $\gamma\in C^{3,\beta}(\text{Sym}(T^\star\mathbb{S}^2\otimes T^\star\mathbb{S}^2))$. There exists a conformal transformation to a metric $e^{-2v}\gamma$, $v\in C^{3,\beta}(\mathbb{S}^2)$, such that $\mathcal{K}_{e^{-2v}\gamma} = 1$. Moreover, the conformal factor $e^v$ can be chosen such that the quantities:
$$C_m:=\inf_{\mathbb{S}^2}\mathfrak{r}e^{-v},\,\,\sup_{\mathbb{S}^2}\mathfrak{r}e^{-v},\,\,\sup_{\mathbb{S}^2}(e^{2v}|\nabla e^{-v}|),\,\,\Big(\int_{\mathbb{S}^2}e^{(2-3p)v}|\nabla^2e^{-v}|^pdA\Big)^{\frac{1}{p}}$$
depend only on:
$$k_M:=\sup_{\mathbb{S}^2}\mathfrak{r}^2|\mathcal{K}|,\,\,\frac{\text{diam}(\Sigma)}{\mathfrak{r}},$$
whereby $4\pi\mathfrak{r}^2:=|\Sigma|$.	Moreover, if $\mathcal{K}>0$, denoting $k_m:=\inf_{\mathbb{S}^2}(\mathfrak{r}^2\mathcal{K})$, then these quantities depend only on $\{k^{-1}_m,k_M\}$.
\end{proposition}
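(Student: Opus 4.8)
The plan is to (i) produce the conformal factor by solving the prescribed Gauss curvature equation, (ii) fix the residual conformal gauge by a balancing condition, and (iii) derive the scale-invariant estimates from the equation together with a Moser--Trudinger inequality controlled by $\{k_M,\mathrm{diam}(\Sigma)/\mathfrak{r}\}$.

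\textbf{Step 1: Existence and regularity of $v$.} For a conformal change $\tilde\gamma=e^{-2v}\gamma$ on a surface one has $\mathcal{K}_{\tilde\gamma}=e^{2v}\big(\mathcal{K}_\gamma+\Delta_\gamma v\big)$, so $\mathcal{K}_{e^{-2v}\gamma}\equiv 1$ is equivalent to the semilinear equation $\Delta_\gamma v = e^{-2v}-\mathcal{K}_\gamma$ on $\mathbb{S}^2$. Since $\gamma\in C^{3,\beta}$ we have $\mathcal{K}_\gamma\in C^{1,\beta}$. Existence of a solution is the Uniformization Theorem on $\mathbb{S}^2$ with positive constant target curvature; equivalently, one solves this Kazdan--Warner-type equation by the method of continuity or a variational argument, noting that the Kazdan--Warner obstruction present for general prescribed curvature is vacuous here because the target is constant. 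By Gauss--Bonnet any solution automatically obeys $\int_{\mathbb{S}^2}e^{-2v}\,dA_\gamma = 4\pi$, i.e. $e^{-2v}\gamma$ has area $4\pi$. Elliptic regularity applied iteratively to $\Delta_\gamma v = e^{-2v}-\mathcal{K}_\gamma$ bootstraps any weak solution to $v\in C^{3,\beta}(\mathbb{S}^2)$.

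\textbf{Step 2: Fixing the gauge.} The solution $v$ is unique only modulo the conformal group of $(\mathbb{S}^2,e^{-2v}\gamma)\cong(\mathbb{S}^2,\text{round})$, which is isomorphic to $SO(3,1)$. Choose an isometry (up to the fixed scale) $\Psi\colon(\mathbb{S}^2,e^{-2v}\gamma)\to(\mathbb{S}^2,\mathring{g})$ onto the round sphere of area $4\pi$, and then compose with a Möbius transformation so that the balancing (``center of mass'') condition $\int_{\mathbb{S}^2}\Psi^i\,dA_{e^{-2v}\gamma}=0$ holds, where $\Psi^1,\Psi^2,\Psi^3$ are the first spherical harmonics of $\mathring g$. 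This pins down the non-compact (boost) directions of $SO(3,1)$; the residual rotational freedom affects none of the scalar quantities $C_m$.

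\textbf{Step 3: A priori estimates and the $\mathcal{K}>0$ case.} With the gauge fixed, test $\Delta_\gamma v = e^{-2v}-\mathcal{K}_\gamma$ against $v$ and against $e^{-2v}$ to produce an energy bound on $\nabla v$; combine this with the Moser--Trudinger inequality on $(\mathbb{S}^2,\gamma)$ (whose constant is controlled by the isoperimetric constant of $\Sigma$, hence by $k_M=\sup_{\mathbb{S}^2}\mathfrak{r}^2|\mathcal{K}|$ and $\mathrm{diam}(\Sigma)/\mathfrak{r}$) and the balancing condition (which forbids concentration of $e^{-2v}\,dA_\gamma$ at a point) to obtain the two-sided bound $C_m^{-1}\le \mathfrak{r}e^{-v}\le C_m$. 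Feeding this back into the equation gives $\Delta_\gamma v\in L^\infty$ with bounds on the same data, and elliptic $L^p$/$C^{1,\beta}$ estimates then yield $\sup_{\mathbb{S}^2}(e^{2v}|\nabla e^{-v}|)$ and the integral bound on $e^{(2-3p)v}|\nabla^2 e^{-v}|^p$, all depending only on $k_M$ and $\mathrm{diam}(\Sigma)/\mathfrak{r}$. When $\mathcal{K}>0$, a lower bound $k_m$ on $\mathfrak{r}^2\mathcal{K}$ gives a Bonnet--Myers-type a priori bound on $\mathrm{diam}(\Sigma)/\mathfrak{r}$ in terms of $k_m^{-1}$, collapsing all constants to dependence on $\{k_m^{-1},k_M\}$.

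\textbf{Main obstacle.} The crux is the scale-invariant $C^0$ bound on the conformal factor with the stated dependence: one needs a Moser--Trudinger inequality whose constant is controlled purely by $k_M$ and $\mathrm{diam}(\Sigma)/\mathfrak{r}$, together with careful use of the balancing gauge to rule out the bubbling that the non-compactness of $SO(3,1)$ would otherwise permit. Once this is in hand, the gradient and $W^{2,p}$ bounds follow by routine elliptic bootstrapping; this is precisely Lemma 2.3.2 of \cite{christodoulou2014global}, to which we refer for the detailed estimates.
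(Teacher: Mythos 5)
Your proposal is consistent with the statement, but note that the paper does not actually prove this proposition: it is imported verbatim from Christodoulou--Klainerman (Lemma 2.3.2), and the only content the paper adds is the remark that CK produce a bounded function $v\in L^\infty(\mathbb{S}^2)$ solving the uniformization equation distributionally, after which $L^p$ theory and Schauder estimates upgrade $v$ to $C^{3,\beta}$ because $\mathcal{K}\in C^{1,\beta}$. Your Step 1 reproduces exactly that existence-plus-bootstrap skeleton (your sign $\Delta_\gamma v=e^{-2v}-\mathcal{K}_\gamma$ versus the paper's $\Delta v=\mathcal{K}-e^{-2v}$ is just a Laplacian sign convention), so up to that point you and the paper are doing the same thing. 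Where you diverge is that you sketch an actual mechanism for the quantitative part -- Möbius "center of mass" balancing to kill the noncompact conformal directions, a Moser--Trudinger inequality with constant controlled by the isoperimetric constant, which in turn is controlled by $k_M$ and $\mathrm{diam}(\Sigma)/\mathfrak{r}$, and Bonnet--Myers to absorb $\mathrm{diam}/\mathfrak{r}$ into $k_m^{-1}$ when $\mathcal{K}>0$. This is a legitimate and standard route and is more informative than the paper's bare citation.

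However, be clear that you have not closed the argument any more than the paper has: the decisive scale-invariant two-sided bound on $\mathfrak{r}e^{-v}$ is exactly what you label "the crux" and then refer back to CK for, and two intermediate links in your chain are themselves nontrivial unproven lemmas -- that the Moser--Trudinger/isoperimetric constant is controlled by $\{k_M,\mathrm{diam}(\Sigma)/\mathfrak{r}\}$ alone, and that the balancing condition rules out concentration so that testing the equation against $v$ and $e^{-2v}$ really yields the claimed $C^0$ control (as written, that pairing gives $\int|\nabla v|^2$ in terms of $\int v\,e^{-2v}$ and $\int v\,\mathcal{K}$, which is not yet a bound). So the correct reading of your submission is: same overall strategy as the paper (cite CK for the hard estimate, supply the $C^{3,\beta}$ regularity upgrade), plus a plausible but schematic reconstruction of how CK-type bounds would be obtained; if you wanted a self-contained proof you would need to carry out the concentration-compactness/balancing analysis and the isoperimetric control explicitly, and also fix the minor slip of using $C_m$ both for $\inf_{\mathbb{S}^2}\mathfrak{r}e^{-v}$ (as in the statement) and as a generic two-sided constant.
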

In-fact, the proof of this result follows from a careful construction of a bounded function $v\in L^\infty(\mathbb{S}^2)$. Then, observing that $v$ satisfies the elliptic equation:
$$\Delta v = \mathcal{K}-e^{-2v}$$
in the sense of distributions, the right-hand side of the above equation belongs to $L^p(\mathbb{S}^2)$ for every $p\in(0,\infty)$. Subsequently, elliptic regularity gives $v\in C^{3,\beta}(\mathbb{S}^2)$, since $\mathcal{K}\in C^{1,\beta}(\mathbb{S}^2)$. We have enough regularity to construct a differentiable exponential map associated to $\gamma\in C^{3,\beta}(\text{Sym}(T^\star\mathbb{S}^2\otimes T^\star\mathbb{S}^2))$ and the famous Killing-Hopf Theorem ensures that we can construct an isometry $\Phi_\gamma:\mathbb{S}^2\to\mathbb{S}^2$ such that $\Phi_\gamma^\star(\mathring\gamma) = e^{-2v}\gamma$, with $\mathring\gamma$ the standard round metric on $\mathbb{S}^2$, and say $\Phi_\gamma(p_N) = p_N$ whereby $p_N\in\mathbb{S}^2$ represents the north pole. Since the function $u:=v\circ\Phi_\gamma^{-1}$ satisfies:
$$\mathring\Delta u = 1-(\mathcal{K}\circ\Phi^{-1}_\gamma)e^{2u},$$
we can apply Calderon-Zygmund $L^p$ estimates on the standard 2-sphere to duduce:
\begin{equation}\label{e25}
\int_{\mathbb{S}^2}|\ring\nabla^2u|^p+|\ring\nabla u|^p+|u-\bar{u}|^p \ring{dA}\leq C(p)\int_{\mathbb{S}^2}|1-\mathcal{K}e^{2u}|^p\ring{dA}\leq C(k_M,C_m),
\end{equation}
  whereby $\bar{u}:=\frac{1}{4\pi}\int u \ring{dA}$. In other-words, the solution $u-\bar{u}$ belongs to the Sobolev space $W^{2,p}(\mathbb{S}^2)$ consisting of functions with finite $L^p(\mathbb{S}^2)$ norm for up to two derivatives in the distributional sense. From the Sobolev embedding theorem, for sufficiently large $p\in\mathbb{R}$, we observe $C^{1,\beta}(\mathbb{S}^2)\hookrightarrow W^{2,p}(\mathbb{S}^2)$, meaning:
  $$||u-\bar{u}||^p_{1,\beta}\leq C_1(p) \int_{\mathbb{S}^2}|\ring\nabla^2u|^p+|\ring\nabla u|^p+|u-\bar{u}|^p \ring{dA}\leq C_2(p)\int_{\mathbb{S}^2}|1-\mathcal{K}e^{2u}|^p\ring{dA}\leq \tilde{C}(k_M,C_m). $$
If we now consider our foliation $\{\Sigma_s\}$ with smooth path $s\to\gamma_s$, within $\Omega$, this foliation gives rise to a family of smooth functions $\{u_s\}$ combining Proposition \ref{p9} and elliptic regularity. From (\ref{a2}), we therefore conclude that:
$$||u_s-\bar{u}_s||_{1,\beta} = O(1).$$
We also observe from Jensen's inequality:
  $$\frac{e^{2\bar{u}_s}}{\mathfrak{r}^2_s}\leq \frac{1}{4\pi}\int_{\mathbb{S}^2}\frac{e^{2u_s}}{\mathfrak{r}_s^2}\ring{dA}\leq 1,$$
  whereby $4\pi\mathfrak{r}^2_s:=|\Sigma_s|$. In-fact, we observe that $c\mathfrak{r}_s^2\leq e^{2\bar{u}_s}\leq \mathfrak{r}_s^2$ for some $c>0$. We show this by contradiction. If not, then we observe a sequence $\{s_n\}$ such that $\frac{e^{2\bar{u}_{s_n}}}{\mathfrak{r}^2_{s_n}} \to 0$. By the famous Kondrachov embedding theorem we have $W^{2,p}(\mathbb{S}^2)\subset\subset W^{1,q}(\mathbb{S}^2)$ for any $q\in[1,\infty)$. From (\ref{a2}), and our bound in (\ref{e25}) we may re-index to a sub-sequence such that $u_{s_n}-\bar{u}_{s_n}\to u_\star\in W^{1,q}(\mathbb{S}^2)$. Since terms in the sequence $\{u_{s_n}-\bar{u}_{s_n}\}$ solve the equation:
  $$\mathring\Delta (u_{s_{n}}-\bar{u}_{s_n}) = 1-\frac{e^{2\bar{u}_{s_{n}}}}{\mathfrak{r}^2_{s_{n}}}(\mathfrak{r}^2_{s_{n}}\mathcal{K}_{s_{n}})e^{2(u_{s_{n}}-\bar{u}_{s_{n}})},$$
it follows that $u_\star$ solves the equation $\mathring\Delta u_\star = 1$ in the distributional sense. From elliptic regularity $u_\star$ is consequently a strong solution and smooth. This contradicts the divergence theorem since $0=\int_{\mathbb{S}^2}\mathring\Delta u_\star \ring{dA} = \int_{\mathbb{S}^2}\ring{dA} = 4\pi.$\\
\indent  From a calculation analogous to (\ref{e19}) (with $\ubar\chi\equiv 0$), we conclude in a local coordinate neighborhood:
\begin{align}
(\bar{\gamma}_s)_{ij}&=\Big(\frac{e^{2\bar{u}_s}}{s^2}\Big)(e^{2(u_s-\bar{u}_s)}\mathring\gamma)_{kl}(\Phi_s(x))(\Phi_s)^k_{,i}(\Phi_s)^l_{,j}\label{E1},\\
	(\Phi_s)^k_{,ij} &= {^{\bar{\gamma}_s}}\Gamma^n_{ij}(\Phi_s)^k_{,n}-({^{e^{2(u_s-\bar{u}_s)}\mathring\gamma}}\Gamma^k_{nm}\circ\Phi_s)(\Phi_s)^n_{,i}(\Phi_s)^m_{,j}\label{E2},
\end{align}
whereby $\Phi_s:=\Phi_{\gamma_s}$, and $\bar{\gamma}_s = \frac{1}{s^2}\gamma_s$. From Lemma \ref{l13} we observe that $\frac{1}{s^2}\mathfrak{r}_s^2\to \frac{1}{4\pi}|\Sigma_\infty|$ as $s\to\infty$, it follows therefore that (\ref{E1}) provides a bound $||d\Phi_s||_{0} = O(1)$. Consequently, (\ref{E2}) provides $||d\Phi_s||_{1,\beta} = O(1)$. In summary:
\begin{lemma}\label{l14}
  Given our foliation $\{\Sigma_s\}$ of $\Omega$, we observe a smooth pairing $(u_{s},\Phi_s)$ such that $\gamma_s = \Phi_s^\star(e^{2u_s}\mathring\gamma)$, whereby $\mathring\gamma$ is the standard round sphere on $\mathbb{S}^2$. Moreover,
  $$\frac{e^{\bar{u}_{s}}}{s},||u_s-\bar{u}_s||_{1,\beta},||d\Phi_s||_{1,\beta} = O(1),$$
whereby $\bar{u}_s = \frac{1}{4\pi}\int_{\mathbb{S}^2}u_s\ring{dA}$. Also, $c_1\leq \frac{e^{\bar{u}_{s}}}{s} \leq c_2$, for some $c_1,c_2>0$. 
  \end{lemma}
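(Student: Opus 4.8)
The plan is to read off the claim by organising the analysis already assembled in the paragraphs preceding this statement, tracking the uniformity of every constant in $s$.

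\emph{Step 1: constructing the pair $(u_s,\Phi_s)$.} For fixed $s$ the metric $\gamma_s$ is smooth, so Proposition \ref{p9} gives a smooth conformal factor $e^{v_s}$ with $\mathcal{K}_{e^{-2v_s}\gamma_s}=1$, and the Killing--Hopf Theorem then produces an isometry $\Phi_s:=\Phi_{\gamma_s}:\mathbb{S}^2\to\mathbb{S}^2$, normalised to fix the north pole, with $\Phi_s^\star(\mathring\gamma)=e^{-2v_s}\gamma_s$. Putting $u_s:=v_s\circ\Phi_s^{-1}$ gives $\gamma_s=\Phi_s^\star(e^{2u_s}\mathring\gamma)$, and $u_s$ is smooth by elliptic regularity applied to $\mathring\Delta u_s=1-(\mathcal{K}_s\circ\Phi_s^{-1})e^{2u_s}$.

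\emph{Step 2: uniform control of $u_s-\bar u_s$ and of $e^{2\bar u_s}$.} From the equation above, the Calder\'on--Zygmund/Sobolev estimate (\ref{e25}) bounds $\|u_s-\bar u_s\|_{1,\beta}$ by a constant depending only on $k_M=\sup_{\mathbb{S}^2}\mathfrak{r}_s^2|\mathcal{K}_s|$ and $\mathrm{diam}(\Sigma_s)/\mathfrak{r}_s$; assumption (\ref{a2}) makes both uniform, so $\|u_s-\bar u_s\|_{1,\beta}=O(1)$. Jensen's inequality gives $e^{2\bar u_s}\le\mathfrak{r}_s^2$. For the matching lower bound $e^{2\bar u_s}\ge c\,\mathfrak{r}_s^2$ I argue by contradiction: if $e^{2\bar u_{s_n}}/\mathfrak{r}_{s_n}^2\to0$ along some $s_n\to\infty$, the uniform $W^{2,p}$ bound together with the Kondrachov embedding lets me pass to a subsequence with $u_{s_n}-\bar u_{s_n}\to u_\star$ in $W^{1,q}$; sending $n\to\infty$ in $\mathring\Delta(u_{s_n}-\bar u_{s_n})=1-\tfrac{e^{2\bar u_{s_n}}}{\mathfrak{r}_{s_n}^2}(\mathfrak{r}_{s_n}^2\mathcal{K}_{s_n})e^{2(u_{s_n}-\bar u_{s_n})}$ yields $\mathring\Delta u_\star=1$ distributionally, contradicting $\int_{\mathbb{S}^2}\mathring\Delta u_\star\,\mathring{dA}=4\pi$.

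\emph{Step 3: pinching $e^{\bar u_s}/s$ and bounding $\Phi_s$.} By Lemma \ref{l13} the rescaled metrics $\bar\gamma_s=s^{-2}\gamma_s$ converge in the $(1,\beta)$-H\"older norm to a positive-definite $\gamma_\infty$ (positive definiteness, hence $|\Sigma_\infty|>0$, follows from the two-sided bound on $\det\bar\gamma_s$ recorded above); in particular $s^{-2}\mathfrak{r}_s^2\to\tfrac1{4\pi}|\Sigma_\infty|>0$, which with $c\,\mathfrak{r}_s^2\le e^{2\bar u_s}\le\mathfrak{r}_s^2$ gives $0<c_1\le e^{\bar u_s}/s\le c_2$, in particular $e^{\bar u_s}/s=O(1)$. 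Finally, (\ref{E1}) reads $\bar\gamma_s$ as the scalar $\frac{e^{2\bar u_s}}{s^2}e^{2(u_s-\bar u_s)}$ times the $\Phi_s$-pullback of $\mathring\gamma$; since $\bar\gamma_s$ is two-sided bounded, the scalar is pinched between positive constants, so $\|d\Phi_s\|_0=O(1)$; feeding this into (\ref{E2})—whose right side is built only from the Christoffel symbols of $\bar\gamma_s$ and of $e^{2(u_s-\bar u_s)}\mathring\gamma$, controlled in $C^{0,\beta}$ by the $(1,\beta)$ bounds of Step 2 and Lemma \ref{l13}—gives $\|d\Phi_s\|_{1,\beta}=O(1)$. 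The main obstacle is the lower bound $e^{2\bar u_s}\ge c\,\mathfrak{r}_s^2$ in Step 2: one must check that the compactness argument really delivers a distributional solution $u_\star$ of $\mathring\Delta u_\star=1$, i.e.\ that the nonlinear right-hand side passes to the limit, which is exactly where (\ref{a2}) and the H\"older bounds behind Lemma \ref{l13} enter; the rest is bookkeeping with estimates already in hand.
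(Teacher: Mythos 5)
Your proposal is correct and follows essentially the same route as the paper: Proposition \ref{p9} plus Killing--Hopf to build $(u_s,\Phi_s)$, the estimate (\ref{e25}) with (\ref{a2}) for $\|u_s-\bar u_s\|_{1,\beta}$, Jensen plus the Kondrachov compactness contradiction for the two-sided bound on $e^{2\bar u_s}/\mathfrak{r}_s^2$, Lemma \ref{l13} for $s^{-2}\mathfrak{r}_s^2\to\frac{1}{4\pi}|\Sigma_\infty|>0$, and (\ref{E1})--(\ref{E2}) for the $\|d\Phi_s\|_{1,\beta}$ bound. The limit-passage point you flag is handled exactly as you suggest, since (\ref{a2}) bounds $\mathfrak{r}_{s_n}^2\mathcal{K}_{s_n}$ and the uniform H\"older bound controls $e^{2(u_{s_n}-\bar u_{s_n})}$, so the right-hand side indeed tends to $1$ distributionally.
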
 
\subsection{Closedness}
In this section we wish to show that any metric, $\gamma\in C^{\mathfrak{n},\beta}(\text{Sym}(T^\star\mathbb{S}^2\otimes T^\star\mathbb{S}^2))$, $\mathfrak{n}\geq 3$, can be isometrically embedded in $\Omega$ up to a rescaling. From Proposition \ref{p9}, we follow a similar argument as in Lemma \ref{l14} to obtain $u_\gamma\in C^{\mathfrak{n},\beta}(\mathbb{S}^2)$, $\Phi_\gamma\in \mathcal{D}^{\mathfrak{n}+1,\beta}(\mathbb{S}^2)$ (as a result of (\ref{E2})), such that $\gamma = \Phi_\gamma^\star(e^{2u_\gamma}\mathring\gamma)$. We now take the path of metrics: 
$$t\to\gamma(s,t):=\Phi_s^\star(e^{2(t(u_\gamma+\bar{u}_s-u_s)+u_s)}\mathring\gamma)\in C^{\mathfrak{n},\beta}(\text{Sym}(T^\star\mathbb{S}^2\otimes T^\star\mathbb{S}^2)),\,\,\,t\in[0,1],$$
and we observe that this path takes us from $\gamma_s$, at $t=0$, to $(\Phi_\gamma^{-1}\circ\Phi_s)^\star(e^{2\bar{u}_s}\gamma)$, at $t=1$. So if we're successful in isometrically embedding this latter metric, we will have isometrically embedded the metric $e^{2\bar{u}_s}\gamma$ by pre-composing the resulting isometric embedding with the appropriate diffeomorphism. We also observe the Fr\'{e}chet derivative of our path $\gamma(s,t)$, is given by:
$$\frac{d}{dt}\gamma(s,t) = \Big(2(u_\gamma+\bar{u}_s-u_s)\circ\Phi_s\Big)\gamma(s,t).$$
Starting from any leaf $\Sigma_s\subset\Omega$, $s\geq s_0$, Theorem \ref{t3} admits an isometric embedding of the path $t\to\gamma(s,t)$ for some interval $t\in[0,\epsilon(s)]$, whereby $0<\epsilon(s)\leq 1$. Our goal is to show that sufficiently large $s$ accommodates an interval with $\epsilon(s)=1$.
\begin{proposition}\label{p10}
	For fixed $s$, consider the path $t\to \gamma(s,t)\in C^{\mathfrak{n},\beta}(\text{Sym}(T^\star\mathbb{S}^2\otimes T^\star\mathbb{S}^2))$ given by: 
	$$\gamma(s,t):=\Phi_s^\star(e^{2(t(u_\gamma+\bar{u}_s-u_s)+u_s)}\mathring\gamma),$$ 
with associated isometric embedding $\vec{r}(s,t)\in C^{\mathfrak{n},\beta}(\mathbb{S}^2,\mathbb{R}^3)$, $t\in[0,\epsilon(s)]$. Then, denoting $\varphi(s,t):=|\vec{r}(s,t)|$, $u(s,t):= [t(u_\gamma+\bar{u}_s-u_s)+u_s]\circ\Phi_s$ we observe, for a fixed constant $c_0>0$, all $t\in[0,\epsilon(s)]$ giving rise to the estimate:
	$$||\varphi(s,t)e^{-u(s,t)}-se^{-u_s\circ\Phi_s}||_{1,\beta}\leq c_0,$$
also admit a constant, $C_\star(c_0)$, such that: 
$$|| \varphi(s,t)e^{-u(s,t)}-se^{-u_s\circ\Phi_s}||_{1,\beta}\leq C_\star(c_0) e^{-\bar{u}_s}.$$
\end{proposition}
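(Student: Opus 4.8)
The plan is to set $w(s,t):=\varphi(s,t)e^{-u(s,t)}-se^{-u_s\circ\Phi_s}$, to note $w(s,0)=0$, and to establish, on any initial sub-interval $[0,T]\subseteq[0,\epsilon(s)]$ on which $\|w(s,\cdot)\|_{1,\beta}\le c_0$, the bound $\|\dot w(s,t)\|_{1,\beta}\le C(c_0)e^{-\bar u_s}$; since $t\le 1$ and $w(s,0)=0$, integrating in $t$ then yields the assertion with $C_\star(c_0)=C(c_0)$. That $w(s,0)=0$ is immediate: by the uniqueness in Corollary \ref{c5} the embedding $\vec{r}(s,0)$ of $\gamma(s,0)=\gamma_s$ is the leaf $\Sigma_s$, i.e. the sphere $\{|\vec{z}|=s\}$ in the $\mathcal{A}$-model, so $\varphi(s,0)\equiv s$, while $u(s,0)=u_s\circ\Phi_s$.

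For the evolution, writing $f_s:=(u_\gamma+\bar u_s-u_s)\circ\Phi_s$ gives $\dot\gamma(s,t)=2f_s\,\gamma(s,t)$ and $\dot u(s,t)=f_s$, and Lemma \ref{l10} (equation (\ref{e24})) gives $\dot\varphi=d\varphi(\tau_t^\#)+\phi_t$, so that
\begin{equation*}
\dot w \;=\; e^{-u}\big(d\varphi(\tau_t^\#)+\phi_t-\varphi f_s\big).
\end{equation*}
Since $e^{-u}=O(e^{-\bar u_s})$ (Lemma \ref{l14}) and $se^{-u_s\circ\Phi_s}$ is bounded above and below in $C^{1,\beta}$, everything reduces to showing the parenthesis has $C^{1,\beta}$-norm $\le C(c_0)$. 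First I would eliminate $\phi_t$: the $\gamma_t$-trace of the linearised embedding equation $2\,\text{Sym}(\nabla^{\gamma_t}\tau_t)+\phi_t h(\vec{r}_t)=2f_s\gamma_t$ gives $\phi_t=(4f_s-2\nabla^{\gamma_t}\cdot\tau_t)/\tr_{\gamma_t}h(\vec{r}_t)$, where $\tr_{\gamma_t}h(\vec{r}_t)=2\tr\ubar\chi(\vec{r}_t)$ is the null expansion of the embedded cross-section $\vec{r}_t$. Here the asymptotics enter: combining a first-variation identity for $\tr\ubar\chi$ of a graph over the geodesic foliation (the analogue of Lemma \ref{l5}) with assumption (\ref{a3}), the Raychaudhuri equation (\ref{i3}), and the decay $|\hat{\ubar\chi}_s|^2=O(s^{-4})$ from Lemma \ref{l12}, one gets $\tr\ubar\chi(\vec{r}_t)=\tfrac{2}{\varphi}\big(1+O(\varphi^{-1})\big)$, the $O$-term controlled in $C^{1,\beta}$ by $c_0$ and the decay data — here $\|w\|_{1,\beta}\le c_0$ first bootstraps, via the Gauss-curvature argument of Proposition \ref{p6} run in renormalised variables, to a uniform $C^{3,\beta}$-bound on $\varphi_t$. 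Substituting, $\phi_t-\varphi f_s=-\tfrac{\varphi}{2}\nabla^{\gamma_t}\cdot\tau_t+O(1)$, so the parenthesis equals $d\varphi(\tau_t^\#)-\tfrac{\varphi}{2}\nabla^{\gamma_t}\cdot\tau_t+O(1)$.

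It remains to control $d\varphi(\tau_t^\#)-\tfrac{\varphi}{2}\nabla^{\gamma_t}\cdot\tau_t$, and since $\varphi\sim s$ and $\gamma_t^{-1}\sim s^{-2}$ this needs the \emph{sharp} size $\|\tau_t\|\sim s$ (not the naive $\sim s^2$) for the solution of $L_{h(\vec{r}_t)}\tau_t=\dot\gamma_t-\tfrac12\tr_{h(\vec{r}_t)}(\dot\gamma_t)h(\vec{r}_t)$ with $\tau_t\perp\text{Ker}(L_{h(\vec{r}_t)})$. This is the main obstacle. I would obtain it by renormalising: rescale to $\bar\gamma_t=s^{-2}\gamma_t$, which converges (Lemmas \ref{l13}, \ref{l14}), and to the $O(1)$ form of the second fundamental form, so that $L_{h(\vec{r}_t)}$ becomes an elliptic operator with $s$-uniformly bounded, convergent coefficients whose renormalised right-hand side is $O(1)$; interior Schauder estimates perpendicular to the (uniformly $6$-dimensional, handled as in Lemma \ref{l7}) kernel then give $\|\tau_t\|_{2,\beta}\le C(c_0)\,s$, hence $\nabla^{\gamma_t}\cdot\tau_t$ and $d\varphi(\tau_t^\#)$ are $O(1)$ in $C^{1,\beta}$ with constant depending only on $c_0$ and the decay data. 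Combining, $\|d\varphi(\tau_t^\#)+\phi_t-\varphi f_s\|_{1,\beta}\le C(c_0)$ — one also checks the $w$-dependent contributions are of the form $O(e^{-\bar u_s}\|w\|_{1,\beta})$, the only potentially dangerous $O(s\|w\|)$ terms (in $\varphi f_s$ and in $\phi_t$) cancelling because $\phi_t\approx\varphi f_s$ at leading order — whence $\|\dot w\|_{1,\beta}\le C(c_0)e^{-\bar u_s}$, and integration over $t\in[0,1]$ finishes the proof. The heaviest ingredients are the $s$-uniform renormalised Schauder theory for $L_{h(\vec{r}_t)}$ and, secondarily, the first-variation estimate for $\tr\ubar\chi(\vec{r}_t)$.
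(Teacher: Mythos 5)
Your overall architecture is close to the paper's (renormalise by $e^{\bar u_s}$, get the sharp bound $\|\tau_t\|_{2,\beta}\lesssim e^{\bar u_s}$, exploit the leading-order cancellation $\phi_t\approx \varphi\,\dot u$), and your observation $w(s,0)=0$ is correct; but there are two concrete gaps. The first is structural: by discarding the transport term and trying to bound $\|\dot w\|_{1,\beta}$ directly, you are forced to estimate $\|e^{-u}\,d\varphi(\tau_t^\#)\|_{1,\beta}$, whose differentiated part contains $e^{-u}\gamma_t^{ij}\tau_i\varphi_{,jk}$, i.e.\ you need a uniform-in-$s$ bound of the form $\|\varphi(s,t)\|_{2,\beta}\lesssim s$. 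Nothing in the hypothesis (which controls only $\varphi e^{-u}$ in $C^{1,\beta}$) gives this, and your proposed bootstrap through the Gauss-curvature equation (\ref{e21}) ``in renormalised variables'' is not covered by the stated asymptotics: Lemma \ref{l13} gives only $C^{1,\beta}$ control of $\bar\gamma_s,a_s,\hat{\ubar\chi}_s$ and (\ref{a2}) only sup bounds on $\mathfrak{r}_s^2\mathcal{K}_s$, so neither $s^2\mathcal{K}_s$ nor $s^2\mathcal{K}_{\gamma(s,t)}$ comes with the uniform H\"older control the Schauder step in Proposition \ref{p6} would require. The paper's proof is built precisely to avoid this: it keeps the equation in the form $\partial_t f=\tau^\#(f)+\tilde\phi$ with
\begin{equation*}
\tilde\phi= e^{-u}(\phi-\dot u\,\varphi)+(\varphi e^{-u})\,\tau^\#(u-\bar u_s)-(se^{-u_s\circ\Phi_s})\,\tau^\#(u_s\circ\Phi_s-\bar u_s),
\end{equation*}
and integrates along characteristics via Lemma \ref{l9}, so only $\|\tilde\phi\|_{1,\beta}$ and the $C^{1,\beta}$ bounds on $d\Psi=d\psi^{-1}$ (obtained from the isometry relation and (\ref{e19}) under the $c_0$-hypothesis) ever enter; no second derivatives of $\varphi$ are needed.

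The second gap is in the central estimate itself: you correctly identify that $\|\tau_t\|\sim s$ (not $s^2$) is the crux, but rescaling $\gamma_t$ to $\bar\gamma_t=s^{-2}\gamma_t$ only makes $L_{h(\vec r_t)}$ uniformly elliptic; it does not make the data one order smaller. Since $\dot\gamma_t=2\dot u\,\gamma_t\sim s^2$, the gain must come from the algebraic structure of $q=\dot\gamma_t-\tfrac12\tr_{\tilde h}(\dot\gamma_t)\tilde h=2\dot u\big(\gamma-\tfrac12(\tr_{\tilde h}\gamma)\tilde h\big)$, namely the identity
\begin{equation*}
\gamma_{ij}-\tfrac12(\tr_{\ubar\chi}\gamma)\,\ubar\chi_{ij}
=-\tfrac12\,\frac{|\hat{\ubar\chi}|^2\gamma_{ij}+\tr\ubar\chi\,\hat{\ubar\chi}_{ij}}{\tfrac14(\tr\ubar\chi)^2-\tfrac12|\hat{\ubar\chi}|^2},
\end{equation*}
together with the $C^{1,\beta}$ control of $\varphi^4|\hat{\ubar\chi}|^2$ and $a$ composed with the embedding (second part of (\ref{a4}) plus Lemma \ref{l13}), which yields $\|q\|_{1,\beta}\le C(c_0)e^{\bar u_s}+C(c_0)$ and hence, after Schauder for $\tilde{\mathcal L}_{\tilde h}=e^{2\bar u_s}\mathcal L_h$, the bound $\|\tau_t\|_{2,\beta}\lesssim e^{\bar u_s}$. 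Your proposal cites the relevant decay ($|\hat{\ubar\chi}_s|^2=O(s^{-4})$) only in the computation of $\tr\ubar\chi(\vec r_t)$, i.e.\ for the trace part governing $\phi_t$, and then asserts the ``renormalised right-hand side is $O(1)$'' without exhibiting this traceless-projection mechanism; as written, the key step of the proof is assumed rather than proved. Repairing the argument means either supplying that identity and the $C^{1,\beta}$ bounds for its coefficients along the graph, or simply adopting the paper's route; and for the first gap, replacing direct integration of $\dot w$ by the characteristics formula.
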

\begin{proof}
We start similarly as in Proposition \ref{p8}, in a local coordinate neighborhood:
$$g_{kl}(\varphi(s,t)(x),\Psi(s,t)(x))\Psi(s,t)^k_{,i}\Psi(s,t)^l_{,j} = \gamma(s,t)_{ij}.$$
From Lemma \ref{l13} we observe that $\bar{\gamma}_s$ converges pointwise, moreover, $\frac{\det{\bar{\gamma}_s}_{ij}}{\det\mathring\gamma_{ij}}$ converges away from zero. It follows that we can find constants $C_1,C_2>0$ such that:
	$$\frac{1}{C_1}\ring\gamma\leq \frac{1}{s^2}\gamma_s\leq C_1\ring\gamma,\,\,\,\frac{1}{C_2}\ring\gamma\leq\frac{1}{s}\ubar\chi_s\leq C_2\ring\gamma.$$
	From our expression above, we therefore have:
	$$\big(\varphi e^{-u(s,t)}\big)^2\big(\frac{g}{\varphi^2}\big)_{kl} \Psi(s,t)^k_{,i}\Psi(s,t)^l_{,j} = \Phi_s^\star(\mathring\gamma)_{ij}.$$
Using Lemma \ref{l14} we also find a constant $C_3>0$ such that $\frac{1}{C_3}\ring\gamma\leq \Phi_s^\star(\mathring\gamma)\leq C_3\ring\gamma$, giving $||d\Psi(s,t)||_0\leq C_4(c_0)$. If we denote by $\tilde\gamma(s,t):=e^{-2\bar{u}_s}\gamma(s,t)$, then from (\ref{e19}):
\begin{align*}
	\Psi^k_{,ij}&= {^{\tilde\gamma}}\Gamma^n_{ij}\Psi^k_{,n}-{^{\bar{\gamma}_s}}\Gamma^k_{nm}\circ(\varphi(x),\Psi(x))\Psi^n_{,i}\Psi^m_{,j} +\big(\varphi e^{-u(s,t)}\big)\big(\frac{\ubar\chi_{nm}}{\varphi}\big)\big(\varphi_{,r} e^{-u(s,t)}\big)\Phi_s^\star(\mathring\gamma)^{lr}\Psi^k_{,l}\Psi^n_{,i}\Psi^m_{,j}\\
	&\qquad-\big(\frac{e^{u(s,t)}}{\varphi}\big)\big(\frac{\ubar\chi _{nm}}{\varphi}\big) (\varphi^2g ^{kn})\big(\varphi_{,i} e^{-u(s,t)}\big)\Psi^m_{,j}-\big(\frac{e^{u(s,t)}}{\varphi}\big)\big(\frac{\ubar\chi_{nm}}{\varphi}\big) (\varphi^2g ^{kn})\big(\varphi_{,j}e^{-u(s,t)}\big)\Psi^m_{,i}.
\end{align*}
Within a local coordinate neighborhood we see $\varphi_ie^{-u(s,t)} = (\varphi e^{-u(s,t)})_i+(\varphi e^{-u(s,t)})(u(s,t) - \bar{u}_s)_i$, so from Lemmata \ref{l13}, \ref{l14} and our hypothesis it follows that $||d\Psi(s,t)||_{1,\beta}\leq C_5(c_0)$. We also see that $\tilde{h}:=\frac{1}{\varphi} h = \frac{2}{\varphi}\Psi^\star(\ubar\chi)$ satisfies $||\tilde h||_{1,\beta}\leq C_6(c_0)$. We now observe $L_h(\tau) = L_{\tilde h}(\tau)$, which we transform into a second order elliptic operator using $\tilde\gamma(s,t)$:
$$\tilde{\mathcal{L}}_{\tilde h}(\tau) := \tilde\nabla\cdot L_{\tilde h}(\tau) - d\tr_{\tilde\gamma}\tilde L_{\tilde h}(\tau) = e^{2\bar{u}_s}\mathcal{L}_h(\tau).$$
Since our path of isometric embeddings is governed by the elliptic equation $\tilde{\mathcal{L}}_{\tilde h}(\tau) = \tilde\nabla\cdot\tilde q- d\tr_{\tilde\gamma}\tilde q$, whereby $\tilde{q}: = \dot{\gamma}-\frac12(\tr_{\tilde{h}}\dot\gamma)\tilde{h}$, we wish to estimate $||\tilde q||_{1,\beta}$ in order to use Schauder estimates to gain control of $||\tau||_{2,\beta}$. In order to do so, we start by observing that $\tilde q = 2\dot{u}(s)(\gamma - \frac12\tr_{\tilde h}\gamma\tilde h)$, whereby $\dot{u}(s):=(u_\gamma+\bar{u}_s-u_s)\circ\Phi_s$. For convenience, we temporarily denote $\Psi^\star(\ubar\chi)_{ij}$ simply by $\ubar\chi_{ij}$, and $\gamma(s,t)_{ij}$ by $\gamma_{ij}$, to see:
\begin{align*}
	\gamma_{ij} - \frac12(\tr_{\ubar\chi}\gamma)\ubar\chi_{ij}&=\gamma_{ij} - \frac12\Big(\frac{\frac12(\tr\ubar\chi)\gamma^{mn}-\hat{\ubar\chi}^{mn}}{\frac14(\tr\ubar\chi)^2-\frac12|\hat{\ubar\chi}|^2}{\gamma}_{mn}\Big)\ubar\chi_{ij}\\
	&=\gamma_{ij}-\frac{\frac12\tr\ubar\chi}{\frac14(\tr\ubar\chi)^2-\frac12|\hat{\ubar\chi}|^2}(\frac12\tr\ubar\chi\gamma_{ij}+\hat{\ubar\chi}_{ij})\\
	&=-\frac12\frac{|\hat{\ubar\chi}|^2\gamma_{ij}+\tr\ubar\chi\hat{\ubar\chi}_{ij}}{\frac14(\tr\ubar\chi)^2-\frac12|\hat{\ubar\chi}|^2}\\
	&=-\frac{2}{\tr\ubar\chi}\Big(1+ \frac{2e^{-2u(s,t)}(\varphi^4|\hat{\ubar\chi}|^2)}{(ae^{-u(s,t)}+2\varphi e^{-u(s,t)})^2-2 e^{-2u(s,t)}(\varphi^4|\hat{\ubar\chi}|^2)}\Big)\hat{\ubar\chi}_{ij}\\
	&\qquad\qquad\qquad\qquad-\frac{2(\varphi^4|\hat{\ubar\chi}|^2)}{(ae^{-u(s,t)}+2\varphi e^{-u(s,t)})^2-2 e^{-2u(s,t)}(\varphi^4|\hat{\ubar\chi}|^2)}\Phi_s^\star(\mathring\gamma)_{ij}.
	\end{align*}

We now use Lemma \ref{l13} and the second part of (\ref{a4}) to observe that both the functions $(s^4|\hat{\ubar\chi}_s|^2)\circ\big(\varphi(s,t)(x),\Psi(s,t)(x)\big)$, and $a_s\circ\big(\varphi(s,t)(x),\Psi(s,t)(x)\big)$ have $C^{1,\beta}(\mathbb{S}^2)$ bounds dependent only on $c_0$. Moreover, $\Psi^\star(\hat{\ubar\chi})\in C^{1,\beta}(\text{Sym}(T^\star\mathbb{S}^2\otimes T^\star\mathbb{S}^2))$ also exhibits a norm bound dependent only on $c_0$. We also have:
$$\frac{2}{\tr\ubar\chi}\circ(\varphi(x),\Psi(x)) = \frac{s}{1+\frac{a}{s}}\circ(\varphi(x),\Psi(x)) = e^{\bar{u}_s}\Big(\frac{e^{-u(s,t)}\varphi}{1+\frac{a e^{-u(s,t)}}{\varphi e^{-u(s,t)}}} e^{u(s,t)-\bar{u}_s}\Big),$$
and from Lemma \ref{l14} that $||\dot{u}||_{1,\beta}\leq C_7$. So combining all of these facts we conclude that $||\tilde{q}||_{1,\beta}\leq e^{\bar{u}_s}C_8(c_0)+C_9(c_0)$. Solving the equation $\tilde{\mathcal{L}}_{\tilde h}(\tau) = \tilde\nabla\cdot\tilde q- d(\tr_{\tilde\gamma}\tilde q)$ therefore yields via elliptic estimates that, $||\tau||_{2,\beta}\leq e^{\bar{u}_s}C_{10}(c_0)+C_{11}(c_0)$. Denoting $\tilde\tau:=e^{-\bar{u}_s}\tau$, we also observe:
$$\phi = \frac{\tr_{\gamma(s,t)}\dot{\gamma}(s,t) - 2\nabla\cdot\tau}{\tr_{\gamma(s,t)}h}=\frac{2\dot{u}}{\tr\ubar\chi}(1-e^{-2\bar{u}_s}\tilde\nabla\cdot\tau)=\dot{u}\varphi\frac{1-e^{-\bar{u}_s}\tilde\nabla\cdot\tilde\tau}{1+\frac{a e^{-u(s,t)}}{\varphi e^{-u(s,t)}}}.$$
Consequently, we observe from Lemma \ref{l10}:
\begin{align*}
\partial_t(e^{-u(s,t)}\varphi-se^{-u_s\circ\Phi_s}) &= \partial_t(e^{-u(s,t)}\varphi)\\
& =\big(-\dot{u}e^{-u(s,t)}\varphi+e^{-u(s,t)}(\tau^\#\varphi+\phi)\big)	\\
&=\tau^\#(e^{-u(s,t)}\varphi-se^{-u_s\circ\Phi_s})\\
&\qquad+e^{-u(s,t)}(\phi-\dot{u}\varphi\big)+(\varphi e^{-u(s,t)})\tau^\#(u(s,t)-\bar{u}_s)\\
&\qquad\qquad-(se^{-u_s\circ\Phi_s})\tau^\#(u_s\circ\Phi_s-\bar{u}_s).
\end{align*}
Denoting: 
\begin{align*}
	f&:= e^{-u(s,t)}\varphi-se^{-u_s\circ\Phi_s}\\
	\tilde\phi&:= e^{-u(s,t)}(\phi-\dot{u}\varphi\big)+(\varphi e^{-u(s,t)})\tau^\#(u(s,t)-\bar{u}_s)-(se^{-u_s\circ\Phi_s})\tau^\#(u_s\circ\Phi_s-\bar{u}_s),
\end{align*}
we observe the equation $\partial_tf = \tau^\#(f)+\tilde\phi$, for which Lemma \ref{l9} yields the unique solution:
$$f(t,x) = \int_0^t\tilde\phi(\lambda,\psi(\lambda,\psi^{-1}(t,x)))d\lambda,$$
whereby the flow $\psi$, and its inverse flow $\psi^{-1}$ are associated to to the vector-field $\tau^\#$, specifically, $\psi^{-1} = \Psi$. In a local co-ordinate neighborhood we also observe, $(\tau^\#)^i=e^{-\bar{u}_s}\tilde\gamma^{ij}\tilde\tau_j$ so that our estimate on $||\tau||_{2,\beta}$ yields $||\tilde\phi||_{1,\beta} = C_{12}(c_0)e^{-\bar{u}_s}$. Combined with our estimates on $d\Psi(s,t)$, we see:
$$||f(t,x)||_{1,\beta}\leq \int_0^t|| \tilde\phi(\lambda,\psi(\lambda,\psi^{-1}(t,x))) ||_{1,\beta} d\lambda\leq tC_{13}(c_0)e^{-\bar{u}_s}\leq C_{13}(c_0)e^{-\bar{u}_s}.$$
\end{proof}
\begin{corollary}\label{c6}
For $s$ sufficiently large, we can isometrically embed the path $t\to\gamma(s,t)$, for $t\in[0,1]$, to completion within $\Omega$.
\end{corollary}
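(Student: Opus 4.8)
\textbf{Proof proposal for Corollary \ref{c6}.} The plan is a continuity argument in $t$, with Proposition \ref{p10} playing the role of the (conditional) a priori estimate and Proposition \ref{p8} the role of the extension criterion. Fix the constant $c_0>0$ from Proposition \ref{p10}. For a given $s\ge s_0$, Theorem \ref{t3} (applied repeatedly) yields the path of isometric embeddings $t\mapsto\vec r(s,t)$ on a maximal half-open interval $[0,T^\star(s))$ with $T^\star(s)\le 1$; at $t=0$ the embedding is simply the inclusion of the leaf $\Sigma_s$, so $\varphi(s,0)=s$, $u(s,0)=u_s\circ\Phi_s$, and hence $\varphi(s,0)e^{-u(s,0)}-se^{-u_s\circ\Phi_s}=0$. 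Consider
$$I_s:=\Big\{\,t\in[0,T^\star(s))\ \big|\ \|\varphi(s,t)e^{-u(s,t)}-se^{-u_s\circ\Phi_s}\|_{1,\beta}\le c_0\,\Big\}.$$
This set contains $0$ and is closed in $[0,T^\star(s))$ because $t\mapsto\vec r(s,t)\in C^{\mathfrak n+1,\beta}(\mathbb S^2,\mathbb R^3)$ — hence $t\mapsto \varphi(s,t)e^{-u(s,t)}$ in $C^{1,\beta}$ — is continuous by Theorem \ref{t3}. By Proposition \ref{p10}, on $I_s$ one in fact has the stronger bound $\|\varphi(s,t)e^{-u(s,t)}-se^{-u_s\circ\Phi_s}\|_{1,\beta}\le C_\star(c_0)e^{-\bar u_s}$. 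Since Lemma \ref{l14} gives $e^{\bar u_s}\ge c_1 s$, choosing $s$ large enough that $C_\star(c_0)e^{-\bar u_s}<c_0$ makes the defining inequality of $I_s$ strict wherever it holds, so by continuity $I_s$ is also open in $[0,T^\star(s))$; connectedness then forces $I_s=[0,T^\star(s))$.

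Next I would convert this uniform control into the hypotheses of Proposition \ref{p8}. Write $se^{-u_s\circ\Phi_s}=(se^{-\bar u_s})\,e^{-(u_s\circ\Phi_s-\bar u_s)}$. Lemma \ref{l14} gives $c_1\le e^{\bar u_s}/s\le c_2$ and $\|u_s-\bar u_s\|_{1,\beta},\ \|d\Phi_s\|_{1,\beta}=O(1)$, so $se^{-u_s\circ\Phi_s}$ is bounded above and below, with controlled gradient, uniformly in $s$; adding the term of size $C_\star(c_0)e^{-\bar u_s}$ established above (small for $s$ large) shows the same for $\varphi(s,t)e^{-u(s,t)}$ on all of $[0,T^\star(s))$. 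Moreover $u(s,t)-\bar u_s=[\,t\,u_\gamma+(1-t)(u_s-\bar u_s)\,]\circ\Phi_s$, which is bounded in $C^{1,\beta}$ uniformly in $(s,t)\in[0,1]$ since $u_\gamma\in C^{\mathfrak n,\beta}(\mathbb S^2)$ is fixed. Multiplying back by $e^{u(s,t)}=e^{\bar u_s}e^{u(s,t)-\bar u_s}$ and using $e^{\bar u_s}\in[c_1 s,c_2 s]$ produces constants $0<c_-<c_+$, independent of $t$, with
$$c_-\,s\ \le\ \varphi(s,t)\ \le\ c_+\,s,\qquad \|d\varphi(s,t)\|_0\le C(s),\qquad t\in[0,T^\star(s)).$$

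Finally, enlarging $s$ once more so that $c_-\,s>S_-$ (recall $S_+=\infty$ after the rescaling of Section 4.1), these are exactly the hypotheses of Proposition \ref{p8} with $C_-=c_-s$, $C_+=c_+s$, $C_0=C(s)$. Therefore, were $T^\star(s)<1$, the path could be uniquely extended past $T^\star(s)$, contradicting maximality; hence $T^\star(s)=1$, and one last application of Proposition \ref{p8} (or of Theorem \ref{t3} at $t=1$) closes the path at $t=1$, as claimed. The main obstacle is the bootstrap in the first paragraph: Proposition \ref{p10} supplies its estimate only on the set where the $c_0$-threshold is already satisfied, so one must exploit the largeness of $s$ — concretely, $e^{\bar u_s}\ge c_1 s$ from Lemma \ref{l14} — to make that conditional estimate self-improving and thereby propagate it across the entire interval of existence. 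Once that is secured, the remaining steps are routine bookkeeping with the $C^{1,\beta}$ bounds furnished by Lemmata \ref{l13} and \ref{l14}.
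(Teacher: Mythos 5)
Your proposal is correct and follows essentially the same route as the paper: the conditional estimate of Proposition \ref{p10} becomes self-improving for large $s$ (since $e^{\bar u_s}\geq c_1 s$ by Lemma \ref{l14}), is propagated over the existence interval, and is then converted via Lemma \ref{l14} into the uniform upper/lower bounds on $\varphi(s,t)$ and the gradient bound needed to invoke Proposition \ref{p8} and extend past any $\epsilon(s)<1$. The only difference is that you make the open--closed continuity bootstrap explicit, which the paper leaves implicit.
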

\begin{proof}
For any $c_0>0$ as in Proposition \ref{p10}, we have:
$$||\varphi(s,t)e^{\bar{u}_s-u(s,t)} - se^{\bar{u}_s-u_s\circ\Phi_s}||_{1,\beta}=||\big(\varphi(s,t)-se^{t(u_\gamma+\bar{u}_s-u_s)\circ\Phi_s}\big)e^{\bar{u}_s-u_s\circ\Phi_s}||_{1,\beta}\leq C_\star(c_0).$$
It is evident from Lemma \ref{l14} that we obtain a bound $C_1(c_0)$ such that: 
$$\sup_t||d\varphi(s,t)||_0,\,\,\sup_t(\sup_{\mathbb{S}^2}\varphi(s,t))\leq sC_1(c_0).$$
Lemma \ref{l14} also provides a constant $C_2>0$ such that $\sup_{\mathbb{S}^2}|\bar{u}_s-u_s\circ\Phi_s|\leq C_2$. Therefore, for any $se^{-C_2+\inf(u_\gamma)}-C_\star e^{C_2}:=C_-> S_-$ we obtain $\inf_t(\inf_{\mathbb{S}^2}\varphi(s,t))\geq C_-$ satisfying the desired conditions to apply Proposition \ref{p8}. Since these bounds continue to hold independently of $t$, if $\epsilon(s)<1$ in Proposition \ref{p11}, we may extend the path $t\to\vec{r}(s,t)$ throughout the interval $t\in[0,1]$.
\end{proof}
Consequently:
\begin{theorem}\label{t5}
	Up-to a sufficiently large scale, any metric $\gamma\in C^{\mathfrak{n},\beta}(\text{Sym}(T^\star\mathbb{S}^2\otimes T^\star\mathbb{S}^2))$, $\mathfrak{n}\geq 3$, can be isometrically embedded within $\Omega$.
\end{theorem}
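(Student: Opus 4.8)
The plan is to deduce Theorem~\ref{t5} by running the continuity method of Corollary~\ref{c6} along a path of metrics that interpolates between the induced metric on a large leaf of the geodesic foliation and a fixed rescaling of $\gamma$. First I would apply the Uniformization result, Proposition~\ref{p9}: since $\gamma\in C^{\mathfrak{n},\beta}(\text{Sym}(T^\star\mathbb{S}^2\otimes T^\star\mathbb{S}^2))$ with $\mathfrak{n}\geq 3$, elliptic regularity as in Lemma~\ref{l14} produces $u_\gamma\in C^{\mathfrak{n},\beta}(\mathbb{S}^2)$ and a diffeomorphism $\Phi_\gamma\in\mathcal{D}^{\mathfrak{n}+1,\beta}(\mathbb{S}^2)$ with $\gamma=\Phi_\gamma^\star(e^{2u_\gamma}\mathring\gamma)$, while each leaf $\Sigma_s$ of the affine foliation carries $\gamma_s=\Phi_s^\star(e^{2u_s}\mathring\gamma)$ with the tautological isometric embedding $\Sigma_s\hookrightarrow\Omega$. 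I would then fix a large $s$ and consider the path
$$t\to\gamma(s,t):=\Phi_s^\star\big(e^{2(t(u_\gamma+\bar{u}_s-u_s)+u_s)}\mathring\gamma\big),\qquad t\in[0,1],$$
which starts at $\gamma_s$ at $t=0$ and ends at $(\Phi_\gamma^{-1}\circ\Phi_s)^\star(e^{2\bar{u}_s}\gamma)$ at $t=1$; it is smooth in $t$, lies in $C^{\mathfrak{n},\beta}$, and has Fr\'{e}chet derivative $\tfrac{d}{dt}\gamma(s,t)=2\big((u_\gamma+\bar{u}_s-u_s)\circ\Phi_s\big)\gamma(s,t)$.

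Next I would invoke the openness results of the previous sections. Theorem~\ref{t3} (equivalently Corollary~\ref{c5}), applied with regularity index $\mathfrak{n}-1\geq 2$, yields an isometric embedding of $t\to\gamma(s,t)$ on some maximal subinterval $[0,\epsilon(s)]$ with $\epsilon(s)\leq 1$, together with the associated path $t\to\vec{r}(s,t)\in C^{\mathfrak{n},\beta}(\mathbb{S}^2,\mathbb{R}^3)$. The crux is that $\epsilon(s)=1$ once $s$ is large, which is precisely Corollary~\ref{c6}: its proof uses the a priori estimate of Proposition~\ref{p10}, which keeps $\varphi(s,t)e^{-u(s,t)}$ inside a fixed $C^{1,\beta}$-ball of $s\,e^{-u_s\circ\Phi_s}$ (in fact within distance $O(e^{-\bar{u}_s})$ of it), and feeds this into the extension criterion of Proposition~\ref{p8}. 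Concretely one verifies, via Lemmata~\ref{l13} and~\ref{l14}, that $\varphi(s,t)$ is comparable to $s\,e^{u_s\circ\Phi_s-\bar{u}_s}$ with $\bar{u}_s-u_s\circ\Phi_s$ uniformly bounded, so that $\inf_{\mathbb{S}^2}\varphi(s,t)$, $\sup_{\mathbb{S}^2}\varphi(s,t)$ and $||d\varphi(s,t)||_0$ remain between fixed constants with $S_-<C_-<C_+<S_+$; since these bounds are $t$-independent, the path cannot stop short of $t=1$.

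Finally, evaluating at $t=1$ gives an isometric embedding $r(s,1):(\mathbb{S}^2,(\Phi_\gamma^{-1}\circ\Phi_s)^\star(e^{2\bar{u}_s}\gamma))\hookrightarrow\Omega$; precomposing with the diffeomorphism $\Phi_s^{-1}\circ\Phi_\gamma$ produces an isometric embedding of $(\mathbb{S}^2,e^{2\bar{u}_s}\gamma)$ into $\Omega$. Because Lemma~\ref{l14} gives $c_1\leq e^{\bar{u}_s}/s\leq c_2$ for constants $c_1,c_2>0$, the scale $e^{2\bar{u}_s}$ is comparable to $s^2$ and hence tends to infinity with $s$, so $e^{2\bar{u}_s}\gamma$ is a large rescaling of $\gamma$, which is the assertion. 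I expect the genuine difficulty to be concentrated entirely in the input estimate (Proposition~\ref{p10}, and thus Corollary~\ref{c6}) --- showing the radial function of the evolving embedding neither collapses toward $S_-$ nor escapes toward infinity as $t$ increases --- while everything in the present argument beyond that is bookkeeping with the uniformization normalization and with the scaling behavior of $\bar{u}_s$.
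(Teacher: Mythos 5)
Your proposal is correct and follows essentially the same route as the paper: uniformize $\gamma$ via Proposition \ref{p9}, run the path $t\to\Phi_s^\star(e^{2(t(u_\gamma+\bar{u}_s-u_s)+u_s)}\mathring\gamma)$ from $\gamma_s$ to $(\Phi_\gamma^{-1}\circ\Phi_s)^\star(e^{2\bar{u}_s}\gamma)$ using the openness theorem, and close the continuity argument with the a priori estimate of Proposition \ref{p10} fed into Proposition \ref{p8} (i.e.\ Corollary \ref{c6}), then precompose with $\Phi_s^{-1}\circ\Phi_\gamma$ and use $c_1\leq e^{\bar{u}_s}/s\leq c_2$ from Lemma \ref{l14} to identify the scale. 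This is exactly how Theorem \ref{t5} is obtained in the text.
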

\subsection{The Foliation}
From Theorem \ref{t5}, we conclude with the isometric embedding of the metric $e^{2\bar{u}_s}\gamma\in C^{\mathfrak{n},\beta}(T^\star\mathbb{S}^2\otimes T^\star\mathbb{S}^2))$, $\mathfrak{n}\geq 3$, for sufficiently large $s$, which we will denote by $\vec{r}_s\in C^{3,\beta}(\mathbb{S}^2,\mathbb{R}^3)$. Moreover, from Proposition \ref{p9} (after the necessary coordinate change) we observe:
$$\big|\big||\vec{r}_s|e^{-\bar{u}_s}-(se^{-\bar{u}_s})e^{(u_\gamma+\bar{u}_s-u_s)\circ\Phi_\gamma}\big|\big|_{1,\beta}\leq \tilde{C}_\star e^{-\bar{u}_s}\implies \big|\big||\vec{r}_s|e^{-\bar{u}_s}\big|\big|_{1,\beta}\leq \tilde{\tilde{C}}_\star.$$
In this section we wish to show, for sufficiently large $s$, we are able to isometrically embed the path of metrics $t\to  e^{t+2\bar{u}_s}\gamma$, $t\in [0,\infty)$, for the given $\gamma\in C^{\mathfrak{n},\beta}(\text{Sym}(T^\star\mathbb{S}^2\otimes T^\star\mathbb{S}^2))$. We also wish to show that this induces a foliation upon $\Omega$ in a neighborhood of infinity. 
\begin{proposition}\label{p11}
For sufficiently large $s$, consider the isometric embedding $\vec{r}_s$ associated to the metric $e^{2\bar{u}_s}\gamma\in C^{\mathfrak{n},\beta}(T^\star\mathbb{S}^2\otimes T^\star\mathbb{S}^2))$, $\mathfrak{n}\geq 3$. For the path $t\to\gamma(s,t):=e^{t+2\bar{u}_s}\gamma$, we take the associated path of isometric embeddings $\vec{r}(s,t)\in C^{\mathfrak{n},\beta}(\mathbb{S}^2,\mathbb{R}^3)$, whereby $\vec{r}(s,0) = \vec{r}_s$, $t\in [0,T(s)]$. Then, denoting $\varphi(s,t):=|\vec{r}(s,t)|$ we observe, for a fixed $c_0>0$, all $t\in [0,T(s)]$ giving rise to the estimate:
$$\big|\big|\varphi(s,t) e^{-\frac{t}{2}-\bar{u}_s} - |\vec{r}_s|e^{-\bar{u}_s}\big|\big|_{1,\beta}\leq c_0,$$
also admit a constant, $C_{\star\star}(c_0)$, such that:
$$||\varphi(s,t) e^{-\frac{t}{2}-\bar{u}_s}-|\vec{r}_s|e^{-\bar{u}_s}||_{1,\beta}\leq C_{\star\star}(c_0)e^{-\bar{u}_s}.$$
\end{proposition}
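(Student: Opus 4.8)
The plan is to mirror the proof of Proposition \ref{p10} closely, since the path $t\to\gamma(s,t):=e^{t+2\bar{u}_s}\gamma$ is a conformal rescaling of the single embedded metric $e^{2\bar{u}_s}\gamma$ exactly as the path in Proposition \ref{p10} was a conformal interpolation anchored at a leaf metric. First I would write the local isometric embedding relation $g_{kl}(\varphi(s,t),\Psi(s,t))\Psi(s,t)^k_{,i}\Psi(s,t)^l_{,j}=\gamma(s,t)_{ij}=e^{t+2\bar{u}_s}\gamma_{ij}$, divide through by $e^{t+2\bar{u}_s}$, and introduce the rescaled radial function $\varphi(s,t)e^{-t/2-\bar{u}_s}$. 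Using the uniform two-sided bounds $\tfrac{1}{C_1}\mathring\gamma\leq s^{-2}\gamma_s\leq C_1\mathring\gamma$ and $\tfrac{1}{C_2}\mathring\gamma\leq s^{-1}\ubar\chi_s\leq C_2\mathring\gamma$ from Lemma \ref{l13}, together with the $\mathfrak{r}_s e^{-u_s}$-type bounds of Lemma \ref{l14} and the standing hypothesis $\|\varphi(s,t)e^{-t/2-\bar{u}_s}-|\vec{r}_s|e^{-\bar{u}_s}\|_{1,\beta}\leq c_0$, I would extract $\|d\Psi(s,t)\|_0\leq C(c_0)$ from the quadratic form and then, via (\ref{e19}) (the second-order equation for $\Psi^k_{,ij}$), upgrade this to $\|d\Psi(s,t)\|_{1,\beta}\leq C(c_0)$. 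This also gives $\|\tilde h\|_{1,\beta}\leq C(c_0)$ for the rescaled second fundamental form $\tilde h:=\varphi^{-1}h$.

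Next I would analyze the linearized embedding equation governing the Fréchet derivative along the path. Since $\dot{\gamma}(s,t)=\gamma(s,t)$ here (because $\tfrac{d}{dt}e^{t+2\bar{u}_s}\gamma=e^{t+2\bar{u}_s}\gamma$), the source term $\tilde q:=\dot\gamma-\tfrac12(\tr_{\tilde h}\dot\gamma)\tilde h$ becomes $\gamma(s,t)-\tfrac12(\tr_{\tilde h}\gamma(s,t))\tilde h$, which — exactly as in the computation in Proposition \ref{p10} — rewrites in terms of $\tfrac{2}{\tr\ubar\chi}$, $\hat{\ubar\chi}$, $a_s$, and $\Phi_s^\star(\mathring\gamma)$. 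Invoking Lemma \ref{l13} and the second half of (\ref{a4}) to bound $(s^4|\hat{\ubar\chi}_s|^2)\circ(\varphi,\Psi)$ and $a_s\circ(\varphi,\Psi)$ in $C^{1,\beta}$, and recognizing $\tfrac{2}{\tr\ubar\chi}\circ(\varphi,\Psi)\sim e^{\bar{u}_s}\cdot(\text{bounded})$, I expect $\|\tilde q\|_{1,\beta}\leq e^{\bar{u}_s}C(c_0)+C(c_0)$ after transforming $\mathcal{L}_h$ into the conformally rescaled second-order elliptic operator $\tilde{\mathcal{L}}_{\tilde h}$ via $e^{2\bar{u}_s}$. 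Schauder estimates then yield $\|\tau\|_{2,\beta}\leq e^{\bar{u}_s}C(c_0)$, so the rescaled one-form $\tilde\tau:=e^{-\bar{u}_s}\tau$ is bounded independently of $s$, and correspondingly $\phi=\dot u\varphi\cdot(\text{bounded}) $ with the clean structure $\phi=\varphi\cdot\frac{1-e^{-\bar{u}_s}\tilde\nabla\cdot\tilde\tau}{1+ae^{-u(s,t)}/(\varphi e^{-u(s,t)})}$ — here with $\dot u\equiv\tfrac12$ since the conformal factor moves linearly.

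Then I would differentiate the target quantity: setting $f:=\varphi(s,t)e^{-t/2-\bar{u}_s}-|\vec{r}_s|e^{-\bar{u}_s}$, a direct computation using $\partial_t\varphi=\tau^\#\varphi+\phi$ (Lemma \ref{l10}) and $\partial_t(t/2)=1/2$ gives a transport equation of the form $\partial_t f=\tau^\#(f)+\tilde\phi$, where $\tilde\phi$ collects $e^{-t/2-\bar{u}_s}(\phi-\tfrac12\varphi)$ plus the curvature-of-flow correction terms $(\varphi e^{-t/2-\bar{u}_s})\tau^\#(\cdot)$. By the $\|\tau\|_{2,\beta}\leq e^{\bar{u}_s}C(c_0)$ bound, each piece of $\tilde\phi$ carries an explicit factor $e^{-\bar{u}_s}$, so $\|\tilde\phi\|_{1,\beta}\leq C(c_0)e^{-\bar{u}_s}$. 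Applying Lemma \ref{l9} to solve $f(t,x)=\int_0^t\tilde\phi(\lambda,\psi(\lambda,\psi^{-1}(t,x)))\,d\lambda$ with $\psi^{-1}=\Psi$, and using the $C^{1,\beta}$ control of $d\Psi(s,t)$, I get $\|f(t,\cdot)\|_{1,\beta}\leq tC(c_0)e^{-\bar{u}_s}\leq C_{\star\star}(c_0)e^{-\bar{u}_s}$, which is the claimed estimate.

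The main obstacle I anticipate is the same one as in Proposition \ref{p10}: ensuring that the $\tilde q$-estimate genuinely isolates the factor $e^{\bar{u}_s}$ cleanly, i.e. that after transforming to $\tilde{\mathcal{L}}_{\tilde h}$ with $\tilde\gamma(s,t):=e^{-2\bar{u}_s}\gamma(s,t)$ all the geometric coefficients ($a_s\circ(\varphi,\Psi)$, $s^4|\hat{\ubar\chi}_s|^2\circ(\varphi,\Psi)$, $\Psi^\star(\hat{\ubar\chi})$) have $c_0$-dependent but $s$-uniform $C^{1,\beta}$ bounds — this is precisely where assumption (\ref{a4}) and Lemma \ref{l13} are essential, and where one must be careful that the composition with $(\varphi(s,t),\Psi(s,t))$ does not destroy the Hölder control. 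A secondary subtlety is that $T(s)$ could be $+\infty$, so all estimates must be uniform in $t$ on the whole interval $[0,T(s))$; fortunately the transport representation gives the harmless extra factor $t$ which is then absorbed because (as one checks in the closedness corollary) the hypothesis region stays bounded and the argument can be iterated, so effectively $t$ stays bounded on each extension step.
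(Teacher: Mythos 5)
Your overall architecture is the right one and matches the paper's: repeat the Proposition \ref{p10} machinery with the conformal factor $e^{t/2+\bar{u}_s}$ in place of $e^{u(s,t)}$, derive a transport equation $\partial_t f=\tau^\#(f)+\tilde\phi$ for $f=\varphi e^{-t/2-\bar{u}_s}-|\vec{r}_s|e^{-\bar{u}_s}$, and integrate it with Lemma \ref{l9}. However, there is a genuine quantitative gap in how you close the estimate, and it is exactly the point that distinguishes this proposition from Proposition \ref{p10}. You rescale by $e^{-\bar{u}_s}$ only, claiming $\|\tau\|_{2,\beta}\lesssim e^{\bar{u}_s}$, $\tilde\tau:=e^{-\bar{u}_s}\tau$, and $\|\tilde\phi\|_{1,\beta}\leq C(c_0)e^{-\bar{u}_s}$, which after integration gives $\|f(t,\cdot)\|_{1,\beta}\leq t\,C(c_0)e^{-\bar{u}_s}$; you then try to absorb the factor $t$. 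That absorption is not legitimate: here $t$ ranges over $[0,T(s)]$ with $T(s)$ eventually taken to be $+\infty$ (this is what feeds Theorem \ref{t6}), and the constant $C_{\star\star}(c_0)$ must be uniform in $t$. A bound growing linearly in $t$ also wrecks the continuation/bootstrap step: once $t\,C(c_0)e^{-\bar{u}_s}$ exceeds $c_0$ the hypothesis region is no longer self-improving, and ``iterating in bounded $t$-steps'' does not help, since the errors from successive steps add and you recover the same linear growth.

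The missing ingredient is the correct \emph{time-dependent} rescaling. At parameter $t$ the metric $\gamma(s,t)=e^{t+2\bar{u}_s}\gamma$ has scale $e^{t/2+\bar{u}_s}$, so the Schauder estimate reads $\|\tau\|_{2,\beta}\leq C_1(c_0)e^{t/2+\bar{u}_s}+C_2(c_0)$ and one sets $\tilde\tau:=e^{-t/2-\bar{u}_s}\tau$, whence $(\tau^\#)^i=e^{-t/2-\bar{u}_s}\gamma^{ij}\tilde\tau_j$. With this, the source $\tilde\phi=e^{-t/2-\bar{u}_s}\bigl(\phi-\tfrac12\varphi\bigr)+e^{-\bar{u}_s}\tau^\#(|\vec{r}_s|)$ satisfies $\|\tilde\phi\|_{1,\beta}\leq C_3(c_0)e^{-t/2-\bar{u}_s}$: the second term gains $e^{-t/2}$ from $\tau^\#$ acting on $|\vec{r}_s|\sim e^{\bar{u}_s}$, and in the first term $\phi-\tfrac12\varphi$ is of order one because $a/\varphi$ and $e^{-t-2\bar{u}_s}\nabla\cdot\tau$ are both $O(e^{-t/2-\bar{u}_s})$. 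Then $\|f(t,\cdot)\|_{1,\beta}\leq C_4(c_0)e^{-\bar{u}_s}\int_0^t e^{-\lambda/2}\,d\lambda\leq 2C_4(c_0)e^{-\bar{u}_s}$, uniformly in $t$, which is the statement. So your proof plan is repairable, but you must track the factor $e^{-t/2}$ through the $\tilde q$, $\tau$, and $\tilde\phi$ estimates rather than only the factor $e^{-\bar{u}_s}$; without it the proposition, and with it the foliation argument of Theorem \ref{t6}, does not follow.
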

\begin{proof}
Our argument starts identically as in Proposition \ref{p10} except with $u(s,t),\bar{u}_s$ both replaced by $\frac{t}{2}+\bar{u}_s$, and $\Phi_s^\star(\mathring\gamma),\tilde\gamma$ both replaced by the metric $\gamma$. Consequently, we also observe $\dot{u} = \frac12$, and an estimate $||\tau||_{2,\beta}\leq C_1(c_0)e^{\frac{t}{2}+\bar{u}_s}+C_2(c_0)$. Denoting:
\begin{align*}
f&:=e^{-\frac{t}{2}-\bar{u}_s}\varphi-|\vec{r}_s|e^{-\bar{u}_s}\\
\tilde\phi&:=	e^{-\frac{t}{2}-\bar{u}_s}(\phi-\frac{1}{2}\varphi\big)+e^{-\bar{u}_s}\tau^\#(|\vec{r}_s|),
\end{align*}
we observe the equation $\partial_tf = \tau^\#(f)+\tilde\phi$, whereby $(\tau^\#)^i = e^{-\frac{t}{2}-\bar{u}_s}\gamma^{ij}\tilde\tau_j$, $\tilde\tau:= e^{-\frac{t}{2}-\bar{u}_s} \tau$. It follows that $||\tilde\phi||_{1,\beta}\leq C_3(c_0)e^{-\frac{t}{2}-\bar{u}_s}$ and therefore, with $\psi^{-1} = \Psi$ we observe from Lemma \ref{l9}:
$$||f(t,x)||_{1,\beta}\leq \int_0^t|| \tilde\phi(\lambda,\psi(\lambda,\psi^{-1}(t,x))) ||_{1,\beta}d \lambda\leq C_4(c_0)e^{-\bar{u}_s}\int_0^te^{-\frac{\lambda}{2}}d\lambda = C_5(c_0)e^{-\bar{u}_s}.$$
\end{proof}
We will be able to use Proposition \ref{p11} to show the existence of a foliation of $\Omega$ in a neighborhood of infinity. Moreover, we recall that a geodesic foliation, $\{\Sigma_\lambda\}$, of $\Omega$ differs from $\{\Sigma_s\}$ via:
$$s|_{\Sigma_\lambda} = (f_1\circ\pi)\lambda+f_2\circ\pi$$
for some functions $f_1,f_2$ on $\Sigma_{s_0}$. 
\begin{definition}\label{d5}
	We will say a family of cross-sections $\{\Sigma_\lambda\}$, $\lambda\geq \lambda_0$, of $\Omega$ form an asymptotically $C^{\mathfrak{n},\beta}$ geodesic foliation, whenever:
	$$s|_{\Sigma_\lambda} = (f_1\circ\pi) \lambda+f_2,$$
	whereby, identifying $\Sigma_{s_0}$ with $\mathbb{S}^2$, $f_1,f_2\circ(\pi|_{\Sigma_\lambda})^{-1}\in C^{\mathfrak{n},\beta}(\mathbb{S}^2)$, and $|| f_2\circ(\pi|_{\Sigma_\lambda})^{-1} ||_{\mathfrak{n},\beta}\leq C$, for some constant $C$ independent of $\lambda$.
\end{definition}

\begin{theorem}\label{t6}
	For $s$ sufficiently large and $\gamma\in C^{\mathfrak{n},\beta}(\text{Sym}(T^\star\mathbb{S}^2\otimes T^\star\mathbb{S}^2))$, $\mathfrak{n}\geq3$, we can isometrically embed the path $t\to \gamma(s,t):=e^{t+2\bar{u}_s}\gamma$ in $\Omega$, for $t\in[0,\infty)$. Moreover, this path of embeddings forms a foliation of $\Omega$ in a neighborhood of infinity that is an asymptotically geodesic $C^{1,\beta}$-foliation.
\end{theorem}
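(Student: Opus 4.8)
The plan is to run a continuity argument in the parameter $t$, leaning entirely on the conditional decay estimate of Proposition \ref{p11} and the extension criterion of Proposition \ref{p8}. Starting from the isometric embedding $\vec{r}_s$ of $e^{2\bar{u}_s}\gamma$ produced by Theorem \ref{t5} (Corollary \ref{c6}), Theorem \ref{t3} gives a unique path $t\to\vec{r}(s,t)\in C^{\mathfrak{n},\beta}(\mathbb{S}^2,\mathbb{R}^3)$ of isometric embeddings of $\gamma(s,t)=e^{t+2\bar{u}_s}\gamma$ on a maximal interval $[0,T(s))$. First I would show $T(s)=\infty$ for $s$ large. Fix $c_0>0$ as in Proposition \ref{p11} and set $f(s,t):=\varphi(s,t)e^{-t/2-\bar{u}_s}-|\vec{r}_s|e^{-\bar{u}_s}$, so $f(s,0)=0$ and, by Steps 2--3 of Theorem \ref{t3}, $t\to f(s,t)$ is continuous in $C^{1,\beta}(\mathbb{S}^2)$. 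Let $I\subseteq[0,T(s))$ be the set of $t$ with $\sup_{[0,t]}\|f(s,\cdot)\|_{1,\beta}\le c_0$; it is closed and nonempty. On $I$ Proposition \ref{p11} upgrades the estimate to $\|f(s,t)\|_{1,\beta}\le C_{\star\star}(c_0)e^{-\bar{u}_s}$, and since $e^{\bar{u}_s}\asymp s$ by Lemma \ref{l14} this is below $c_0/2$ once $s$ is large, so $I$ is relatively open and hence equals $[0,T(s))$. Consequently $\varphi(s,t)\asymp e^{t/2+\bar{u}_s}$ pointwise, uniformly in $t$, with $\|e^{-t/2-\bar{u}_s}\varphi(s,t)\|_{1,\beta}$ uniformly bounded; in particular on each finite window $[0,T]$ one has $\inf_{\mathbb{S}^2}\varphi(s,t)\ge c\,e^{\bar{u}_s}>S_-$, $\sup_{\mathbb{S}^2}\varphi(s,t)\le Ce^{T/2+\bar{u}_s}<\infty=S_+$, and a uniform bound on $\|d\varphi(s,t)\|_0$. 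These are exactly the hypotheses of Proposition \ref{p8}, so the path extends past any finite $T(s)$, contradicting maximality; thus $T(s)=\infty$, and uniqueness is inherited from Corollary \ref{c5} and Proposition \ref{p8}.

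Next I would establish the foliation. For fixed large $s$ each $\vec{r}(s,t)$ bounds a star-shaped region, hence is a cross-section $\Sigma_{\omega(s,t)}\subset\Omega$ with graph function $\omega(s,t)=\varphi(s,t)\circ\Psi(s,t)^{-1}$ over $\Sigma_{s_0}\cong\mathbb{S}^2$ in the notation of Proposition \ref{p6}. From $\partial_t\varphi=\tau^\#\varphi+\phi$ with $\phi=\big(\tr_{\gamma(s,t)}\dot\gamma(s,t)-2\nabla^{\gamma(s,t)}\cdot\tau\big)/\tr_{\gamma(s,t)}h$ and $\tr_{\gamma(s,t)}\dot\gamma(s,t)=\tr_{\gamma(s,t)}\gamma(s,t)=2$, together with the optical decay $\tr\ubar\chi\asymp 2/\varphi$ forced by (\ref{a3}) and the Raychaudhuri equation, one gets $\phi\asymp\varphi/2$, while $\tau^\#\varphi$ is uniformly bounded (the metric rescaling $\gamma(s,t)^{-1}\asymp e^{-t-2\bar{u}_s}\gamma^{-1}$ cancels the growth $\|\tau\|_{2,\beta}\lesssim e^{t/2+\bar{u}_s}$). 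Hence $\partial_t\varphi>0$ on $\mathbb{S}^2\times[0,\infty)$ for $s$ large, so along each generator $t\to\omega(s,t)$ is continuous, strictly increasing, and tends to $+\infty$; since $\omega(s,0)$ is bounded, the intermediate value theorem shows the leaves $\{\Sigma_{\omega(s,t)}\}_{t\ge0}$ are pairwise disjoint and exhaust $\{q\in\Omega:\ s(q)>\omega(s,0)(\pi(q))\}$, a neighborhood of infinity.

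For the asymptotic structure I would reparametrize by $\lambda:=e^{t/2}$, writing $\varphi(s,t)=\lambda\big(|\vec{r}_s|+e^{\bar{u}_s}f(s,t)\big)$ with $\|e^{\bar{u}_s}f(s,t)\|_{1,\beta}\le C_{\star\star}(c_0)$ bounded independently of $t$. Since the flow generated by $\tau^\#$ has velocity $O(e^{-t/2-\bar{u}_s})$, which is integrable in $t$, the diffeomorphism $\Psi(s,t)^{-1}$ converges in $C^{1,\beta}\mathcal{D}(\mathbb{S}^2)$ to a limit $\Psi_\infty^{-1}$ with $\|\Psi(s,t)^{-1}-\Psi_\infty^{-1}\|_{1,\beta}=O(\lambda^{-1}e^{-\bar{u}_s})$ (here $\vec{r}_s\in C^{3,\beta}$ keeps compositions in $C^{1,\beta}$). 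Then
$$\omega(s,t)=\lambda\big(|\vec{r}_s|\circ\Psi_\infty^{-1}\big)+\Big(\lambda\big(|\vec{r}_s|\circ\Psi(s,t)^{-1}-|\vec{r}_s|\circ\Psi_\infty^{-1}\big)+\big(e^{\bar{u}_s}f(s,t)\big)\circ\Psi(s,t)^{-1}\Big),$$
where the bracketed remainder is bounded in $C^{1,\beta}(\mathbb{S}^2)$ uniformly in $\lambda$ (the factor $\lambda$ cancels the $\lambda^{-1}$ decay). Setting $f_1:=|\vec{r}_s|\circ\Psi_\infty^{-1}\in C^{1,\beta}(\mathbb{S}^2)$ and taking $f_2$ to be the remainder — which pulled back along $(\pi|_{\Sigma_\lambda})^{-1}=\Psi(s,t)^{-1}$ has $C^{1,\beta}$ norm bounded uniformly in $\lambda$ — gives exactly $s|_{\Sigma_\lambda}=(f_1\circ\pi)\lambda+f_2$ in the sense of Definition \ref{d5}, i.e., an asymptotically $C^{1,\beta}$ geodesic foliation in a neighborhood of infinity.

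The main obstacle is Step 1: Proposition \ref{p11} delivers only a \emph{conditional} estimate, so the delicate point is closing the bootstrap — verifying that the region where $\|f(s,\cdot)\|_{1,\beta}\le c_0$ is both open and closed, hence all of $[0,T(s))$ once $s$ is large — and then feeding the resulting uniform-in-$t$ control into Proposition \ref{p8} on an exhausting sequence of finite windows to exclude finite-time breakdown. A secondary technical point is the uniform-in-$t$ $C^{1,\beta}$ convergence of $\Psi(s,t)^{-1}$, which is what makes the leading coefficient $f_1$ in Definition \ref{d5} well defined.
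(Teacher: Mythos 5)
Your proposal is correct and its first half is essentially the paper's argument made more explicit: you close the bootstrap for the conditional estimate of Proposition \ref{p11} by an open/closed argument in $t$ (the paper does this tersely, as in Corollary \ref{c6}), and then feed the resulting uniform bounds into Proposition \ref{p8} to rule out finite-time breakdown. Where you diverge is in how the lower bound, the monotonicity, and the asymptotic structure are obtained. The paper works throughout with the graph function $\omega(s,t)=\varphi(s,t)\circ\Psi(s,t)^{-1}$ and the exact identity $\partial_t\omega=\phi\circ\Psi^{-1}$, getting $|\partial_t\omega-\tfrac{\omega}{2}|\leq C_1$ at once; this yields the lower bound $\inf\varphi$, strict monotonicity along each generator, and then, from $\partial_t(e^{-t/2}\omega)=e^{-t/2}\phi'\circ\Psi^{-1}$ with $\|\phi'\circ\Psi^{-1}\|_{1,\beta}\leq C_2$, the $C^{1,\beta}$ convergence $e^{-t/2}\omega\to\omega_\infty$, so that $f_1=\omega_\infty$ and $f_2$ bounded drop out with no need to control the diffeomorphisms $\Psi(s,t)^{-1}$ beyond uniform bounds. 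You instead take $f_1=|\vec{r}_s|\circ\Psi_\infty^{-1}$, which forces you to prove $C^{1,\beta}$ convergence of the inverse flows at the precise rate $e^{-t/2}$ and to run composition estimates against $|\vec{r}_s|\in C^{2,\beta}$ so that the factor $\lambda$ is cancelled; this is admissible under Definition \ref{d5} (only boundedness of $f_2$ is required, and your $f_1$ differs from $\omega_\infty$ by a bounded function), but it costs Gr\"onwall-type flow estimates at the $C^{1,\beta}$ level in the spirit of Theorem \ref{t4} that the paper's route avoids entirely. One small repair: the monotonicity that produces disjoint leaves is positivity of $\partial_t\omega=\phi\circ\Psi^{-1}$ along a fixed generator, not positivity of $\partial_t\varphi$ at a fixed parameter point (these differ by the tangential term $\tau^\#\varphi$); your estimate $\phi\asymp\varphi/2>0$ gives the correct statement directly, so the inference should be routed through $\phi$ rather than through $\partial_t\varphi$.
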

\begin{proof}
Here, for any $c_0>0$ as in Proposition \ref{p11}, we have:
$$||\varphi(s,t) - |\vec{r}_s|e^{\frac{t}{2}}||_{1,\beta}\leq C_{\star\star}(c_0)e^{\frac{t}{2}},\,\,\,|||\vec{r}_s|||_{1,\beta}\leq \tilde{\tilde{C}}_\star e^{\bar{u}_s}.$$
We conclude $\sup_t||d\varphi(s,t)||_0,\sup_t(\sup_{\mathbb{S}^2}\varphi(s,t))\leq (C_{\star\star}(c_0)+\tilde{\tilde{C}}_\star e^{\bar{u}_s})e^{\frac{t}{2}}$. In order to gain control of $\inf_t(\inf_{\mathbb{S}^2}\varphi(s,t))$ from below, we analyze the function $\omega(s,t) = \varphi(s,t)\circ \Psi^{-1}(s,t)$. \\
\indent For $(\tau,\phi)(s,t)$ as in Proposition \ref{p11}, from Lemma \ref{l8} we know $d\Psi^{-1}(s,t)(\partial_t) = d\psi(\partial_t) = -\tau(s,t)$, and from Lemma \ref{l10}, that $\partial_t\varphi(s,t) = \tau^\#(\varphi)+\phi$. It follows therefore that $\partial_t\omega = \partial_t\varphi\circ\Psi^{-1}+d\varphi(d\Psi^{-1}(\partial_t)) = \tau^\#(\varphi)\circ\Psi^{-1}+\phi\circ\Psi^{-1} - \tau^\#(\varphi)\circ\Psi^{-1} = \phi\circ\Psi^{-1}$. We also see, borrowing from Proposition \ref{p11}:
$$\phi\circ\Psi^{-1}  =\frac{1}{\tr\ubar\chi}\Big(1-\nabla\cdot\tau\Big)\circ\Psi^{-1} = \frac{\omega}{2}\Big(\frac{1}{1+\frac{a e^{-\frac{t}{2}-\bar{u}_s}}{\omega e^{-\frac{t}{2}-\bar{u}_s}}}\Big)\Big(1-e^{-\frac{t}{2}-\bar{u}_s}\tilde\nabla\cdot\tilde\tau\Big)\circ\Psi^{-1}.$$
We therefore have $\sup_{\mathbb{S}^2}|\partial_t\omega - \frac{\omega}{2}|\leq C_1(c_0)$. Since $\omega(s,0) = |\vec{r}_s|$, and:
$$\sup_{\mathbb{S}^2}\Big||\vec{r}_s|e^{-\bar{u}_s}-(se^{-\bar{u}_s})e^{(u_\gamma+\bar{u}_s-u_s)\circ\Phi_\gamma}\Big|\leq \tilde{C}_\star e^{-\bar{u}_s},$$
we observe from Lemma \ref{l14} that $\omega(s,0)$ grows at least linearly in $s$. Consequently, $\partial_t\omega(s,0)>0$ whenever $|\vec{r}_s|>2C_1.$ From the differential inequality, $\partial_t\omega\geq \frac{\omega}{2}-C_1$, we conclude $\partial_t\omega(s,t)>0$ whenever $\omega(s,0)>2C_1$. Giving $\inf_t(\inf_{\mathbb{S}^2}\varphi(s,t))>2C_1$. We may now apply Proposition \ref{p8} to extend the path $t\to\vec{r}(s,t)$ of Proposition \ref{p11} to $t\in[0,\infty)$. Moreover, 
$$\omega(s,t)\geq (|\vec{r}_s|-2C_1)e^{\frac{t}{2}}+2C_1,$$
so we foliate a neighborhood of infinity. Again from Proposition \ref{p11}:
$$\partial_t(e^{-\frac{t}{2}}\omega) = e^{-\frac{t}{2}}(\phi\circ\Psi^{-1}-\frac12\omega)=e^{-\frac{t}{2}}\phi'\circ\Psi^{-1},$$
whereby $||\phi'\circ\Psi^{-1}||_{1,\beta}\leq C_2.$ We then solve to obtain, for $t_n\leq t_m$ (suppressing dependence on $s$):
$$||e^{-\frac{t_m}{2}}\omega(t_m,x)-e^{-\frac{t_n}{2}}\omega(t_n,x)||_{1,\beta}\leq \int _{t_n}^{t_m}e^{-\frac{\lambda}{2}}||\phi'(\lambda,x)||_{1,\beta}d\lambda\leq 2C_2(e^{-\frac{t_n}{2}}-e^{-\frac{t_m}{2}}).$$
So from any Cauchy sequence, $\{t_n\}$, we observe a convergent sequence $\{e^{-\frac{t_n}{2}}\omega(t_n,x)\}\subset C^{1,\beta}(\mathbb{S}^2)$ with limit $\omega_\infty\in C^{1,\beta}(\mathbb{S}^2)$, independent of $\{t_n\}$. Moreover,
$$||e^{-\frac{t}{2}}\omega(t,x)-\omega_\infty||_{1,\beta}\leq2C_2e^{-\frac{t}{2}},$$
so for a change of parameter $t\to \mathfrak{t}=e^{\frac{t}{2}}$, we observe $\omega(\mathfrak{t},x) = \omega_\infty\mathfrak{t}+f(\mathfrak{t},x)$ whereby  the functions $\omega_\infty,f\circ(\pi|_{\Sigma_\mathfrak{t}})^{-1}\in C^{1,\beta}(\mathbb{S}^2)$, and $||f\circ(\pi|_{\Sigma_\mathfrak{t}})^{-1}||_{1,\beta}\leq 2C_2$.
\end{proof}

\section{Acknowledgements} 
This material is based upon work supported by the National Science Foundation under Award No. 1703184. The author would like to thank Mu-Tao Wang for introducing him to this problem and his continued support, as well as Siyuan Lu for helpful conversations related to this work.
\newpage
\section{Appendix}
We wish to present a proof of Proposition \ref{p4} without using Theorem \ref{t0}. This argument is an adaptation of the original argument by Li-Wang \cite{li2020}, Theorem 11.\\\\
For a given metric $\gamma$ on a 2-sphere, we return to the volume 2-form, $\varepsilon_\gamma$, and identity (\ref{i0}). Contracting this identity either with itself, or a traceless 2-tensor, $T$ we observe:
 \begin{align}
 \varepsilon_i\,^l\varepsilon_{lj}&=-\gamma_{ij}\label{o1}\\
 \varepsilon_{ik}T^{kl}\varepsilon_{lj}&=T_{ij}\label{o2}.
 \end{align}
 Any tensor indices raised using $h^{-1}$ we will denote with a $\bar{bar}$. We have for a $h$-traceless 2-tensor $a:$
 \begin{align*}
 \frac12\mathcal{P}(h,h)(h^{-1})^{ij}(h^{-1})^{kl}a_{jl}&=\frac12\mathcal{P}(h,h)\bar{a}^{ik}\\
 &=\frac12\mathcal{P}(h,h)\varepsilon_{h}^{im}a_{mn}\varepsilon_{h}^{nk}\\
 &=	\varepsilon^{im}a_{mn}\varepsilon^{nk}.
 \end{align*}
In dimension two, with the use of the volume form we can instead equivalently characterize the bundle: 
$$\ubar\Xi_h=\{a\in\Gamma(\text{Sym}(T^\star\mathbb{S}^2\otimes T^\star\mathbb{S}^2))|\varepsilon^{ij}\nabla_ia_{jk}=0\}.$$
This follows from (\ref{i0}) since 
$$\varepsilon^{ij}\varepsilon^{kl}\nabla_i a_{jk} = (\gamma^{ik}\gamma^{jl}-\gamma^{jk}\gamma^{il})\nabla_i a_{jk} = (\nabla\cdot a)^l-\nabla^l\tr_\gamma a.$$
  Finally, we recall the \textit{Hodge Star} isomorphism acting on 1-forms:
 $$(\star\tau)_i = \varepsilon^j\,_i\tau_j.$$
 \begin{lemma}[\cite{li2020}, Lemma 7]\label{la}
Given $a\in\ubar\Xi_h$, and $\tau\in\text{Ker}(L_h)$, then the 1-form
 	$$\eta = a(\star\tau)$$
 	is trivial. 
 \end{lemma}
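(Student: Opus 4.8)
The plan is to work in a local orthonormal (or at least adapted) frame and compute $\nabla\cdot\eta$ and $\operatorname{curl}\eta$ (that is, $\varepsilon^{ij}\nabla_i\eta_j$), show that both vanish identically, and then invoke the cohomology of $\mathbb{S}^2$ — since $H^1(\mathbb{S}^2)=0$, a 1-form that is both closed and co-closed must be zero. So the real content is to verify the two scalar identities $\nabla^i\eta_i=0$ and $\varepsilon^{ij}\nabla_i\eta_j=0$ using only the two hypotheses: $a\in\ubar\Xi_h$, i.e. (in the dimension-two characterization recalled just above) $\varepsilon^{ij}\nabla_i a_{jk}=0$ together with $\operatorname{tr}_h a=0$ (equivalently $\mathcal{P}(a,h)=0$); and $\tau\in\operatorname{Ker}(L_h)$, i.e. $L_h(\tau)=2\operatorname{Sym}(\nabla\tau)-\operatorname{tr}_h(\nabla\tau)\,h=0$, which says $2\operatorname{Sym}(\nabla\tau)$ is a pointwise $h$-multiple of $h$, namely $\nabla_i\tau_j+\nabla_j\tau_i=\operatorname{tr}_h(\nabla\tau)\,h_{ij}$.

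First I would write $\eta_i=a_{ij}(\star\tau)^j=a_{ij}\varepsilon^{kj}\tau_k$ (being careful with the index placement/sign convention for $\star$ used in the paper) and differentiate: $\nabla^i\eta_i=(\nabla^i a_{ij})\varepsilon^{kj}\tau_k+a_{ij}\varepsilon^{kj}\nabla^i\tau_k$ and similarly $\varepsilon^{li}\nabla_l\eta_i=(\varepsilon^{li}\nabla_l a_{ij})\varepsilon^{kj}\tau_k+a_{ij}\varepsilon^{li}\varepsilon^{kj}\nabla_l\tau_k$. In the curl, the first term dies immediately by the $\ubar\Xi_h$ condition $\varepsilon^{li}\nabla_l a_{ij}=0$. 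For the remaining term $a_{ij}\varepsilon^{li}\varepsilon^{kj}\nabla_l\tau_k$, I would use the epsilon identity (\ref{i0}) to rewrite $\varepsilon^{li}\varepsilon^{kj}=\gamma^{lk}\gamma^{ij}-\gamma^{lj}\gamma^{ik}$, reducing it to a combination of traces of $a$ contracted with $\nabla\tau$; then feeding in the Killing-type relation $\operatorname{Sym}(\nabla\tau)=\tfrac12\operatorname{tr}_h(\nabla\tau)h$ and the fact $\operatorname{tr}_h a=0$ should make it cancel. The divergence term $\nabla^i\eta_i$ is handled the same way but now the first piece $(\nabla^i a_{ij})\varepsilon^{kj}\tau_k$ is not obviously zero; here I expect to need to convert $\nabla^i a_{ij}$ via the identity $(\nabla\cdot a)_j-\nabla_j\operatorname{tr}_\gamma a=\varepsilon^{li}\varepsilon_j{}^m\nabla_l a_{im}$ recalled above, combined with the $\ubar\Xi_h$ condition, to express the $\gamma$-divergence of $a$ in terms of $d\operatorname{tr}_\gamma a$, and then pair this against $\star\tau$ and use that $\tau$ is a conformal Killing field for the relevant structure.

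The step I expect to be the genuine obstacle is the bookkeeping that ties the $h$-traceless condition on $a$ and the $h$-conformal-Killing condition on $\tau$ together — these are stated with respect to $h$, not $\gamma$, whereas the Hodge star, the volume form $\varepsilon$, and the identities (\ref{i0}), (\ref{o1}), (\ref{o2}) are all with respect to $\gamma$. So I anticipate needing the relation $\tfrac12\mathcal{P}(h,h)(h^{-1})^{ik}=\varepsilon^{ij}_{\phantom{ij}}\,\cdot\,$ (the formula $\tfrac12\mathcal{P}(h,h)\,\bar a^{ik}=\varepsilon^{im}a_{mn}\varepsilon^{nk}$ displayed just above, and its analogue with $h$ in place of $a$) to translate $h$-traces into $\gamma$-contractions with $\varepsilon$. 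Once that translation is in hand, both $\nabla^i\eta_i$ and $\varepsilon^{ij}\nabla_i\eta_j$ should collapse to zero after a short computation. I would then conclude: $\eta$ is a harmonic 1-form on $\mathbb{S}^2$, hence $\eta\equiv 0$ by $H^1(\mathbb{S}^2;\mathbb{R})=0$ (equivalently by the Bochner/Hodge theorem on the sphere), which is exactly the claim.
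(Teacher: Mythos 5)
Your plan splits into two halves, and only one of them works. The closedness half is fine and is essentially identical to the paper's first step: with $\eta_k=\varepsilon^{ij}\tau_i a_{jk}$, the term $(\varepsilon^{li}\nabla_l a_{ij})\varepsilon^{kj}\tau_k$ dies by the dimension-two characterization of $\ubar\Xi_h$, and the remaining term reduces, via (\ref{i0}) and $2\text{Sym}(\nabla\tau)=\tr_h(\nabla\tau)h$, to $\tfrac12\tr_h(\nabla\tau)\,\mathcal{P}(a,h)=0$. The genuine gap is the claim that $\eta$ is also co-closed. Carrying out your own recipe, write $\nabla_k\tau_i=\tfrac12\tr_h(\nabla\tau)h_{ki}+\tfrac12(d\tau)_{ki}$ and use $\nabla\cdot a=d\tr_\gamma a$; one finds (up to the sign conventions for $\star$)
\[
\nabla^k\eta_k \;=\; \tfrac12\,\tr_h(\nabla\tau)\,\varepsilon^{ij}\hat{a}_j{}^k\hat{h}_{ki}\;-\;\tfrac12(\tr_\gamma a)\,\varepsilon^{kl}\nabla_k\tau_l\;+\;\varepsilon^{ij}\tau_i\nabla_j\tr_\gamma a .
\]
None of these three terms is killed by the hypotheses: $\mathcal{P}(a,h)=0$ fixes the pairing $\langle\hat{a},\hat{h}\rangle=\tfrac12(\tr_\gamma a)(\tr_\gamma h)$ but says nothing about the rotated pairing $\varepsilon^{ij}\hat{a}_j{}^k\hat{h}_{ki}$ (which vanishes only when $\hat{a}$ and $\hat{h}$ are proportional); $L_h(\tau)=0$ constrains only $\text{Sym}(\nabla\tau)$ and leaves the curl $\varepsilon^{kl}\nabla_k\tau_l$ completely free; and the $\ubar\Xi_h$ condition converts $\nabla\cdot a$ into $d\tr_\gamma a$ but does not make it vanish, since $a$ is $h$-traceless rather than $\gamma$-traceless. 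So co-closedness is not a pointwise consequence of the hypotheses, and the Hodge/$H^1(\mathbb{S}^2)=0$ conclusion cannot be run as you propose; it would only be available a posteriori, once $\eta\equiv0$ is already known.

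This is precisely why the paper (following Li--Wang) argues differently after the closedness step: $H^1(\mathbb{S}^2)=0$ gives $\eta=df$, and then, on the set where $a\neq0$, the identity $a_{lj}\bar{a}^j{}_m=\tfrac12|a|_h^2h_{lm}$ is used to solve for $\star\tau$ in terms of $df$, producing the second-order equation $(h^{-1})^{ij}\nabla^2_{ij}f+R^kf_k=0$ with coefficients that remain bounded as $a\to0$. Note this is a non-divergence-form equation in the $h^{-1}$-``metric'', not $\Delta_\gamma f=0$ — which is another way of seeing that harmonicity of $\eta$ is not what the hypotheses give you. The conclusion $df\equiv0$, hence $\eta\equiv0$, then comes from the strong maximum principle, together with the care needed on the degenerate set $\{a=0\}$. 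That maximum-principle step is the actual content of the lemma and is what your proposal is missing.
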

\begin{proof}
We will work within a local coordinate neighborhood $(x^i,\mathcal{U})$, and start by showing that $\eta$ is closed, and therefore given the cohomology of $\mathbb{S}^2$, exact.
\begin{align*}
d\eta&=\partial_l(\varepsilon^{ij}\tau_ia_{jk})dx^l\wedge dx^k\\
&=\nabla_l(\varepsilon^{ij}\tau_ia_{jk}) dx^l\wedge dx^k\\
&=(\varepsilon^{ij}a_{jk}\nabla_l\tau_i+\varepsilon^{ij}\tau_i\nabla_la_{jk})\varepsilon^{lk}(\sqrt{\text{det}\gamma}dx^1\wedge dx^2).	\end{align*}
	Analyzing the terms in turn:
	\begin{enumerate}
	\item $(\varepsilon^{ij}a_{jk}\varepsilon^{lk})\nabla_{l}\tau_i=-\mathcal{P}(h,h)\bar{a}^{il}\nabla_l\tau_i=-\frac12\mathcal{P}(h,h)\bar{a}^{il}(\nabla_l\tau_i+\nabla_i\tau_l)=\tr_h(\nabla\tau) \mathcal{P}(h,h)\bar{a}^{il}h_{il}=0$,
	\item $\varepsilon^{ij}\tau_i(\varepsilon^{lk}\nabla_l a_{jk})=\varepsilon^{ij}\tau_i(\varepsilon^{lk}\nabla_l a_{kj})=0$.
	\end{enumerate}
Therefore, $\eta = df$ for some function $f$. As a result
\begin{align*}
\nabla^2_{ij}f&=\nabla_i(\varepsilon^{kl}\tau_{k} a_{lj})\\
&=\varepsilon^{kl}\nabla_i\tau_k a_{lj}+\varepsilon^{kl}\tau_k \nabla_i a_{lj}\\
(h^{-1})^{ij}\nabla^2_{ij}f&=(\varepsilon^{kl}(h^{-1})^{ij}a_{lj})\nabla_i\tau_k+\varepsilon^{kl}\tau_k((h^{-1})^{ij}\nabla_la_{ij})\\
&=\sqrt{\mathcal{P}(h,h)}(\bar{\varepsilon}^{kl}\bar{a}^i\,_l)\nabla_i\tau_k-\varepsilon^{kl}\tau_ka_{ij}\nabla_l(h^{-1})^{ij}.
\end{align*}
Combining identities (\ref{o1},\ref{o2}), we observe that $\bar{\varepsilon}^{kl}\bar{a}^i\,_l$ is a symmetric contravariant 2-tensor. As a result, $\bar{\varepsilon}^{kl}\bar{a}^i\,_l\nabla_i\tau_k = \frac12\bar\varepsilon^{kl}\bar{a}^i\,_l(\nabla_i\tau_k+\nabla_k\tau_i) = -\frac12\tr_h(\nabla\tau)\bar\varepsilon^{kl}\bar{a}^i\,_lh_{ik} = -\frac12\tr_h(\nabla\tau)\bar{\varepsilon}^{kl}a_{kl}=0.$ Since $a_{lj}$ is $h$-traceless, we have $a_{lj}\bar{a}^{j}\,_{m}=\frac12|a|^2_{h}h_{lm}$. If we assume $a\neq0$, we can therefore replace $\tau_j$ with $f_j$:
$$f_k\bar{a}^k\,_m = \frac12\varepsilon^{kl}\tau_k|a|^2_{h}h_{lm}\implies f_k\frac{2\bar{a}^{kl}}{|a|^2_{h}}=\varepsilon^{kl}\tau_k.$$
We conclude that
$$(h^{-1})^{ij}\nabla^2_{ij}f + \Big(2\frac{\bar{a}^{kl}}{|a|^2_{h}}a_{ij}\nabla_l(h^{-1})^{ij}\Big)f_k=0.$$
Since the above coefficients remain bounded as $a_{ij}\to 0$ in any coordinate neighborhood, moreover, $(h^{-1})^{ij}\nabla_{ij}f = -\varepsilon^{kl}\tau_ka_{ij}\nabla_l(h^{-1})^{ij} = 0 = f_k$, whenever $a_{ij}=0$, we are free to (arbitrarily) extend the coefficients such that $f$ satisfies an equation:
$$(h^{-1})^{ij}\nabla_{ij}f + R^kf_k=0$$ 
for some bounded $R^k$ on the 2-sphere. By the strong Maximum Principle we therefore have $df\equiv 0$ on $\mathbb{S}^2$.
\end{proof}

\begin{proposition}[\cite{li2020}, Theorem 11]
Given $h\in\Gamma(\text{Sym}(T^\star\mathbb{S}^2\otimes T^\star\mathbb{S}^2))$ such that $\mathcal{P}(h,h)>0$, then the operator $L_h:\Gamma(T^\star\mathbb{S}^2)\to\Gamma(\Xi_h)$ is surjective, equivalently $\ubar\Xi_h=\{0\}$, and $\dim(\text{Ker}(L_h))=6$.
\end{proposition}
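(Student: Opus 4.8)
The plan is to combine the index-invariance argument already used in the proof of Proposition \ref{p4}, the decomposition of Proposition \ref{p3} (which identifies $\text{Coker}(L_h)$ with $\ubar\Xi_h$), and Lemma \ref{la}, so that the genuinely hard analytic work has been done once and for all in Lemma \ref{la}.

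\emph{Step 1 (the index).} Exactly as in the proof of Proposition \ref{p4}, I would join $h$ to a positive multiple of $\gamma$ by a smooth path $t\mapsto h_t$ in $\mathcal{S}$ and precompose with the $h$-orthogonal $\mathcal{P}$-projection $\pi$, obtaining a continuous family $\{\pi\circ L_{h_t}\}_{t\in[0,1]}$ of elliptic operators $\Gamma(T^\star\mathbb{S}^2)\to\Gamma(\Xi_h)$ between \emph{fixed} bundles. Invariance of the index under such perturbations gives $\text{index}(L_h)=\text{index}(\pi\circ L_h)=\text{index}(\pi\circ L_\gamma)=\text{index}(L_\gamma)$, the last step because $\pi|_{\Xi_\gamma}$ is a bundle isomorphism. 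At the endpoint, $\text{Ker}(L_\gamma)$ is metrically the space of conformal Killing fields of $\mathbb{S}^2$, of dimension $\dim SO(3,1)=6$, while $\text{Coker}(L_\gamma)\cong\ubar\Xi_\gamma=\{0\}$ since a divergence-free traceless symmetric $2$-tensor on the $2$-sphere vanishes. Hence $\text{index}(L_h)=6$, so by Proposition \ref{p3}, $\dim\text{Ker}(L_h)=6+\dim\ubar\Xi_h$; in particular $\text{Ker}(L_h)\neq\{0\}$.

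\emph{Step 2 ($\ubar\Xi_h=\{0\}$).} Suppose toward a contradiction that some $a\in\ubar\Xi_h$ is not identically zero, and pick $\tau\in\text{Ker}(L_h)\setminus\{0\}$, available by Step 1. By Lemma \ref{la} the $1$-form $a(\star\tau)$ vanishes identically. Wherever $a\neq0$ the identity $a\,h^{-1}a=\tfrac12|a|_h^2\,h$ from the proof of Lemma \ref{la} shows $a$ is pointwise invertible, so $\star\tau$—hence $\tau$—vanishes on the nonempty open set $\{a\neq0\}$. Since $a$ solves the first-order elliptic system $\varepsilon^{ij}\nabla_ia_{jk}=0$ (together with $\mathcal{P}(a,h)=0$, which rules out $a$ being a nonzero multiple of $\gamma$ because $\mathcal{P}(\gamma,h)=\tr_\gamma h\neq0$), its zero set is discrete, so $\tau$ vanishes off a discrete set and, by continuity, everywhere—contradicting $\tau\neq0$. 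Hence $\ubar\Xi_h=\{0\}$.

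\emph{Conclusion.} By Proposition \ref{p3}, $\ubar\Xi_h=\{0\}$ means $\Gamma(\Xi_h)=L_h(\Gamma(T^\star\mathbb{S}^2))$, i.e.\ $L_h$ is surjective; and then Step 1 gives $\dim\text{Ker}(L_h)=\text{index}(L_h)=6$. The most delicate points I expect are verifying that the family $\pi\circ L_{h_t}$ genuinely satisfies the hypotheses of the index-invariance theorem (ellipticity of each member and continuity in the operator topology on the relevant Hölder spaces) and the unique-continuation input in Step 2 that pins down the zeros of a nonzero $a\in\ubar\Xi_h$ as isolated; the heaviest analytic ingredient—Lemma \ref{la}, built on the maximum principle and $H^1(\mathbb{S}^2)=0$—is already available.
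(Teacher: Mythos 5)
Your Step 1 matches the paper's argument, and your overall architecture (index invariance plus Lemma \ref{la}) is the intended one, but Step 2 has a genuine gap at its key point: the claim that the zero set of a nonzero $a\in\ubar\Xi_h$ is discrete. The system you cite, $\varepsilon^{ij}\nabla_i a_{jk}=0$, is two equations for the three components of a symmetric $2$-tensor, so it is not elliptic as stated; ellipticity only appears after restricting to the rank-two bundle $\Xi_h$ (where injectivity of the symbol uses definiteness of $h$, i.e.\ $\mathcal{P}(h,h)>0$ --- your parenthetical about $a$ not being a multiple of $\gamma$ is beside the point), and even then strong unique continuation only yields that $a$ cannot vanish on an open set, i.e.\ that $\{a\neq0\}$ is dense, not that the zero set is discrete. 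You neither set up the reduced system on $\Xi_h$ nor invoke a theorem that would deliver what you assert, so the step ``its zero set is discrete'' is unsupported; fortunately density would suffice for your contradiction, but that too is unproved as written.

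The paper avoids this entirely by applying unique continuation to the \emph{other} object: in isothermal coordinates for $h$ the equation $L_h(\tau)=0$ becomes a genuine $2\times2$ first-order elliptic (Cauchy--Riemann-type) system, so by the strong unique continuation principle a nonzero $\tau\in\text{Ker}(L_h)$ has $\{\tau\neq0\}$ dense; Lemma \ref{la} then forces $a(\star\tau)=0$, hence $\det a=0$ on a dense set and so everywhere, and the pointwise identity $\frac{\det a}{\det h}=-\frac12|a|^2_h$ for $h$-traceless $a$ gives $a\equiv0$, whence surjectivity via Proposition \ref{p3} and $\dim\text{Ker}(L_h)=6$ from the index. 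Your argument is easily repaired in the same spirit without touching the zero set of $a$: since $\tau$ vanishes on the nonempty open set $\{a\neq0\}$ (your invertibility observation there is correct), unique continuation for the elliptic system satisfied by $\tau$ already forces $\tau\equiv0$, contradicting your choice of $\tau$. Either supply that, or carry out the reduction of the divergence-type equation to an elliptic system on $\Xi_h$ with an appropriate unique continuation statement; as written the decisive assertion is not justified.
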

\begin{proof}
We argue identically as in the beginning of Proposition \ref{p4} to conclude that $\text{index}(L_h) = \text{index}(\pi\circ L_\gamma) = \dim(\text{Ker}(L_{\gamma}))$. By conformal invariance of $L_\gamma$ we also conclude $\dim(\text{Ker}(L_\gamma)) = \dim(\text{Ker}(L_{\mathring\gamma})) = 6$. This implies $\dim(\text{Ker}(L_h))\geq 6$, and therefore $\text{Ker}(L_h)\neq\{0\}$. In the neighborhood of any point on our 2-sphere, we may choose isothermal co-ordinates $((x,y),\mathcal{U})$ so that $h=\psi^2((dx^1)^2+(dy)^2)$, for some $0<\psi\in\mathcal{F}(\mathcal{U})$. As a result, $L_h(\tau)=0$ is equivalent to the system of first order elliptic PDE:
\begin{align*}
\partial_x\begin{pmatrix}
\tau_x\\\tau_y	
\end{pmatrix}
+\begin{pmatrix}
0&-1\\
1&0	
\end{pmatrix}
\partial_y\begin{pmatrix}
	\tau_x\\\tau_y
\end{pmatrix}
=\begin{pmatrix}
	\Gamma^x_{xx}-\Gamma^x_{yy}&\Gamma^y_{xx}-\Gamma^y_{yy}\\
	2\Gamma^x_{xy}&2\Gamma^y_{xy}
\end{pmatrix}
\begin{pmatrix}
	\tau_x\\\tau_y
\end{pmatrix}
\end{align*}
where $\tau = \tau_xdx+\tau_ydy$, and $\Gamma^i_{jk}$ are the Christoffel symbols associated to $((x,y),\mathcal{U})$. Denoting the left-hand side by $L(\vec{\tau})$, we conclude that $|L(\vec{\tau})|\leq C|\vec{\tau}|$ ($|\vec{x}|$, the local Euclidean norm). The Strong Unique Continuation Principle of first order elliptic systems (see, for example Theorem 2.1, pg 151 of \cite{okaji2001strong}) implies $\tau$ cannot vanish on any open subset of $\mathcal{U}$ without vanishing throughout. As a result, given any non-trivial $\tau\in Ker(L_h)$, the collection of points $\mathcal{V}_\tau:=\{p\in\mathbb{S}^2|\tau_p\neq0\}$ are topologically dense within $\mathbb{S}^2$. Lemma \ref{la} therefore ensures that any $a\in\ubar\Xi_h$ is pointwise singular. Specifically, $0=\frac{\det a}{\det h}=-\frac12|a|^2_{h}$, enforcing $a\equiv 0$. So $\ubar\Xi_h=\{0\}$, and by Proposition \ref{p3}, $L_h$ is surjective. Consequently, $\dim(\text{Ker}(L_h))=6$.
\end{proof}
 
\bibliographystyle{abbrv}
\normalbaselines 
\bibliography{foliation6.bbl} 
\end{document}